\newtheorem{thm}{Theorem}[section]
\newtheorem{cor}[thm]{Corollary}
\newtheorem{cla}[thm]{Claim}
\newtheorem{lem}[thm]{Lemma}
\newtheorem{prop}[thm]{Proposition}
\theoremstyle{definition}
\newtheorem{defn}[thm]{Definition}
\newtheorem{rem}[thm]{Remark}
\theoremstyle{definition}
\numberwithin{equation}{section}
\DeclareMathOperator{\id}{id}
\DeclareMathOperator{\Span}{Span}
\DeclareMathOperator{\stab}{stab}
\DeclareMathOperator{\Ad}{Ad}
\DeclareMathOperator{\Lie}{Lie}
\DeclareMathOperator{\supp}{supp}
\DeclareMathOperator{\Leb}{Leb}
\DeclareMathOperator{\BL}{BL}
\begin{document}


\title{R\MakeLowercase{igidity of joinings for time-changes of unipotent flows on quotients of }L\MakeLowercase{orentz groups}}
\author{S\MakeLowercase{iyuan} T\MakeLowercase{ang}}%
\address{D\MakeLowercase{epartment of} M\MakeLowercase{athematics}, IU, B\MakeLowercase{loomington}, IN 47401}
\email{1992.siyuan.tang@gmail.com,  siyutang@indiana.edu}
\maketitle

\begin{abstract}
  Let $u_{X}^{t}$ be a unipotent flow on $X=SO(n,1)/\Gamma$, $u_{Y}^{t}$ be a unipotent flow on $Y=G/\Gamma^{\prime}$. Let $\tilde{u}_{X}^{t}$, $\tilde{u}_{Y}^{t}$ be time-changes of  $u_{X}^{t}$, $u_{Y}^{t}$ respectively. We show the disjointness (in the sense of Furstenberg) between $u_{X}^{t}$ and $\tilde{u}_{Y}^{t}$ (or $\tilde{u}_{X}^{t}$ and $u_{Y}^{t}$) in certain situations.

  Our method refines  the works of Ratner and extends a recent work of Dong, Kanigowski and Wei.
\end{abstract}

\tableofcontents

\section{Introduction}
\subsection{Main results}
 In this paper, we study the rigidity of joinings of time-changes of unipotent flows.
First, let
\begin{itemize}
  \item  $G_{X}=SO(n_{X},1)$, $G_{Y}$ be a semisimple Lie group  with finite center and no compact factors  and $\Gamma_{X}\subset G_{X}$,  $\Gamma_{Y}\subset G_{Y}$ be irreducible lattices,
  \item $(X,m_{X})$, $(Y,m_{Y})$ be the homogeneous spaces $X=G_{X}/\Gamma_{X}$,  $Y=G_{Y}/\Gamma_{Y}$ equipped with the Lebesgue measures $m_{X}$, $m_{Y}$ respectively,
  \item $u_{X}^{t}$, $u_{Y}^{t}$ be   unipotent flows on $X$ and $Y$ respectively,
  \item $\tau_{X}$, $\tau_{Y}$ be    positive   functions with integral $m_{X}(\tau_{X})=m_{Y}(\tau_{Y})=1$ under certain regularity on $X$ and $Y$ respectively,
  \item $\tilde{u}^{t}_{X}$, $\tilde{u}^{t}_{Y}$  be the \textit{time-changes}\index{time-change} of $u^{t}_{X}$, $u^{t}_{Y}$  induced by $\tau_{X}$, $\tau_{Y}$, respectively,
  \item    $d\mu=\tau_{X} dm_{X}$,  $d\nu=\tau_{Y} dm_{Y}$  be the  $\tilde{u}_{X}$-, $\tilde{u}_{Y}$-invariant measures respectively.
\end{itemize}
  We shall verify the disjointness and so classify the joinings of $u_{X}^{t}$ and $\tilde{u}_{Y}^{t}$ (or $\tilde{u}_{X}^{t}$ and $u_{Y}^{t}$) in certain situations.

 Recall that a \textit{joining}\index{joinings}   of $\tilde{u}_{X}^{t}$ and $\tilde{u}_{Y}^{t}$ is    a $(\tilde{u}_{X}^{t}\times\tilde{u}_{Y}^{t})$-invariant probability measure on $X\times Y$, whose marginals on $X$ and $Y$ are $\mu$ and $\nu$ respectively. It was first introduced by Furstenberg in \cite{furstenberg1981recurrence}, and is a natural generalization of \textit{measurable conjugacies}\index{measurable conjugacies}. The classical results on classifying joinings under this context were established by Ratner \cite{ratner1982rigidity}, \cite{ratner1983horocycle}, \cite{ratner1986rigidity}, \cite{ratner1987rigid},  \cite{ratner1990measure}. First, the most celebrated  \textit{Ratner's theorem}\index{Ratner's theorem} indicates that all joinings between $u_{X}^{t}$ and $u_{Y}^{t}$ have to be \textbf{algebraic}. Besides,  for $G_{X}=SO(2,1)$, Ratner studied the    \textit{H-property}\index{H-property} (or \textit{Ratner's property}\index{Ratner's property}) of horocycle flows $u_{X}^{t}$, as well as their time-changes $\tilde{u}_{X}^{t}$,
 and then showed  that    any nontrivial (i.e. not the product measure $\mu\times\nu$) ergodic joining of $\tilde{u}_{X}^{t}$ and $\tilde{u}_{Y}^{t}$ is a \textbf{finite extension} of $\nu$. (In fact, this is even true for any measure-preserving system on $(Y,\nu)$.)
 Using this, Ratner was able to show   that for $G_{X}=G_{Y}=SO(2,1)$, the existence of a nontrivial   ergodic joining of $\tilde{u}_{X}^{t}$ and $\tilde{u}_{Y}^{t}$ implies that $\tau_{X}$ and $\tau_{Y}$ are \textit{algebraically cohomologous}\index{algebraically cohomologous}. In other words,    whether $\tilde{u}_{X}^{t}$ and $\tilde{u}_{Y}^{t}$ are disjoint  is determined by cohomological equations.

 It is natural to ask if it is possible to extend the results to $G_{X}=SO(n_{X},1)$ for $n_{X}\geq 3$. The difficulty is that the time-change   $\tilde{u}^{t}_{X}$ needs not have the \textit{H-property}\index{H-property}. It is one of the main ingredient of unipotent flows. Roughly speaking, H-property states that the divergence of nearby unipotent orbits happens always along some direction from the centralizer $C_{G_{X}}(u_{X})$ of the flow $u^{t}_{X}$. In particular, for  $G_{X}=SO(2,1)$, the direction can only be the flow direction $u^{t}_{X}$ itself. Moreover, Ratner  \cite{ratner1987rigid} naturally extended this notion to the general measure-preserving systems and verified it for the time-changes $\tilde{u}^{t}_{X}$ of horocycle flows. However, for $n_{X}\geq 3$, it seems that there is no suitable way to describe the ``centralizer" of the time-change $\tilde{u}^{t}_{X}$. Thus, classifying joinings of $\tilde{u}_{X}^{t}$ and $\tilde{u}_{Y}^{t}$ for $n_{X}\geq 3$ becomes a difficult problem.

  Recently, Dong, Kanigowski and Wei \cite{dong2020rigidity} considered the case when $G_{X}=SO(2,1)$, $G_{Y}$ is semisimple as above, $\Gamma_{X}$ and $\Gamma_{Y}$ are cocompact lattices. After comparing the $H$-property of $\tilde{u}_{X}^{t}$ and  $u_{Y}^{t}$,  they showed that $\tilde{u}_{X}^{t}$ and  $u_{Y}^{t}$ are disjoint once the Lie algebra $\mathfrak{g}_{Y}$ of $G_{Y}$ contains at least one weight vector of weight at least $1$ other than the $\mathfrak{sl}_{2}$-triples generated by $u_{Y}^{t}$.

 In this paper, we try to generalize the results stated above for $n_{X}\geq 3$. First, we follow the idea of Ratner and study the H-property of $u_{X}^{t}$ and deduce:
\begin{thm}\label{joinings202106.158}  Let $(Y,\nu,S)$ be a measure-preserving system of some map $S:Y\rightarrow Y$, $\rho$ be an ergodic joining of $u_{X}^{1}$ and $S$. Then either $\rho=\mu\times\nu$ or $(u^{1}_{X}\times S,\rho)$ is a compact extension of $(S,\nu)$. More precisely, if $\rho\neq\mu\times \nu$, then there exists   a compact subgroup $C^{\rho}\subset C_{G_{X}}(u_{X})$, and   $n>0$ such that  for $\nu$-a.e. $y\in Y$, there exist $x_{1}^{y},\ldots,x_{n}^{y}$ in the support of $\rho_{y}$ with
  \[\rho_{y}(C^{\rho}x_{i}^{y})=\frac{1}{n}\]
  for $i=1,\ldots,n$, where  $ \rho=\int_{Y}\rho_{y}d\nu(y)$ is the disintegration along $Y$.
\end{thm}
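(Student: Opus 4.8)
The plan is to exploit the H-property (Ratner's property) of the unipotent flow $u_X^t$ on $X = SO(n,1)/\Gamma_X$, which must have been established in the preceding sections of the paper, and to combine it with an ergodic-theoretic argument in the style of Ratner's horocycle joinings work. The key structural fact to invoke is that for $u_X^t$ the divergence of nearby orbits is controlled: two nearby points eventually separate along a direction lying in the centralizer $C_{G_X}(u_X)$, with the slow polynomial divergence that the H-property quantifies. I would state this precisely as the ``H-property lemma'' proved earlier and treat it as a black box here.

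First I would set up the joining $\rho$ of $(X,\mu,u_X^1)$ and $(Y,\nu,S)$ and its disintegration $\rho = \int_Y \rho_y \, d\nu(y)$ over the factor $(Y,\nu,S)$, noting the $S$-equivariance $\rho_{Sy} = (u_X^1)_* \rho_y$ for $\nu$-a.e.\ $y$. The heart of the argument is a dichotomy: I would pick a generic point for the ergodic system $(X\times Y,\rho,u_X^1\times S)$, take two nearby points $(x,y)$ and $(x',y)$ lying in the same fiber (this is where one uses that $\rho$ is not a product — if the conditional measures $\rho_y$ are not a.e.\ atomic on a single orbit, one can find such pairs with $x' = gx$, $g$ close to the identity but $g\notin C_{G_X}(u_X)$), and run the H-property along a long stretch of time. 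The H-property forces the Birkhoff averages along $X$ to be pushed by an element $c$ of the centralizer $C_{G_X}(u_X)$, while the $Y$-coordinates stay synchronized because $S$ acts identically on the second factor for both points; invoking ergodicity of $\rho$ and a density-point / Chacon-type argument one concludes that $\rho$ is invariant under $c$ acting on the first coordinate. Averaging over all such limit elements, the collection of $c$'s generated this way forms a closed subgroup $H \subset C_{G_X}(u_X)$ under which $\rho$ is invariant in the $X$-direction.

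Next I would analyze this $H$. Since $\rho$ projects to $\mu$ on $X$ and $\mu$ is the Haar (Lebesgue) measure, $H$-invariance in the fiber direction together with ergodicity constrains $H$: if $H$ were noncompact, a Mautner-type / mixing argument would force $\rho = \mu\times\nu$, contradicting our standing assumption; hence $H =: C^\rho$ is compact. Then $\rho_y$ is $C^\rho$-invariant for $\nu$-a.e.\ $y$, so each $\rho_y$ is a convex combination of the normalized Haar measures on the $C^\rho$-orbits $C^\rho x$ meeting $\supp\rho_y$. The remaining point is to show the conditional measure is the \emph{uniform} average over finitely many such orbits, each of mass $1/n$: this is a standard ergodicity argument — the function $y \mapsto (\text{number of }C^\rho\text{-orbits in }\supp\rho_y)$ is $S$-invariant hence a.e.\ constant $n$, and $S$-equivariance $\rho_{Sy} = (u_X^1)_*\rho_y$ permutes these orbits measurably; ergodicity of $(S,\nu)$ (inherited from ergodicity of the joining) forces the weights to be equalized over a finite orbit of the permutation, giving mass exactly $1/n$ on each. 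This exhibits $(u_X^1\times S, \rho)$ as a compact (indeed finite-to-one over each $C^\rho$-coset) extension of $(S,\nu)$.

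The main obstacle I expect is the step producing nontrivial centralizer elements in the fiber direction: unlike the $SO(2,1)$ case where the centralizer is one-dimensional (just the flow itself) so that the H-property immediately yields flow-invariance, for $n \geq 3$ the centralizer $C_{G_X}(u_X)$ is larger, and one must carefully track \emph{which} centralizer direction the H-property divergence produces and rule out that the divergence is ``trivial'' (i.e.\ always along the flow, which would give no new information). This requires a quantitative version of the H-property adapted to $SO(n,1)$ — precisely the content one expects the earlier sections to supply — together with a careful choice of the time windows over which one compares orbits, so that the nonflow component of the divergence is detected. A secondary technical point is justifying the passage from ``$\rho$ is invariant under a dense set of limiting elements'' to ``$\rho$ is invariant under the closed group they generate,'' which uses continuity of the pushforward action and compactness of the relevant piece of $C_{G_X}(u_X)$.
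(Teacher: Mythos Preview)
Your overall plan is in the right spirit and uses the same ingredients as the paper --- the H-property, Moore/Mautner ergodicity for noncompact centralizer directions, and ergodicity of the joining to equalize fiber weights --- but you have the logic of where $C^\rho$ comes from reversed, and this is a genuine gap.

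In the paper's argument, the H-property is used \emph{only} to force a contradiction. The key observation (Remark~\ref{joinings202103.4}) is that when $g\notin C_{G_X}(u_X)$, the fastest relative motion $q$ can always be taken in the nilpotent part $\mathfrak{n}=\mathbf{R}U_X+\mathfrak{n}_C^\perp$ of the centralizer, so $\langle\exp(tq)\rangle$ is automatically \emph{noncompact}. Combined with Corollary~\ref{joinings202103.3} (your Mautner step), this means that for $\rho\neq\mu\times\nu$ one has $(\phi^q_1\times\id)_\ast\rho\perp\rho$, and the $\epsilon$-closeness forced by the H-property then contradicts this singularity on a set of positive measure (Lemma~\ref{joinings202012.6}). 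The conclusion is that nearby fiber points lie on the same $C_{G_X}(u_X)$-orbit, full stop. No invariance under any nontrivial subgroup is extracted at this stage: the group $H$ you propose to build from the divergence elements $c$ would consist of noncompact elements and hence be empty (under the standing assumption $\rho\neq\mu\times\nu$).

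The compact group $C^\rho$ arises from a separate, later analysis. Once one knows each $\rho_y$ is supported on $C_{G_X}(u_X)$-orbits, ergodicity shows there are exactly $n$ such orbits, each of mass $1/n$ (Lemma~\ref{joinings202103.8}). Then one proves that the conditional measure on each $C_{G_X}(u_X)$-orbit is a fixed measure $m$ (independent of $y$ by ergodicity), that $m$ is right-invariant under the elements relating fiber points and hence is Haar on a closed subgroup, and \emph{defines} $C^\rho$ to be the stabilizer of $m$. Only then does one invoke Corollary~\ref{joinings202103.3} a second time to conclude $C^\rho$ is compact. So your sentence ``hence $H=:C^\rho$ is compact'' conflates two distinct objects, and your subsequent steps (counting $C^\rho$-orbits, equalizing weights) should instead be carried out for $C_{G_X}(u_X)$-orbits first, with $C^\rho$ appearing afterward as the stabilizer of the orbit measure. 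Your ``main obstacle'' paragraph is also a red herring: there is no need to detect nonflow components of the divergence, because \emph{every} divergence direction produced is noncompact and is used only to reach a contradiction.
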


By Theorem \ref{joinings202106.158}, for any nontrivial ergodic joining $\rho$ of $u_{X}^{t}$ and $\tilde{u}_{Y}^{t}$,
there are measurable maps $\psi_{1},\ldots,\psi_{n}:Y\rightarrow X$ such that
\begin{equation}\label{joinings202106.162}
  \rho(f)=\int_{Y} \int_{C^{\rho}}\frac{1}{n}\sum_{p=1}^{n}f(k\psi_{p}(y),y)dm(k)d\nu(y)
\end{equation}
for $f\in C(X\times Y)$ where $m$ is the Lebesgue measure of the compact group $C^{\rho}$. Projecting $\rho$ to $(C^{\rho}\backslash X)\times Y$, we get
\[\overline{\rho}(f)=\int_{Y}  \frac{1}{n}\sum_{p=1}^{n}f(\overline{\psi}_{p}(y),y)d\nu(y)\]
for $f\in C((C^{\rho}\backslash X)\times Y)$.
Then, we can study the rigidity of $\rho$ by thinking about $\overline{\psi}_{1},\ldots,\overline{\psi}_{n}$. Also, $\overline{\rho}$ is a nontrivial ergodic joining of $u_{X}^{t}$ and  $\tilde{u}^{t}_{Y}$.

Then we can establish the rigidity of $\overline{\psi}_{p}$ by studying the shearing of $u_{X}^{t}$. The idea comes from \cite{ratner1986rigidity}, \cite{tang2020new}.  We require the time-changes having the effective mixing property. Thus, let $\mathbf{K}(Y)$ be the set  of all positive integrable functions $\tau$ on $Y$ such that $\tau,\tau^{-1}$ are bounded and satisfies
       \[\left|\int_{Y}\tau(y)\tau(u_{Y}^{t}y)d\nu(y)-\left(\int_{Y}\tau(y)\nu(y)\right)^{2}\right|\leq D_{\tau}|t|^{-\kappa_{\tau}}\]
       for some $D_{\tau},\kappa_{\tau}>0$. In other words, elements $\tau\in\mathbf{K}(Y)$ have polynomial decay of correlations.  Let $\langle u_{X}, a_{X},\overline{u}_{X}\rangle$,  $\langle u_{Y}, a_{Y},\overline{u}_{Y}\rangle$ be $\mathfrak{sl}_{2}$-triples of $G_{X}$ and $G_{Y}$, respectively.
Then we obtain the following:

 \begin{thm}[Extra central invariance  of $\rho$]\label{joinings202106.159}  Let $\tau_{Y}\in\mathbf{K}(Y)$, $\tilde{u}^{t}_{Y}$ be the time-change of $u^{t}_{Y}$ induced by $\tau_{Y}$ and $\rho$ be a nontrivial ergodic joining of $u_{X}^{t},\tilde{u}_{Y}^{t}$. Then there exist maps $\alpha:N_{G_{Y}}(u_{Y})\times Y\rightarrow\mathbf{R}$, $\beta:N_{G_{Y}}(u_{Y})\rightarrow C_{G_{Y}}(u_{Y})$ such that
\begin{enumerate}
  \item  Restricted to the centralizer $C_{G_{Y}}(u_{Y})$,  $\alpha:C_{G_{Y}}(u_{Y})\times Y\rightarrow\mathbf{R}$ is a cocycle, $\beta:C_{G_{Y}}(u_{Y})\rightarrow C_{G_{X}}(u_{X})$ is a homomorphism. Besides,
            $\tau_{Y}(cy)$ and $\tau_{Y}( y)$ are (measurably)  cohomologous along $u_{Y}^{t}$ via the transfer function $\alpha(c,y)$ for all $c\in C_{G_{Y}}(u_{Y})$; in other words,
            \[\int_{0}^{T}\tau_{Y}(cu_{Y}^{t}y)-\tau_{Y}(u_{Y}^{t}y) dt=\alpha(c,u^{T}_{Y}y)-\alpha(c,y).\]
  \item For $c\in C_{G_{Y}}(u_{Y})$, the map $S_{c}:X\times Y\rightarrow X\times Y$ defined by
  \[S_{c}:(x,y)\mapsto (\beta(c)x,\tilde{u}_{Y}^{-\alpha(c,y)}(cy))\]
  commutes with $u_{X}^{t}\times \widetilde{u}_{Y}^{t}$, and is $\rho$-invariant.  Besides, $S_{c_{1}c_{2}}=S_{c_{1}}\circ S_{c_{2}}$ for any $c_{1},c_{2}\in C_{G_{Y}}(u_{Y})$, and  $S_{u_{Y}^{t}}=\id$ for $t\in\mathbf{R}$.
  \item For $r\in\mathbf{R}$, the map $S_{a^{r}_{Y}}:X\times Y\rightarrow X\times Y$ defined by
  \[S_{a^{r}_{Y}}:(x,y)\mapsto \left(\beta(a^{r}_{Y})a^{r}_{X}x,\tilde{u}_{Y}^{-\alpha(a^{r}_{Y},y)}(a^{r}_{Y}y)\right)\]
  satisfies
  \[ S_{a^{r}_{Y}}\circ(u_{X}^{t}\times \widetilde{u}_{Y}^{t})=(u_{X}^{e^{-r}t}\times \widetilde{u}_{Y}^{e^{-r}t})\circ S_{a^{r}_{Y}}\]
  and is $\rho$-invariant. Besides, $S_{a^{r_{1}+r_{2}}_{Y}}=S_{a^{r_{1}}_{Y}}S_{a^{r_{2}}_{Y}}$ for any $r_{1},r_{2}\in \mathbf{R}$, and
  \[S_{a_{Y}}\circ S_{c}\circ S_{a_{Y}^{-1}}=S_{a_{Y}ca_{Y}^{-1}}\]
  for any $c\in C_{G_{Y}}(u_{Y})$.
\end{enumerate}
 \end{thm}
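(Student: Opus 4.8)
The plan is to run a shearing argument for $u_{X}^{t}$ against the effectively mixing time-change $\tilde{u}_{Y}^{t}$, in the spirit of \cite{ratner1986rigidity} and \cite{tang2020new}. Since $\rho\neq\mu\times\nu$, Theorem~\ref{joinings202106.158} lets me replace $\rho$ by its projection $\overline{\rho}$ on $(C^{\rho}\backslash X)\times Y$, presented by finitely many measurable sections $\overline{\psi}_{1},\dots,\overline{\psi}_{n}\colon Y\to C^{\rho}\backslash X$, so that $\overline{\rho}_{y}=\frac{1}{n}\sum_{p}\delta_{\overline{\psi}_{p}(y)}$. I will produce $\alpha$, $\beta$ and the transformations $S_{c}$, $S_{a_{Y}^{r}}$ first at the level of $\overline{\rho}$ and then lift them back to $\rho$; the lift is legitimate because, as the construction will show, $\beta(c)$ can be taken in $C_{G_{X}}(u_{X})$ normalising $C^{\rho}$, hence it acts on $X$ compatibly with the fibrewise $C^{\rho}$-action, and similarly for $\beta(a_{Y}^{r})a_{X}^{r}$.

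The heart of the argument is the following shearing step. I would fix $c$ near the identity in $C_{G_{Y}}(u_{Y})$; for $\nu$-a.e.\ $y$ both $(\overline{\psi}_{p}(y),y)$ and $(\overline{\psi}_{p}(cy),cy)$ lie in $\supp\overline{\rho}$ (note $c_{*}\nu$ and $\nu$ are equivalent, since $\tau_{Y}^{\pm1}$ are bounded). I would flow the first point by $u_{X}^{t}\times\tilde{u}_{Y}^{t}$ and the second by $u_{X}^{s(t)}\times\tilde{u}_{Y}^{s(t)}$, choosing $s(t)$ so that the $Y$-coordinate of the second equals exactly $c\cdot\tilde{u}_{Y}^{t}y$ — possible because $c$ centralises $u_{Y}$ — in which case $s(t)-t$ equals, up to a bounded error, the time-change reparametrisation of the Birkhoff sum $\int_{0}^{t}\bigl(\tau_{Y}(c\,u_{Y}^{r}y)-\tau_{Y}(u_{Y}^{r}y)\bigr)\,dr$. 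Writing $\overline{\psi}_{p}(cy)=g_{p}(y)\overline{\psi}_{p}(y)$, the $X$-coordinates of the two flowed points differ by $u_{X}^{s(t)-t}\bigl(u_{X}^{t}g_{p}(y)u_{X}^{-t}\bigr)$. The $H$-property of $u_{X}^{t}$ and the structure of $G_{X}=SO(n_{X},1)$ force this displacement to diverge polynomially unless $g_{p}(y)\in C_{G_{X}}(u_{X})$, whereas the compact-extension structure of $\overline{\rho}$ over $Y$ keeps both $X$-coordinates inside a bounded neighbourhood of the $n$-point fibre over $\tilde{u}_{Y}^{t}y$ along a positive-density set of times $t$; reconciling the two forces $g_{p}(y)\in C_{G_{X}}(u_{X})$ and then that $s(t)-t$ stays bounded. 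The latter is precisely the statement that $\tau_{Y}(cy)$ and $\tau_{Y}(y)$ are cohomologous along $u_{Y}^{t}$, and I would define $\alpha(c,y)$ to be the limiting transfer function and $\beta(c,y)$ the part of $g_{p}(y)$ left after the reparametrisation absorbs the flow direction. Effective mixing of $\tilde{u}_{Y}^{t}$ (guaranteed by $\tau_{Y}\in\mathbf{K}(Y)$) is what controls this drift quantitatively and excludes the alternative $s(t)-t\to\infty$, in which case the polynomial mixing rate forces the two orbits to decouple, contradicting finiteness of the fibres. I expect this to be the main obstacle: the competition between the polynomial growth of the $u_{X}$-shearing and the mixing rate $\kappa_{\tau_{Y}}$ must be carried out carefully, uniformly in the $n$ sheets and in $C^{\rho}$.

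With the shearing step in hand, flowing the identity $\overline{\psi}_{p}(cy)=\beta(c,y)\overline{\psi}_{p}(y)$ shows $\beta(c,\tilde{u}_{Y}^{t}y)=\beta(c,y)$ modulo the locally constant sheet permutation, so ergodicity of $(\tilde{u}_{Y}^{t},\nu)$ makes $\beta(c,\cdot)$ a.e.\ a constant $\beta(c)\in C_{G_{X}}(u_{X})$; composing the relations for $c_{1},c_{2}$ yields $\beta(c_{1}c_{2})=\beta(c_{1})\beta(c_{2})$, the cocycle identity $\alpha(c_{1}c_{2},y)=\alpha(c_{1},c_{2}y)+\alpha(c_{2},y)$, and, taking $c=u_{Y}^{t}$, the degeneracy that gives $S_{u_{Y}^{t}}=\id$. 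I would then set $S_{c}(x,y)=(\beta(c)x,\tilde{u}_{Y}^{-\alpha(c,y)}(cy))$: since $\beta(c)\in C_{G_{X}}(u_{X})$ it commutes with $u_{X}^{t}$, and the cocycle identity makes $y\mapsto\tilde{u}_{Y}^{-\alpha(c,y)}(cy)$ commute with $\tilde{u}_{Y}^{t}$, so $S_{c}$ commutes with $u_{X}^{t}\times\tilde{u}_{Y}^{t}$ and preserves $\{(\overline{\psi}_{p}(y),y)\}$, hence preserves $\overline{\rho}$ and, after the lift, $\rho$.

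For the semisimple direction I would rerun the shearing with $c$ replaced by $a_{Y}^{r}$. The only new feature is $a_{Y}^{r}u_{Y}^{t}a_{Y}^{-r}=u_{Y}^{e^{-r}t}$, so conjugating the comparison by $a_{Y}^{r}$ rescales the time parameter by $e^{-r}$ on both sides; the extra factor $a_{X}^{r}$ in $S_{a_{Y}^{r}}(x,y)=(\beta(a_{Y}^{r})a_{X}^{r}x,\tilde{u}_{Y}^{-\alpha(a_{Y}^{r},y)}(a_{Y}^{r}y))$ compensates this on the $X$-side because $a_{X}^{r}u_{X}^{t}a_{X}^{-r}=u_{X}^{e^{-r}t}$, yielding $S_{a_{Y}^{r}}\circ(u_{X}^{t}\times\tilde{u}_{Y}^{t})=(u_{X}^{e^{-r}t}\times\tilde{u}_{Y}^{e^{-r}t})\circ S_{a_{Y}^{r}}$. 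Since $N_{G_{Y}}(u_{Y})$ is generated by $C_{G_{Y}}(u_{Y})$ and $\{a_{Y}^{r}\}$, with $a_{Y}^{r}$ acting on $C_{G_{Y}}(u_{Y})$ by conjugation, the relations $S_{a_{Y}^{r_{1}+r_{2}}}=S_{a_{Y}^{r_{1}}}S_{a_{Y}^{r_{2}}}$ and $S_{a_{Y}}S_{c}S_{a_{Y}^{-1}}=S_{a_{Y}ca_{Y}^{-1}}$ follow formally from $\beta$ being a homomorphism, $\alpha$ a cocycle, and uniqueness of the transfer functions, which also pins down how $\alpha$ and $\beta$ extend from $C_{G_{Y}}(u_{Y})$ to all of $N_{G_{Y}}(u_{Y})$.
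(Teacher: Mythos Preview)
Your outline is correct and follows essentially the same route as the paper: reduce via Theorem~\ref{joinings202106.158} to the finite-over-$C^{\rho}$ picture with sections $\overline{\psi}_{p}$, compare $\overline{\psi}_{p}(y)$ and $\overline{\psi}_{p}(cy)$ under the flow, force the displacement into $C_{G_{X}}(u_{X})$, split it as $u_{X}^{\alpha}\beta$, freeze $\beta$ by ergodicity, and rerun for $a_{Y}^{r}$ with the rescaled commutation.

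The one place where your description is thinner than the actual difficulty is the sentence ``reconciling the two forces $g_{p}(y)\in C_{G_{X}}(u_{X})$''. The polynomial divergence of $u_{X}^{t}g_{p}(y)u_{X}^{-t}$ is a statement in $C^{\rho}\backslash G_{X}$, but the ``bounded neighbourhood of the fibre'' is measured in the double quotient $C^{\rho}\backslash G_{X}/\Gamma_{X}$; nothing a priori prevents the conjugate from drifting far in the group while remaining close in $\overline{X}$ via a nontrivial $\gamma\in\Gamma_{X}$. The $H$-property alone does not close this. The paper spends all of Section~\ref{joinings202103.13} on this \emph{shifting} problem: it builds $\epsilon$-blocks, proves (Proposition~\ref{joinings202104.29}) that any $\Gamma_{X}$-shift between blocks forces an effective gap via diagonal renormalisation, and then (Proposition~\ref{joinings202104.57}) that the non-shifting time has definite density in $[0,\lambda]$. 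Only on that set is your $H$-property comparison valid, and that is where Proposition~\ref{joinings202104.58} extracts $g_{p}(y)\in C_{G_{X}}(u_{X})$ on a set of positive measure. You correctly flagged this step as the main obstacle; the point is that it is not merely a quantitative competition with $\kappa_{\tau_{Y}}$ but a separate structural argument about how lattice returns interact with the sublinear bound $|z(y,t)-t|=O(t^{1-\kappa})$.

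Two small items you also elide but which the paper handles explicitly: the independence of $\alpha(c,x,y)$ from the sheet $x$ (done via a brief $\alpha_{\max}$ argument just before Theorem~\ref{joinings202105.41}), and the $\rho$-invariance of $S_{c}$, which is not automatic from commutation and requires producing a generic point in $\Omega\cap S_{c}^{-1}\Omega$ by hand using the fibre description.
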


 For the opposite unipotent direction $\overline{u}_{Y}$, we cannot obtain the invariance for $\rho$ directly. However, we can fix it by making the ``$a$-adjustment". Here we further require $\tau_{Y}$ being smooth and $\alpha(c,\cdot)$ being integrable.  The idea comes from \cite{ratner1987rigid}. Then since $\overline{u}_{Y}$ and $C_{G_{Y}}(u_{Y})$ generate the whole group $G_{Y}$,  we are able to use \textit{Ratner's theorem}\index{Ratner's theorem} to get the rigidity of  $\overline{\psi}_{1},\ldots,\overline{\psi}_{n}$.

\begin{thm}[Cohomological criterion]\label{joinings202106.160}
     Let
 $G_{X}=SO(n_{X},1)$,  $G_{Y}$ be a semisimple Lie group  with finite center and no compact factors  and $\Gamma_{X}\subset G_{X}$,  $\Gamma_{Y}\subset G_{Y}$ be irreducible lattices. Let $U_{Y}\in\mathfrak{g}_{Y}$ be a nilpotent vector so that $C_{\mathfrak{g}_{Y}}(U_{Y})$ only contains vectors of   weight at most $2$, and let $u_{Y}=\exp(U_{Y})$. Let $\tau_{Y}\in \mathbf{K}(Y)\cap C^{1}(Y)$ so that $\tau_{Y}(cy)$ and  $\tau_{Y}(y)$ are $L^{1}$-cohomologous along $u_{Y}^{t}$ for any $c=\exp(v)\in C_{G_{Y}}(u_{Y})$ with positive weight. If there is  a nontrivial ergodic joining  $\rho$ of $u_{X}^{t}$ and $\tilde{u}_{Y}^{t}$, then $\tau_{X}\equiv1$ and $\tau_{Y}$ are joint cohomologous.
 \end{thm}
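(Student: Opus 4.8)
The plan is to feed the rigidity package of Theorems~\ref{joinings202106.158} and~\ref{joinings202106.159} into an ``$a$-adjustment'' in the spirit of \cite{ratner1987rigid}, and then to invoke Ratner's measure classification.

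\emph{Step 1: reduction.} Let $\rho$ be a nontrivial ergodic joining of $u_{X}^{t}$ and $\tilde{u}_{Y}^{t}$ (the case $\rho=\mu\times\nu$ being excluded). By Theorem~\ref{joinings202106.158} there are a compact $C^{\rho}\subset C_{G_{X}}(u_{X})$ and measurable maps $\psi_{1},\dots,\psi_{n}:Y\to X$ as in~(\ref{joinings202106.162}); passing to $(C^{\rho}\backslash X)\times Y$ gives a nontrivial ergodic joining $\overline{\rho}$ of $u_{X}^{t}$ and $\tilde{u}_{Y}^{t}$ represented by maps $\overline{\psi}_{1},\dots,\overline{\psi}_{n}$. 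Theorem~\ref{joinings202106.159} then supplies the cocycle $\alpha$, the homomorphism $\beta:C_{G_{Y}}(u_{Y})\to C_{G_{X}}(u_{X})$, the commuting $\rho$-invariant maps $S_{c}$ for $c\in C_{G_{Y}}(u_{Y})$, the flow-rescaling intertwiners $S_{a_{Y}^{r}}$, and the identity expressing that $\tau_{Y}(cy)$ is cohomologous to $\tau_{Y}(y)$ along $u_{Y}^{t}$ with transfer $\alpha(c,\cdot)$. Under the hypotheses of Theorem~\ref{joinings202106.160} (namely $\tau_{Y}\in\mathbf{K}(Y)\cap C^{1}(Y)$ and the $L^{1}$-cohomology of $\tau_{Y}(cy)$ versus $\tau_{Y}(y)$ for $c$ of positive weight) these transfer functions are moreover integrable. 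Everything descends to $(C^{\rho}\backslash X)\times Y$.

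\emph{Step 2: the opposite unipotent via $a$-adjustment (the hard part).} Since $\langle u_{Y},a_{Y},\overline{u}_{Y}\rangle$ is an $\mathfrak{sl}_{2}$-triple and $C_{\mathfrak{g}_{Y}}(U_{Y})$ contains only vectors of weight at most $2$, the $\ad(a_{Y})$-weight spaces of $\mathfrak{g}_{Y}$ lie in $\mathfrak{sl}_{2}$-submodules of highest weight $0,1,2$; hence the divergence in the $\overline{u}_{Y}$-direction is polynomial of bounded degree, and $\overline{u}_{Y}$ together with $C_{G_{Y}}(u_{Y})$ generate $G_{Y}$. The naive candidate $S_{\overline{u}_{Y}^{s}}$ is not $\rho$-invariant, because $\overline{u}_{Y}^{s}$ does not normalize $u_{Y}$ and $\alpha(\cdot,y)$ lives on $N_{G_{Y}}(u_{Y})$. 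Following \cite{ratner1987rigid}, I would conjugate by $S_{a_{Y}^{r}}$ and let $r\to-\infty$: as $a_{Y}^{r}\overline{u}_{Y}^{s}a_{Y}^{-r}$ collapses to the identity while $S_{a_{Y}^{r}}$ rescales the flow by $e^{-r}$, the defect of invariance is controlled by the sup-norms of $\tau_{Y}^{\pm1}$ (smoothness), by the $L^{1}$-size of the transfers $\alpha(c,\cdot)$ (integrability), and by the polynomial decay of correlations of $\tau_{Y}\in\mathbf{K}(Y)$ already used for the shearing in Theorem~\ref{joinings202106.159}; the weight-$\leq2$ bound is exactly what makes these estimates converge. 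The upshot is a $\rho$-invariant map $S_{\overline{u}_{Y}^{s}}$ of the same skew-product shape $(x,y)\mapsto(\beta(\overline{u}_{Y}^{s})x,\tilde{u}_{Y}^{-\alpha(\overline{u}_{Y}^{s},y)}(\overline{u}_{Y}^{s}y))$, extending $c\mapsto S_{c}$ to a generating family, together with the relation that $\tau_{Y}(\overline{u}_{Y}^{s}y)$ is cohomologous to $\tau_{Y}(y)$ along $u_{Y}^{t}$. I expect this to be the main obstacle: one must carry all estimates through a non-normalizing direction, verify that the limit transformation is genuinely measure-preserving, and check the cocycle/homomorphism identities on the enlarged index set — this is precisely where smoothness of $\tau_{Y}$ and integrability of $\alpha$ are spent.

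\emph{Step 3: Ratner's theorem and the cohomological conclusion.} The maps $S_{c}$ ($c\in C_{G_{Y}}(u_{Y})$), $S_{\overline{u}_{Y}^{s}}$, and the flow $u_{X}^{t}\times\tilde{u}_{Y}^{t}$ generate a group of $\rho$-preserving transformations of $X\times Y$; since each $S_{c}$ commutes with the flow, the $\alpha$-twists can be absorbed into the $u_{X}^{t}\times\tilde{u}_{Y}^{t}$-direction, turning this into an honest action of a group that surjects onto $G_{Y}$ acting on $Y=G_{Y}/\Gamma_{Y}$ in the standard way, under which $\overline{\rho}$ is invariant and ergodic. Ratner's measure classification then forces $\overline{\rho}$ to be algebraic up to the time-change: concretely, each $\overline{\psi}_{p}$ is, $\nu$-a.e., a rational map of a lift of $y$ to $G_{Y}$ post-composed with a measurable shift $u_{X}^{\xi_{p}(y)}$. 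Inserting this into the flow-equivariance $u_{X}^{t}\overline{\psi}_{p}(y)=\overline{\psi}_{p}(\tilde{u}_{Y}^{t}y)$ and using that the algebraic part carries the $u_{Y}$-direction linearly onto the $u_{X}$-direction, one gets $\int_{0}^{t}\bigl(\tau_{Y}(\tilde{u}_{Y}^{r}y)-1\bigr)dr=\xi_{p}(y)-\xi_{p}(\tilde{u}_{Y}^{t}y)$; that is, $\tau_{Y}-1$ is a coboundary along $\tilde{u}_{Y}^{t}$ (equivalently along $u_{Y}^{t}$). Lifted back to the joining this reads $1-\tau_{Y}(y)=G(u_{X}^{t}x,\tilde{u}_{Y}^{t}y)-G(x,y)$ with $G(x,y)=\xi_{p}(y)$ on $\supp\rho$, which together with $\tau_{X}\equiv1$ is exactly the assertion that $\tau_{X}$ and $\tau_{Y}$ are joint cohomologous.
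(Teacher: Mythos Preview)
Your Step 1 matches the paper, but Steps 2 and 3 contain a genuine gap.

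\textbf{The opposite unipotent direction does not yield a $\rho$-invariant map.} The paper explicitly says this after Theorem~\ref{joinings202106.159}: ``for the opposite unipotent direction $\overline{u}_{Y}$, we cannot obtain the invariance for $\rho$ directly.'' Your proposed $S_{\overline{u}_{Y}^{s}}:(x,y)\mapsto(\beta(\overline{u}_{Y}^{s})x,\tilde{u}_{Y}^{-\alpha(\overline{u}_{Y}^{s},y)}(\overline{u}_{Y}^{s}y))$ cannot exist in this form: $\overline{u}_{Y}^{s}\notin N_{G_{Y}}(u_{Y})$, so $\alpha(\overline{u}_{Y}^{s},\cdot)$ is not even defined, and no map with first coordinate in $C_{G_{X}}(u_{X})$ can compensate for the fact that $\overline{u}_{Y}^{s}$ does not normalise $u_{Y}^{t}$ on the second factor. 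What the paper actually proves (Theorem~\ref{joinings202106.122}) is only an \emph{asymptotic} statement after renormalisation: setting $\Psi_{k,p}(y)\coloneqq a_{X}^{\lambda_{k}}\overline{\psi}_{p}(a_{Y}^{-\lambda_{k}}y)$ for a carefully chosen sequence $\lambda_{k}\to\infty$, one has $d_{\overline{X}}(\Psi_{k,p}(\overline{u}_{Y}^{r}y),\overline{u}_{X}^{r}\Psi_{k,p}(y))\to 0$. This is a Ratner-style H-property comparison (Lemmas~\ref{joinings202105.65} and~\ref{joinings202106.121}), not an invariance statement for $\rho$.

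\textbf{Ratner's theorem is applied to a \emph{different} measure.} Because $\rho$ is invariant under $u_{X}^{t}\times\tilde{u}_{Y}^{t}$, which is not a homogeneous flow, your Step~3 cannot invoke Ratner's theorem for $\rho$ (or $\overline{\rho}$); ``absorbing the $\alpha$-twists into the flow direction'' does not convert a time-changed flow into a homogeneous one. The paper's key manoeuvre (Proposition~\ref{joinings202106.140}) is to show that the limit $\Psi^{\ast}(y)=\lim_{k}\Psi_{k}^{\ast}(y)$ exists $\nu$-a.e., and then to define a \emph{new} measure $\widetilde{\rho}$ on $\overline{X}\times Y$ by $\int f\,d\widetilde{\rho}=\int_{Y}\frac{1}{n}\sum_{p}f(\Psi_{p}(y),y)\,dm_{Y}(y)$. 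By construction $\widetilde{\rho}$ is $(u_{X}^{t}\times u_{Y}^{t})$-invariant (no time-change), so Ratner's theorem applies to $\widetilde{\rho}$, giving a homomorphism $\Phi:G_{Y}\to G_{X}$ and the algebraic form~(\ref{joinings202106.141}) of $\Psi_{p}$. One then runs the shearing argument of Section~\ref{joinings202105.24} once more to transfer this back to $\psi_{p}$ (equation~(\ref{joinings202106.144})), and Proposition~\ref{joinings202106.142} reads off the joint cohomology. Incidentally, the $L^{1}$-cohomology hypothesis is spent not on the $\overline{u}_{Y}$-direction but, via Lemma~\ref{joinings202106.135}, on the \emph{central} direction: it guarantees that $\frac{1}{t}\alpha(c^{t},y)$ converges, which is what makes the limits in~(\ref{joinings202106.139}) exist and hence $\Psi_{k}^{\ast}$ converge.
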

 \begin{rem}
  When $\tau_{X}\equiv1$ and $\tau_{Y}$ are joint cohomologous, one can deduce that $1$ (on $Y$) and $\tau_{Y}$  are (measurably) cohomologous. See Proposition \ref{rigid reparametrizations42} for further discussion.
 \end{rem}

In \cite{tang2020new}, we see that for $G_{Y}=SO(n_{Y},1)$, some cocompact lattice $\Gamma_{Y}$, there exists a function $\tau_{Y}\in \mathbf{K}(Y)\cap C^{1}(Y)$ such that
\begin{itemize}
  \item  $\tau_{Y}$ and $1$ are not measurably cohomologous,
  \item  for any $c\in C_{G_{Y}}(u_{Y})$, $\tau_{Y}(cy)$ and $\tau_{Y}(y)$ are not  measurably cohomologous if they are not $L^{2}$-cohomologous.
\end{itemize}
Applying Theorem  \ref{joinings202106.159} (1)  and Theorem \ref{joinings202106.160} to $\tau_{Y}$, we get
\begin{cor}[Existence of nontrivial time-changes]  For $G_{Y}=SO(n_{Y},1)$, there exists a cocompact lattice $\Gamma_{Y}$, and a function $\tau_{Y}$ on $Y=G_{Y}/\Gamma_{Y}$ such that  $u_{X}^{t}$ and $\tilde{u}_{Y}^{t}$ are disjoint (i.e. the only joining of $u_{X}^{t}$ and $\tilde{u}_{Y}^{t}$ is the product measure $\mu\times\nu$).
\end{cor}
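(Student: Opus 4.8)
The plan is to combine the cohomological criterion of Theorem~\ref{joinings202106.160} with the negative cohomology statements recalled from \cite{tang2020new}. First I would invoke the result of \cite{tang2020new}: it produces, for $G_{Y}=SO(n_{Y},1)$, a cocompact lattice $\Gamma_{Y}$ and a function $\tau_{Y}\in\mathbf{K}(Y)\cap C^{1}(Y)$ on $Y=G_{Y}/\Gamma_{Y}$ with the two listed properties --- namely $\tau_{Y}$ is not measurably cohomologous to $1$, and for every $c\in C_{G_{Y}}(u_{Y})$ the functions $\tau_{Y}(cy)$ and $\tau_{Y}(y)$ are not measurably cohomologous unless they are already $L^{2}$-cohomologous. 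Since $G_{Y}=SO(n_{Y},1)$, the nilpotent vector $U_{Y}$ generating the unipotent flow $u_{Y}^{t}$ has centralizer $C_{\mathfrak{g}_{Y}}(U_{Y})$ whose weight-graded pieces under the associated $\mathfrak{sl}_{2}$-triple have weight at most $2$ (this is the standard root-space structure of $\mathfrak{so}(n,1)$: the horospherical subalgebra is two-step nilpotent with weights $1$ and $2$), so the hypothesis of Theorem~\ref{joinings202106.160} on $U_{Y}$ is satisfied.

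Next I would argue by contradiction: suppose $u_{X}^{t}$ and $\tilde{u}_{Y}^{t}$ are \emph{not} disjoint, i.e.\ there is a joining other than $\mu\times\nu$. By ergodic decomposition one may pass to a nontrivial \emph{ergodic} joining $\rho$ of $u_{X}^{t}$ and $\tilde{u}_{Y}^{t}$ (one should check that an ergodic component of a nontrivial joining is still a joining and is not the product measure --- this uses that the two factor systems are ergodic so the product is the unique self-joining that is a product, and a convex combination that is nontrivial must have a nontrivial ergodic component; this is routine). Now apply Theorem~\ref{joinings202106.159}(1): for every $c\in C_{G_{Y}}(u_{Y})$ the functions $\tau_{Y}(cy)$ and $\tau_{Y}(y)$ are measurably cohomologous along $u_{Y}^{t}$ via the transfer function $\alpha(c,\cdot)$. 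To feed this into Theorem~\ref{joinings202106.160} one needs the stronger $L^{1}$-cohomology for $c$ of positive weight; here I would use the second bullet from \cite{tang2020new}, which forces measurable cohomology between $\tau_{Y}(cy)$ and $\tau_{Y}(y)$ to be $L^{2}$-cohomology, hence in particular $L^{1}$-cohomology (on the finite-measure space $(Y,\nu)$, $L^{2}\subset L^{1}$), so the hypothesis ``$\tau_{Y}(cy)$ and $\tau_{Y}(y)$ are $L^{1}$-cohomologous along $u_{Y}^{t}$ for any $c\in C_{G_{Y}}(u_{Y})$ with positive weight'' of Theorem~\ref{joinings202106.160} holds. (One should double-check that $\tau_{Y}\in\mathbf{K}(Y)\cap C^{1}(Y)$ together with this $L^{1}$-cohomology condition is exactly the package Theorem~\ref{joinings202106.160} requires; the weight-$\le 2$ condition on $U_{Y}$ was already verified.)

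Then Theorem~\ref{joinings202106.160} applies and yields that $\tau_{X}\equiv 1$ and $\tau_{Y}$ are joint cohomologous. By the Remark following Theorem~\ref{joinings202106.160} (and Proposition~\ref{rigid reparametrizations42}), joint cohomology of $1$ and $\tau_{Y}$ implies that $1$ and $\tau_{Y}$ are measurably cohomologous on $Y$ along $u_{Y}^{t}$. This directly contradicts the first property of $\tau_{Y}$ coming from \cite{tang2020new}, namely that $\tau_{Y}$ and $1$ are \emph{not} measurably cohomologous. The contradiction shows that no nontrivial joining exists, i.e.\ $u_{X}^{t}$ and $\tilde{u}_{Y}^{t}$ are disjoint, which is the assertion of the corollary.

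The step I expect to be the main (though still minor) obstacle is the bookkeeping that bridges Theorem~\ref{joinings202106.159}(1) and the hypotheses of Theorem~\ref{joinings202106.160}: one must confirm that the measurable cohomology along $u_{Y}^{t}$ produced by Theorem~\ref{joinings202106.159}(1) upgrades (via the $L^{2}$-rigidity from \cite{tang2020new}) to the $L^{1}$-cohomology for positive-weight $c$ demanded by Theorem~\ref{joinings202106.160}, and that the integrability assumption on $\alpha(c,\cdot)$ implicitly used in Theorem~\ref{joinings202106.160} is compatible with the $\alpha$ furnished by Theorem~\ref{joinings202106.159}. Everything else --- checking the weight structure of $\mathfrak{so}(n_{Y},1)$, passing to an ergodic joining, and invoking Proposition~\ref{rigid reparametrizations42} --- is immediate. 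Thus the corollary follows by assembling the previously established results rather than by any genuinely new argument.
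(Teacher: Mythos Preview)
Your proposal is correct and follows essentially the same approach as the paper, which simply states that applying Theorem~\ref{joinings202106.159}(1) and Theorem~\ref{joinings202106.160} to the $\tau_{Y}$ furnished by \cite{tang2020new} yields the result. One minor imprecision: for $\mathfrak{so}(n_{Y},1)$ the horospherical subalgebra is actually abelian (not two-step nilpotent), but your conclusion that the $\mathfrak{sl}_{2}$-weights in $C_{\mathfrak{g}_{Y}}(U_{Y})$ are at most $2$ is correct, as recorded in (\ref{joinings202105.14}).
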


Besides, the homomorphism $\beta|_{C_{G_{Y}}(u_{Y})}$ obtained by Theorem \ref{joinings202106.159} also  provide some information. Combining   Ratner's theorem, we conclude that the existence of nontrivial joinings requires the algebraic structure $G_{Y}$ to be similar to $G_{X}$.
\begin{thm}[Algebraic criterion]\label{joinings202106.161}   Let the notation and assumptions be as in Theorem \ref{joinings202106.160}. If there is  a nontrivial ergodic joining  $\rho$ of $u_{X}^{t}$ and $\tilde{u}_{Y}^{t}$, then $\rho$ is a finite extension of $\nu$ (i.e. the $C^{\rho}$ provided by Theorem \ref{joinings202106.158} is trivial). Besides, consider the decomposition (see (\ref{time change184})):
 \[ C_{\mathfrak{g}_{Y}}(U_{Y})=\mathbf{R}U_{Y}\oplus V^{\perp}_{C_{Y}},\ \ \ C_{\mathfrak{g}_{X}}(U_{X})=\mathbf{R}U_{X}\oplus V^{\perp}_{C_{X}}.\]
    Then the derivative  $d\beta|_{V^{\perp}_{C}}: V^{\perp}_{C_{Y}}\rightarrow V^{\perp}_{C_{X}}$ is an injective Lie algebra homomorphism.
\end{thm}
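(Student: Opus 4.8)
The plan is to use the cohomological conclusion of Theorem~\ref{joinings202106.160} to untwist the $Y$-factor, apply Ratner's measure classification theorem to the resulting joining of the two genuine unipotent flows, and then read off both assertions from the homogeneous description together with the explicit symmetries $S_{c}$ of Theorem~\ref{joinings202106.159}. By Theorem~\ref{joinings202106.160}, $\tau_{X}\equiv 1$ and $\tau_{Y}$ are joint cohomologous, so by the Remark/Proposition~\ref{rigid reparametrizations42} $\tau_{Y}$ is measurably cohomologous to $1$ on $Y$; hence there is a measurable isomorphism $\Phi\colon(Y,\nu,\tilde u_{Y}^{t})\to(Y,m_{Y},u_{Y}^{t})$ of the form $\Phi(y)=u_{Y}^{h(y)}y$. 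Put $\rho'=(\id_{X}\times\Phi)_{*}\rho$, a nontrivial ergodic joining of $(X,m_{X},u_{X}^{t})$ and $(Y,m_{Y},u_{Y}^{t})$. Since $\Phi$ acts only on the $Y$-coordinate, the transported maps $S'_{c}=(\id_{X}\times\Phi)\,S_{c}\,(\id_{X}\times\Phi)^{-1}$ for $c\in C_{G_{Y}}(u_{Y})$ remain $\rho'$-invariant, still commute with $u_{X}^{t}\times u_{Y}^{t}$, still act on $X$ by $x\mapsto\beta(c)x$, and — because $c$ centralizes $u_{Y}$ and hence slides past the flow corrections coming from $h$ — act on $Y$ by $y\mapsto u_{Y}^{s_{c}(y)}(cy)$ for a measurable $s_{c}\colon Y\to\mathbf{R}$.

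Applying Ratner's theorem, $\rho'=m_{H}$ is the homogeneous measure on a closed orbit $\Omega=Hx_{0}\subset(G_{X}\times G_{Y})/(\Gamma_{X}\times\Gamma_{Y})$ with $(u_{X}^{t},u_{Y}^{t})\in H$, and as the marginals of $\rho'$ are $m_{X}$ and $m_{Y}$, the projections of $H$ to $G_{X}$ and to $G_{Y}$ are onto. Set $N_{X}=H\cap(G_{X}\times\{e\})$ and $N_{Y}=H\cap(\{e\}\times G_{Y})$, normal in $G_{X}$ and $G_{Y}$ respectively. Since $G_{X}=SO(n_{X},1)$ is almost simple, $N_{X}$ is either finite or all of $G_{X}$; the latter forces $H=G_{X}\times G_{Y}$ and $\rho'=m_{X}\times m_{Y}$, contrary to nontriviality, so $N_{X}$ is finite. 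By Goursat's lemma $H/(N_{X}\times N_{Y})$ is the graph of an isomorphism $\sigma\colon G_{X}/N_{X}\xrightarrow{\sim}G_{Y}/N_{Y}$ with $\sigma(\bar u_{X}^{t})=\bar u_{Y}^{t}$; in particular $G_{Y}/N_{Y}$ is simple, so $N_{Y}$ is a product of some simple factors of $G_{Y}$ up to a finite group. Moreover $N_{Y}$ is finite: $\operatorname{pr}_{X}\colon\Omega\to X$ has fibers that are countable unions of closed $N_{Y}$-orbits (as $\{e\}\times N_{Y}=\ker(\operatorname{pr}_{X}|_{H})\trianglelefteq H$) carrying the $N_{Y}$-invariant conditional measures of $m_{H}$, so an infinite (hence noncompact) $N_{Y}$ would produce a finite-volume closed $N_{Y}$-orbit in $Y$, i.e.\ $\Gamma_{Y}$ would intersect the proper normal subgroup $N_{Y}$ in a lattice, contradicting irreducibility of $\Gamma_{Y}$. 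Hence $\mathfrak{n}_{X}=\mathfrak{n}_{Y}=0$, and $d\sigma\colon\mathfrak{g}_{X}\to\mathfrak{g}_{Y}$ is an isomorphism restricting to an isomorphism $C_{\mathfrak{g}_{X}}(U_{X})\to C_{\mathfrak{g}_{Y}}(U_{Y})$ with $d\sigma(U_{X})=U_{Y}$.

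For the first assertion, $N_{X}$ finite makes $\operatorname{pr}_{Y}\colon\Omega\to Y$ finite-to-one, so $\rho'$ — and hence $\rho$, $\Phi$ being a bijection of $Y$ — is a finite extension of $m_{Y}$: $\rho_{y}=\tfrac1n\sum_{i=1}^{n}\delta_{x_{i}^{y}}$ for a.e.\ $y$. By Theorem~\ref{joinings202106.158}, $\rho_{y}$ is $C^{\rho}$-invariant with $\rho_{y}(C^{\rho}x_{i}^{y})=1/n$; since $\rho_{y}$ is finitely supported, $C^{\rho}$ permutes its atoms and so acts on $X=G_{X}/\Gamma_{X}$ with finite orbits, which forces the compact group $C^{\rho}$ to be finite (each stabiliser $C^{\rho}\cap g\Gamma_{X}g^{-1}$ being finite). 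As $C^{\rho}$ is connected by its construction in the proof of Theorem~\ref{joinings202106.158}, we conclude $C^{\rho}=\{e\}$.

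For the second assertion, $\beta|_{C_{G_{Y}}(u_{Y})}\colon C_{G_{Y}}(u_{Y})\to C_{G_{X}}(u_{X})$ is a continuous homomorphism (Theorem~\ref{joinings202106.159}(1)), so $d\beta$ is a Lie algebra homomorphism $C_{\mathfrak{g}_{Y}}(U_{Y})\to C_{\mathfrak{g}_{X}}(U_{X})$; and $S_{u_{Y}^{t}}=\id$ (Theorem~\ref{joinings202106.159}(2)) gives $\beta(u_{Y}^{t})=e$, i.e.\ $d\beta(U_{Y})=0$, so $d\beta$ descends to a Lie algebra homomorphism $\overline{d\beta}\colon V_{C_{Y}}^{\perp}\to V_{C_{X}}^{\perp}$ under the identifications $V_{C}^{\perp}\cong C_{\mathfrak{g}}(U)/\mathbf{R}U$. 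To see $\overline{d\beta}$ is injective: commuting $S'_{c}$ past $u_{X}^{t}\times u_{Y}^{t}$ and comparing $Y$-components forces $s_{c}(u_{Y}^{t}y)=s_{c}(y)$, and since $m_{Y}$ is $u_{Y}^{t}$-ergodic (being a factor of the ergodic $\rho'$), $s_{c}$ is a.e.\ a constant $s_{c}$. Reading the relation ``$S'_{c}$ preserves $\Omega$'' through the quotient by $N_{X}\times N_{Y}$, where by Goursat the two coordinates are linked by $\sigma$, then yields $\sigma(\beta(c))\equiv u_{Y}^{s_{c}}c\pmod{N_{Y}}$, which by continuity near $c=e$ reads $\sigma(\beta(c))=c\,u_{Y}^{s_{c}}$. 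Differentiating at $c=e$ gives $d\sigma(d\beta(v))=v+\dot s(v)U_{Y}$, i.e.\ $d\beta(v)=d\sigma^{-1}(v)+\dot s(v)U_{X}$ for $v\in C_{\mathfrak{g}_{Y}}(U_{Y})$ (using $d\sigma^{-1}(U_{Y})=U_{X}$); hence on the quotient by $\mathbf{R}U$ the map $\overline{d\beta}$ coincides with the isomorphism induced by $d\sigma^{-1}\colon C_{\mathfrak{g}_{Y}}(U_{Y})\xrightarrow{\sim}C_{\mathfrak{g}_{X}}(U_{X})$, and in particular $d\beta|_{V_{C}^{\perp}}\colon V_{C_{Y}}^{\perp}\to V_{C_{X}}^{\perp}$ is an injective Lie algebra homomorphism. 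The step I expect to be the main obstacle is precisely the extraction of the identity $\sigma(\beta(c))\equiv u_{Y}^{s_{c}}c\pmod{N_{Y}}$ from the measure-theoretic statement that $S'_{c}$ preserves $\rho'$: doing this carefully requires passing to the quotient $\Omega/(N_{X}\times N_{Y})$, tracking the finite groups $N_{X},N_{Y}$ and the stabiliser lattice $\Lambda=H\cap g_{0}^{-1}(\Gamma_{X}\times\Gamma_{Y})g_{0}$, and using ergodicity of the flow to absorb the discrete defects; the proof that $N_{Y}$ is finite, where irreducibility of $\Gamma_{Y}$ enters, is the other delicate point.
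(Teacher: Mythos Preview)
Your strategy---invoke Theorem~\ref{joinings202106.160} to get a measurable coboundary, untwist the $Y$--factor by $\Phi(y)=u_Y^{h(y)}y$, and then apply Ratner's theorem and a Goursat analysis to the resulting honest unipotent joining $\rho'$---is a legitimate alternative to the paper's route, and most of the steps you outline are sound. The paper, by contrast, does \emph{not} feed Theorem~\ref{joinings202106.160} back in: it builds a new $(u_X^t\times u_Y^t)$--invariant measure $\widetilde\rho$ directly as the limit of the $a$--renormalized maps $\Psi_{k,p}(y)=a_X^{\lambda_k}\overline\psi_p(a_Y^{-\lambda_k}y)$ (Proposition~\ref{joinings202106.140}, using Theorem~\ref{joinings202106.122} for the $\overline u_Y$--direction), applies Ratner to $\widetilde\rho$, and reads off $C^\rho=\{e\}$ together with the surjection $\Phi:G_Y\to G_X$. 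For injectivity of $d\beta$ the paper's argument is also more direct than your Goursat computation: from the explicit transformation law of the $\Psi_k^\ast$ under $\exp(v)$ one sees that if $\beta(\exp v)=e$ for $v\in V_{C_Y}^\perp$, then $\widetilde\rho$ is invariant under a map of the form $(\text{id}\times g)$ with $g$ generating an unbounded subgroup; Moore's ergodicity together with Lemma~\ref{joinings202012.3} (with the roles of $X$ and $Y$ reversed) then forces $\widetilde\rho$ to be a product, contradicting nontriviality. Your approach buys a concrete identification $d\beta\equiv d\sigma^{-1}\ (\bmod\ \mathbf R U)$, hence even bijectivity, at the cost of the Goursat bookkeeping and the extraction step you flag; the paper's approach avoids that extraction entirely but only yields injectivity.

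There is one genuine gap in your argument for $C^\rho=\{e\}$. You correctly deduce that $C^\rho$ must be \emph{finite} (finite orbit and discrete stabilizer), but then conclude triviality by asserting ``$C^\rho$ is connected by its construction in the proof of Theorem~\ref{joinings202106.158}''. That proof defines $C^\rho$ only as the stabilizer of a Haar measure on $C_G(u)$ and shows it is compact; connectedness is never established, so this step does not go through as written. In the paper the triviality of $C^\rho$ is instead read off from Ratner's description of the lifted (and $C^\rho$--averaged) measure on $X\times Y$, which is the point at which the algebraic structure rules out any residual finite symmetry. You can repair your argument along the same lines---lift $\rho'$ to $X\times Y$, observe it is $(C^\rho\times e)$--invariant, and compare with the Ratner normal form---rather than appealing to connectedness.
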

\begin{rem}
   Theorem \ref{joinings202106.160} and \ref{joinings202106.161} provide criteria for the disjointness of $u_{X}^{t}$ and $\tilde{u}_{Y}^{t}$. However, they require  that the functions $\tau_{Y}(cy)$ and $\tau_{Y}(y)$ are $L^{1}$-cohomologous for all $c\in C_{G_{Y}}(u_{Y})$ with positive weight (Theorem \ref{joinings202106.159} (1) indicates that they are always measurably cohomologous whenever $u_{X}^{t}$ and $\tilde{u}_{Y}^{t}$ are not disjoint). This condition seems in general is not easy to verify.
\end{rem}

On the other hand,  when the time-changes happen on quotients $X$ of Lorentz groups, we no longer have Theorem \ref{joinings202106.158}, because of the lack of H-property. However, if there exists a joining $\rho$ as in (\ref{joinings202106.162}),  we can follow the same idea as in Theorem \ref{joinings202106.159}  and obtain the rigidity in certain situations:
\begin{thm}\label{joinings202106.163}  Let
 $G_{X}=SO(n_{X},1)$,  $G_{Y}$ be a semisimple Lie group  with finite center and no compact factors  and $\Gamma_{X}\subset G_{X}$,  $\Gamma_{Y}\subset G_{Y}$ be irreducible lattices. Let $U_{Y}\in\mathfrak{g}_{Y}$ be nilpotent.   Let $\tau_{Y}\equiv 1$ and $\tau_{X}\in \mathbf{K}(X)$. Suppose that there exists an ergodic joining  $\rho$ of $\tilde{u}^{t}_{X}$ and  $u^{t}_{Y}$ that is a compact extension of $\nu$, i.e.  satisfies (\ref{joinings202106.162}). Then there exist maps $\alpha:N_{G_{Y}}(u_{Y})\times Y\rightarrow\mathbf{R}$, $\beta:N_{G_{Y}}(u_{Y})\rightarrow C_{G_{Y}}(u_{Y})$ such that
\begin{enumerate}
  \item  Restricted to the centralizer $C_{G_{Y}}(u_{Y})$,  $\alpha:C_{G_{Y}}(u_{Y})\times Y\rightarrow\mathbf{R}$ is a cocycle, $\beta:C_{G_{Y}}(u_{Y})\rightarrow C_{G_{X}}(u_{X})$ is a homomorphism. Besides,
            $\tau_{X}(cx)$ and $\tau_{X}( x)$ are (measurably)  cohomologous for all $c\in C_{G_{X}}(u_{X})$.
  \item For $c\in C_{G_{Y}}(u_{Y})$, the map $\widetilde{S}_{c}:X\times Y\rightarrow X\times Y$ defined by
  \[\widetilde{S}_{c}:(x,y) \mapsto ( u_{X}^{\alpha(c,y)}\beta(c)x,cy)\]
  commutes with $\tilde{u}_{X}^{t}\times u_{Y}^{t}$, and is $\rho$-invariant.  Besides, $\widetilde{S}_{c_{1}c_{2}}=\widetilde{S}_{c_{1}}\circ \widetilde{S}_{c_{2}}$ for any $c_{1},c_{2}\in C_{G_{Y}}(u_{Y})$, and  $\widetilde{S}_{u_{Y}^{t}}=\tilde{u}^{t}_{X}$ for $t\in\mathbf{R}$.
  \item  The map $S_{a_{Y}}:X\times Y\rightarrow X\times Y$ defined by   for $r\in\mathbf{R}$,
   \[\widetilde{S}_{a^{r}_{Y}}:(x,y)\mapsto \left(u_{X}^{\alpha(a^{r}_{Y},y)}\beta(a^{r}_{Y})a^{r}_{X}x,a^{r}_{Y}y\right)\]
   is $\rho$-invariant.  Besides, $\widetilde{S}_{a^{r_{1}+r_{2}}_{Y}}=\widetilde{S}_{a^{r_{1}}_{Y}}\widetilde{S}_{a^{r_{2}}_{Y}}$ for any $r_{1},r_{2}\in \mathbf{R}$, and
  \[\widetilde{S}_{a_{Y}}\circ \widetilde{S}_{c}\circ \widetilde{S}_{a_{Y}^{-1}}=\widetilde{S}_{a_{Y}ca_{Y}^{-1}}\]
  for any $c\in C_{G_{Y}}(u_{Y})$.
\end{enumerate}
 Moreover, for any weight vector $v\in V^{\perp}_{C_{Y}}$ of positive weight, the derivative
 \begin{equation}\label{joinings202106.169}
  d\beta|_{V^{\perp}_{C}}(v)\neq0.
 \end{equation}
\end{thm}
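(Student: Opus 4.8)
\emph{Outline.} The plan is to run Ratner's shearing argument, as in the proofs of Theorems~\ref{joinings202106.158} and~\ref{joinings202106.159}, together with a renormalization by $a_Y$. Compared with Theorem~\ref{joinings202106.159} the roles of $X$ and $Y$ are interchanged, so the effective--mixing hypothesis is now carried by $\tau_X\in\mathbf{K}(X)$. Throughout, write $\tilde u_X^t x=u_X^{\xi(t,x)}x$ with $\xi$ the reparametrization cocycle of $\tau_X$, let $\psi_1,\dots,\psi_n\colon Y\to X$ be the measurable sections from \eqref{joinings202106.162}, and put $H:=u_X^{\mathbf{R}}C^\rho$, which is a closed subgroup of $G_X$ because $C^\rho$ is compact and commutes with $u_X$. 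Invariance of $\rho$ under $\tilde u_X^t\times u_Y^t$ forces the basic equivariance $g(u_Y^t y)=g(y)$ for $\nu$-a.e.\ $y$, where $g(y):=\bigcup_{p}H\psi_p(y)$.

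First I would fix a weight vector $v\in C_{\mathfrak g_Y}(U_Y)$, put $c=\exp(v)$, and compare the $(\tilde u_X^t\times u_Y^t)$-orbits of two $\rho$-generic points $(x_1,y_1),(x_2,y_2)\in\supp\rho$ whose $Y$-coordinates differ, up to a small generic perturbation transverse to $C_{G_Y}(u_Y)$, by the left translation $c$. Under the flow the transverse part of the $Y$-perturbation grows polynomially (since $u_Y^t$ is unipotent and $v$ commutes with $U_Y$), while on $X$ one controls $\tilde u_X^t x_2$ relative to $\tilde u_X^t x_1$: the dominant term $\Ad\!\bigl(u_X^{\xi(t,x_1)}\bigr)\bigl(\log(x_2 x_1^{-1})\bigr)$ has its fastest--growing component in $C_{\mathfrak g_X}(U_X)$ (the polynomial divergence of nearby $u_X^t$-orbits along the centralizer, on the rank--one space $X$), and the discrepancy $\xi(t,x_2)-\xi(t,x_1)$ of the reparametrization cocycles is kept under control precisely because $\tau_X$ has polynomial decay of correlations --- this is where $\tau_X\in\mathbf{K}(X)$ is used, the needed estimates being those of \cite{tang2020new}. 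Matching the $X$- and $Y$-divergences at the critical time scale and passing to a limit along suitable return times then produces, for each $c\in C_{G_Y}(u_Y)$, an element $\beta(c)\in C_{G_X}(u_X)$ and a scalar $\alpha(c,y)$ recording the residual flow--direction ambiguity. The cocycle identity for $\alpha$, the homomorphism property of $\beta$, the identity $\widetilde S_{u_Y^t}=\tilde u_X^t$, and the commutation of $\widetilde S_c$ with $\tilde u_X^t\times u_Y^t$ follow from uniqueness of the shearing limit and the group law in $C_{G_Y}(u_Y)$; requiring $\widetilde S_c$ to commute with the flow also forces $\tau_X(\beta(c)x)$ and $\tau_X(x)$ to be cohomologous along $u_X^t$, which, with the analogous relation for $c\in C^\rho$ coming from the fiberwise shearing, yields the cohomology assertion in part~(1). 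Adjoining $a_Y$ --- which normalizes $\{u_Y^t\}$ and $C_{G_Y}(u_Y)$ and renormalizes $\tilde u_X^t\times u_Y^t$ to $\tilde u_X^{e^{\pm r}t}\times u_Y^{e^{\pm r}t}$ --- and using that $a_X$ plays the matching role on $X=SO(n_X,1)/\Gamma_X$ then produces $\widetilde S_{a_Y^r}$, its $\rho$-invariance (as $\rho$ is the unique ergodic joining of its renormalized flow), and the conjugation relation $\widetilde S_{a_Y}\widetilde S_c\widetilde S_{a_Y}^{-1}=\widetilde S_{a_Y c a_Y^{-1}}$. This gives parts~(1)--(3).

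It then remains to prove \eqref{joinings202106.169}. Let $v\in V^\perp_{C_Y}$ have weight $w>0$ and suppose, for contradiction, that $d\beta|_{V^\perp_C}(v)=0$. Since $\{\exp(sv)\}_{s\in\mathbf{R}}\subset C_{G_Y}(u_Y)$ and $\beta$ is there a continuous homomorphism with vanishing derivative at the identity, $\beta(\exp(sv))\equiv e$, so $\widetilde S_{\exp(sv)}\colon(x,y)\mapsto\bigl(u_X^{\alpha(\exp(sv),y)}x,\exp(sv)y\bigr)$ is $\rho$-invariant and commutes with $\tilde u_X^t\times u_Y^t$; hence $(t,s)\mapsto(\tilde u_X^t\times u_Y^t)\circ\widetilde S_{\exp(sv)}$ is an $\mathbf{R}^2$-action preserving $\rho$ with $Y$-projection $y\mapsto u_Y^t\exp(sv)y$. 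Exactly as for the flow equivariance, the invariance of $\rho$ under $\widetilde S_{\exp(sv)}$ gives $g(\exp(sv)y)=g(y)$ for $\nu$-a.e.\ $y$, so $g$ is invariant under this $\mathbf{R}^2$-action. But $\{u_Y^t\}$, hence the $\mathbf{R}^2$-action $y\mapsto u_Y^t\exp(sv)y$, acts ergodically on $(Y,\nu)$ by Moore's theorem, using irreducibility of $\Gamma_Y$; therefore $g$ is $\nu$-a.e.\ equal to a fixed finite union $E_0$ of $H$-cosets. Then $\psi_p(y)\in g(y)=E_0$ for $\nu$-a.e.\ $y$ and all $p$, so the $X$-marginal of $\rho$ is supported on $E_0$, which has dimension $\dim C^\rho+1$. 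Since $C^\rho$ is a compact subgroup of $C_{G_X}(u_X)$, $\dim C^\rho\le\dim SO(n_X-2)=\dim X-3(n_X-1)$, whence $\dim E_0\le\dim X-2<\dim X$; this contradicts the fact that the $X$-marginal of $\rho$ is $\mu=\tau_X\,dm_X$, which --- $\tau_X$ and $\tau_X^{-1}$ being bounded --- is absolutely continuous with positive density with respect to $m_X$. Hence $d\beta|_{V^\perp_C}(v)\neq0$.

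The hard part will be the quantitative shearing used to construct $\beta$: without the decay--of--correlations bound built into $\tau_X\in\mathbf{K}(X)$ one recovers $\beta(c)$ only up to the flow direction, whereas the dimension count needs $\beta(c)$ itself, and adapting the estimates of \cite{tang2020new} to the present situation (time change on the $SO(n_X,1)$-factor) is where the technical work lies. Granting the output of that step, the derivation of \eqref{joinings202106.169} is comparatively soft; it is worth noting that here --- unlike in the setting of Theorem~\ref{joinings202106.160} --- invariance of $\rho$ along the opposite unipotent $\overline u_Y$ is not available, so \eqref{joinings202106.169} has to be extracted directly from the dimension argument rather than from full bidirectional rigidity.
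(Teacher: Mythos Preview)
Your outline for parts (1)--(3) is in line with the paper: one reruns the shearing machinery of Section~\ref{joinings202103.13} and Section~\ref{joinings202105.24} with the roles of $X$ and $Y$ exchanged (this is Remark~\ref{joinings202106.134}), using $\tau_X\in\mathbf{K}(X)$ for the effective estimates, and the normalizer direction is handled exactly as in Section~\ref{joinings202106.167}.

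Your argument for \eqref{joinings202106.169}, however, has a genuine gap, and in fact proves too much. You deduce $g(u_Y^t y)=g(y)$ from the flow invariance alone, with $g(y)=\bigcup_p H\psi_p(y)$ and $H=u_X^{\mathbf{R}}C^\rho$; then you invoke ergodicity of $u_Y^t$ to conclude $g$ is a.e.\ constant. But this step never uses the hypothesis $\beta(\exp(sv))=e$: if valid, it would force the $X$-marginal of \emph{every} compact-extension joining onto a finite union of $H$-orbits, which is absurd (algebraic joinings with $\tau_X\equiv1$ give counterexamples). The reason the ergodicity step fails is that $g$ takes values in finite subsets of $H\backslash X$, and since $u_X^{\mathbf{R}}\subset H$ acts ergodically on $(X,m_X)$, $H$-orbits are typically dense and $H\backslash X$ is not a standard Borel space; one cannot apply the ergodic theorem to such a target. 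Concretely, already for $G_X=G_Y$, $\tau_X\equiv1$, and $\rho$ the diagonal joining, $g(y)$ is the $u_X$-orbit of $y$, which is $u_Y$-invariant but certainly not constant.

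The paper's route to \eqref{joinings202106.169} is different and genuinely uses $\beta(c)=e$. From the commutation relation for $\widetilde S_c$ one gets, when $\beta(c)=e$, the identity
\[
\int_0^{\alpha(c,y)}\tau_X(u_X^s x)\,ds \equiv r_c
\]
for some constant $r_c$ and $\rho$-a.e.\ $(x,y)$ (see Remark~\ref{joinings202106.134}, equation~\eqref{joinings202106.148}); equivalently $\alpha(c,y)=\xi(x,r_c)$, so $\widetilde S_c=(\tilde u_X^{r_c}\times c)$ on $\supp\rho$. Composing with $\tilde u_X^{-r_c}\times u_Y^{-r_c}$ shows that $\id_X\times(u_Y^{-r_c}c)$ preserves $\rho$. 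Now Lemma~\ref{joinings202012.3} (with $X$ and $Y$ swapped) and Moore's theorem force $\langle u_Y^{-r_c}c\rangle$ to be relatively compact in $G_Y$; for $c=\exp(v)$ with $v$ a weight vector of positive weight this is impossible, yielding \eqref{joinings202106.169}. The point you are missing is precisely this reduction of $\alpha(c,\cdot)$ to the reparametrization cocycle $\xi(\cdot,r_c)$, which converts the $\widetilde S_c$-invariance into a genuine $(\id\times g)$-invariance to which Lemma~\ref{joinings202012.3} applies.
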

\begin{rem} In other words,  (\ref{joinings202106.169}) asserts that $d\beta$ is injective on the nilpotent part of $V^{\perp}_{C_{Y}}$.
One direct consequence of (\ref{joinings202106.169}) is that $C_{\mathfrak{g}_{Y}}(U_{Y})$ (under the assumptions of Theorem  \ref{joinings202106.163}) does not contain any weight vector of   weight $\neq0, 2$ (see Lemma \ref{joinings202106.170}).
\end{rem}

 In particular, recall that \cite{ratner1987rigid} showed that when  $G_{X}=SO(2,1)$, any time-change $\tilde{u}^{t}_{X}$ has $H$-property. It meets all the requirements of  Theorem \ref{joinings202106.163}. Then combining \cite{ratner1987rigid},  we obtain a slight extension of \cite{dong2020rigidity}:
 \begin{thm}\label{joinings202106.173}
 Let  the notation and assumptions be as in Theorem \ref{joinings202106.163}. Let $n_{X}=2$ and $\tau_{X}\in \mathbf{K}(X)\cap C^{1}(X)$.  If the Lie algebra $\mathfrak{g}_{Y}\ncong\mathfrak{sl}_{2}$, then $\tilde{u}_{X}^{t}$ and $u_{Y}^{t}$ are disjoint.
 \end{thm}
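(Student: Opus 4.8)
The plan is to argue by contradiction. Suppose $\tilde{u}_{X}^{t}$ and $u_{Y}^{t}$ are \emph{not} disjoint, so there is a nontrivial ergodic joining $\rho$ of $\tilde{u}_{X}^{t}$ and $u_{Y}^{t}$; I aim to show this forces $\mathfrak{g}_{Y}\cong\mathfrak{sl}_{2}$, contradicting the hypothesis.

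First I would bring $\rho$ into the form needed to apply Theorem~\ref{joinings202106.163}. Since $n_{X}=2$ and $\tau_{X}\in\mathbf{K}(X)\cap C^{1}(X)$, the flow $\tilde{u}_{X}^{t}$ is a $C^{1}$ time-change of the classical horocycle flow on $X$, so by \cite{ratner1987rigid} it has the $H$-property; as recalled just before the statement, this implies that every nontrivial ergodic joining of $\tilde{u}_{X}^{t}$ with an ergodic measure-preserving system is a compact (in fact finite) extension of that system, hence $\rho$ satisfies \eqref{joinings202106.162}. Thus $\rho$ meets the hypotheses of Theorem~\ref{joinings202106.163}, which then supplies the maps $\alpha,\beta$, the homomorphism $\beta\colon C_{G_{Y}}(u_{Y})\to C_{G_{X}}(u_{X})$, the decompositions $C_{\mathfrak{g}_{X}}(U_{X})=\mathbf{R}U_{X}\oplus V^{\perp}_{C_{X}}$ and $C_{\mathfrak{g}_{Y}}(U_{Y})=\mathbf{R}U_{Y}\oplus V^{\perp}_{C_{Y}}$, and---crucially---the conclusion \eqref{joinings202106.169}, i.e. (via the remark following Theorem~\ref{joinings202106.163} and Lemma~\ref{joinings202106.170}) the injectivity of $d\beta|_{V^{\perp}_{C}}$ on the nilpotent part of $V^{\perp}_{C_{Y}}$.

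Next I would exploit that $G_{X}=SO(2,1)$. Here $\mathfrak{g}_{X}\cong\mathfrak{sl}_{2}$ and $U_{X}$ is a nonzero nilpotent, so $C_{\mathfrak{g}_{X}}(U_{X})=\mathbf{R}U_{X}$ and hence $V^{\perp}_{C_{X}}=\{0\}$. Consequently $d\beta|_{V^{\perp}_{C}}\colon V^{\perp}_{C_{Y}}\to V^{\perp}_{C_{X}}=\{0\}$ is the zero map, so by \eqref{joinings202106.169}/Lemma~\ref{joinings202106.170} the nilpotent part of $V^{\perp}_{C_{Y}}$ is $\{0\}$; in particular $C_{\mathfrak{g}_{Y}}(U_{Y})$ contains no weight vector of weight $\neq 0,2$ and its weight-$2$ part is exactly $\mathbf{R}U_{Y}$. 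Decomposing $\mathfrak{g}_{Y}$ into isotypic components under the $\ad$-action of the $\mathfrak{sl}_{2}$-triple $\langle U_{Y},A_{Y},\overline{U}_{Y}\rangle$, this means only the trivial module and the adjoint ($3$-dimensional) module occur, the latter with multiplicity one; thus $\mathfrak{g}_{Y}=\Span\langle U_{Y},A_{Y},\overline{U}_{Y}\rangle\oplus\mathfrak{z}$, where $\mathfrak{z}$ is the centralizer of the triple (the sum of the trivial submodules). Since the two summands commute and each is a subalgebra, $\mathfrak{z}$ is an ideal of $\mathfrak{g}_{Y}$, hence semisimple and, as $G_{Y}$ has no compact factors, without compact factors. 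If $\mathfrak{z}\neq\{0\}$ it would contain a nonzero nilpotent element, which lies in $C_{\mathfrak{g}_{Y}}(U_{Y})$ but is a weight-$0$ vector, hence a nonzero nilpotent vector of $V^{\perp}_{C_{Y}}$---a contradiction. Therefore $\mathfrak{z}=\{0\}$ and $\mathfrak{g}_{Y}\cong\mathfrak{sl}_{2}$, contrary to hypothesis; so no nontrivial ergodic joining can exist and $\tilde{u}_{X}^{t}$, $u_{Y}^{t}$ are disjoint.

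The hard part, I expect, is the very first step: making sure that the $H$-property input of \cite{ratner1987rigid} really yields a joining of the precise shape \eqref{joinings202106.162}, with the fibrewise invariance group sitting inside $C_{G_{X}}(u_{X})$, so that Theorem~\ref{joinings202106.163} applies verbatim---this is what licenses the whole reduction. A secondary delicate point, and the source of the improvement over \cite{dong2020rigidity} (from ``$\mathfrak{g}_{Y}$ carries a weight vector of weight $\geq 1$ outside the triple'' to ``$\mathfrak{g}_{Y}\ncong\mathfrak{sl}_{2}$''), is that the argument uses the \emph{full} strength of \eqref{joinings202106.169}---control of the entire nilpotent part of $V^{\perp}_{C_{Y}}$, including the weight-$0$ nilpotents coming from the residual centralizer $\mathfrak{z}$---rather than merely its positive-weight vectors; confirming that Lemma~\ref{joinings202106.170} really delivers this is where some care is needed.
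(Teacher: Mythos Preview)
Your argument is correct and, for the last step, actually more economical than the paper's. Both you and the paper first reduce to the setting of Theorem~\ref{joinings202106.163} via the $H$-property of $\tilde{u}_X^t$ from \cite{ratner1987rigid}; since $n_X=2$ the centralizer $C_{G_X}(u_X)$ is one-dimensional, so its only compact subgroup is trivial and the joining is automatically a \emph{finite} extension of $\nu$, which disposes of your first worry. From $V^{\perp}_{C_X}=\{0\}$ one has $\beta\equiv e$, and the Moore-ergodicity argument behind \eqref{joinings202106.169} (via \eqref{joinings202106.156} and \eqref{joinings202106.171}) forces $\langle u_Y^{-r_c}\exp(v)\rangle$ to be relatively compact for every $v\in V^{\perp}_{C_Y}$; for nilpotent $v$ (of any weight, including $0$) this immediately gives $v=0$, since $-r_cU_Y+v$ is a sum of commuting nilpotents. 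That is the content you need, and your remaining structural step---$\mathfrak z$ semisimple without compact factors contains a nonzero nilpotent---finishes. The paper instead pushes further: it builds the limits $\Psi^{\ast}$ (Proposition~\ref{joinings202106.174}), invokes Ratner's theorem to obtain the homomorphism $\Phi$, and argues that any $v\in V^{\perp}_{C_Y}$ lands in the \emph{normal} subgroup $\ker\Phi$, yielding the contradiction with irreducibility of $\Gamma_Y$ and absence of compact factors. Your route buys brevity and avoids the $\Psi_k$-limit machinery and Ratner altogether for the disjointness conclusion; the paper's route, while longer here, is what simultaneously delivers the cohomological statement (Proposition~6.5) in the same section. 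One small correction of attribution: the weight-$0$ nilpotent case is handled by the Moore-ergodicity argument underlying \eqref{joinings202106.169}/\eqref{joinings202106.171}, not by Lemma~\ref{joinings202106.170}, which only shows $d\beta(v)=0$ for weights $\neq 0,2$.
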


\subsection{Structure of the paper}
In Section \ref{joinings202106.164} we recall basic definitions, including  some basic material on the Lie algebra $\mathfrak{so}(n,1)$ (in Section \ref{joinings202105.23}, Section \ref{joinings202104.3}), as well as time-changes (Section \ref{joinings202103.14}) and
coboundaries (Section \ref{joinings202106.165}). In Section \ref{joinings202105.3}, we make use of the H-property of unipotent flows and deduce  Theorem \ref{joinings202106.158}. This requires studying the shearing property of $u_{X}^{t}$ for nearby points of the form $(x,y)$ and $(gx,y)$.  In Section \ref{joinings202103.13} we state and prove
a number of results which will be used as tools to prove the extra invariance of joinings $\rho$ (Theorem \ref{joinings202106.159}), in particular Proposition \ref{joinings202104.57} which pulls  the shearing phenomenon on the homogeneous space $X$ back to the Lie group $G_{X}$. We also give a quantitative estimate of the difference between two nearby points in terms of the length of the shearing (Lemma \ref{joinings202104.35}). In Section \ref{joinings202106.166}, we present the proof of Theorem \ref{joinings202106.159}  (Section \ref{joinings202105.24} Section \ref{joinings202106.167}) and a technical result for the opposite unipotent direction (Theorem \ref{joinings202106.122}). The latter result also requires studying the H-property of unipotent flows. Finally, using the results we got and Ratner's theorem,  we present in Section \ref{joinings202106.168} the proof of Theorem \ref{joinings202106.160}, \ref{joinings202106.161} (in Section \ref{joinings202106.146}), \ref{joinings202106.163} and \ref{joinings202106.173} (in Section \ref{joinings202106.172}).

 \noindent
 \textbf{Acknowledgements.}
 The original motivation of this paper came from the questions that Adam Kanigowski asked during the conversations. I am thankful to him for asking the questions.
 The paper was written under the guidance of my advisor David Fisher for my PhD thesis, and I am sincerely grateful for his help. I would also like to thank    Livio Flaminio for helpful discussions. 

\section{Preliminaries}\label{joinings202106.164}
\subsection{Definitions}\label{joinings202105.23}
Let $G\coloneqq SO(n,1)$ be the set of $g\in SL_{n+1}(\mathbf{R})$ satisfying
    \[ \left[
            \begin{array}{ccc}
              I_{n} &    \\
                 &   -1  \\
            \end{array}
          \right]g^{T}\left[
            \begin{array}{ccc}
              I_{n} &    \\
                 &   -1 \\
            \end{array}
          \right]=g^{-1} \]
           where $I_{n}$ is the $n\times n$ identity matrix. The corresponding Lie algebra $\mathfrak{g}$ then consists of $v\in \mathfrak{sl}_{n+1}(\mathbf{R})$ satisfying
           \[\left[
            \begin{array}{ccc}
              I_{n} &    \\
                 &   -1  \\
            \end{array}
          \right]v^{T}\left[
            \begin{array}{ccc}
              I_{n} &    \\
                 &   -1 \\
            \end{array}
          \right]=-v.\]
    Then the \textit{Cartan decomposition}\index{Cartan decomposition} can be given by
    \[\mathfrak{g}=\mathfrak{l}\oplus\mathfrak{p}=\left\{\left[
            \begin{array}{ccc}
              \mathbf{l} &    \\
                 &   0  \\
            \end{array}
          \right] :\mathbf{l}\in \mathfrak{so}(n) \right\}\oplus\left\{ \left[
            \begin{array}{ccc}
              0  &  \mathbf{p}  \\
               \mathbf{p}^{T}  &  0    \\
            \end{array}
          \right]: \mathbf{p}\in\mathbf{R}^{n}\right\}.\]
          Let $E_{ij}$ be the $(n\times n)$-matrix with $1$ in the $(i,j)$-entry and $0$ otherwise. Let $e_{k}\in\mathbf{R}^{n}$ be the $k$-th standard basis vector. Set
          \[Y_{k}\coloneqq\left[
            \begin{array}{ccc}
              0 & e_{k}    \\
               e_{k}^{T}  &   0 \\
            \end{array}
          \right],\ \ \ \Theta_{ij}\coloneqq\left[
            \begin{array}{ccc}
              E_{ji}-E_{ij} &  0    \\
               0  &   0 \\
            \end{array}
          \right].\]
  Then $Y_{i},\Theta_{ij}$ form a basis of $\mathfrak{g}=\mathfrak{so}(n,1)$.

 Let $\mathfrak{a}=\mathbf{R}Y_{n}\subset\mathfrak{p}$ be a maximal abelian subspace of $\mathfrak{p}$. Then the root space decomposition of $\mathfrak{g}$ is given by
\begin{equation}\label{time change187}
  \mathfrak{g}=\mathfrak{g}_{-1}\oplus\mathfrak{m}\oplus\mathfrak{a}\oplus\mathfrak{g}_{1}.
\end{equation}
Denote by $\mathfrak{n}\coloneqq\mathfrak{g}_{1}$ the sum of the positive root spaces.
Let $\rho$ be the half sum of positive roots. We also adopt the convention by identifying $\mathfrak{a}^{\ast}$ with $\mathbf{C}$ via $\lambda\mapsto\lambda(Y_{n})$. Thus, $\rho=\rho(Y_{n})=(n-1)/2$.

 Let  $\Gamma\subset G$ be a lattice,  $X\coloneqq   G/\Gamma$, $\mu$ be the Haar probability measure on $X$. Fix a nilpotent $U\in  \mathfrak{g}_{-1}$. On $G/\Gamma$, denote by
  \begin{itemize}
    \item  $\phi^{Y_{n}}_{t}(x)\coloneqq\exp(tY_{n})x=a^{t}x$   a geodesic flow,
    \item $\phi^{U}_{t}(x)\coloneqq\exp(tU)x=u^{t}x$ a unipotent flow.
  \end{itemize}
 It is worth noting that
\[[Y_{n},U]=-U.\]
Then   there exists $\overline{U}\in\mathfrak{g}$ such that $\{U,Y_{n},\overline{U}\}$ is an \textit{$\mathfrak{sl}_{2}$-triple}. Denote
\[\overline{u}^{t}\coloneqq\exp(t\overline{U}).\]
For convenience, we   choose
\begin{equation}\label{time change211}
  U\coloneqq\left[
            \begin{array}{ccc}
              0 & e_{n-1}   & e_{n-1}    \\
              -e_{n-1}^{T} & 0   & 0   \\
               e_{n-1}^{T}  & 0 &   0 \\
            \end{array}
          \right],\ \ \ \overline{U}\coloneqq\left[
            \begin{array}{ccc}
              0 & -e_{n-1}   & e_{n-1}    \\
              e_{n-1}^{T} & 0   & 0   \\
               e_{n-1}^{T}  & 0 &   0 \\
            \end{array}
          \right].
\end{equation}
Then $\langle u^{t},a^{t},\overline{u}^{t}\rangle$ generates $SO(2,1)\subset SO(n,1)$.

\subsection{$\mathfrak{sl}_{2}$-weight decomposition}\label{joinings202104.3}
First,     consider an arbitrary Lie algebra $\mathfrak{g}$ as a $\mathfrak{sl}_{2}$-representation via the adjoint map (after identifying an image of $\mathfrak{sl}_{2}$ by \textit{Jacobson–Morozov theorem}\index{Jacobson–Morozov theorem}), then by  the complete reducibility of $\mathfrak{sl}_{2}$, there is a    decomposition of $\mathfrak{sl}_{2}$-representations
\begin{equation}\label{joinings202103.2}
  \mathfrak{g}=\mathfrak{sl}_{2}\oplus V^{\perp}
\end{equation}
where $V^{\perp}\subset\mathfrak{g}$ is the sum of $\mathfrak{sl}_{2}$-irreducible representations other than   $\mathfrak{sl}_{2}$. In particular, for $\mathfrak{g}=\mathfrak{so}(n,1)$, we have
\begin{equation}\label{joinings202105.14}
 V^{\perp}=\sum_{i} V_{i}^{0}\oplus\sum_{j} V_{j}^{2}
\end{equation}
where $V_{i}^{0}$ and $V_{j}^{2}$ are $\mathfrak{sl}_{2}$-irreducible representations  with highest weights $0$ and $2$. More precisely, we have

\begin{lem}\label{time change179}
   By the weight decomposition, an irreducible $\mathfrak{sl}_{2}$-representation $V^{\varsigma}$ is the direct sum of weight spaces, each of which is $1$ dimensional. More precisely, there exists a basis $v_{0},\ldots,v_{\varsigma}\in V^{\varsigma}$ such that
   \[U.v_{i}=(i+1)v_{i+1},\ \ \ Y_{n}.v_{i}= \frac{\varsigma-2i}{2}v_{i}.\]
\end{lem}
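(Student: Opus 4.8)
The plan is to verify this directly from the representation theory of $\mathfrak{sl}_2$, since the statement is simply the classical structure theorem for finite-dimensional irreducible $\mathfrak{sl}_2$-representations, specialized to the notation at hand. Recall that we have fixed the $\mathfrak{sl}_2$-triple $\{U, Y_n, \overline{U}\}$ with $[Y_n, U] = -U$; up to the standard rescaling this means $Y_n$ plays the role of (half) the semisimple element $h$, $U$ the role of the lowering/raising operator, and $\overline{U}$ the opposite one. So I would begin by recalling that by complete reducibility and the classification of irreducibles, an irreducible $\mathfrak{sl}_2$-module of highest weight $\varsigma$ (a nonnegative integer, automatically so because $\mathfrak{g}$ is a real semisimple Lie algebra and the adjoint action integrates to the group) has dimension $\varsigma + 1$, decomposes as a direct sum of one-dimensional weight spaces for the Cartan element, and these weights are $\varsigma, \varsigma - 2, \ldots, -\varsigma$.

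Next I would pin down the precise normalization so that the eigenvalue of $Y_n$ on the top weight vector is $\varsigma/2$ rather than $\varsigma$. This is forced by the relation $[Y_n, U] = -U$: comparing with the model triple $(e, h, f)$ where $[h, e] = 2e$, we see $Y_n$ corresponds to $-h/2$, equivalently $\ad(Y_n) = -\tfrac12 \ad(h)$ on the representation. Hence if $w$ is a vector of $h$-weight $\lambda$, i.e. $h.w = \lambda w$, then $Y_n.w = -\tfrac{\lambda}{2} w$. I would therefore start from the standard basis $w_0, \ldots, w_\varsigma$ of the irreducible of highest $h$-weight $\varsigma$, normalized so that $f.w_i = w_{i+1}$ (with $w_{\varsigma+1} = 0$) and $h.w_i = (\varsigma - 2i) w_i$, and then set $v_i \coloneqq w_i$ but tracking that the raising/lowering roles of $U$ versus $\overline{U}$ match the sign in $[Y_n, U] = -U$. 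Since $[Y_n, U] = -U$ means $U$ lowers the $Y_n$-eigenvalue — wait, $U.v$ has $Y_n$-eigenvalue one less in the $h$-normalization, i.e. $[Y_n, U] = -U$ gives $Y_n.(U.v) = U.(Y_n.v) + [Y_n,U].v = U.(Y_n.v) - U.v$, so if $Y_n.v = \tfrac{\varsigma - 2i}{2} v$ then $Y_n.(U.v) = \tfrac{\varsigma - 2i - 2}{2}(U.v) = \tfrac{\varsigma - 2(i+1)}{2}(U.v)$, consistent with $U.v_i$ being a multiple of $v_{i+1}$. Then I would fix the scaling of the $v_i$ recursively by $v_0$ a highest weight vector and $v_{i+1} \coloneqq \tfrac{1}{i+1} U.v_i$, which gives exactly $U.v_i = (i+1) v_{i+1}$; the only thing to check is that $U.v_\varsigma = 0$, which follows because $v_\varsigma$ has $Y_n$-weight $-\varsigma/2$ and $U.v_\varsigma$ would have weight $-\varsigma/2 - 1 < -\varsigma/2$, outside the weight range, hence zero.

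I do not expect any real obstacle here — the content is entirely standard $\mathfrak{sl}_2$-theory — so the only thing requiring care is bookkeeping: (i) confirming $\varsigma$ is a nonnegative integer (use that $\mathfrak{g} = \mathfrak{so}(n,1)$ is the Lie algebra of a linear group, so all $\ad$-weights of the semisimple element are integers in the standard normalization, and one-dimensionality of each weight space follows from irreducibility plus the $\mathfrak{sl}_2$-string structure), (ii) getting the factor of $2$ right in $Y_n.v_i = \tfrac{\varsigma - 2i}{2} v_i$ from the relation $[Y_n, U] = -U$, and (iii) checking the normalization constant $i+1$ in $U.v_i = (i+1) v_{i+1}$ is consistent at both ends of the string (it is, by the recursive definition $v_{i+1} = \tfrac{1}{i+1} U.v_i$ together with $U.v_\varsigma = 0$). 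The proof is then a couple of lines: invoke the classification, choose the normalized basis recursively, and verify the two displayed formulas.
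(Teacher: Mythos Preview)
Your proposal is correct. The paper states this lemma without proof, treating it as the standard structure theorem for finite-dimensional irreducible $\mathfrak{sl}_2$-representations; your verification (recursively defining $v_{i+1}\coloneqq\tfrac{1}{i+1}U.v_i$ from a highest $Y_n$-weight vector $v_0$, checking the $Y_n$-eigenvalues via $[Y_n,U]=-U$, and noting $U.v_\varsigma=0$ by the weight bound) is exactly the routine argument one would supply. The brief detour through the identification $Y_n=-h/2$ is slightly tangled, but you self-correct and the final construction is clean; the only small point you might add for completeness is that the recursively defined $v_i$ are nonzero for $0\le i\le\varsigma$, which is immediate from irreducibility and the dimension count.
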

Thus, if $V^{\varsigma}$ is an irreducible representation of  $\mathfrak{sl}_{2}$ with the highest weight $\varsigma\leq 2$, then for any $v=b_{0}v_{0}+\cdots+b_{\varsigma}v_{\varsigma}\in V^{\varsigma}$, we have
 \begin{align}
\exp(tU).v=&\sum_{j=0}^{\varsigma}\sum_{i=0}^{j}b_{i}\binom{j}{i}t^{j-i}v_{j},  \; \label{dynamical systems5}\\
 \exp(\omega Y_{n}).v=&  \sum_{j=0}^{\varsigma} b_{j}e^{ (\varsigma-2j)\omega /2} v_{j}. \;  \label{joinings202106.129}
\end{align}

For elements $g\in\exp\mathfrak{g}$, we decompose
\[g=h\exp(v),\ \ \ h\in SO_{0}(2,1),\ \ \ v\in V^{\perp}.\]
Moreover, it is convenient to think about $h\in SO_{0}(2,1)$ as a $(2\times 2)$-matrix with determinant $1$. Thus, consider the two-to-one   isogeny $\iota:SL_{2}(\mathbf{R})\rightarrow SO(2,1)\subset G$ induced by $\mathfrak{sl}_{2}(\mathbf{R})\rightarrow\Span\{U,Y_{n},\bar{U}\}\subset\mathfrak{g}$.    In the following, for $h\in SO_{0}(2,1)$ and $v$ in an irreducible representation, we write
\[h=\left[
            \begin{array}{ccc}
              a &   b  \\
            c &   d\\
            \end{array}
          \right],\ \ \ v=b_{0}v_{0}+\cdots+b_{\varsigma}v_{\varsigma}\]
where $v_{i}$ are weight vectors in $\mathfrak{g}$ of weight $i$. Notice that $h$ should more appropriately be written as $\iota(h)$. Besides, for notational simplicity, we shall usually assume that $v\in V^{\perp}$ lies in a single irreducible representation, since the proofs will mostly focus on the $\Ad u^{t}$-action and so the general case will be identical but tedious to write down.

For  the centralizer $ C_{\mathfrak{g}}(U)$ (for an arbitrary Lie algebra $\mathfrak{g}$), we  have the corresponding decomposition:
\begin{equation}\label{time change184}
  C_{\mathfrak{g}}(U)=\mathbf{R}U\oplus V^{\perp}_{C}
\end{equation}
where  $V^{\perp}_{C}\subset V^{\perp}$ consists of  highest weight vectors other than $U$. In particular, for $\mathfrak{g}=\mathfrak{so}(n,1)$, under the setting (\ref{time change211}), one may calculate
\begin{align}
C_{\mathfrak{g}}(U)=&\mathbf{R}U\oplus V^{\perp}_{C}=\mathbf{R}U\oplus \mathfrak{k}^{\perp}_{C}\oplus \mathfrak{n}^{\perp}_{C} \;\nonumber\\
=&  \mathbf{R}U\oplus\left[
            \begin{array}{ccc}
             \mathfrak{so}(n-2) &    \\
                 &   0  \\
            \end{array}
          \right] \oplus\left\{ \left[
            \begin{array}{cccc}
              0&   0  &  \mathbf{u} &  \mathbf{u}  \\
                  0&   0  &  0 & 0  \\
             -\mathbf{u}^{T} &   0 &  0 &   0\\
               \mathbf{u}^{T}  &0 &0 &  0    \\
            \end{array}
          \right]:  \mathbf{u}\in\mathbf{R}^{n-2}\right\}. \;  \label{joinings202103.12}
\end{align}
Note that  $\mathfrak{k}^{\perp}_{C}$ consists of semisimple elements, and $\mathfrak{n}^{\perp}_{C}$ consists of nilpotent elements, and they satisfy $[\mathfrak{k}^{\perp}_{C},\mathfrak{n}^{\perp}_{C}]=\mathfrak{n}^{\perp}_{C}$.

\subsection{Time-changes}\label{joinings202103.14}
 Let $Y$ be a homogeneous space and $U$ be a nilpotent. Let $\phi^{U,\tau}_{t}$ be a \textit{time change}\index{time change} for the unipotent flow $\phi^{U}_{t}$, $t\in\mathbf{R}$. Thus, we assume that
          \begin{itemize}
            \item  $\tau:Y\rightarrow\mathbf{R}^{+}$ is a integrable nonnegative function on $Y$ satisfying
          \[\int_{Y}\tau(y)d\mu(y)=1,\]
          \item  $\xi:Y\times\mathbf{R}\rightarrow\mathbf{R}$ is the cocycle determined by
             \[t=\int_{0}^{\xi(y,t)}\tau(u^{s}y)ds=\int_{0}^{\xi(y,t)}\tau(\phi_{t}^{U} y)ds.\]
          \item $\phi^{U,\tau}_{t}:Y\rightarrow Y$ is given by the relation
          \[\phi^{U,\tau}_{t}(y)\coloneqq u^{\xi(y,t)}y.\]
          \end{itemize}
          \begin{rem}\label{joinings202104.59}
  Note that $\phi^{U,1}_{t}=\phi^{U}_{t}$. Besides, one can check that  $\phi^{U,\tau}_{t}$ preserves the probability measure on $Y$ defined by  $d\nu\coloneqq\tau dm_{Y}$ where $m_{Y}$ is the Lebesgue measure on $Y$. On the other hand, if  $\tau$ is smooth, then the time-change $\phi_{t}^{U,\tau}$ is the flow on $Y$ generated by the smooth vector field $U_{\tau}\coloneqq U/\tau$.
\end{rem}
          In practice, we define $z:Y\times\mathbf{R}\rightarrow\mathbf{R}$ by
          \[z(y,t)=\int_{0}^{t}\tau(u^{s}y)ds.\]
          It follows that
          \begin{equation}\label{joinings202105.9}
            t=z(y,\xi(y,t)),\ \ \ \phi^{U,\tau}_{z(y,t)}(x)=\phi^{U}_{t}(y)= u^{t}y.
          \end{equation}

       Let $\kappa>0$ and $\mathbf{K}_{\kappa}(Y)$ be the collection of all positive integrable functions $\tau$ on $Y$ such that $\tau,\tau^{-1}$ are bounded and satisfies
       \begin{equation}\label{joinings202104.26}
       \left|\int_{Y}\tau(y)\tau(u^{t}y)d\nu(y)-\left(\int_{Y}\tau(y)\nu(y)\right)^{2}\right|\leq D_{\tau}|t|^{-\kappa}
       \end{equation}
       for some $D_{\tau}>0$. Let $\mathbf{K}(Y)=\bigcup_{\kappa>0}\mathbf{K}_{\kappa}(Y)$.  This is the effective mixing property of the unipotent flow $\phi_{t}^{U}$. Note that \cite{kleinbock1999logarithm} (see also \cite{venkatesh2010sparse})  has shown that there is $\kappa>0$ such that
       \[\left|\langle\phi^{U}_{t}(f),g\rangle-\left(\int_{Y}f(y)\nu(y)\right)\left(\int_{Y}g(y)\nu(y)\right)\right|\ll(1+|t|)^{-\kappa}\|f\|_{W^{s}}\|g\|_{W^{s}}\]
       for $f,g\in C^{\infty}(X)$, where $s\geq \dim(K)$ and $W^{s}$ denotes the Sobolev norm on $Y=G/\Gamma$. According to   Lemma 3.1 \cite{ratner1986rigidity}, when $\tau\in\mathbf{K}_{\kappa}(Y)$, we have the effective ergodicity:   there is $K\subset Y$ with $\nu(K)>1-\sigma$ and $t_{K}>0$ such that
         \begin{equation}\label{joinings202105.45}
           |t-z(y,t)|=O(t^{1-\kappa})
         \end{equation}
         for   all $t\geq t_{K}$ and $y\in K$.
    Later on, we shall make use of the effective mixing/ergodicity to study the shearing property of unipotent flows (see Section \ref{joinings202103.13} (\ref{joinings202104.24})).

\subsection{Cohomology}\label{joinings202106.165}
We first introduce the $1$-coboundary of two functions.
\begin{defn}[Cohomology]
   We say that two   functions $\tau_{1},\tau_{2}$ on $Y$ are  \textit{measurable (respectively $L^{2}$, smooth, etc.) cohomologous over the flow $\phi_{t}$}\index{cohomologous} if there exists a measurable (respectively $L^{2}$, smooth, etc.) function  $f$ on $Y$, called the \textit{transfer function}\index{transfer function}, such that
\begin{equation}\label{joinings202106.138}
  \int_{0}^{T}\tau_{1}(\phi_{t}y)-\tau_{2}(\phi_{t}y)dt=f(\phi_{T}y)-f(y).
\end{equation}
\end{defn}

For $i\in\{1,2\}$, let  $(Y_{i},\mathcal{Y}_{i},\nu_{i},\phi_{t}^{(i)})$ be  measure-preserving flows, and let  $\tau_{i}:Y_{i}\rightarrow\mathbf{R}$ be measurable functions on $Y_{i}$. Besides, we extend $\tau_{i}$ to $Y_{1}\times Y_{2}$ by setting
\[\tau_{i}: (y_{1},y_{2})\mapsto\tau_{i}(y_{i}),\ \ \ i=1,2.\]
\begin{defn}[Joint cohomology]
   Let $\rho\in J(\phi_{t}^{(1)},\phi_{t}^{(2)})$ be a joining of $\phi_{t}^{(1)}$ and  $\phi_{t}^{(2)}$. We say that $\tau_{1}$ and $\tau_{2}$ are \textit{jointly cohomologous via $\rho$}\index{jointly cohomologous} if $\tau_{1}$ and $\tau_{2}$ (considered as functions on $Y_{1}\times Y_{2}$) are cohomologous over $\phi_{t}^{(1)}\times\phi_{t}^{(2)}$ on $(Y_{1}\times Y_{2},\rho)$.
   More specifically, if $\tau_{1}$ and $\tau_{2}$ are cohomologous over $\phi_{t}^{(1)}\times\phi_{t}^{(2)}$ with a transfer function $f:Y_{1}\times Y_{2}\rightarrow\mathbf{R}$, then we say that $\tau_{1}$ and  $\tau_{2}$ are \textit{jointly cohomologous via $(\rho,f)$}\index{jointly cohomologous}, and we have
   \begin{equation}\label{rigid reparametrizations43}
 \int_{0}^{T}(\tau_{1}-\tau_{2})(\phi^{(1)}_{t}y_{1}, \phi^{(2)}_{t}y_{2})dt=   f(\phi^{(1)}_{T}y_{1}, \phi^{(2)}_{T}y_{2})-f(y_{1}, y_{2})
   \end{equation}
   for $\rho$-a.e. $(y_{1},y_{2})\in Y_{1}\times Y_{2}$ and all $T\in\mathbf{R}$.
\end{defn}
Let $\mathcal{A}_{1}\coloneqq\{A\times Y_{2}:A\in\mathcal{Y}_{1}\}$,  $\mathcal{A}_{2}\coloneqq\{Y_{1}\times A:A\in\mathcal{Y}_{2}\}$. Then there is a unique family $\{\rho_{y_{1}}^{\mathcal{A}_{1}}:y_{1}\in Y_{1}\}$ of probability measure, called the \textit{conditional measures}\index{conditional measure}, on $Y_{2}$ such that
\begin{equation}\label{rigid reparametrizations42}
  E^{\rho}(g|\mathcal{A}_{1})(y_{1})=\int_{Y_{2}}g(y_{1},y_{2})d\rho_{y_{1}}^{\mathcal{A}_{1}}(y_{2}),\ \ \ \rho^{\mathcal{A}_{1}}_{\phi^{(1)}_{t}y_{1}}=(\phi^{(2)}_{t})_{\ast}\rho^{\mathcal{A}_{1}}_{y_{1}}
\end{equation}
 for   every $g\in L^{1}(Y_{1}\times Y_{2},\rho)$, $t\in\mathbf{R}$,  and $\nu_{1}$-a.e. $y_{1}\in Y_{1}$.
Taking the integration over $\rho_{y_{1}}^{\mathcal{A}_{1}}$, expressions (\ref{rigid reparametrizations43}) and (\ref{rigid reparametrizations42}) show that if the transfer function $f(y_{1},\cdot)\in L^{1}(Y_{2},\rho^{\mathcal{A}_{1}}_{y_{1}})$ for $\nu_{1}$-a.e. $y_{1}\in Y_{1}$, then $\tau_{1}$ and $E^{\rho}(\tau_{2}|\mathcal{A}_{1})$ are cohomologous along $\phi^{(1)}_{t}$ via $E^{\rho}(f|\mathcal{A}_{1})$. We have just proved the following:
\begin{prop}
  Let  $\tau_{i}:Y_{i}\rightarrow\mathbf{R}$ be measurable functions on $Y_{i}$, $i=1,2$. Suppose that $\tau_{1}$ and $\tau_{2}$ are jointly cohomologous via $(\rho,f)$ with $f(y_{1},\cdot)\in L^{1}(Y_{2},\rho^{\mathcal{A}_{1}}_{y_{1}})$ for $\mu_{1}$-a.e. $y_{1}\in Y_{1}$. Then $\tau_{1}$ and $E^{\rho}(\tau_{2}|\mathcal{A}_{1})$ are cohomologous over $\phi^{(1)}_{t}$ via $E^{\rho}(\tau_{2}|\mathcal{A}_{1})$.
\end{prop}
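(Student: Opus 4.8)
The plan is to extract the transfer function $f$ and the joint cohomology identity (\ref{rigid reparametrizations43}) and integrate it against the conditional measures $\rho^{\mathcal{A}_{1}}_{y_{1}}$. First I would fix the versions of the objects: $\tau_{1}$ and $\tau_{2}$ are jointly cohomologous via $(\rho,f)$, so by definition (\ref{rigid reparametrizations43}) holds for $\rho$-a.e.\ $(y_{1},y_{2})$ and all $T$; since $f(y_{1},\cdot)\in L^{1}(Y_{2},\rho^{\mathcal{A}_{1}}_{y_{1}})$ for $\mu_{1}$-a.e.\ $y_{1}$, each side of (\ref{rigid reparametrizations43}) is, for fixed $T$, integrable against $\rho^{\mathcal{A}_{1}}_{y_{1}}$. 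I would then integrate both sides of (\ref{rigid reparametrizations43}) over $y_{2}$ with respect to $\rho^{\mathcal{A}_{1}}_{y_{1}}$, using Fubini on the left to swap the $dt$-integral with the $d\rho^{\mathcal{A}_{1}}_{y_{1}}(y_{2})$-integral (justified because $\tau_{1}-\tau_{2}$ is bounded on $[0,T]\times Y_{2}$ up to the usual integrability hypotheses, or more carefully by cutting $\tau_{i}$ and passing to the limit).

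The key step is to recognize the two resulting terms. On the left, after swapping, the integrand becomes
\[
\int_{Y_{2}}(\tau_{1}-\tau_{2})(\phi^{(1)}_{t}y_{1},\phi^{(2)}_{t}y_{2})\,d\rho^{\mathcal{A}_{1}}_{y_{1}}(y_{2}),
\]
and I would use the equivariance relation $\rho^{\mathcal{A}_{1}}_{\phi^{(1)}_{t}y_{1}}=(\phi^{(2)}_{t})_{\ast}\rho^{\mathcal{A}_{1}}_{y_{1}}$ from (\ref{rigid reparametrizations42}) together with the conditional-expectation formula $E^{\rho}(g|\mathcal{A}_{1})(y_{1})=\int g(y_{1},y_{2})\,d\rho^{\mathcal{A}_{1}}_{y_{1}}(y_{2})$. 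Since $\tau_{1}$ depends only on $y_{1}$, its conditional expectation is $\tau_{1}$ itself; for $\tau_{2}$ the change of variables $y_{2}\mapsto\phi^{(2)}_{t}y_{2}$ identifies $\int_{Y_{2}}\tau_{2}(\phi^{(2)}_{t}y_{2})\,d\rho^{\mathcal{A}_{1}}_{y_{1}}(y_{2})=\int_{Y_{2}}\tau_{2}(y_{2})\,d\rho^{\mathcal{A}_{1}}_{\phi^{(1)}_{t}y_{1}}(y_{2})=E^{\rho}(\tau_{2}|\mathcal{A}_{1})(\phi^{(1)}_{t}y_{1})$. Hence the left side becomes $\int_{0}^{T}\bigl(\tau_{1}-E^{\rho}(\tau_{2}|\mathcal{A}_{1})\bigr)(\phi^{(1)}_{t}y_{1})\,dt$. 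On the right, integrating $f(\phi^{(1)}_{T}y_{1},\phi^{(2)}_{T}y_{2})-f(y_{1},y_{2})$ against $\rho^{\mathcal{A}_{1}}_{y_{1}}$ and applying the same equivariance gives $E^{\rho}(f|\mathcal{A}_{1})(\phi^{(1)}_{T}y_{1})-E^{\rho}(f|\mathcal{A}_{1})(y_{1})$. Comparing the two sides yields exactly the cohomology equation (\ref{joinings202106.138}) for $\tau_{1}$ and $E^{\rho}(\tau_{2}|\mathcal{A}_{1})$ over $\phi^{(1)}_{t}$ with transfer function $E^{\rho}(f|\mathcal{A}_{1})$.

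The main obstacle is the measure-theoretic bookkeeping: ensuring the Fubini swap on the left is legitimate (the integrand $(\tau_{1}-\tau_{2})(\phi^{(1)}_{t}y_{1},\phi^{(2)}_{t}y_{2})$ must be jointly measurable in $(t,y_{2})$ and absolutely integrable on $[0,T]\times Y_{2}$ against $dt\otimes d\rho^{\mathcal{A}_{1}}_{y_{1}}$), and handling the "for all $T$, for a.e.\ point" quantifier order so that after integration the identity holds for a.e.\ $y_{1}$ and all $T$ simultaneously — this is done by choosing a countable dense set of $T$'s, taking the common full-measure set, and using continuity in $T$ of both sides. I note that the Proposition statement as written has "$E^{\rho}(\tau_{2}|\mathcal{A}_{1})$" appearing in place of "$E^{\rho}(f|\mathcal{A}_{1})$" as the transfer function; the argument above shows the transfer function is $E^{\rho}(f|\mathcal{A}_{1})$, and this is presumably a typo in the statement. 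Everything else is routine manipulation of conditional measures, so no deep input beyond the disintegration identities (\ref{rigid reparametrizations42}) is needed.
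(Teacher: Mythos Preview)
Your proposal is correct and matches the paper's own argument essentially verbatim: the paper simply says ``taking the integration over $\rho_{y_{1}}^{\mathcal{A}_{1}}$, expressions (\ref{rigid reparametrizations43}) and (\ref{rigid reparametrizations42}) show \ldots'' and concludes the transfer function is $E^{\rho}(f|\mathcal{A}_{1})$, confirming your observation that the appearance of $E^{\rho}(\tau_{2}|\mathcal{A}_{1})$ as the transfer function in the statement is a typo. Your additional care with Fubini and the quantifier order for $T$ is not present in the paper but is harmless extra rigor.
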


\section{Shearing property I, H-flow on one factor}\label{joinings202105.3}
\subsection{Joinings}
Let $G=SO(n,1)$, $\Gamma$ be a lattice of $G$, $(X,\mu)$ be the homogeneous space $X=G/\Gamma$ equipped with the Lebesgue measure $\mu$, and let $\phi^{U}_{t}$ be a unipotent flow on $X$. Let $(Y,\nu,S)$ be a measure-preserving system. We want to study the joinings of $(X,\mu,\phi_{1}^{U})$ and  $(Y,\nu,S)$. Thus, let $\rho$ be an ergodic \textit{joining} of $\phi_{1}^{U}$ and  $S$\index{joinings}, i.e. $\rho$ is a probability measure on $X\times Y$, whose marginals on $X$ and $Y$ are $\mu$ and $\nu$ respectively, and which is $(\phi_{1}^{U}\times S)$-ergodic.

Let $C(\phi_{1}^{U})$ be the \textit{commutant} of $\phi_{1}^{U}$\index{commutant of transformations}, i.e. collection of all measure-preserving transformations on $X$ that commute with $\phi_{1}^{U}$. The following is a basic criterion for $\rho$ in terms of the commutant of $\phi_{1}^{U}$:
\begin{lem}\label{joinings202012.3}
Let the notation and assumptions be as above. Assume further that  $T\in C(\phi_{1}^{U})$ is   ergodic   on $(X,\mu)$.
Then \[\text{either }(T\times\id)_{\ast}\rho\perp\rho\ \ \text{ or }\ \ \rho=\mu\times\nu.\]
\end{lem}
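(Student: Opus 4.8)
The plan is to use the ergodicity of $\phi_1^U$ together with the standard dichotomy for ergodic joinings: two ergodic joinings of the same pair of systems are either equal or mutually singular. Here the relevant observation is that both $\rho$ and $(T\times\id)_*\rho$ are joinings of $(X,\mu,\phi_1^U)$ and $(Y,\nu,S)$. Indeed, since $T\in C(\phi_1^U)$, the map $T\times\id$ commutes with $\phi_1^U\times S$, so $(T\times\id)_*\rho$ is again $(\phi_1^U\times S)$-invariant; and since $T$ preserves $\mu$ while $\id$ preserves $\nu$, the marginals of $(T\times\id)_*\rho$ are still $\mu$ and $\nu$. Moreover $(T\times\id)_*\rho$ is $(\phi_1^U\times S)$-ergodic because $T\times\id$ is a measure-isomorphism intertwining the $\phi_1^U\times S$-action on $(X\times Y,\rho)$ with that on $(X\times Y,(T\times\id)_*\rho)$, and ergodicity is an isomorphism invariant.

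First I would record these two facts and then invoke the ergodic joinings dichotomy: either $(T\times\id)_*\rho = \rho$, or $(T\times\id)_*\rho \perp \rho$. It remains to show that the first alternative forces $\rho = \mu\times\nu$. So assume $(T\times\id)_*\rho = \rho$, i.e. $\rho$ is invariant under $T\times\id$. Now consider the (possibly uncountable) group generated by $T$ acting on the first coordinate; because $T$ is ergodic on $(X,\mu)$, one expects the $(T\times\id)$-invariance of $\rho$ to propagate, via a conditional-measure / disintegration argument, into the statement that the disintegration $\rho = \int_Y \rho_y\, d\nu(y)$ over the second factor has each $\rho_y$ invariant under $T$ — hence, by ergodicity of $T$ on $(X,\mu)$ and the fact that $\mu$ is the unique $T$-invariant ergodic measure in the relevant class, each $\rho_y = \mu$. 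This gives $\rho = \mu\times\nu$.

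More carefully: disintegrate $\rho$ over the factor $\mathcal A_2 = \{X\times A : A\in\mathcal Y\}$ as $\rho = \int_Y \rho_y\,d\nu(y)$, where $\rho_y$ is a probability measure on $X$. Pushing forward under $T\times\id$ gives $(T\times\id)_*\rho = \int_Y (T_*\rho_y)\,d\nu(y)$, and since this equals $\rho$, uniqueness of disintegration yields $T_*\rho_y = \rho_y$ for $\nu$-a.e. $y$. On the other hand, $\int_Y \rho_y\,d\nu(y) = \mu$ (the first marginal), so for $\nu$-a.e. $y$ the measure $\rho_y$ is absolutely continuous with respect to $\mu$ in the appropriate averaged sense — more precisely, each $\rho_y\ll\mu$ fails in general, so instead I would argue that a $T$-invariant probability $\lambda$ with $\int \lambda\,d\nu = \mu$ and $\lambda = \rho_y$ must, by the ergodicity of $(X,\mu,T)$ and the ergodic decomposition, satisfy $\rho_y = \mu$. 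The cleanest route: $T$ ergodic on $(X,\mu)$ means $\mu$ is an extreme point of the $T$-invariant measures, and writing $\mu = \int_Y \rho_y\,d\nu(y)$ as an average of $T$-invariant probability measures forces $\rho_y = \mu$ for $\nu$-a.e. $y$. Hence $\rho = \mu\times\nu$.

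The main obstacle is the last step — deducing $\rho_y = \mu$ a.e. from $T$-invariance of the $\rho_y$ together with $\int \rho_y\, d\nu = \mu$ and ergodicity of $(X,\mu,T)$. This is a standard fact (an ergodic invariant measure cannot be a nontrivial average of distinct invariant measures of the same class), but one must be slightly careful about measurability of $y\mapsto\rho_y$ and the fact that we only know the barycenter equals $\mu$, not that each $\rho_y$ is a priori in the class of $\mu$; the ergodic decomposition of $\mu$ under $T$ handles this. The preliminary verification that $(T\times\id)_*\rho$ is an ergodic joining is routine. I would also remark that Lemma~\ref{joinings202012.3} is exactly the mechanism by which H-property input (producing elements of the commutant $C(\phi_1^U)$ lying in the centralizer $C_{G_X}(u_X)$) will be converted into the structural conclusion of Theorem~\ref{joinings202106.158}.
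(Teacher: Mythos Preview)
Your proof is correct and follows essentially the same route as the paper: both observe that $(T\times\id)_*\rho$ is again an ergodic joining, invoke the equal-or-singular dichotomy, and in the equal case disintegrate over $Y$ to get $T$-invariance of the fiber measures $\rho_y$, concluding $\rho_y=\mu$ a.e.\ from $\int_Y\rho_y\,d\nu=\mu$ and the $T$-ergodicity of $\mu$. The only cosmetic difference is in that last step: you use the extreme-point characterization of ergodic measures (an ergodic measure cannot be a nontrivial barycenter of invariant measures), whereas the paper argues more concretely via the $\mu$-conull set of $T$-generic points and derives a contradiction from $\mu(\Omega)<1$; your formulation is slightly cleaner but the content is the same.
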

\begin{proof}
   First, by the commutative property of $T$, we easily see that $(T\times\id)_{\ast}\rho$ is again $(\phi_{1}^{U}\times S)$-ergodic on $X\times Y$. It implies that  either $(T\times\id)_{\ast}\rho\perp\rho$ or $(T\times\id)_{\ast}\rho=\rho$. Now assume that  $(T\times\id)_{\ast}\rho=\rho$, i.e. $\rho$ is $(T\times\id)$-invariant. Then via disintegration, we know that $\rho_{y}$ is $T$-invariant on $X$ for $\nu$-a.e. $y\in Y$, where
   \begin{equation}\label{joinings202012.8}
     \rho=\int_{Y}\rho_{y}d\nu(y).
   \end{equation}

   Now assume for contradiction  that there exists $B\subset Y$ with $\nu(B)>0$ such that $\rho_{y}\neq\mu$ for  $y\in B$.
   It follows that for $y\in B$, there is $A_{y}\subset X$ with $\mu(A_{y})>0$ such that for $x\in A_{y}$, we have
   \begin{equation}\label{joinings202012.2}
    (\rho_{y})_{x}^{\mathcal{E}}\neq\mu
   \end{equation}
    where $(\rho_{y})_{x}^{\mathcal{E}}$ is given by the $T$-ergodic decomposition
  \[\rho_{y}=\int_{X}(\rho_{y})_{x}^{\mathcal{E}}d\mu(x).\]
  Notice that by the ergodicity, there is a $\mu$-conull set $\Omega\subset X$, namely the set of $T$-generic points of $\mu$, such that $(\rho_{y})_{x}^{\mathcal{E}}(\Omega)=0$ for the measures $(\rho_{y})_{x}^{\mathcal{E}}$ in (\ref{joinings202012.2}). Then by the assumption of joining, we have
  \begin{align}
   \mu(\Omega)=\rho(\pi_{X}^{-1}(\Omega))=& \int_{Y}\rho_{y} (\Omega)d\nu(y)\;\nonumber\\
   =& \int_{B}\rho_{y}(\Omega)d\nu(y)+\int_{Y\setminus B}\rho_{y}(\Omega)d\nu(y)\;\nonumber\\
   \leq& \int_{B}\int_{X}(\rho_{y})_{x}^{\mathcal{E}}(\Omega)d\mu(x)d\nu(y)+ \nu(Y\setminus B)\;\nonumber\\
      =& \int_{B}\int_{X\setminus A_{y}}(\rho_{y})_{x}^{\mathcal{E}}(\Omega)d\mu(x)d\nu(y)+ \nu(Y\setminus B)\;\nonumber\\
       \leq & \int_{B} \mu(X\setminus A_{y})d\nu(y)+ \nu(Y\setminus B)\;\nonumber\\
   <&\nu(B)+\nu(Y\setminus B)=1\nonumber
  \end{align}
  which is a contradiction. Thus, we conclude that $\rho_{y}=\mu$ for $\nu$-a.e. $y\in Y$ and so $\rho=\mu\times\nu$.
\end{proof}

By \textit{Moore’s ergodicity theorem}\index{Moore’s ergodicity theorem}, we deduce that
\begin{cor}\label{joinings202103.3}
   If $w\in C_{\mathfrak{g}}(U)$ so that $\langle\exp tw\rangle_{t\in\mathbf{R}}$ is not compact, then
   \[\text{either }(\phi^{w}_{1}\times\id)_{\ast}\rho\perp\rho\ \ \text{ or }\ \ \rho=\mu\times\nu.\]
\end{cor}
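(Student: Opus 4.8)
The plan is to apply Lemma \ref{joinings202012.3} with $T=\phi^{w}_{1}$, so that the work reduces to verifying the three hypotheses of that lemma for this choice of $T$: that $\phi^{w}_{1}$ preserves $\mu$, that it lies in the commutant $C(\phi^{U}_{1})$, and that it is ergodic on $(X,\mu)$.

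The first two points are immediate. Left translation by $\exp(w)$ on $X=G/\Gamma$ preserves the Haar probability measure $\mu$, so $\phi^{w}_{1}$ is measure-preserving. Since $w\in C_{\mathfrak{g}}(U)$, the one-parameter subgroups $\{\exp(tw)\}$ and $\{\exp(sU)\}$ commute in $G$; in particular $\exp(w)\exp(U)=\exp(U)\exp(w)$, so the corresponding left translations commute and $\phi^{w}_{1}\in C(\phi^{U}_{1})$.

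The only substantive point is the ergodicity of $\phi^{w}_{1}$. Here I would invoke Moore's ergodicity theorem applied to the closed subgroup $H\coloneqq\overline{\{\exp(nw):n\in\mathbf{Z}\}}\leq G$: since $G=SO(n,1)$ has finite center and no compact factors, the action on $G/\Gamma$ of any noncompact closed subgroup is ergodic, and the $H$-action is generated by $\phi^{w}_{1}$, so it suffices to check that $H$ is noncompact. The closure $\overline{\{\exp(tw):t\in\mathbf{R}\}}$ is a connected abelian Lie group, hence isomorphic to $\mathbf{R}^{a}\times\mathbf{T}^{b}$, and in suitable coordinates the flow takes the form $t\mapsto(tv_{0},\,t\theta_{0}\bmod\mathbf{Z}^{b})$ for fixed $v_{0}\in\mathbf{R}^{a}$, $\theta_{0}\in\mathbf{R}^{b}$; the hypothesis that this closure is noncompact forces $a\geq1$ and $v_{0}\neq0$, whence $\{\exp(nw):n\in\mathbf{Z}\}$ is already unbounded in the $\mathbf{R}^{a}$-factor and $H$ is noncompact. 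With all three hypotheses in place, Lemma \ref{joinings202012.3} yields the dichotomy $(\phi^{w}_{1}\times\id)_{\ast}\rho\perp\rho$ or $\rho=\mu\times\nu$.

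The corollary is essentially a repackaging of Lemma \ref{joinings202012.3}, and the argument is short; the one mild subtlety — and the step I would take most care with — is the passage from noncompactness of the one-parameter group $\langle\exp tw\rangle_{t\in\mathbf{R}}$ to noncompactness of the closure of the cyclic group $\langle\exp w\rangle_{\mathbf{Z}}$, which is what allows Moore's theorem to be applied to the time-one transformation $\phi^{w}_{1}$ rather than to the full flow.
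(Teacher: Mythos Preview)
Your proof is correct and follows the same approach as the paper, which simply writes ``By Moore's ergodicity theorem, we deduce that'' and invokes Lemma~\ref{joinings202012.3}. You have filled in the details the paper leaves implicit, including the careful passage from noncompactness of the one-parameter group to noncompactness of the closure of the cyclic group generated by $\exp(w)$, which is exactly what is needed to apply Moore's theorem to the time-one map.
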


\subsection{H-property}\label{joinings202103.10}

In this section, we want to introduce the \textit{$H$-property} (or \textit{Ratner property}) in order to study the joining $\rho$ in terms of the unipotent flow $\phi^{U}_{t}$ on $X$.  The classic $H$-property can be formulated as follows:
\begin{thm}[H-property, \cite{witte1985rigidity}]\label{h-property202103.1}
   Let $u$ be a unipotent element of $G$. Given any neighborhood $Q$ of $e$ in $C_{G}(u)$, there is a compact subset $\partial Q$ of $Q\setminus\{e\}$ such that for any $\epsilon>0$ and $M>0$, there are $\alpha=\alpha(u,Q,\epsilon)>0$ and $\delta=\delta(u,Q,\epsilon,M)>0$  such that if $x_{1},x_{2}\in X$ with $d_{X}(x_{1},x_{2})<\delta$ then one of the following holds:
  \begin{itemize}
    \item  $x_{2}=cx_{1}$ for some $c\in C_{G}(u)$ with $d_{G}(e,c)<\delta$,
    \item there are   $L>M/\alpha$ and $q\in\partial Q$ such that
    \begin{equation}\label{joinings202103.5}
      d_{X}(u^{n}x_{2},qu^{n}x_{1})<\epsilon
    \end{equation}
        whenever $n\in[L,(1+\alpha)L]$.
  \end{itemize}
\end{thm}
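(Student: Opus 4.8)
The plan is to transfer everything to the Lie group $G$ and to read the divergence of the two orbits off the $\mathfrak{sl}_{2}$-weight decomposition of Section~\ref{joinings202104.3}. First I would fix $\delta$ small relative to a uniform lower bound for the injectivity radius on the region under consideration (automatic if $\Gamma$ is cocompact; for a general lattice one restricts to a fixed compact part of $X$, as is standard for this property), so that $d_{X}(x_{1},x_{2})<\delta$ forces $x_{2}=\exp(w)x_{1}$ for a unique $w\in\mathfrak{g}$ with $\|w\|\le C\,d_{X}(x_{1},x_{2})$, and so that $\exp$ and the projection $G\to X$ are bi-Lipschitz with uniform constants on the balls in play. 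Since $\Ad(u)=\exp(\ad U)$ with $\ad U$ nilpotent, $C_{\mathfrak{g}}(U)=\ker(\ad U)=\ker(\Ad(u)-\id)=\Lie(C_{G}(u))$, so $\exp(C_{\mathfrak{g}}(U))\subset C_{G}(u)$. Decompose $w=w_{0}+w'$ with $w_{0}\in C_{\mathfrak{g}}(U)$ and $w'$ in a fixed complement. If $w'=0$ we land in the first alternative, with $c=\exp(w_{0})\in C_{G}(u)$ and $d_{G}(e,c)<\delta$; so from now on $w'\neq0$.

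The heart of the matter is the behaviour of
\[u^{n}x_{2}=\exp\bigl(\Ad(u^{n})w\bigr)u^{n}x_{1},\qquad \Ad(u^{n})w=w_{0}+\Ad(u^{n})w',\]
which I would analyse one $\mathfrak{sl}_{2}$-summand at a time via Lemma~\ref{time change179} and formula~(\ref{dynamical systems5}). In a summand $V^{\varsigma}$, if the component of $w'$ is $b_{0}v_{0}+\dots+b_{\varsigma}v_{\varsigma}$ with $i_{0}=\min\{i:b_{i}\neq0\}$ (necessarily $<\varsigma$, since $v_{\varsigma}\in C_{\mathfrak{g}}(U)$), then (\ref{dynamical systems5}) shows that $\exp(nU)$ sends it to a vector whose $v_{\varsigma}$-coordinate is a polynomial in $n$ of degree $\varsigma-i_{0}\ge1$ with nonzero leading coefficient, while every other coordinate grows strictly more slowly. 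Since $U.v_{\varsigma}=0$, this leading line $\mathbf{R}v_{\varsigma}$ lies inside $C_{\mathfrak{g}}(U)$; summing over summands, the dominant part of $\Ad(u^{n})w'$ at any fixed scale is a nonzero combination of such lowest-weight vectors, hence lies in $C_{\mathfrak{g}}(U)$. This is precisely the mechanism that ``divergence of nearby unipotent orbits happens along the centralizer''. For a general semisimple $G$ the computation is identical after invoking Jacobson--Morozov to embed $U$ in an $\mathfrak{sl}_{2}$-triple and using~(\ref{joinings202103.2}); for $G=SO(n,1)$ it follows verbatim from Lemma~\ref{time change179} and the explicit centralizer~(\ref{time change184})--(\ref{joinings202103.12}).

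It remains to calibrate the scales. Fix once and for all, depending only on $u$ and $Q$, a compact ``shell'' $\partial Q=\{c\in C_{G}(u):r_{1}\le d_{G}(e,c)\le r_{2}\}\subset Q\setminus\{e\}$ with $0<r_{1}<r_{2}$ small. Given $\epsilon>0$ and $M>0$: choose $\alpha>0$ small enough that along any polynomial trajectory of bounded degree staying of size $\asymp r_{1}$, the total change of position and of direction over an interval $[L,(1+\alpha)L]$ is, after multiplying by the bi-Lipschitz constant, less than $\epsilon$; then choose $\delta$ small enough that the scale $L=L(w,Q)$ at which $\|\Ad(u^{L})w\|\asymp r_{1}$ satisfies $L>M/\alpha$ -- possible because $w'\neq0$ forces $L\ge c\,(r_{1}/\|w'\|)^{1/d}\ge c\,(r_{1}/(C\delta))^{1/d}\to\infty$ as $\delta\to0$, where $d\ge1$ is the degree driving the growth. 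For $n\in[L,(1+\alpha)L]$ the vector $\Ad(u^{n})w$ has norm in $[r_{1},r_{2}]$ and direction within $\epsilon$ of a fixed direction in $C_{\mathfrak{g}}(U)$ (the contributions of $w_{0}$ and of the slower coordinates being negligible once $\delta$ is small and $L$ large), so $\exp(\Ad(u^{n})w)$ stays within $\epsilon$ of a single $q\in\partial Q$, namely the point of $C_{G}(u)$ nearest $\exp(\Ad(u^{L})w)$. Pushing back to $X$ with the uniform bi-Lipschitz bounds yields $d_{X}(u^{n}x_{2},qu^{n}x_{1})<\epsilon$ for all such $n$, the second alternative.

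The one genuinely non-formal ingredient -- the main obstacle -- is the structural claim that the top-order part of $\Ad(u^{n})w'$ always lands in $C_{\mathfrak{g}}(U)$; once that is granted, everything else is bookkeeping with polynomial estimates and with the uniformity of $\exp$ and of the covering $G\to X$. A secondary technical point is keeping every constant independent of $x_{1},x_{2}$, which for non-cocompact $\Gamma$ is handled by the usual device of restricting to a fixed compact core of $X$; this is the only place the ambient geometry, rather than the representation theory of $\Ad(u)$, plays a role.
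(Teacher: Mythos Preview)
The paper does not prove this theorem; it is quoted from \cite{witte1985rigidity} and used as a black box. The only trace of the argument in the paper is Remark~\ref{joinings202103.4}, which records that the element $q$ in the second alternative is obtained as $q=\pi_{C_{\mathfrak{g}}(U)}\exp(LU).v$ where $x_{2}=\exp(v)x_{1}$, and then specialises this to $\mathfrak{so}(n,1)$ via the decomposition~(\ref{joinings202103.2})--(\ref{joinings202105.14}) to observe that one may take $q\in\mathfrak{n}=\mathbf{R}U+\mathfrak{n}_{C}^{\perp}$.

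Your sketch is correct and derives precisely this formula by the standard route: lift to $G$, write $u^{n}x_{2}=\exp(\Ad(u^{n})w)u^{n}x_{1}$, and use the $\mathfrak{sl}_{2}$-weight picture (Lemma~\ref{time change179}, (\ref{dynamical systems5})) to see that the fastest-growing coordinate of $\Ad(u^{n})w$ in each irreducible summand is the highest-weight one, which lies in $C_{\mathfrak{g}}(U)$. The calibration of $L$, $\alpha$, $\delta$ is exactly the bookkeeping one expects. So your approach coincides with what the paper implicitly invokes; there is no alternative proof in the paper to contrast with.

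One minor point: your caveat about non-cocompact $\Gamma$ and the injectivity radius is well taken, and indeed the paper only ever applies the theorem to points lying in a fixed compact set (see the proof of Lemma~\ref{joinings202012.6}), so the uniformity issue you flag does not cause trouble in the applications.
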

\begin{rem}\label{joinings202103.4}
   In fact, for $x_{2}=gx_{1}$ with $g=\exp(v)\in B^{G}_{\delta}$, the element $q\in C_{\mathfrak{g}}(U)$ in  Theorem \ref{h-property202103.1} is chosen by
   \begin{equation}\label{joinings202106.116}
    q=\pi_{C_{\mathfrak{g}}(U)}\exp(LU).v
   \end{equation}
   where $\pi_{C_{\mathfrak{g}}(U)}: \mathfrak{g}\rightarrow C_{\mathfrak{g}}(U)$ is the natural projection and $\exp(LU).v$ is the adjoint representation (see (\ref{dynamical systems5})). We often call $q$ as the \textit{fastest relative motion}\index{fastest relative motion} between $x_{1},x_{2}$; see \cite{morris2005ratner} for more discussion. In what follows, we   choose $Q=B_{\lambda}^{C_{G}(u)}$ to be the ball of radius $\lambda$ of $e$ in $C_{G}(u)$ for sufficiently small $\lambda$ (independent of $\epsilon$), and then $\partial Q$ is  the sphere of radius $\lambda$. Now by (\ref{joinings202103.2}) (\ref{joinings202105.14}), we have the decomposition
   \[v=v_{0}+v_{2}\]
    where $v_{0}\in\sum_{i} V_{i}^{0}$ and  $v_{2}\in \mathfrak{sl}_{2}+\sum_{j} V_{j}^{2}$. Thus, $\|v_{0}\|,\|v_{2}\|<\delta$ and
    \[q=v_{0}+\pi_{C_{\mathfrak{g}}(U)}\exp(LU).v_{2}.\]
    Since $\|q\|=\lambda$, we see that $v_{0}$ is negligible. In other words, we can replace $q$ by
    \begin{equation}\label{joinings202106.102}
      q^{\prime}\coloneqq\pi_{C_{\mathfrak{g}}(U)}\exp(LU).v_{2}
    \end{equation}
     and then Theorem \ref{h-property202103.1} still holds. On the other hand,  note that $q^{\prime}\in \mathfrak{n}= \mathbf{R}U+\mathfrak{n}_{C}^{\perp}$ (cf. (\ref{joinings202103.12})).  Thus, the one-parameter group $\langle\exp(tq^{\prime})\rangle_{t\in\mathbf{R}}$ generated by $q^{\prime}$ is not compact.
\end{rem}

In the following, we shall generalize the idea in \cite{ratner1983horocycle} and prove Theorem \ref{joinings202106.158}.
\begin{thm}\label{joinings202012.1}
  Let the notation and assumptions be as above. Then either $\rho=\mu\times\nu$ or $(\phi^{U}_{1}\times S,\rho)$ is a compact extension of $(S,\nu)$. More precisely, if $\rho\neq\mu\times \nu$, then there exists a $\nu$-conull set $\Theta\subset Y$, a compact subgroup $C^{\rho}\subset C_{G}(u)$, and   $n>0$ such that  for any $y\in\Theta$, there exist $x_{1}^{y},\ldots,x_{n}^{y}$ in the support of $\rho_{y}$ with
  \[\rho_{y}(C^{\rho}x_{i}^{y})=\frac{1}{n}\]
  for $i=1,\ldots,n$, where  $ \rho=\int_{Y}\rho_{y}d\nu(y)$ is the disintegration along $Y$ (cf. (\ref{joinings202012.8})).
\end{thm}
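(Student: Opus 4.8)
The plan is to generalize Ratner's use of the $H$-property from \cite{ratner1983horocycle}; the one structural novelty over the $SO(2,1)$ case is the compact summand $\mathfrak{k}_{C}^{\perp}\cong\mathfrak{so}(n-2)$ inside $C_{\mathfrak{g}}(U)$, which the argument must learn to circumvent. Assume throughout that $\rho\neq\mu\times\nu$. The heart of the proof is the assertion that there is a compact subgroup $C^{\rho}\subset C_{G}(u)$ such that, for $\nu$-a.e.\ $y$ and $(\rho_{y}\times\rho_{y})$-a.e.\ pair $(x_{1},x_{2})$, one has $x_{2}\in C^{\rho}x_{1}$. Granting this, a Fubini argument shows $\rho_{y}(C^{\rho}x_{0})=1$ for some $x_{0}\in\supp\rho_{y}$, so $\supp\rho_{y}$ lies in the single compact orbit $C^{\rho}x_{0}$; since $C^{\rho}\subset C_{G}(u)$ commutes with $u$, the map $\phi^{U}_{1}$ descends to $C^{\rho}\backslash X$, so the factor $(X\times Y,\rho)\to(Y,\nu,S)$ is a compact extension, hence --- being ergodic --- of the stated form: the structure theory of ergodic isometric extensions produces the integer $n\geq 1$ and points $x_{1}^{y},\ldots,x_{n}^{y}$ with $\supp\rho_{y}=\bigcup_{i=1}^{n}C^{\rho}x_{i}^{y}$ and $\rho_{y}(C^{\rho}x_{i}^{y})=1/n$; take $\Theta$ to be the $\nu$-conull set on which all these a.e.\ statements hold.

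To prove the assertion, I would first show that for $\nu$-a.e.\ $y$ and $(\rho_{y}\times\rho_{y})$-a.e.\ pair $(x_{1},x_{2})$ there is $c\in C_{G}(u)$ with $x_{2}=cx_{1}$ and $\langle c^{m}\rangle_{m\in\mathbf{Z}}$ relatively compact; one then takes $C^{\rho}$ to be the closure of the group generated by all these relative motions (it is a subgroup of $C_{G}(u)$, and it is compact: otherwise it contains an element $\exp w$ with $w\in C_{\mathfrak{g}}(U)$ and $\langle\exp tw\rangle$ non-compact, whence $(\phi^{w}_{1}\times\id)_{\ast}\rho=\rho$ and Corollary \ref{joinings202103.3} forces $\rho=\mu\times\nu$, contrary to assumption), and a short argument upgrades ``$(\rho_{y}\times\rho_{y})$-a.e.\ related by a relatively compact element'' to ``$(\rho_{y}\times\rho_{y})$-a.e.\ related by an element of $C^{\rho}$''. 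So suppose instead that a positive $\nu$-measure set of $y$'s has $\supp\rho_{y}$ containing pairs $x_{1},x_{2}$ with $d_{X}(x_{1},x_{2})$ arbitrarily small, $x_{2}=\exp(v)x_{1}$, and with the relative motion not relatively compact. Using Egorov's theorem I would pass to a set $\mathcal{G}\subset X\times Y$ with $\rho(\mathcal{G})$ close to $1$ on which the Birkhoff averages for $(\phi^{U}_{1}\times S,\rho)$ converge uniformly, and by a Lusin selection of fibres together with Poincar\'e recurrence realize the above with both $(x_{i},y)\in\mathcal{G}$.

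For such a close bad pair, Theorem \ref{h-property202103.1} gives one of two cases. \emph{Case (i):} $x_{2}=cx_{1}$ with $c=\exp v$, $v\in C_{\mathfrak{g}}(U)$, $d_{G}(e,c)<\delta$; since $c$ centralizes $u$ we get $u^{m}x_{2}=c\,u^{m}x_{1}$ for all $m$, so averaging any $F\in C(X\times Y)$ along the two uniformly generic orbits yields $(\phi^{v}_{1}\times\id)_{\ast}\rho=\rho$; but the pair is bad, so $\langle c^{m}\rangle$ is not relatively compact, i.e.\ $\langle\exp tv\rangle$ is non-compact, and Corollary \ref{joinings202103.3} forces $\rho=\mu\times\nu$, a contradiction. \emph{Case (ii):} there is $\alpha>0$ (independent of the pair) and a window $[L,(1+\alpha)L]$ with $L$ as large as desired on which $d_{X}(u^{m}x_{2},q\,u^{m}x_{1})<\epsilon$, where by Remark \ref{joinings202103.4} the fastest relative motion $q=\pi_{C_{\mathfrak{g}}(U)}\exp(LU).v_{2}$ lies on a fixed sphere $\partial Q$ and, crucially, inside $\mathbf{R}U\oplus\mathfrak{n}_{C}^{\perp}$: recall $v_{2}$ lies in $\mathfrak{sl}_{2}+\sum_{j}V_{j}^{2}$, a sum of $\mathfrak{sl}_{2}$-irreducibles of highest weight $2$ (by (\ref{joinings202105.14}), $\mathfrak{so}(n,1)$ having no $\mathfrak{sl}_{2}$-weights $>2$), so $q\in\mathbf{R}U\oplus\mathfrak{n}_{C}^{\perp}$ is a nonzero nilpotent element and never gets trapped in the compact directions $\mathfrak{k}_{C}^{\perp}$. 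Running this along a sequence of bad pairs $(x_{1}^{k},x_{2}^{k})\subset\mathcal{G}$ with $L_{k}\to\infty$, $\epsilon_{k}\to 0$, $q_{k}\in\partial Q\cap(\mathbf{R}U\oplus\mathfrak{n}_{C}^{\perp})$, and averaging $F$ over $[L_{k},(1+\alpha)L_{k}]$: by uniform genericity the averages of $F(u^{m}x_{2}^{k},S^{m}y^{k})$ tend to $\int F\,d\rho$; by the shadowing they differ by at most the modulus of continuity of $F$ at $\epsilon_{k}$ from the averages of $G_{q_{k}}(u^{m}x_{1}^{k},S^{m}y^{k})$ with $G_{q}(x,y):=F(qx,y)$; and passing $q_{k}$ to a subsequential limit $q_{\infty}\in\partial Q\cap(\mathbf{R}U\oplus\mathfrak{n}_{C}^{\perp})$ (a compact set), uniform continuity of $F$ and the uniform genericity of $(x_{1}^{k},y^{k})$ give that these last averages converge to $\int G_{q_{\infty}}\,d\rho$. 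Hence $\int F\,d\rho=\int F(q_{\infty}x,y)\,d\rho(x,y)$ for every $F$, i.e.\ $(\phi^{w}_{1}\times\id)_{\ast}\rho=\rho$ with $q_{\infty}=\exp w$, $w$ a nonzero nilpotent element, so $\langle\exp tw\rangle$ is non-compact and Corollary \ref{joinings202103.3} once more gives $\rho=\mu\times\nu$, a contradiction. This proves the assertion, and with it the theorem.

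The main obstacle is Case (ii): converting the single $H$-property shadowing window attached to each (varying) bad pair into a genuine translation-invariance of $\rho$, which is what forces the Egorov uniformization of the ergodic averages and the passage $q_{k}\to q_{\infty}$ inside the compact sphere $\partial Q$, and which relies on the fastest relative motion $q_{k}$ being non-compact --- precisely the feature of $\mathfrak{so}(n,1)$ (only $\mathfrak{sl}_{2}$-weights $\leq 2$) that makes its unipotent flows behave, for this argument, like horocycle flows. A secondary point requiring care is the bookkeeping: passing from ``close bad pairs do not occur'' to ``$(\rho_{y}\times\rho_{y})$-a.e.\ pair is $C^{\rho}$-related'', and from ``$\supp\rho_{y}$ is a single $C^{\rho}$-orbit'' to the precise finite-extension form of $\rho_{y}$ with equal masses $1/n$.
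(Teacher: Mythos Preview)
Your central assertion—that for $\nu$-a.e.\ $y$ and $(\rho_y\times\rho_y)$-a.e.\ pair $(x_1,x_2)$ one has $x_2\in C^\rho x_1$ with $C^\rho$ compact—is too strong: it forces $\rho_y$ to be supported on a \emph{single} $C^\rho$-orbit, hence $n=1$. But the theorem genuinely allows $n>1$ (finite-index phenomena in the lattices can produce multiple sheets), and indeed your own next sentence, invoking ``structure theory of ergodic isometric extensions'' to produce $n\ge 1$ orbits, is inconsistent with what you just claimed. What is actually true—and what the paper proves as Lemma \ref{joinings202012.6}—is only that \emph{close} pairs in a set of positive $\rho$-measure lie in the same $C_G(u)$-orbit (the full, non-compact centralizer). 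From there the paper (a) uses ergodicity of $\phi_1^U\times S$ on the function $y\mapsto\sup_x\rho_y(C_G(u)x)$ to obtain the $n$ equal-mass $C_G(u)$-orbit structure (Lemma \ref{joinings202103.8}), and then (b) separately analyzes the conditional measure on each $C_G(u)$-orbit, showing it is right-invariant under its stabilizer $C^\rho$, which is then forced to be compact by Corollary \ref{joinings202103.3}. Your definition of $C^\rho$ as the closure of the group generated by small relative motions never sees the inter-orbit structure and cannot recover the integer $n$.

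A related gap: your argument only treats \emph{close} pairs (those to which Theorem \ref{h-property202103.1} applies), and you never explain the passage to $(\rho_y\times\rho_y)$-a.e.\ pairs; ``a Lusin selection together with Poincar\'e recurrence'' does not bridge this. Finally, your Case (ii) strategy—averaging $F$ over the $H$-window $[L_k,(1+\alpha)L_k]$ and passing $q_k\to q_\infty$ to extract an invariance—is different from the paper's route. The paper is more direct: having first built (via the mutual singularity in Corollary \ref{joinings202103.3}) a set $K_2$ on which $d_X(x_2,\phi_1^w x_1)\ge\epsilon$ uniformly for all $w$ in the relevant sphere of $\mathfrak{n}$, it uses two nested applications of the ergodic theorem to locate a \emph{single} time $n$ in the window at which both orbits land in $K_2$, contradicting the shadowing. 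Your averaging approach can be made rigorous, but it requires a uniform Egorov over the compact family $\{G_q\}_{q\in\partial Q}$ and control of window averages rather than full Birkhoff averages—bookkeeping the paper avoids entirely.
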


Assume that $\rho\neq\mu\times\nu$. Then by Corollary \ref{joinings202103.3}, there is a $\rho$-conull set $\Omega\subset X\times Y$, namely the set of $(\phi_{1}^{U}\times S)$-generic points, such that $(\phi^{w}_{1}\times\id) (\Omega)\cap \Omega=\emptyset$
 for all $w\in  \mathfrak{n}$.  Given a sufficiently small $\lambda>0$, we define the sphere of radius $\lambda$ of $0$ by
\[ B^{\mathfrak{n}}_{\lambda}\coloneqq\{w\in \mathfrak{n}:\|w\|=\lambda\}.\]
Then,   one can find a       compact    subset $K_{1}\subset \Omega$  with $\mu(K_{1})>199/200$. Then
\[ \bigcup_{w\in B^{\mathfrak{n}}_{\lambda}}(\phi^{w}_{1}\times \id)(K_{1}) \]
is compact.
 Thus, there are $\epsilon>0$ and $K_{2}\subset  K_{1}$ with $\mu(K_{2})>99/100$ such that
\[d_{X\times Y}\left(K_{2},\bigcup_{w\in B^{\mathfrak{n}}_{\lambda}}(\phi^{w}_{1}\times \id)(K_{1}) \right)>\epsilon.\]
 It follows that  if $(x_{1},y),(x_{2},y)\in K_{2}$ then
\begin{equation}\label{joinings202012.5}
  d_{X}(x_{2},\phi^{w}_{1}x_{1})\geq\epsilon
\end{equation}
for all $w\in B^{\mathfrak{n}}_{\lambda}$. Let $\alpha=\alpha(\epsilon)>0$ be as in Theorem \ref{h-property202103.1}. Comparing (\ref{joinings202012.5}) with (\ref{joinings202103.5}), we conclude

\begin{lem}\label{joinings202012.6}
   There is a positive number $\delta=\delta(\epsilon)>0$, a measurable set $K_{4}\subset \Omega$ with $\rho(K_{4})>0$ such that if $(x_{1},y),(x_{2},y)\in K_{4}$ and $d_{X}(x_{1},x_{2})<\delta$, then $x_{2}\in C_{G}(u)x_{1}$.
\end{lem}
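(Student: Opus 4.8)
The plan is to derive the lemma from the $H$-property (Theorem~\ref{h-property202103.1}) in the manner of \cite{ratner1983horocycle}. Keep $\epsilon$, $K_{1}$, $K_{2}$ and $\Omega$ as fixed above, so that the separation property (\ref{joinings202012.5}) holds for all $w\in B^{\mathfrak{n}}_{\lambda}$, and recall that $\alpha=\alpha(\epsilon)>0$ is the constant supplied by Theorem~\ref{h-property202103.1} for $Q=B^{C_{G}(u)}_{\lambda}$. The first step is to build $K_{4}$ so that orbits starting in it return to $K_{2}$ with controlled frequency, \emph{uniformly}. Since $(\phi^{U}_{1}\times S,\rho)$ is ergodic and $K_{2}\subset\Omega$, the Birkhoff averages $\frac{1}{N}\sum_{n=0}^{N-1}\mathbf{1}_{K_{2}}(\phi^{n}_{1}x,S^{n}y)$ converge $\rho$-a.e.\ to $\rho(K_{2})>99/100$; by Egorov's theorem, for any prescribed $\eta>0$ there are a measurable set $K_{4}\subset\Omega$ with $\rho(K_{4})$ as close to $1$ as we wish and an integer $N_{\eta}$ such that this average stays within $\eta$ of $\rho(K_{2})$ for every $(x,y)\in K_{4}$ and every $N\geq N_{\eta}$.

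Next I would fix the remaining parameters, in the order: first $\eta$, which fixes $(K_{4},N_{\eta})$, then $M$, then $\delta$. For $(x_{1},y),(x_{2},y)\in K_{4}$ and $L\geq N_{\eta}$, subtracting the Birkhoff average over $[0,L)$ from the one over $[0,(1+\alpha)L)$ shows that the window $W=[L,(1+\alpha)L]$ contains at least $\rho(K_{2})\alpha L-O(\eta L)$ integers $n$ with $(\phi^{n}_{1}x_{i},S^{n}y)\in K_{2}$, for $i=1$ and $i=2$; intersecting the two sets, $W$ contains at least $(2\rho(K_{2})-1)\alpha L-O(\eta L)$ integers that are good for both. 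Since $2\rho(K_{2})-1>98/100>0$, I can choose $\eta$ small in terms of $\alpha$ and $\rho(K_{2})$ alone, which fixes $K_{4}$ and $N_{\eta}$, and then choose $M>0$ so large that this count is positive for \emph{every} $L>M/\alpha$. Finally set $\delta=\delta(\epsilon,M)>0$ from Theorem~\ref{h-property202103.1}.

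With the parameters in hand, the dichotomy runs as follows. Let $(x_{1},y),(x_{2},y)\in K_{4}$ with $d_{X}(x_{1},x_{2})<\delta$. If the first alternative of Theorem~\ref{h-property202103.1} failed, there would be $L>M/\alpha$ and $q\in\partial Q$ with $d_{X}(u^{n}x_{2},qu^{n}x_{1})<\epsilon$ for all $n\in W=[L,(1+\alpha)L]$; by Remark~\ref{joinings202103.4} we may replace $q$ by $q'\in\mathfrak{n}$ as in (\ref{joinings202106.102}) (the discarded $V^{0}$-component has norm $<\delta$), so $q'\in B^{\mathfrak{n}}_{\lambda}$ and still $d_{X}(u^{n}x_{2},\exp(q')u^{n}x_{1})<\epsilon$ on $W$. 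By the second paragraph, $W$ contains an $n$ with $(\phi^{n}_{1}x_{1},S^{n}y)\in K_{2}$ and $(\phi^{n}_{1}x_{2},S^{n}y)\in K_{2}$; then (\ref{joinings202012.5}) applied to this pair with $w=q'$ yields $d_{X}(u^{n}x_{2},\exp(q')u^{n}x_{1})\geq\epsilon$, a contradiction. Hence the first alternative holds, i.e.\ $x_{2}=cx_{1}$ for some $c\in C_{G}(u)$ with $d_{G}(e,c)<\delta$; in particular $x_{2}\in C_{G}(u)x_{1}$, which is the assertion, and $\rho(K_{4})>0$ (indeed arbitrarily close to $1$).

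The step I expect to be the real obstacle is the second paragraph, i.e.\ making $M$ independent of the pair $(x_{1},y),(x_{2},y)$: a priori the window $[L,(1+\alpha)L]$ produced by the $H$-property starts at an uncontrolled height $L>M/\alpha$, and only by passing to a set $K_{4}$ of \emph{uniform} equidistribution and by performing the return-time count inside that single multiplicative window (where the elementary inequality $2\rho(K_{2})-1>0$ does the work) can one guarantee a simultaneous $K_{2}$-return for $x_{1}$ and $x_{2}$ without the choice of $M$ circularly depending on $\delta$ through the points. A secondary point, genuinely using $G=SO(n,1)$, is that the fastest relative motion $q$ returned by the $H$-property lies a priori only in $C_{G}(u)$, whereas the separation (\ref{joinings202012.5}) was set up along $\mathfrak{n}$; Remark~\ref{joinings202103.4}, which discards the compact component, is exactly what lets us land $q'$ in $B^{\mathfrak{n}}_{\lambda}$.
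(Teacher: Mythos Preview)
Your proof is correct and follows the same strategy as the paper: use the $H$-property dichotomy and contradict the second alternative by producing a simultaneous $K_{2}$-return for $(x_{1},y)$ and $(x_{2},y)$ inside the multiplicative window $[L,(1+\alpha)L]$, then invoke (\ref{joinings202012.5}). The only difference is in how uniformity over the window is obtained: you apply Egorov once and subtract Birkhoff sums over $[0,L)$ and $[0,(1+\alpha)L)$ directly, whereas the paper uses a two-layer construction (a set $K_{3}$ of points whose forward orbits hit $K_{2}$ with density $>9/10$, then $K_{4}$ a set whose orbits hit $K_{3}$ with density $>1-\alpha/(10+\alpha)$), first landing in $K_{3}$ near the start of the window and then running the $K_{2}$-density from there. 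Your route is slightly more economical; the paper's avoids Egorov but needs the extra layer. Both handle the passage from $q\in\partial Q$ to $q'\in\mathfrak{n}$ via Remark~\ref{joinings202103.4} in the same way.
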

\begin{proof}
   Suppose  that $M$, $\delta$, $K_{4}$ are given, and $x_{2}\not\in C_{G}(u)x_{1}$ with $d_{X}(x_{1},x_{2})<\delta$. Then by the $H$-property of the unipotent flow (Theorem \ref{h-property202103.1} and Remark \ref{joinings202103.4}), we know that there are   $L>M/\alpha$ and $w\in B^{\mathfrak{n}}_{\lambda}$  such that
      \begin{equation}\label{joinings202012.7}
    d_{X}(\phi^{U}_{n}x_{1},\phi^{w}_{1}\phi^{U}_{n}x_{2})<\epsilon
   \end{equation}
        for $n\in[L,(1+\alpha)L]$.
   Next, we shall find some qualified $x_{1},x_{2}\in X$ such that the distance between $\phi^{U}_{n}x_{1}$ and $\phi^{w}_{1}\phi^{U}_{n}x_{2}$ is at least $\epsilon$. This will lead to a contradiction.

   First, applying the ergodic theorem, there is a measurable set $K_{3}\subset \Omega$ with $\rho(K_{3})>1-\alpha/2(100+\alpha)$, a number $M_{1}>0$ such that
   \begin{equation}\label{joinings202103.7}
     \frac{1}{n}\left|\{k\in [0,n]:(\phi^{U}_{1}\times S)^{k}(x,y)\in K_{2}\}\right|>   \frac{9}{10}
   \end{equation}
  for $(x,y)\in K_{3}$ and $n>M_{1}$. Applying the ergodic theorem one more time, there is a measurable set $K_{4}\subset \Omega$ with $\rho(K_{4})>0$, a number $M_{2}>0$ such that
  \begin{equation}\label{joinings202103.6}
    \frac{1}{n}\left|\{k\in [0,n]:(\phi^{U}_{1}\times S)^{k}(x,y)\in K_{3}\}\right|>   1-\frac{\alpha}{10+\alpha}
  \end{equation}
  for $(x,y)\in K_{4}$ and $n>M_{2}$.

   Choose $M=\max\{M_{1},M_{2}\}$ and then $L>M/\alpha$ and $\delta=\delta(\epsilon,M)>0$ as obtained from the H-property (Theorem \ref{h-property202103.1}). Let  $(x_{1},y),(x_{2},y)\in K_{4}$ with $d_{X}(x_{1},x_{2})<\delta$.
   Then replacing $n$ by $(1+\alpha/10)L$ and applying (\ref{joinings202103.6}), we know that
     \[(\phi^{U}_{1}\times S)^{s}(x_{1},y),(\phi^{U}_{1}\times S)^{t}(x_{2},y)\in K_{3}\]
    for some  integers $s,t\in [L,(1+\alpha/10)L]$. Further,  replacing the interval  $[0,n]$ by $[s,(1+\alpha)L]$ (resp. $[t,(1+\alpha)L]$) and applying (\ref{joinings202103.7}), we know that
    \[\frac{1}{(1+\alpha)L-s}\left|\{k\in [s,(1+\alpha)L]:(\phi^{U}_{1}\times S)^{k}(x_{1},y)\in K_{2}\}\right|>   \frac{9}{10}\]
     \[\frac{1}{(1+\alpha)L-t}\left|\{k\in [t,(1+\alpha)L]:(\phi^{U}_{1}\times S)^{k}(x_{2},y)\in K_{2}\}\right|>   \frac{9}{10}.\]
  It follows that   there exists $n\in [(1+\alpha/10)L,(1+\alpha)L]$ such that
  \[(\phi^{U}_{1}\times S)^{n}(x_{1},y),\  (\phi^{U}_{1}\times S)^{n}(x_{2},y)\in K_{2}.\]
Then by (\ref{joinings202012.5}), we have
 \[ d_{X}(\phi^{U}_{n}x_{1},\phi^{w}_{1}\phi^{U}_{n}x_{2})\geq\epsilon\]
 which contradicts (\ref{joinings202012.7}).
\end{proof}

Recall that via disintegration (cf. (\ref{joinings202012.8})), we have
    \[ \rho=\int_{Y}\rho_{y}d\nu(y).\]
Then by the ergodic theory, we have
\begin{lem}\label{joinings202103.8} There exists a $\nu$-conull set $\Theta\subset Y$  and   $n>0$ such that   for any $y\in \Theta$, there exist $x_{1}^{y},\ldots,x_{n}^{y}$ in the support of $\rho_{y}$ with
  \[\rho_{y}(C_{G}(u)x_{i}^{y})=\frac{1}{n}\]
  for $i=1,\ldots,n$.
\end{lem}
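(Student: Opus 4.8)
The plan is to bootstrap Lemma~\ref{joinings202012.6} into a genuinely fiberwise statement by an ergodic-theoretic argument, in the spirit of Ratner's finite-extension theorem for horocycle joinings. First I would strengthen Lemma~\ref{joinings202012.6}: re-running its proof with a large threshold $M$ shows that for every small $\eta>0$ there are $\delta_\eta>0$ and a set $K_4^\eta\subset\Omega$ with $\rho(K_4^\eta)>1-\eta$ such that whenever $(x_1,y),(x_2,y)\in K_4^\eta$ and $d_X(x_1,x_2)<\delta_\eta$ one has $x_2\in C_G(u)x_1$. Applying the Birkhoff ergodic theorem for the ergodic system $(\phi_1^U\times S,\rho)$ to $\mathbf 1_{K_4^\eta}$, together with Egorov, produces a further set $G_\eta$ with $\rho(G_\eta)>1-\eta$ and an integer $N_\eta$ so that every point of $G_\eta$ spends more than a $(1-2\eta)$-fraction of its first $n$ iterates in $K_4^\eta$ for all $n\ge N_\eta$. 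Disintegrating $\rho=\int_Y\rho_y\,d\nu(y)$, for all $y$ outside a set of $\nu$-measure $<\sqrt\eta$ one then has $\rho_y(G_\eta^y)\ge 1-\sqrt\eta$.

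The main step is to establish a uniform separation of distinct $C_G(u)$-orbits inside $\supp\rho_y$. Fix $\eta<1/4$ and $N=N_\eta$, and use that $\Ad(u^t)$ grows polynomially to choose $\delta''>0$ so small that $d_X(\phi_k^U x,\phi_k^U x')<\delta_\eta$ whenever $k<N$, $x,x'$ lie in a fixed ball, and $d_X(x,x')<\delta''$. Suppose $x,x'\in G_\eta^y$ with $d_X(x,x')<\delta''$ but $x'\notin C_G(u)x$. Each of the forward orbits of $(x,y)$ and $(x',y)$ visits $K_4^\eta$ for more than $(1-2\eta)N$ of the times $k\in\{0,\dots,N-1\}$, so, $\eta<1/4$ forcing these two time-sets to overlap, there is a common $k<N$ with $(\phi_k^U x,S^k y),(\phi_k^U x',S^k y)\in K_4^\eta$; since the two points remain $\delta_\eta$-close there, the strengthened Lemma~\ref{joinings202012.6} gives $\phi_k^U x'\in C_G(u)\phi_k^U x$, and applying $u^{-k}\in C_G(u)$ yields $x'\in C_G(u)x$ --- a contradiction. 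Hence every ball of radius $<\delta''/2$ meets $G_\eta^y$ in at most one $C_G(u)$-orbit; covering $X$ by countably many such balls shows $\rho_y|_{G_\eta^y}$ is carried by countably many $C_G(u)$-orbits, and letting $\eta\to 0$ along a summable sequence (Borel--Cantelli) we conclude that for $\nu$-a.e.\ $y$ the whole of $\rho_y$ is carried by a countable family $O_1^y,O_2^y,\dots$ of $C_G(u)$-orbits with masses $m_1(y)\ge m_2(y)\ge\cdots$ and $\sum_i m_i(y)=1$.

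Finally I would pin down the masses. Since $u\in C_G(u)$, the map $\phi_1^U$ carries each $C_G(u)$-orbit bijectively onto itself, so from the disintegration identity $\rho_{Sy}=(\phi_1^U)_{\ast}\rho_y$ the (unordered) family of positive-mass orbits and their masses coincide at $y$ and at $Sy$; thus $y\mapsto(m_i(y))_{i\ge1}$ is an $S$-invariant measurable map and, $(S,\nu)$ being ergodic (it is the $Y$-marginal of the ergodic $\rho$), it is $\nu$-a.e.\ equal to a fixed decreasing sequence $(m_i)_i$, so in particular $n:=\#\{i:m_i>0\}$ is constant. Then the set $A=\{(x,y): x\in O_i^y \text{ for some } i \text{ with } m_i=m_1\}$ is $(\phi_1^U\times S)$-invariant and satisfies $\rho(A)=m_1\cdot\#\{i:m_i=m_1\}>0$, hence $\rho(A)=1$ by ergodicity; combined with $\sum_i m_i=1$ this forces $m_1=\cdots=m_n=1/n$ and $m_i=0$ for $i>n$. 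Taking $\Theta$ to be the $\nu$-conull set of $y$ on which all of the above holds and picking $x_i^y\in O_i^y\cap\supp\rho_y$ completes the proof.

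The step I expect to be the main obstacle is the uniform orbit-separation in the second paragraph. It forces one to have simultaneously a version of Lemma~\ref{joinings202012.6} with $\rho(K_4^\eta)$ arbitrarily close to $1$ \emph{and} a uniform (Egorov) rate in the ergodic theorem, so that a single horizon $N$ and a single $\delta''>0$ control a full-measure set of fibers at once; the polynomial growth of $\Ad(u^t)$ is precisely what prevents $\phi_k^U$ from separating a $\delta''$-close pair beyond $\delta_\eta$ before a common return to $K_4^\eta$ occurs, and the passage from ``most of $\rho_y$ is carried by countably many orbits'' to genuine countable orbit-atomicity, and thence to finiteness via the mass argument, will also need to be handled with some care.
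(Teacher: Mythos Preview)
Your proof is correct, but the paper's argument is considerably shorter and avoids almost all of the machinery you set up. The paper never strengthens Lemma~\ref{joinings202012.6} to a set of measure $>1-\eta$, never invokes Egorov or the polynomial-growth separation argument, and never establishes countable orbit-atomicity of $\rho_y$ as a preliminary step. Instead it works directly with the maximal-mass function
\[
f(y)\;=\;\sup_{x\in X}\rho_y\bigl(C_G(u)x\bigr).
\]
From Lemma~\ref{joinings202012.6} with $\rho(K_4)>0$ one gets $f(y)>0$ on the positive-$\nu$-measure set $K_4^Y=\{y:\rho_y(K_4^y)>0\}$ (the same ``cover by $\delta$-balls'' observation you use, but applied only to $K_4^y$ to show a single orbit has positive mass). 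Since $u\in C_G(u)$, the function $f$ is $S$-invariant, hence $f\equiv c>0$ on a $\nu$-conull set. Then the set
\[
D\;=\;\{(x,y):\rho_y(C_G(u)x)=c\}
\]
is $(\phi_1^U\times S)$-invariant with $\rho(D)>0$, so $\rho(D)=1$; this forces every $C_G(u)$-orbit meeting $\supp\rho_y$ to have mass exactly $c$, whence $n=1/c$ is an integer.

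Note that your final step --- the set $A=\{(x,y):\rho_y(C_G(u)x)=m_1\}$ is invariant, positive, hence full --- is exactly the paper's set $D$; so the endgames coincide, and what differs is only the route to ``$m_1>0$ a.e.'' Your route (strengthen $K_4$, Egorov, finite-horizon separation, countable atomicity) is valid but unnecessary: since $f$ is $S$-invariant, positivity on a set of positive measure already gives positivity a.e.\ by ergodicity, with no need to push $\rho(K_4^\eta)\to 1$. Your approach does produce the intermediate fact that $\rho_y$ is countably $C_G(u)$-orbit-atomic, but this is not used beyond what the max-mass argument already yields.
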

\begin{proof}
   Let $f:Y\rightarrow\mathbf{R}$ be defined by
   \[f:y\mapsto\sup_{x\in X}\rho_{y}(C_{G}(u)x).\]
   By Lemma \ref{joinings202012.6}, we know that for $y\in K_{4}^{Y}\coloneqq\{y\in Y:\rho_{y}\{x\in X:(x,y)\in K_{4}\}>0\}$, $f(y)>0$. Note also that $\nu(K_{4}^{Y})>0$ and $f$ is $S$-invariant. By the ergodicity,   $f$ is a positive constant, say $f\equiv c$, on a $\nu$-conull set $\Theta_{1}\subset Y$.

   Next, consider
     \[D\coloneqq\{(x,y)\in X\times Y:y\in\Theta_{1},\ \rho_{y}(C_{G}(u)x)=c\}.\]
     Then $D$ is $(\phi^{U}_{1}\times S)$-invariant and $\rho(D)>0$. Thus, $\rho(D)=1$. Next, define
     \[\Theta\coloneqq\{y\in \Theta_{1}:\rho_{y}\{x\in X:(x,y)\in D\}=1\}.\]
     Then $\Theta\subset Y$ is an $S$-invariant $\nu$-conull set. Thus, for any $y\in\Theta$,  we have
     \[\rho_{y}(C_{G}(u)x)\equiv c\]
     for any $x\in X$ with $(x,y)\in D$. It forces $n=1/c$ to be an integer. Besides, for any $y\in\Theta$, there are only finitely many points  $x_{1}^{y},\ldots,x_{n}^{y}$ with
  \[\rho_{y}(C_{G}(u)x_{i}^{y})=\frac{1}{n}\]
  for $i=1,\ldots,n$.
\end{proof}

 Thus, by Lemma \ref{joinings202103.8}, we see that $\rho_{y}$ supports on $\bigsqcup_{i=1}^{n}C_{G}(u)x_{i}^{y}$ whenever $y\in\Theta$. With a further effort, we observe that these $\rho_{y}$ must have a compact support.

\begin{proof}[Proof of Theorem \ref{joinings202012.1}]
   For a Borel measurable subset $A\subset C_{G}(u)$, consider the map $f_{A}:X\times Y\rightarrow\mathbf{R}^{+}$ be defined by
   \[f_{A}:(x,y)\mapsto \rho_{y}(Ax).\]
    Note that since $\rho$ is $(\phi^{U}_{1}\times S)$-invariant, we have
  \[(\phi^{U}_{1})_{\ast}\rho_{y}=\rho_{Sy}.\]
  It follows that
  \[f_{A}(x,y)=\rho_{y}(Ax)=\rho_{Sy}(\phi^{U}_{1}Ax)=\rho_{Sy}(A\phi^{U}_{1}x)=f_{A}(\phi^{U}_{1}x,Sy).\]
  In other words, $f_{A}$ is $(\phi^{U}_{1}\times S)$-invariant and therefore is $\rho$-a.e. a constant, say $m(A)$. Thus, for any $A\in\mathcal{B}(C_{G}(u))$, there exists a $\rho$-conull set $\Omega_{A}\subset X\times Y$, such that
  \begin{equation}\label{joinings202103.9}
   \rho_{y}(Ax)\equiv m(A)
  \end{equation}
  for $(x,y)\in \Omega_{A}$.

  Next, we consider the fundamental domain, i.e. a Borel subset $F\subset C_{G}(u)$ such that the natural map $F\rightarrow C_{G}(u)/(C_{G}(u)\cap \Gamma)$ defined by $g\mapsto g\Gamma$ is bijective. Then since $\mathcal{B}(F)$ is countably generated, by \textit{Carath\'{e}odory's extension theorem}\index{Carath\'{e}odory's extension theorem}, we know that
 $m:\mathcal{B}(F)\rightarrow \mathbf{R}^{+}$ is a measure. Besides, it follows from (\ref{joinings202103.9}) that there exists a $\rho$-conull set $\Omega\subset X\times Y$, such that
  \begin{equation}\label{joinings202012.11}
   \rho_{y}(Ax)\equiv m(A)
  \end{equation}
  for $(x,y)\in \Omega$, $A\in\mathcal{B}(F)$.

   Now assume that (\ref{joinings202012.11}) holds for $(x,y),(gx,y)\in \Omega$ and $g\in C_{G}(u)$. Then
   \[m(A)=\rho_{y}(Agx)=m(Ag)\]
   for $A\in\mathcal{B}(F)$. In other words, $m$ is $g$-(right) invariant and so is (right) Haar. Note that $C_{G}(u)$ is unimodular (since its Lie algebra $C_{\mathfrak{g}}(U)$ is a direct sum of a compact and a nilpotent Lie subalgebra). We conclude that $m$ is also a (left) Haar measure, and therefore $\rho_{y}$ is (left) Haar on $C_{G}(u)x$ for $(x,y)\in\Omega$.

   Let $C^{\rho}$ be the stabilizer of $m$. Then the above result shows that $\rho$ is $(C^{\rho}\times\id)$-invariant. Thus, according to Corollary \ref{joinings202103.3}, $C^{\rho}$ must be compact. This finishes the proof of Theorem \ref{joinings202012.1}.
\end{proof}

Using Theorem \ref{joinings202012.1}, for any   ergodic joining $\rho$  of $\phi_{1}^{U}$ and  $S$ on $X\times Y$, we obtain an ergodic joining $\overline{\rho}\coloneqq\pi_{\ast}\rho$ of $\phi_{1}^{U}$ and  $S$ on $C^{\rho}\backslash X\times Y$ under the natural projection $\pi:   X\times Y\rightarrow C^{\rho}\backslash X\times Y$. Moreover, when $\overline{\rho}\neq \overline{\mu}\times\nu$ is not the product measure, it is a finite extension of $\nu$, i.e. $\supp\overline{\rho}_{y}$ consists of exactly $n$ points $\overline{x}_{1}^{y},\ldots,\overline{x}_{n}^{y}$ for $\nu$-a.e. $y\in Y$ (without loss of generality, we shall assume that it holds for all $y\in Y$). Note that $y\mapsto\overline{x}_{i}^{y}$ need not be measurable. However, this can be resolved by using \textit{Kunugui's theorem} (see \cite{Kunugui1940SurUP}, \cite{Kallman1975CertainQS}).

Therefore, let $\overline{X}\coloneqq C^{\rho}\backslash X$, $\pi_{X}:X\times Y\rightarrow X$, $\pi_{\overline{X}}:\overline{X}\times Y\rightarrow \overline{X}$, $\pi_{Y}:\overline{X}\times Y\rightarrow Y$ be the natural projections.
By Kunugui's theorem, we are able to find $\hat{\psi}_{i}:Y\rightarrow \overline{X}\times Y$ for $i=1,\ldots,n$ such that $\pi_{Y}\circ\hat{\psi}_{i}=\id$ and $\hat{\psi}_{i}(Y)\cap \hat{\psi}_{j}(Y)=\emptyset$ whenever $i\neq j$. Let
\begin{equation}\label{joinings202104.23}
 \Omega_{i}\coloneqq \hat{\psi}_{i}(Y),\ \ \ \overline{\psi}_{i}\coloneqq\pi_{\overline{X}}\circ\hat{\psi}_{i}.
\end{equation}
 Then $\rho(\Omega_{i})=1/n$,   $ \bigcup \Omega_{i}=\supp\overline{\rho}$, and $\Omega\cap\supp\overline{\rho}_{y}$ consists of exactly one point.
Next, we can apply Kunugui's theorem again and obtain $\psi_{i}:Y\rightarrow X$ so that $P_{X}\circ \psi_{i}=\overline{\psi}_{i}$ where $P_{X}:X\rightarrow\overline{X}$.

\section{Shearing property II, time changes of unipotent flows}\label{joinings202103.13}
We continue to study the  shearing property of  unipotent flows. More precisely, we shall study the shearing in directions different from Section \ref{joinings202103.10} and deduce the following Proposition   \ref{joinings202104.57}. In fact, in Section \ref{joinings202103.10}, we study the shearing between points of the form $(x,y),(gx,y)\in X\times Y$ for some $g\in G_{X}$ sufficiently close to the identity.  Thus, the information basically comes from the $X$-factor. However, in this section, we shall study the shearing between points of the form $(\psi(y),y),(\psi(gy),gy)\in C^{\rho}\backslash X \times Y$   where $\psi:Y\rightarrow C^{\rho}\backslash X$ is a measurable map and $g\in G_{X}$ is sufficiently close to the identity. Thus, the time-change   on $Y$ comes into play.   The technique used in Proposition \ref{joinings202104.57} generalizes the ideas in \cite{ratner1986rigidity} \cite{tang2020new}, and provides us a quantitative estimate of a unipotent shearing on the double quotient space $C^{\rho}\backslash G_{X}/\Gamma_{X}$. Roughly speaking, Proposition \ref{joinings202104.57} helps us better understand the non-shifting time under a unipotent shearing.

\subsection{Preliminaries}
We start with a combinatorial result.  Let $I$ be an interval in $\mathbf{R}$ and let $J_{i},J_{j}$ be disjoint subintervals of $I,J_{i}=[x_{i},y_{i}]$, $y_{i}<x_{j}$ if $i<j$. Denote
\[d(J_{i},J_{j})\coloneqq \Leb[y_{i},x_{j}]=x_{j}-y_{i}.\]
For a collection $\beta$ of finitely many intervals, we define
\[|\beta|\coloneqq \Leb\left(\bigcup_{J\in \beta}J\right).\]
Besides, for a collection $\beta$ of finitely many intervals, an interval $I$, let
\[\beta\cap I\coloneqq\{I\cap J:J\in\beta\}.\]
\begin{prop}
  [Existence of large intervals,  Solovay  \cite{ratner1979cartesian}]\label{joinings202104.27}
   Given $\eta\in(0,1)$, $\zeta\in(0,1)$, there is $\theta=\theta(\zeta,\eta)\in(0,1)$ such that if $I$ is an interval of length $\lambda\gg 1 $   and $\alpha=\{J_{1},\ldots,J_{n}\}=\mathcal{G}\cup\mathcal{B}$ is a partition of $I$ into good and bad intervals such that
  \begin{enumerate}[\ \ \ (1)]
    \item   for any two good intervals $J_{i},J_{j}\in\mathcal{G}$, we have
    \begin{equation}\label{joinings202104.28}
      d(J_{i},J_{j})\geq[\min\{\Leb(J_{i}),\Leb(J_{j})\}]^{1+\eta},
    \end{equation}
    \item $ \Leb(J)\leq \zeta\lambda$ for any good interval $J\in\mathcal{G}$,
    \item $ \Leb(J)\geq 1$ for any bad interval $J\in\mathcal{B}$,
  \end{enumerate}
  then the measure of bad intervals $\Leb(\bigcup_{J\in \mathcal{B}}J)\geq \theta\lambda$. More precisely, we can take
   \[\theta=\theta(\zeta,\eta)= \prod_{n=0}^{\infty} \left(1+ C\zeta^{n\eta}\right)^{-1}\]
   for some constant $C>0$ (independent of $\zeta,\eta$).
\end{prop}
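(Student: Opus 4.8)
The assertion is purely combinatorial, and since $\alpha$ partitions $I$ we have $\Leb(\bigcup_{J\in\mathcal G}J)+\Leb(\bigcup_{J\in\mathcal B}J)=\lambda$; thus it is equivalent to prove the upper bound $\Leb(\bigcup_{J\in\mathcal G}J)\le(1-\theta)\lambda$ on the total length of the good intervals. The plan is to sort the good intervals into $\zeta$-adic length scales, to bound how densely the intervals of a given scale can sit inside $I$ using the super-linear separation hypothesis (1), and to accumulate these per-scale losses into the infinite product, using hypothesis (2) to cut off the top of the scale range and hypothesis (3) its bottom.

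Concretely, for $n\ge 1$ I would put $\mathcal G_n:=\{J\in\mathcal G:\zeta^{n+1}\lambda<\Leb(J)\le\zeta^n\lambda\}$; hypothesis (2) guarantees $\mathcal G=\bigsqcup_{n\ge1}\mathcal G_n$, and only finitely many $\mathcal G_n$ are nonempty. Any two members of $\mathcal G_n$ lie at distance at least $g_n:=(\zeta^{n+1}\lambda)^{1+\eta}$ by (1), so inside a subinterval $W$ containing $k$ of them one has $\Leb(W)\ge k\zeta^{n+1}\lambda+(k-1)g_n$, which lets one estimate the fraction of $W$ they occupy. The argument then peels scales off from coarsest to finest: one forms $I=\mathcal U_1\supset\mathcal U_2\supset\cdots$ with $\mathcal U_{n+1}$ obtained from $\mathcal U_n$ by deleting the members of $\mathcal G_n$ (each contained in a single component of $\mathcal U_n$, being disjoint from every coarser good interval), applies the local estimate on each component, and reads off a recursion of the shape $\Leb(\mathcal U_{n+1})\ge(1+C\zeta^{n\eta})^{-1}\Leb(\mathcal U_n)$ for a suitable absolute constant $C$; the constant works out to be independent of $\zeta,\eta$ precisely because we are free to assume $\lambda\ge\lambda_0(\zeta,\eta)$, which is the meaning of $\lambda\gg 1$. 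Iterating and noting that $\mathcal U_\infty$, containing no good interval, is covered by bad intervals gives $\Leb(\bigcup_{J\in\mathcal B}J)\ge\lambda\prod_{n\ge1}(1+C\zeta^{n\eta})^{-1}$; re-indexing as $\prod_{n\ge0}$ merely inserts one extra factor $<1$ and so can only weaken the bound.

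I expect the genuine work to be the two boundary effects, not the density estimate. Near the transition scale $n^\ast\approx\log_{1/\zeta}\lambda$ the quantity $g_n$ drops to order $1$, a component of $\mathcal U_n$ may be too short to hold two members of $\mathcal G_n$, and the crude density bound becomes vacuous; this is exactly where (3) is indispensable — between any two consecutive good intervals there is a positive gap by (1), hence a partition element sitting inside it, hence a bad interval of length $\ge 1$, so short good intervals cannot cluster. Secondly, applying the local estimate componentwise costs one additive error, comparable to a single good-interval length, per connected component of $\mathcal U_n$; the number of components is of order $\zeta^{-n}$ (it is controlled by the number of coarser good intervals already removed, each of length $>\zeta^{m+1}\lambda$), which is just large enough that these errors do not visibly sum to something negligible. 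The way to neutralize them is to delete, along with each good interval of $\mathcal G_n$, a buffer of length comparable to $g_n$ taken from the adjacent region — legitimate for the coarse scales because the gap next to such an interval already has length $\ge g_n$, and for the fine scales because of (3) — so that the deleted blocks tile $\mathcal U_n$ without overhead and the recursion holds cleanly. Carrying this bookkeeping out and then optimizing $C$ produces $\theta=\prod_{n\ge0}(1+C\zeta^{n\eta})^{-1}$.
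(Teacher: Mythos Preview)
Your strategy matches the paper's: stratify $\mathcal G$ into $\zeta$-adic length scales $\mathcal G_n$, peel them off coarsest-to-finest while tracking the surviving measure (your $\mathcal U_n$ is the paper's $\mathcal B_{\le n-1}$), and bound each step multiplicatively using the separation hypothesis~(1); the product of the per-step factors is $\theta$.

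One correction to the recursion you wrote. The density estimate on a component of $\mathcal U_n$ gives
\[
\frac{\text{removed}}{\text{remaining}}\ \lesssim\ \frac{\zeta^{n}\lambda}{(\zeta^{n+1}\lambda)^{1+\eta}}\ \asymp\ (\zeta^{n}\lambda)^{-\eta}\ \asymp\ \zeta^{(k-n)\eta},
\]
where $\lambda\asymp\zeta^{-k}$, so the one-step factor is $(1+C\zeta^{(k-n)\eta})^{-1}$, not $(1+C\zeta^{n\eta})^{-1}$: the loss is negligible at the \emph{coarse} scales and worst at the \emph{fine} ones, exactly opposite to what your formula says. This is what the paper obtains, and it is precisely why condition~(3) is needed to terminate the iteration at $n\approx k$ (as you correctly diagnose, every gap between consecutive good intervals contains a bad interval and hence has length $\ge 1$, so scales with $\zeta^{n}\lambda<1$ are empty). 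After the substitution $m=k-n$ the product $\prod_{n=0}^{k}(1+C\zeta^{(k-n)\eta})^{-1}$ becomes $\prod_{m=0}^{k}(1+C\zeta^{m\eta})^{-1}\ge\prod_{m\ge0}(1+C\zeta^{m\eta})^{-1}$, so your final bound is unaffected. Your treatment of the component boundary errors via buffers is more explicit than the paper's, which simply asserts ``a similar relation'' holds in the degenerate cases.
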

\begin{proof}
 Assume that  $\zeta^{1-k} \leq \lambda\leq \zeta^{-k}$ for some $k\geq1$.  Let \[\mathcal{G}_{n}\coloneqq\{J\in\mathcal{G}:\zeta^{n+1}\lambda\leq |J|\leq \zeta^{n}\lambda\},\]
  $\mathcal{G}_{\leq n}\coloneqq\bigcup_{i=1}^{n}\mathcal{G}_{i}$, and $\mathcal{B}_{\leq n}$ be the collection of remaining intervals forming $I\setminus\bigcup_{J\in\mathcal{G}_{\leq n}}J$.  Then  for $n\in\mathbf{N}$, $J\in \mathcal{B}_{\leq n}$, by (\ref{joinings202104.28}), we have
    \begin{align}
\frac{|\mathcal{B}_{\leq n+1}\cap J|}{\Leb(J)}=&\frac{|\mathcal{B}_{\leq n+1}\cap J|}{|\mathcal{G}_{n+1}\cap J|+|\mathcal{B}_{\leq n+1}\cap J|}= \left(1+\frac{|\mathcal{G}_{n+1}\cap J|}{|\mathcal{B}_{\leq n+1}\cap J|}\right)^{-1}  \;\nonumber\\
\geq& \left(1+\frac{l\zeta^{n+1}\lambda}{(l-1) \zeta^{(n+2)(1+\eta)}\lambda^{1+\eta}}\right)^{-1} = \left(1+ C\zeta^{(k-n)\eta}\right)^{-1}  \;  \nonumber
\end{align}
where $l\geq 2$ is the number of intervals in $\mathcal{G}_{n+1}\cap J$, and $C>0$ is some constant depending on $\eta$ and $\zeta$. One can also show that when $k=0,1$, we have a similar relation. By summing over $J\in\mathcal{B}_{\leq n}$, we obtain
\[\frac{|\mathcal{B}_{\leq n+1}|}{|\mathcal{B}_{\leq n}|}\geq\left(1+ C\zeta^{(k-n)\eta}\right)^{-1}. \]
Note that by (2), $|\mathcal{B}_{\leq 0}|=\lambda$, and by (3), $\mathcal{B}_{\leq n}=\mathcal{B}_{\leq n+1}$ for all $n\geq k$. We calculate
\[|\mathcal{B}|=|\bigcap_{k\geq 0}\mathcal{B}_{\leq k}|=\lim_{k\rightarrow\infty}|\mathcal{B}_{\leq k}|=\prod_{n=0}^{\infty}\frac{|\mathcal{B}_{\leq n+1}|}{|\mathcal{B}_{\leq n}|}\cdot \lambda\geq \prod_{n=0}^{k} \left(1+ C\zeta^{(k-n)\eta}\right)^{-1}\cdot \lambda.\]
Now note that
\[\theta(\zeta,\eta)=\prod_{n=0}^{\infty} \left(1+ C\zeta^{n\eta}\right)^{-1}\leq\prod_{n=0}^{k} \left(1+ C\zeta^{(k-n)\eta}\right)^{-1} \]
 and the proposition follows.
\end{proof}

In light of (\ref{joinings202104.28}), we make the following definition.
\begin{defn}[Effective gaps between intervals]
  We say that two intervals $I,J\subset\mathbf{R}$ have an \textit{effective gap}\index{effective gap} if
\[   d(I,J)\geq[\min\{\Leb(I),\Leb(J)\}]^{1+\eta}\]
for some $\eta>0$.
Later, we shall obtain some quantitative results relative to the effective gap.
\end{defn}
\begin{rem}\label{joinings202103.15}
  It is worth noting that if $\mathcal{A}$ and $\mathcal{B}$ are collections of intervals with effective gaps, then the intersection $\mathcal{A}\cap\mathcal{B}\coloneqq\{I\cap J:I\in\mathcal{A},\ J\in\mathcal{B}\}$ also have effective gaps. More generally, assume that $\mathcal{A}$ and $\mathcal{B}$ are collections of intervals. If $J_{1},J_{2}\in\mathcal{A}\cap\mathcal{B}$ have an effective gap, then there is a pair of intervals $I_{1},I_{2}$, either in $\mathcal{A}$ or in $\mathcal{B}$,  such that $J_{1}\subset I_{1}$, $J_{2}\subset I_{2}$ and  $I_{1},I_{2}$ have an effective gap.
\end{rem}

In the following, we shall use the asymptotic notation:
\begin{itemize}
  \item  $A\ll B$ or $A=O(B)$ means there is a constant $C>0$ such that $A\leq CB$ (we also write $A\ll_{\kappa} B$ if the constant $C(\kappa)$ depends on some coefficient $\kappa$);
  \item  $A=o(B)$ means that $A/B\rightarrow0$ as $B\rightarrow0$;
  \item  $A\asymp B$ means there is a constant $C>1$ such that $C^{-1}B\leq A\leq CB$;
  \item $A\approx0$ means $A\in (0,1)$ close to $0$, and $A\approx1$ means $A\in (0,1)$ close to $1$.
\end{itemize}
 Similar to  \cite{tang2020new}, we need to following quantitative property of polynomials.

\begin{lem}\label{dynamical systems4} Fix numbers $R_{0}>0, \kappa\in(0,1]$, a real polynomial $p(x)=v_{0}+v_{1}x+\cdots+v_{k}x^{k}\in \mathbf{R}[x]$. Assume further that there exist intervals $[0,\overline{l}_{1}]\cup[l_{2},\overline{l}_{2}]\cup\cdots\cup[l_{m},\overline{l}_{m}]$ such that
\begin{equation}\label{dynamical systems3}
  |p(t)|\ll\max\{R_{0},t^{1-\kappa}\}\ \  \text{ iff } \ \ t\in[0,\overline{l}_{1}]\cup[l_{2},\overline{l}_{2}]\cup\cdots\cup[l_{m},\overline{l}_{m}]
\end{equation}
Then $\overline{l}_{1}$  has the  lower bound $l$  depending on $\max_{i}|v_{i}|$, $R_{0}$, $\kappa$ and the implicit constant such that $l\nearrow\infty$ as $\max_{i}|v_{i}|\searrow0$ for  fixed $R_{0},\kappa$.
Besides, $m\leq k$ and we have
   \begin{enumerate}[\ \ \ (1)]
     \item  $|v_{i}|\ll_{k,\kappa} R_{0}\overline{l}_{1}^{1-i-\kappa} $ for all $1\leq i\leq k$;
     \item Fix $\eta\approx0$. For $1\leq j\leq k-1$, sufficiently large $\overline{l}_{j}$, assume that the intervals $[0,\overline{l}_{j}]$ and $[l_{j+1},\overline{l}_{j+1}]$ do not have an effective gap:
     \begin{equation}\label{dynamical systems6}
       l_{j+1}-\overline{l}_{j}\leq \min\{\overline{l}_{j},\overline{l}_{j+1}-l_{j+1}\}^{1+\eta}.
     \end{equation}
     Then there exists $1\approx\xi(\eta,k)\in(0,1)$ with $\xi(\eta,k)\rightarrow1$ as $\eta\rightarrow0$ such that
     \[|v_{i}|\ll_{k,\kappa} \overline{l}_{j}^{\xi(\eta,k)(1-i-\kappa)}\]
for all $1\leq i\leq k$.
   \end{enumerate}
\end{lem}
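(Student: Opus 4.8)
The statement is Lemma \ref{dynamical systems4}, a quantitative estimate on the coefficients of a real polynomial $p(x) = v_0 + v_1 x + \cdots + v_k x^k$ whose sublevel set $\{t : |p(t)| \ll \max\{R_0, t^{1-\kappa}\}\}$ on $[0,\infty)$ consists of a finite union of intervals. I would begin with the structural claim that $m \le k$: since the polynomial $p(t)^2 - c^2 \max\{R_0, t^{1-\kappa}\}^2$ (for the implicit constant $c$) changes sign at each endpoint $l_i, \overline{l}_i$, and on $[R_0^{1/(1-\kappa)},\infty)$ the function $\max\{R_0,t^{1-\kappa}\} = t^{1-\kappa}$ is real-analytic, the sublevel set $\{|p(t)| \le c\, t^{1-\kappa}\}$ is cut out by the two polynomial inequalities $p(t) \le c t^{1-\kappa}$ and $p(t) \ge -c t^{1-\kappa}$, i.e. by the sign of $p(t)^2 t^{2\kappa - 2} - c^2$; clearing denominators gives a polynomial of degree $2k$ in $t$ but the bound on the number of intervals comes from the fact that $p(t)/t^{1-\kappa}$ is, after substitution, monotone in pieces — more carefully, one counts that $|p(t)| = c t^{1-\kappa}$ has at most $\deg p = k$ solutions on each branch once $t$ is large, plus the behavior near $R_0^{1/(1-\kappa)}$, yielding $m \le k$ (a Descartes/Rolle argument on $p(t)^2 t^{2\kappa-2}$ or its logarithmic derivative).

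For the lower bound on $\overline{l}_1$: on $[0,\overline{l}_1]$ we have $|p(t)| \ll \max\{R_0, t^{1-\kappa}\}$. I would split the interval $[0, R_0^{1/(1-\kappa)}]$, where the bound reads $|p(t)| \ll R_0$, and use a standard norm-comparison on the finite-dimensional space of polynomials of degree $\le k$: if $R_0^{1/(1-\kappa)}$ is bounded below, then $\sup_{t \in [0,R_0^{1/(1-\kappa)}]}|p(t)| \gg_{k} \max_i |v_i|$, forcing $\max_i |v_i| \ll_{k} R_0$; but we want the reverse direction, namely as $\max_i |v_i| \searrow 0$, $\overline{l}_1 \nearrow \infty$. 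For that, compare $|p(t)|$ with $t^{1-\kappa}$ at the scale $t = \overline{l}_1$: evaluating the comparison at dyadic points $t \asymp \overline{l}_1$ and using that the $v_i$ are small, one sees $|p(t)| \le C \max_i|v_i| \cdot (1 + t^k)$, which stays below $c\max\{R_0, t^{1-\kappa}\}$ up to $t = l$ with $l \to \infty$ as $\max_i|v_i| \to 0$; this gives the claimed monotone lower bound.

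For part (1): from $|p(t)| \ll \max\{R_0, t^{1-\kappa}\} = t^{1-\kappa}$ for $t$ in a long enough final stretch of $[0,\overline{l}_1]$ (say on a subinterval of length $\asymp \overline{l}_1$, valid once $\overline{l}_1 \gg R_0^{1/(1-\kappa)}$), I would sample $p$ at $k+1$ well-separated points $t_0 < t_1 < \cdots < t_k$ all of size $\asymp \overline{l}_1$ and invert the Vandermonde system: the coefficients $v_i$ are linear combinations of the values $p(t_j)$ with coefficients of size $\ll_k \overline{l}_1^{-i}$, so $|v_i| \ll_{k} \overline{l}_1^{-i} \cdot \max_j |p(t_j)| \ll_{k,\kappa} \overline{l}_1^{-i} \cdot \overline{l}_1^{1-\kappa} = \overline{l}_1^{1-i-\kappa}$. (If instead $\overline{l}_1 \lesssim R_0^{1/(1-\kappa)}$ one gets $|v_i|\ll_k R_0 \overline{l}_1^{-i}\ll_k R_0\overline{l}_1^{1-i-\kappa}$, matching the stated bound.) For part (2), the hypothesis \eqref{dynamical systems6} says $[0,\overline{l}_j]$ and $[l_{j+1},\overline{l}_{j+1}]$ fail the effective-gap condition. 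I would apply the sampling/Vandermonde argument on $[0,\overline{l}_j]$ as in part (1) to get $|v_i| \ll_{k,\kappa} \overline{l}_j^{1-i-\kappa}$, then observe that because $p$ re-enters the sublevel region on $[l_{j+1},\overline{l}_{j+1}]$ with $l_{j+1} \le \overline{l}_j + \min\{\overline{l}_j, \overline{l}_{j+1}-l_{j+1}\}^{1+\eta}$, the polynomial $p$ cannot have grown too much between $\overline{l}_j$ and $\overline{l}_{j+1}$; comparing $|p|$ on the two regions and using that a degree-$k$ polynomial bounded on one interval is controlled on a slightly larger one (a Bernstein/Remez-type estimate, with the loss quantified by the ratio of lengths raised to a power $\le k$), one extracts an improved exponent $\xi(\eta, k)$ with $\xi \to 1$ as $\eta \to 0$, i.e. $|v_i| \ll_{k,\kappa} \overline{l}_j^{\xi(\eta,k)(1-i-\kappa)}$.

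The main obstacle will be part (2): turning the ``no effective gap'' combinatorial hypothesis into the quantitative exponent loss $\xi(\eta,k)$ requires a careful Remez-type inequality on the two nearby intervals $[0,\overline{l}_j]$ and $[l_{j+1},\overline{l}_{j+1}]$ and a bookkeeping of how the polynomial degree $k$ converts a small gap $\eta$ in interval-length into a small loss in the exponent; getting the clean statement $\xi(\eta,k)\to 1$ as $\eta\to 0$ uniformly in the data is the delicate point. The counting $m \le k$ is the second subtlety, since one must rule out the sublevel set fragmenting into more than $k$ pieces despite the non-analytic kink of $\max\{R_0,t^{1-\kappa}\}$ at $t = R_0^{1/(1-\kappa)}$ — handled by treating that single kink separately and applying Rolle's theorem to $p(t)^2 - c^2 t^{2-2\kappa}$ on $t > R_0^{1/(1-\kappa)}$.
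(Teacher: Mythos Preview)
Your plan for part (1) --- sampling $p$ at $\asymp k$ well-separated points of size $\asymp \overline{l}_1$ and inverting the Vandermonde system --- is correct and is exactly the paper's argument (the paper rescales to $[0,1]$ and samples at $1/k, 2/k, \ldots, 1$). The bound $m\le k$ and the lower bound on $\overline{l}_1$ are also treated comparably; the paper in fact gives even less detail there than you do.

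Part (2) is where your plan goes astray. You write ``apply the sampling/Vandermonde argument on $[0,\overline{l}_j]$ as in part (1) to get $|v_i| \ll_{k,\kappa} \overline{l}_j^{1-i-\kappa}$'', but this step is not available: the hypothesis $|p(t)|\ll\max\{R_0,t^{1-\kappa}\}$ holds only on $[0,\overline{l}_1]\cup\cdots\cup[l_j,\overline{l}_j]$, not on the full interval $[0,\overline{l}_j]$, and in the gaps $|p|$ exceeds the bound. If this step were valid you would already have $\xi=1$, stronger than the claim; your subsequent Remez discussion would then be both superfluous and pointed in the wrong direction (you speak of an ``improved exponent'', but $\xi<1$ \emph{weakens} the bound, since $1-i-\kappa<0$ for $i\ge 1$). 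The Remez idea could in principle be made to work by first controlling $|p|$ on the gaps and only then sampling, but your ordering and phrasing do not reflect that.

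The paper's argument for (2) avoids any Remez-type inequality and is purely arithmetic. From the non-effective-gap hypothesis $l_{j+1}-\overline{l}_j\le\min\{\overline{l}_j,\overline{l}_{j+1}-l_{j+1}\}^{1+\eta}$ one reads off directly (in the worst case, where the gap dominates both $\overline{l}_j$ and $\overline{l}_{j+1}-l_{j+1}$)
\[
\overline{l}_{j+1}=(\overline{l}_{j+1}-l_{j+1})+(l_{j+1}-\overline{l}_j)+\overline{l}_j\le 3\,\overline{l}_j^{\,1+\eta}.
\]
One then inducts on $j$: the base case is part (1), and the inductive step converts a bound $|v_i|\ll\overline{l}_j^{\,\xi_j(1-i-\kappa)}$ into $|v_i|\ll\overline{l}_{j+1}^{\,\frac{\xi_j}{1+\eta}(1-i-\kappa)}$ via the displayed inequality. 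Taking $\xi(\eta,k)=(1+\eta)^{-(k-1)}$ suffices, and $\xi\to 1$ as $\eta\to 0$ is immediate. No polynomial inequality beyond the Vandermonde of part (1) is needed; the non-effective-gap condition is used only to compare the scales $\overline{l}_j$ and $\overline{l}_{j+1}$.
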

\begin{proof}
The number $m$ of intervals in (\ref{dynamical systems3}) can be bounded by $k$ via an elementary argument of polynomials.

(1)   Let $F (x)\coloneqq  v_{1}(\overline{l}_{1}x)^{\kappa} +\cdots+v_{k}(\overline{l}_{1}x)^{k-1+\kappa}$ for $x\in[0,1]$.  Then we have
\[\left(
            \begin{array}{c}
             v_{1}\overline{l}_{1}^{\kappa} \\
              v_{2}\overline{l}_{1}^{1+\kappa} \\
             \vdots \\
             v_{k}\overline{l}_{1}^{k-1+\kappa} \\
            \end{array}
          \right)=\left[
            \begin{array}{cccc}
              (1/k)^{\kappa}&  (1/k)^{1+\kappa} &  \cdots &  (1/k)^{k-1+\kappa} \\
              (2/k)^{\kappa}&  (2/k)^{1+\kappa} &  \cdots &  (2/k)^{k-1+\kappa} \\
                \vdots&  \vdots &  \ddots &  \vdots \\
                 1&  1 &  \cdots &  1 \\
            \end{array}
          \right]^{-1}\left(
            \begin{array}{c}
              F(1/k) \\
              F(2/k) \\
             \vdots \\
             F(1) \\
            \end{array}
          \right).\]
   By (\ref{dynamical systems3}), we know that   $|F (1/k)|,|F (2/k)|,\cdots,|F(1)|\ll R_{0}$. Thus, we obtain  $|v_{i}|\ll_{k,\kappa} R_{0} \overline{l}_{1}^{1-i-\kappa} $ for all $1\leq i\leq k$.

   (2) This follows  by induction. Assume that the statement holds for $j-1$. For $j$,  the only difficult situation is when $\overline{l}_{j}\leq l_{j+1}-\overline{l}_{j}$ and $\overline{l}_{j+1}-l_{j+1}\leq l_{j+1}-\overline{l}_{j}$.   If this is the case, then
\[ \overline{l}_{j+1}= (\overline{l}_{j+1}-l_{j+1})+(l_{j+1}-\overline{l}_{j})+\overline{l}_{j}\leq 3\overline{l}_{j}^{1+\eta}.\]
Thus, by induction hypothesis, we get
\[|v_{i}|\ll \overline{l}_{j}^{\xi(\eta,j)(1-i-\kappa)}\ll \overline{l}_{j+1}^{\frac{\xi(\eta,j)}{1+\eta}(1-i-\kappa)}\]
for all $1\leq i\leq k$.
    \end{proof}

\subsection{Effective estimates of shearing phenomena}\label{joinings202104.33}
Now we begin to study the shearing between two nearby orbits of time-changes of unipotent flows. Let $G=SO(n,1)$.
First, since all maximal compact subgroups of $C_{G}(U)$ are conjugate, we can assume without loss of generality that  $C^{\rho}$ is in the compact group generated by $\mathfrak{k}^{\perp}_{C}$ . Thus, via (\ref{joinings202103.2}) (\ref{joinings202105.14}) and  (\ref{joinings202103.12}), we consider the   decomposition
\[\mathfrak{g}=\mathfrak{sl}_{2}\oplus V^{\perp\rho}\oplus\Lie(C^{\rho}),\ \ \ V^{\perp\rho}=\sum_{i} V_{i}^{0\perp\rho}\oplus\sum_{j} V_{j}^{2}\]
\[\mathfrak{k}^{\perp}_{C}=\mathfrak{k}^{\perp\rho}_{C}\oplus\Lie(C^{\rho})\]
where $\Lie(C^{\rho})$ denotes the Lie algebra of $C^{\rho}$ and note that $\Lie(C^{\rho})$ consists of weight $0$ spaces. Since $C^{\rho}$ is compact, there  is a $G$-right invariant metric $d_{C^{\rho}\backslash G}(\cdot,\cdot)$ on $C^{\rho}\backslash G$. Let $P:G\rightarrow C^{\rho}\backslash G$ be the natural projection
\[P:g\mapsto C^{\rho}g\eqqcolon\overline{g}.\]
 Then, for $g_{x},g_{y}\in  G$, we have
\[d_{C^{\rho}\backslash G}(\overline{g_{x}},\overline{g_{y}})=d_{C^{\rho}\backslash G}( C^{\rho}g_{x} ,C^{\rho}g_{y})=d_{C^{\rho}\backslash G}(C^{\rho}g_{x}g_{y}^{-1},C^{\rho})=d_{C^{\rho}\backslash G}(\overline{g_{x}g_{y}^{-1}},\overline{e}).\]
Moreover, $dP$ induces an isometry between $\mathfrak{sl}_{2}+ V^{\perp\rho}$ and $T_{\overline{e}}(C^{\rho}\backslash G)$. See for example  \cite{gallier2019differential} for more details.

Assume  $\overline{g}\in B_{C^{\rho}\backslash G}(e,\epsilon)$   for sufficiently small $0<\epsilon$. Since $C^{\rho}$   in fact commutes with $SO_{0}(2,1)$, we can identify
\begin{equation}\label{dynamical systems1}
  \overline{g}=C^{\rho}h\exp v
\end{equation}
for some $h\in B_{SO_{0}(2,1)}(e,\epsilon)$  and $v\in B_{V^{\perp\rho}}(0,\epsilon)$. Besides, for $h=\left[
            \begin{array}{ccc}
              a&  b \\
             c&  d\\
            \end{array}
          \right]\in B_{SO_{0}(2,1)}(e,\epsilon)$, we must have  $|b|,|c|<\epsilon$, $1-\epsilon<|a|,|d|<1+\epsilon$.

Next, let $t(s)\in\mathbf{R}^{+}$ be a function of $s\in\mathbf{R}^{+}$. Then we want to study the difference $ u^{t}\overline{g}u^{-s}$ of two nearby orbits of time-changes of unipotent flows. By (\ref{dynamical systems5}), we have
\begin{multline}
  u^{t}\overline{g}u^{-s}=C^{\rho}u^{t}h\exp v u^{-s}=C^{\rho}(u^{t}hu^{-s})(u^{s}\exp(v)u^{-s}) \\ \label{joinings202105.21}
 = C^{\rho}(u^{t}hu^{-s})\exp(\Ad u^{s}.v) =
 C^{\rho}(u^{t}hu^{-s})\exp\left(\sum_{n=0}^{\varsigma}\sum_{i=0}^{n}b_{i}\binom{n}{i}s^{n-i}v_{n}\right).
\end{multline}
Then one may conclude that  $u^{t}\overline{g}u^{-s}< \epsilon$ if and only if
\begin{equation}\label{joinings202104.61}
  u^{t}hu^{-s}\ll\epsilon,\ \ \ \Ad u^{s}.v=\sum_{n=0}^{\varsigma}\sum_{i=0}^{n}b_{i}\binom{n}{i}s^{n-i}v_{n}\ll\epsilon
\end{equation}
where $\overline{g}\ll\epsilon$ for $g\in G$ means  $d_{C^{\rho}\backslash G}(\overline{g},e)\ll\epsilon$.
Therefore, later on, we shall split the elements closing to the identity into two parts, say the $SO(2,1)$-part and the $V^{\perp\rho}$-part.

As shown in (\ref{joinings202104.61}), we   consider the elements of the form $u^{t}hu^{-s}\in B_{SO(2,1)}(e,\epsilon)$.
   One may calculates
          \begin{align}
u^{t}hu^{-s}= &\left[
            \begin{array}{ccc}
              1&   \\
             t&  1\\
            \end{array}
          \right]\left[
            \begin{array}{ccc}
              a&  b \\
             c&  d\\
            \end{array}
          \right]\left[
            \begin{array}{ccc}
              1&    \\
            -s&  1\\
            \end{array}
          \right]\;\nonumber\\
=& \left[
            \begin{array}{ccc}
              a-bs&  b \\
             c+(a-d)s-bs^{2}+(t-s)(a-bs)&  d+bt\\
            \end{array}
          \right]. \;\label{joinings202104.60}
\end{align}
   If we further impose the H\"{o}lder inequality $|s-t|\ll_{\kappa}\max\{R_{0},s^{1-\kappa}\}$ for some $R_{0}>\epsilon$ (see Section \ref{joinings202103.14} or (\ref{joinings202104.5})), then we have the  crude estimate
   \begin{align}
 &|-bs^{2}+(a-d)s+c+(-bs+a)(t-s)|<\epsilon \;\nonumber\\
\Rightarrow\ \ \ & |-bs^{2}+(a-d)s|-|c|-|(-bs+a)(t-s)|<\epsilon\;\nonumber\\
\Rightarrow\ \ \ & |-bs^{2}+(a-d)s|<2\epsilon+2|t-s|\;\nonumber\\
\Rightarrow\ \ \ & |-bs^{2}+(a-d)s|\ll_{\kappa}\max\{R_{0},s^{1-\kappa}\}. \;\nonumber
\end{align}
By Lemma \ref{dynamical systems4}, we immediately obtain
\begin{lem}[Estimates for $SO_{0}(2,1)$-coefficients]\label{dynamical systems8}  Given $\kappa\approx0$, $R_{0}>0$, $\epsilon\approx0$, a matrix $h=\left[
            \begin{array}{ccc}
              a&  b \\
             c&  d\\
            \end{array}
          \right]\in B_{SO(2,1)}(e,\epsilon)$, then the solutions $s\in[0,\infty)$ of the following inequality
          \begin{equation}\label{timechange17}
            |-bs^{2}+(a-d)s|\ll_{\kappa} \max\{R_{0},s^{1-\kappa}\}
          \end{equation}
   consist of at most two intervals, say $[0,\overline{l}_{1}(h)]\cup[l_{2}(h),\overline{l}_{2}(h)]$,  where  $\overline{l}_{1}$  has the  lower bound $l(\epsilon,R_{0},\kappa)$ such that $l(\epsilon,R_{0},\kappa)\nearrow\infty$ as $\epsilon\searrow0$ for fixed $R_{0},\kappa$.  Moreover, we have
   \begin{enumerate}[\ \ \ (1)]
     \item  $|b|\ll_{\kappa} \overline{l}_{1}^{-1-\kappa}$ and $|a-d|\ll_{\kappa} \overline{l}_{1}^{-\kappa}$;
     \item If we further assume     that the intervals $[0,\overline{l}_{1}]$ and $[l_{2},\overline{l}_{2}]$ do not have an effective gap (\ref{dynamical systems6}), i.e.   $l_{2}-\overline{l}_{1}\leq \min\{\overline{l}_{1},\overline{l}_{2}-l_{2}\}^{1+\eta}$ for some $\eta\approx0$, then
         \[|b|\ll_{\kappa} \overline{l}_{2}^{\xi(\eta)(-1-\kappa)},\ \ \ |a-d|\ll_{\kappa} \overline{l}_{2}^{\xi(\eta)(-\kappa)}.\]
   \end{enumerate}
\end{lem}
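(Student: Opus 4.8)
The plan is to reduce the claimed inequality $|{-}bs^2+(a-d)s|\ll_\kappa\max\{R_0,s^{1-\kappa}\}$ to an application of Lemma \ref{dynamical systems4} with the polynomial $p(s)=(a-d)s-bs^2$, which has degree $k\leq 2$ and vanishing constant term. The main content is simply to check that the hypotheses of Lemma \ref{dynamical systems4} hold for this $p$, and then to translate its conclusions (1) and (2) into the two displayed bounds on $|b|$ and $|a-d|$.

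First I would note that the solution set $\{s\geq 0:|p(s)|\ll_\kappa\max\{R_0,s^{1-\kappa}\}\}$ is automatically of the shape $[0,\overline{l}_1]\cup[l_2,\overline{l}_2]\cup\cdots$ demanded in \eqref{dynamical systems3}: indeed $p(0)=0$ while $R_0>\epsilon>0$, so a neighborhood of $0$ always lies in the set, and since $\deg p\leq 2$ the number of sign-type transitions is bounded, giving $m\leq k\leq 2$ intervals by the elementary polynomial argument invoked in the proof of Lemma \ref{dynamical systems4}. (If $b=0$ and $a=d$, i.e.\ $p\equiv 0$, the statement is vacuous/trivial, so we may assume $p\not\equiv 0$.) This accounts for the assertion ``at most two intervals $[0,\overline{l}_1(h)]\cup[l_2(h),\overline{l}_2(h)]$.''

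Next I would record that the coefficients of $p$ are $v_1=a-d$ and $v_2=-b$, and that since $h\in B_{SO(2,1)}(e,\epsilon)$ we have $|b|<\epsilon$ and $|a-d|<2\epsilon$, so $\max_i|v_i|\searrow 0$ as $\epsilon\searrow 0$. Lemma \ref{dynamical systems4} then gives a lower bound $l(\epsilon,R_0,\kappa)$ for $\overline{l}_1$ with $l(\epsilon,R_0,\kappa)\nearrow\infty$ as $\epsilon\searrow 0$, which is exactly the claimed behavior of $\overline{l}_1$. For part (1), Lemma \ref{dynamical systems4}(1) with $i=1,2$ yields $|v_1|\ll_\kappa R_0\,\overline{l}_1^{-\kappa}$ and $|v_2|\ll_\kappa R_0\,\overline{l}_1^{-1-\kappa}$; absorbing the constant $R_0$ into the implied constant gives $|a-d|\ll_\kappa \overline{l}_1^{-\kappa}$ and $|b|\ll_\kappa \overline{l}_1^{-1-\kappa}$. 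For part (2), under the no-effective-gap hypothesis \eqref{dynamical systems6} (here with $j=1$), Lemma \ref{dynamical systems4}(2) produces $\xi=\xi(\eta,2)\in(0,1)$, $\xi\to 1$ as $\eta\to 0$, with $|v_i|\ll_{\kappa}\overline{l}_2^{\xi(1-i-\kappa)}$; specializing to $i=1,2$ gives $|a-d|\ll_\kappa\overline{l}_2^{\xi(-\kappa)}$ and $|b|\ll_\kappa\overline{l}_2^{\xi(-1-\kappa)}$, as stated (writing $\xi(\eta)$ for $\xi(\eta,2)$).

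The only genuine subtlety — and the place I would be most careful — is the passage from the hypothesis of Lemma \ref{dynamical systems4}, which assumes the inequality $|p(t)|\ll\max\{R_0,t^{1-\kappa}\}$ holds on the intervals and fails outside them (``iff''), to the present situation where \eqref{timechange17} is just an inequality whose solution set we are describing. This is not actually an obstacle: the solution set of \eqref{timechange17} \emph{is} by definition the set on which the inequality holds, so the ``iff'' in \eqref{dynamical systems3} is satisfied tautologically once we take the intervals $[0,\overline{l}_1],[l_2,\overline{l}_2],\dots$ to be precisely the connected components of that solution set. I would make this identification explicit at the start of the proof so that Lemma \ref{dynamical systems4} applies verbatim, and then the rest is the bookkeeping described above.
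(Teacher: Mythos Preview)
Your proposal is correct and matches the paper's approach exactly: the paper simply states that the lemma follows ``immediately'' from Lemma~\ref{dynamical systems4}, and your write-up is precisely the bookkeeping that justifies this — identifying $p(s)=(a-d)s-bs^2$, noting $\max_i|v_i|\to 0$ as $\epsilon\to 0$, and reading off parts (1) and (2) from the corresponding parts of Lemma~\ref{dynamical systems4}. There is nothing to add.
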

Next, we study the situation when $Ad u^{s}.v\ll \epsilon$. Again by Lemma \ref{dynamical systems4}, we have
\begin{lem}[Estimates for $V^{\perp\rho}$-coefficients]\label{dynamical systems7}
  Fix $v=b_{0}v_{0}+\cdots+b_{\varsigma}v_{\varsigma}\in B_{V_{\varsigma}}(0,\epsilon)$. Assume that
   \[Ad u^{s}.v\ll \epsilon\ \ \ \text{ iff }\ \ \ s\in[0,\overline{l}_{1}(v)] \cup\cdots\cup[l_{m}(v),\overline{l}_{m}(v)] \]
   where  $\overline{l}_{1}$  has the  lower bound $l(\epsilon,R_{0},\kappa)$ such that $l(\epsilon,R_{0},\kappa)\nearrow\infty$ as $\epsilon\searrow0$ for fixed $R_{0},\kappa$. Then  $m=m(v)$ is bounded by a constant depending on $\varsigma$. Moreover, for   $1\leq j\leq \varsigma-1$, the
   intervals $[0,\overline{l}_{j}]$ and $[l_{j+1},\overline{l}_{j+1}]$ do not have an effective gap (\ref{dynamical systems6}), i.e.   $l_{j+1}-\overline{l}_{j}\leq \min\{\overline{l}_{j},\overline{l}_{j+1}-l_{j+1}\}^{1+\eta}$, then  we have
   \[|b_{i}|\ll_{\varsigma,\kappa} \overline{l}_{j}^{\xi(\eta,\varsigma)(-\varsigma+i)}.\]

\end{lem}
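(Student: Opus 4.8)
The plan is to reduce the vector estimate $\Ad u^{s}.v\ll\epsilon$ to a scalar polynomial estimate for the highest weight coordinate and then feed that to Lemma~\ref{dynamical systems4}, exactly as Lemma~\ref{dynamical systems8} handles the $SO_{0}(2,1)$--part. Expanding by (\ref{dynamical systems5}),
\[\Ad u^{s}.v=\sum_{j=0}^{\varsigma}q_{j}(s)\,v_{j},\qquad q_{j}(s)=\sum_{i=0}^{j}\binom{j}{i}b_{i}\,s^{j-i},\]
and since the $v_{j}$ form a fixed weight basis, $\Ad u^{s}.v\ll\epsilon$ holds precisely on $S\coloneqq\bigcap_{j=0}^{\varsigma}\{s\ge0:|q_{j}(s)|\ll_{\varsigma}\epsilon\}$. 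The controlling polynomial is $q_{\varsigma}$: it has degree $\le\varsigma$, the coefficient of $s^{\varsigma-i}$ in $q_{\varsigma}$ is $\binom{\varsigma}{i}b_{i}$, and trivially $S\subseteq\{|q_{\varsigma}|\ll\epsilon\}$. Note that here the right-hand side $\epsilon$ is a constant, so the relevant instance of Lemma~\ref{dynamical systems4} is the limiting case $\kappa=1$ (equivalently, a direct application of Markov's inequality for derivatives of polynomials).

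The crux is the near-converse: on any interval $[l,\overline{l}]$ with $\overline{l}-l\ge1$ on which $|q_{\varsigma}|\ll\epsilon$, one in fact has $\Ad u^{s}.v\ll_{\varsigma}\epsilon$ on all of $[l,\overline{l}]$. To prove this I would use $\Ad u^{l+r}.v=\Ad u^{r}.(\Ad u^{l}.v)$ and compare top coordinates: $q_{\varsigma}(l+r)$ is the top coordinate of $\Ad u^{r}.w$ where $w\coloneqq\Ad u^{l}.v=\sum_{j}q_{j}(l)v_{j}$, so the coefficient of $r^{\varsigma-i}$ in $q_{\varsigma}(l+\cdot\,)$ equals $\binom{\varsigma}{i}q_{i}(l)$. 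The elementary estimate underlying Lemma~\ref{dynamical systems4}, applied to $q_{\varsigma}$ on $[l,\overline{l}]$, bounds these Taylor coefficients by $\ll_{\varsigma}\epsilon(\overline{l}-l)^{i-\varsigma}$, i.e. $|q_{i}(l)|\ll_{\varsigma}\epsilon(\overline{l}-l)^{i-\varsigma}$; then for $s=l+r\in[l,\overline{l}]$,
\[|q_{j}(s)|\le\sum_{i=0}^{j}\binom{j}{i}|q_{i}(l)|\,(\overline{l}-l)^{j-i}\ll_{\varsigma}\epsilon\,(\overline{l}-l)^{j-\varsigma}\ll_{\varsigma}\epsilon,\]
using $j\le\varsigma$ and $\overline{l}-l\ge1$. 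Hence $S$ and $\{|q_{\varsigma}|\ll\epsilon\}$ agree outside intervals of length $<1$ (the usual adjustment of implicit constants being harmless). In particular $[0,\overline{l}_{1}(v)]$ is the first interval of $\{|q_{\varsigma}|\ll\epsilon\}$, so the lower bound $\overline{l}_{1}\ge l(\epsilon,R_{0},\kappa)$ with $l\nearrow\infty$ as $\epsilon\searrow0$ follows as in Lemma~\ref{dynamical systems8} (it reflects $\|\Ad u^{s}.v\|\ll_{\varsigma}\max\{1,s^{\varsigma}\}\|v\|$), and $m$ is bounded by the number of components of a sublevel set of a degree-$\le\varsigma$ polynomial together with the finitely many discarded short intervals, hence $m=O(\varsigma^{2})$.

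The coefficient bounds are then read off by applying Lemma~\ref{dynamical systems4} (with $k=\varsigma$, $\kappa=1$) to $q_{\varsigma}$. For $j=1$, part~(1) gives $|b_{i}|\ll_{\varsigma}\overline{l}_{1}^{\,i-\varsigma}$, which is even stronger than the stated bound (whose exponent $\xi(\eta,\varsigma)(i-\varsigma)$ is closer to $0$). For $2\le j\le\varsigma-1$, part~(2) requires ``no effective gap'' between $[0,\overline{l}_{i}]$ and $[l_{i+1},\overline{l}_{i+1}]$ for the intervals of $q_{\varsigma}$; this is exactly the hypothesis imposed here on the intervals of $S$, and since the two families coincide outside short intervals, Remark~\ref{joinings202103.15} (effective gaps persist to sub-intervals) transports it to $q_{\varsigma}$. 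Part~(2) then yields $|b_{i}|\ll_{\varsigma,\kappa}\overline{l}_{j}^{\,\xi(\eta,\varsigma)(i-\varsigma)}$ with $\xi(\eta,\varsigma)\to1$ as $\eta\to0$, which is the assertion.

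The step I expect to be the real obstacle is the last piece of bookkeeping: verifying that intervals of length $<1$ never disturb either the count $m$ or the chaining, and that the ``no effective gap'' hypothesis, stated for the intervals of the vector-valued solution set $S$, propagates correctly to the scalar solution set of $q_{\varsigma}$, so that Lemma~\ref{dynamical systems4}(2) is genuinely applicable. Everything else is Lemma~\ref{dynamical systems4} together with elementary polynomial estimates, in complete parallel with the $SO_{0}(2,1)$ case treated in Lemma~\ref{dynamical systems8}.
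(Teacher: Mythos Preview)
Your proposal is correct and follows the paper's approach: the paper's entire proof is the single sentence ``Again by Lemma~\ref{dynamical systems4}, we have'', so you have in fact supplied the details the paper omits. Your key reduction---that the top-weight polynomial $q_{\varsigma}$ controls all lower-weight coordinates $q_{j}$ on any interval of length $\ge1$, via the identity $q_{\varsigma}(l+r)=\sum_{i}\binom{\varsigma}{i}q_{i}(l)r^{\varsigma-i}$---is exactly what makes the citation of Lemma~\ref{dynamical systems4} legitimate, and your bookkeeping concern about short intervals is harmless in the paper's setting since $\varsigma\le2$ and $\overline{l}_{1}\ge l(\epsilon,R_{0},\kappa)\gg1$.
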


Next, we shall combine the results of Lemma \ref{dynamical systems8} and   \ref{dynamical systems7}.   The basic idea is to consider the intersection of the collections of intervals obtained from the above lemmas. For simplicity, we assume that ``$V^{\perp\rho}$-part" consists of a single $\mathfrak{sl}_{2}$-irreducible representation. For the general case, we can  repeat  the argument for each $\mathfrak{sl}_{2}$-irreducible representation (cf. Section \ref{joinings202104.3}). First, for $\overline{g}=C^{\rho}h\exp(v)\in C^{\rho}\backslash G$, we write as in Lemma \ref{dynamical systems8} and   \ref{dynamical systems7}
\begin{align}
 u^{t}hu^{-s}\ll\epsilon &\text{ iff } s\in [0,\overline{l}_{1}(h)]\cup[l_{2}(h),\overline{l}_{2}(h)]\;\nonumber\\
 Ad u^{s}.v\ll \epsilon  &\text{ iff } s\in[0,\overline{l}_{1}(v)]\cup\cdots\cup[l_{m(v)}(v),\overline{l}_{m(v)}(v)].\; \nonumber
\end{align}
Write $l_{1}(h)=l_{1}(v)=0$ and we shall consider the family of intervals
\begin{equation}\label{time change174}
  \{[l_{k}(g),\overline{l}_{k}(g)]\}_{k}\coloneqq\{[l_{i}(h),\overline{l}_{i}(h)]\cap[l_{j}(v),\overline{l}_{j}(v)]\}_{i,j}
\end{equation}
where $\overline{l}_{k}(g)<l_{k+1}(g)$ for all $k$. Thus, in particular, $l_{1}(g)=0$ and $[0,\overline{l}_{1}(g)]=[0,\overline{l}_{1}(h)]\cap[0,\overline{l}_{1}(v)]$.

Now assume that there exists $k$ such that $[0,\overline{l}_{k}(g)]$  and $[l_{k+1}(g),\overline{l}_{k+1}(g)]$ do not have an effective gap (\ref{dynamical systems6}), i.e.
\[  l_{k+1}(g)-\overline{l}_{k}(g)\leq \min\{\overline{l}_{k}(g),\overline{l}_{k+1}(g)-l_{k+1}(g)\}^{1+\eta}.\]
Then by Remark \ref{joinings202103.15}, the corresponding ``$SO(2,1)$-part" and ``$V^{\perp\rho}$-part" should not have effective gaps either. More precisely, for the $SO(2,1)$-part, we define
\[i_{\geq k}\coloneqq\min\{i\in\{1,2\}:\overline{l}_{k}(g)\leq \overline{l}_{i}(h)\},\ \ \ i_{\leq k+1}\coloneqq\max\{i\in\{1,2\}:l_{k+1}(g)\geq l_{i}(h)\}.\]
Thus, we know
\[[0,\overline{l}_{k}(g)]\subset[0,\overline{l}_{i_{\geq k}}(h)],\ \ \ [l_{k+1}(g),\overline{l}_{k+1}(g)]\subset[l_{i_{\leq k+1}}(h),\overline{l}_{i_{\leq k+1}}(h)] \]
and hence $[0,\overline{l}_{i_{\geq k}}(h)]$  and $[l_{i_{\leq k+1}}(h),\overline{l}_{i_{\leq k+1}}(h)] $ do not have an effective gap (\ref{dynamical systems6}). Similarly, for the $V^{\perp\rho}$-part, we define
\[j_{\geq k}\coloneqq\min\{j:\overline{l}_{k}(g)\leq \overline{l}_{j}(v)\},\ \ \ j_{\leq k+1}\coloneqq\max\{j:l_{k+1}(g)\geq l_{j}(v)\}.\]
Then we know
\[[0,\overline{l}_{k}(g)]\subset[0,\overline{l}_{j_{\geq k}}(v)],\ \ \ [l_{k+1}(g),\overline{l}_{k+1}(g)]\subset[l_{j_{\leq k+1}}(v),\overline{l}_{j_{\leq k+1}}(v)] \]
and hence $[0,\overline{l}_{j_{\geq k}}(v)]$  and $[l_{j_{\leq k+1}}(v),\overline{l}_{j_{\leq k+1}}(v)] $ do not have an effective gap (\ref{dynamical systems6}). Further, one observes
\begin{align}
[0,\overline{l}_{k}(g)]=&[0,\overline{l}_{i_{\geq k}}(h)]\cap [0,\overline{l}_{j_{\geq k}}(v)]\;\nonumber\\
 [l_{k+1}(g),\overline{l}_{k+1}(g)]= & [l_{i_{\leq k+1}}(h),\overline{l}_{i_{\leq k+1}}(h)]\cap [l_{j_{\leq k+1}}(v),\overline{l}_{j_{\leq k+1}}(v)].\; \nonumber
\end{align}
Now recall by the definition that the number (\ref{time change174}) of intervals in $\{[l_{k}(g),\overline{l}_{k}(g)]\}_{k}$ is bounded by a constant $c(\varsigma)>0$ because the numbers of intervals $\{[l_{i}(h),\overline{l}_{i}(h)]\}_{i}$, $\{[l_{j}(v),\overline{l}_{j}(v)]\}_{j}$   are. Since $\varsigma\leq 2$ when $\mathfrak{g}=\mathfrak{so}(n,1)$, we see that $c(\varsigma)$ is uniformly bounded for all $\varsigma$. Thus, we conclude that the number of intervals in $\{[l_{k}(g),\overline{l}_{k}(g)]\}_{k}$ is uniformly bounded for all $g\in G$.
Then, combining   Lemma \ref{dynamical systems7} and  \ref{dynamical systems8}, we obtain
\begin{lem}[Estimates for $C^{\rho}\backslash G$-coefficients]\label{time change176}
     Let $\kappa\approx0$, $R_{0}>0$, $\epsilon\approx0$,  $\overline{g}=C^{\rho}h\exp v\in B_{C^{\rho}\backslash G}(e,\epsilon)$ be as above, where
   \[h=\left[
            \begin{array}{ccc}
              a&  b \\
             c&  d\\
            \end{array}
          \right]\in SO_{0}(2,1),\ \ \ v=b_{0}v_{0}+\cdots+b_{\varsigma}v_{\varsigma}\in V_{\varsigma}.\]
          Next, let $t(s)\in\mathbf{R}^{+}$ be  a function of $s\in\mathbf{R}^{+}$ which satisfies the effectiveness \[|s-t(s)|\ll_{\kappa}\max\{R_{0},s^{1-\kappa}\}.\]
      Then there exist intervals $\{[l_{k}(g),\overline{l}_{k}(g)]\}_{k}$ such that
          \begin{equation}\label{joinings202106.175}
            u^{t}\overline{g}u^{-s}<\epsilon,\ \ \  \text{ implies }\ \ \ s\in \bigcup_{k}[l_{k}(g),\overline{l}_{k}(g)]
          \end{equation}
           where $\overline{l}_{1}$  has the  lower bound $l(\epsilon,R_{0},\kappa)$ such that $l(\epsilon,R_{0},\kappa)\nearrow\infty$ as $\epsilon\searrow0$ for fixed $R_{0},\kappa$. Besides, $k\leq c$ for some constant $c=c(\mathfrak{g})>0$, and
          \begin{enumerate}[\ \ \ (1)]
            \item   $|b|\ll_{\kappa} \overline{l}_{1}(g)^{-1-\kappa}$, $|a-d|\ll_{\kappa} \overline{l}_{1}(g)^{-\kappa}$, $|b_{i}|\ll_{\varsigma,\kappa} \overline{l}_{1}(g)^{-\varsigma+i}$ for all $0\leq i\leq \varsigma$;
     \item If we further assume     that the intervals $[0,\overline{l}_{k}(g)]$ and $[l_{k+1}(g),\overline{l}_{k+1}(g)]$ do not have an effective gap (\ref{dynamical systems6}). Then there exists $1\approx\xi=\xi(\eta)\in(0,1)$ with $\xi\rightarrow1$ as $\eta\rightarrow0$ such that
      \[|b|\ll_{\kappa} \overline{l}_{k}(g)^{-\xi(1+\kappa)},\ \ \ |a-d|\ll_{\kappa} \overline{l}_{k}(g)^{-\xi\kappa},\ \ \ |b_{i}|\ll_{\varsigma,\kappa} \overline{l}_{k}(g)^{-\xi (\varsigma-i)}\]
for all $1\leq i\leq \varsigma$.
          \end{enumerate}
\end{lem}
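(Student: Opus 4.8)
The plan is to split the displacement $u^{t}\overline{g}u^{-s}$ into its $SO_{0}(2,1)$-part and its $V^{\perp\rho}$-part according to (\ref{joinings202105.21}), apply the one-variable estimates of Lemmas \ref{dynamical systems8} and \ref{dynamical systems7} to the two parts separately, and then take the common refinement of the two families of intervals so produced. Concretely: by (\ref{joinings202105.21}) we have $u^{t}\overline{g}u^{-s}=C^{\rho}(u^{t}hu^{-s})\exp(\Ad u^{s}.v)$, and since $C^{\rho}$ commutes with $SO_{0}(2,1)$ and $dP$ induces an isometry on $\mathfrak{sl}_{2}+V^{\perp\rho}$, the condition $u^{t}\overline{g}u^{-s}<\epsilon$ is, up to the implied constants, equivalent to the pair of conditions $u^{t}hu^{-s}\ll\epsilon$ and $\Ad u^{s}.v\ll\epsilon$ of (\ref{joinings202104.61}). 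Imposing the H\"older constraint $|s-t(s)|\ll_{\kappa}\max\{R_{0},s^{1-\kappa}\}$ and using the crude bound derived from (\ref{joinings202104.60}), the first condition feeds into Lemma \ref{dynamical systems8}, yielding the at most two intervals $[0,\overline{l}_{1}(h)]\cup[l_{2}(h),\overline{l}_{2}(h)]$ with the stated bounds on $|b|,|a-d|$; the second feeds into Lemma \ref{dynamical systems7} (applied, after decomposing $V^{\perp\rho}$, to each $\mathfrak{sl}_{2}$-irreducible summand in turn), yielding boundedly many intervals $[0,\overline{l}_{1}(v)]\cup\dots\cup[l_{m}(v),\overline{l}_{m}(v)]$ with the stated bounds on the $|b_{i}|$. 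Intersecting, the admissible $s$ lie in the family $\{[l_{k}(g),\overline{l}_{k}(g)]\}_{k}$ of (\ref{time change174}), which gives (\ref{joinings202106.175}); since $\varsigma\le 2$ when $\mathfrak{g}=\mathfrak{so}(n,1)$ the number of these intervals is bounded by an absolute constant $c(\mathfrak{g})$, and because $[0,\overline{l}_{1}(g)]=[0,\overline{l}_{1}(h)]\cap[0,\overline{l}_{1}(v)]$ its endpoint $\overline{l}_{1}(g)=\min\{\overline{l}_{1}(h),\overline{l}_{1}(v)\}$ inherits the lower bound $l(\epsilon,R_{0},\kappa)\nearrow\infty$ from both lemmas.

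For the coefficient estimates (1), I would note that the exponents $-1-\kappa$, $-\kappa$ and $-\varsigma+i$ are all nonpositive, so from $\overline{l}_{1}(g)\le\overline{l}_{1}(h)$ and $\overline{l}_{1}(g)\le\overline{l}_{1}(v)$ together with the monotonicity of $x\mapsto x^{-\alpha}$ on $(0,\infty)$ for $\alpha\ge 0$, the bounds $|b|\ll_{\kappa}\overline{l}_{1}(h)^{-1-\kappa}$, $|a-d|\ll_{\kappa}\overline{l}_{1}(h)^{-\kappa}$ of Lemma \ref{dynamical systems8}(1) and the first-interval bound $|b_{i}|\ll_{\varsigma,\kappa}\overline{l}_{1}(v)^{-\varsigma+i}$ of Lemma \ref{dynamical systems7} immediately weaken to the same bounds with $\overline{l}_{1}(g)$ in place of $\overline{l}_{1}(h)$, $\overline{l}_{1}(v)$.

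For (2), I would suppose that $[0,\overline{l}_{k}(g)]$ and $[l_{k+1}(g),\overline{l}_{k+1}(g)]$ have no effective gap. As carried out in the discussion preceding the statement, one locates the smallest $SO_{0}(2,1)$-interval containing $[0,\overline{l}_{k}(g)]$ and the $SO_{0}(2,1)$-interval containing $[l_{k+1}(g),\overline{l}_{k+1}(g)]$ (and likewise for $V^{\perp\rho}$): enlarging the two $g$-components to these larger intervals only shrinks the gap between them while not decreasing either length, so by Remark \ref{joinings202103.15} the no-effective-gap property propagates upward to the constituent $h$- and $v$-intervals. Lemma \ref{dynamical systems8}(2) and Lemma \ref{dynamical systems7} then supply the $\xi(\eta)$-discounted bounds at the $h$- and $v$-scales, and since $\overline{l}_{k}(g)$ is bounded above by the endpoints of those intervals, monotonicity of $x\mapsto x^{-\alpha}$ once more converts them into the bounds $|b|\ll_{\kappa}\overline{l}_{k}(g)^{-\xi(1+\kappa)}$, $|a-d|\ll_{\kappa}\overline{l}_{k}(g)^{-\xi\kappa}$, $|b_{i}|\ll_{\varsigma,\kappa}\overline{l}_{k}(g)^{-\xi(\varsigma-i)}$ asserted in the lemma.

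The substantive content is entirely contained in Lemmas \ref{dynamical systems8} and \ref{dynamical systems7}, so the only real work here is bookkeeping: keeping track of which endpoint each estimate is naturally phrased in terms of and checking that the passage to $\overline{l}_{1}(g)$ or $\overline{l}_{k}(g)$ always loosens, never tightens, the bound; and verifying the elementary but easy-to-botch propagation of ``no effective gap'' from the refined $g$-intervals to the coarser $h$- and $v$-intervals via Remark \ref{joinings202103.15}. I expect this scale-matching to be the main, if modest, obstacle.
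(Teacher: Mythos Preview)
Your proposal is correct and follows essentially the same route as the paper: the lemma is derived by combining Lemmas \ref{dynamical systems8} and \ref{dynamical systems7} via the intersection (\ref{time change174}), with the no-effective-gap propagation handled exactly through the indices $i_{\geq k},i_{\leq k+1},j_{\geq k},j_{\leq k+1}$ and Remark \ref{joinings202103.15} as in the discussion preceding the statement. The only cosmetic point is that the first-interval bound $|b_{i}|\ll\overline{l}_{1}(v)^{-\varsigma+i}$ you attribute to Lemma \ref{dynamical systems7} is not stated there explicitly (that lemma is phrased for the no-gap case); it comes directly from Lemma \ref{dynamical systems4}(1), which is of course the underlying input.
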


In practical use, we consider two strictly increasing functions $t(r),s(r)\in\mathbf{R}^{+}$  of $r\in\mathbf{R}^{+}$ satisfying the effective estimates
\begin{equation}\label{joinings202104.9}
|r-t(r)|\ll_{\kappa}\max\{R_{0},r^{1-\kappa}\},\ \ \ |r-s(r)|\ll_{\kappa}\max\{R_{0},r^{1-\kappa}\}.
\end{equation}
It follows that $t$ is also an increasing  function of $s$ and satisfies
\[|t(r)-s(r)|\leq|t(r)-r|+|r-s(r)|\ll_{\kappa}\max\{R_{0},r^{1-\kappa}\}\ll_{\kappa}\max\{R_{0},s(r)^{1-\kappa}\}.\]
Then by Lemma \ref{time change176} and the monotonic nature, we deduce that
\begin{cor}[Change of variables]\label{joinings202104.10}
 Let $\kappa\approx0$, $R_{0}>0$, $\epsilon\approx0$, $\overline{g}=C^{\rho}h\exp v\in B_{C^{\rho}\backslash G}(e,\epsilon)$ be as above, where
   \[h=\left[
            \begin{array}{ccc}
              a&  b \\
             c&  d\\
            \end{array}
          \right]\in SO_{0}(2,1),\ \ \ v=b_{0}v_{0}+\cdots+b_{\varsigma}v_{\varsigma}\in V_{\varsigma}.\]
       Assume  that we have (\ref{joinings202104.9}).
      Then there exist intervals $\{[l_{k}(g),\overline{l}_{k}(g)]\}_{k}$ such that
\begin{equation}\label{joinings202104.8}
  u^{t(r)}\overline{g}u^{-s(r)}<\epsilon\ \ \ \text{ implies }\ \ \ r\in \bigcup_{k}[L_{k}(g),\overline{L}_{k}(g)]
\end{equation}
         where $\overline{L}_{1}$ has the  lower bound $L(\epsilon,R_{0},\kappa)$ such that $L(\epsilon,R_{0},\kappa)\nearrow\infty$ as $\epsilon\searrow0$ for fixed $R_{0},\kappa$. Then we have $k\leq c$ for some constant $c=c(\mathfrak{g})>0$, and
          \begin{enumerate}[\ \ \ (1)]
            \item   $|b|\ll_{\kappa} \overline{L}_{1}(g)^{-1-\kappa}$, $|a-d|\ll_{\kappa} \overline{L}_{1}(g)^{-\kappa}$, $|b_{i}|\ll_{\varsigma,\kappa} \overline{L}_{1}(g)^{-\varsigma+i}$ for all $0\leq i\leq \varsigma$;
     \item If we further assume     that the intervals $[0,\overline{L}_{k}(g)]$ and $[L_{k+1}(g),\overline{L}_{k+1}(g)]$ do not have an effective gap (\ref{dynamical systems6}). Then there exists $1\approx\xi=\xi(\eta)\in(0,1)$ with $\xi\rightarrow1$ as $\eta\rightarrow0$ such that
      \[|b|\ll_{\kappa} \overline{L}_{k}(g)^{-\xi(1+\kappa)},\ \ \ |a-d|\ll_{\kappa} \overline{L}_{k}(g)^{-\xi\kappa},\ \ \ |b_{i}|\ll_{\varsigma,\kappa} \overline{L}_{k}(g)^{-\xi (\varsigma-i)}\]
for all $1\leq i\leq \varsigma$.
          \end{enumerate}
\end{cor}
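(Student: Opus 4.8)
The plan is to deduce the corollary from Lemma~\ref{time change176} by regarding $t$ as a function of $s$ and then undoing the substitution $s=s(r)$. First I would record that the estimate $|r-s(r)|\ll_{\kappa}\max\{R_{0},r^{1-\kappa}\}$ in (\ref{joinings202104.9}) forces $s(r)\to\infty$ and, once $r$ exceeds a threshold $r_{0}=r_{0}(R_{0},\kappa)$ beyond which the correction term is at most $r/2$, gives $s(r)\asymp r$; so $s$ is a strictly increasing bijection of $\mathbf{R}^{+}$ onto an unbounded subset, with a strictly increasing inverse $s^{-1}$, and $t$ regarded as a function of $s$ satisfies $|t(r)-s(r)|\ll_{\kappa}\max\{R_{0},s(r)^{1-\kappa}\}$ (the last bound using $s(r)\asymp r$), which is exactly the hypothesis of Lemma~\ref{time change176}. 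Applying that lemma yields intervals $\{[l_{k}(g),\overline{l}_{k}(g)]\}_{k}$ with $l_{1}(g)=0$ and $k\le c(\mathfrak{g})$ such that $u^{t}\overline{g}u^{-s}<\epsilon$ forces $s\in\bigcup_{k}[l_{k}(g),\overline{l}_{k}(g)]$, together with its coefficient bounds (1) and (2) expressed through $\overline{l}_{k}(g)$.

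Next I would change variables, setting $L_{k}(g)\coloneqq s^{-1}(l_{k}(g))$ and $\overline{L}_{k}(g)\coloneqq s^{-1}(\overline{l}_{k}(g))$, so $L_{1}(g)=0$. Since $s^{-1}$ is strictly increasing, the $[L_{k}(g),\overline{L}_{k}(g)]$ are pairwise disjoint, ordered, at most $c(\mathfrak{g})$ in number, and preimages of intervals under a strictly monotone map are intervals; hence $u^{t(r)}\overline{g}u^{-s(r)}<\epsilon$ gives $s(r)\in\bigcup_{k}[l_{k}(g),\overline{l}_{k}(g)]$, i.e.\ $r\in\bigcup_{k}[L_{k}(g),\overline{L}_{k}(g)]$, which is (\ref{joinings202104.8}). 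For the lower bound on $\overline{L}_{1}$, from $s(r)=r+O_{\kappa}(\max\{R_{0},r^{1-\kappa}\})$ one checks $\tfrac{1}{2} w\le s^{-1}(w)\le 2w$ for $w\ge w_{0}(R_{0},\kappa)$; since $\overline{l}_{1}(g)\ge l(\epsilon,R_{0},\kappa)\nearrow\infty$ as $\epsilon\searrow0$, for small $\epsilon$ one has $\overline{l}_{1}(g)\ge w_{0}$ and thus $\overline{L}_{1}(g)\ge\tfrac{1}{2} l(\epsilon,R_{0},\kappa)=:L(\epsilon,R_{0},\kappa)\nearrow\infty$. Moreover every endpoint occurring is either $0$ or at least $\overline{l}_{1}(g)\ge w_{0}$, so $l_{k}(g)\asymp L_{k}(g)$ and $\overline{l}_{k}(g)\asymp\overline{L}_{k}(g)$ for all $k$; feeding these comparabilities into part (1) of Lemma~\ref{time change176} (whose exponents are all $\le0$, so that replacing the base by a comparable quantity only alters the implicit constant) gives part (1) of the corollary.

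Finally I would transfer the effective-gap clause (2), which I expect to be the one genuinely delicate step. Assuming $[0,\overline{L}_{k}(g)]$ and $[L_{k+1}(g),\overline{L}_{k+1}(g)]$ have no effective gap, i.e.\ $L_{k+1}-\overline{L}_{k}\le\min\{\overline{L}_{k},\overline{L}_{k+1}-L_{k+1}\}^{1+\eta}$, I would note that $s$ distorts the gap $L_{k+1}-\overline{L}_{k}$ and each of $\overline{L}_{k}$ and $\overline{L}_{k+1}-L_{k+1}$ by a bounded multiplicative factor plus an additive error $O_{\kappa}(\max\{R_{0},\overline{L}_{k+1}^{1-\kappa}\})$, a fixed power of the large quantity $\overline{L}_{k+1}$. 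A short case analysis (on which of $\overline{L}_{k}$, $\overline{L}_{k+1}-L_{k+1}$ attains the minimum, and on whether that minimum dominates or is dominated by the additive error) then shows that $[0,\overline{l}_{k}(g)]$ and $[l_{k+1}(g),\overline{l}_{k+1}(g)]$ have no effective gap for some $\eta'=\eta'(\eta,R_{0},\kappa)$ with $\eta'\to0$ as $\eta\to0$. Applying part (2) of Lemma~\ref{time change176} with $\eta'$ gives the bounds with exponents $-\xi(\eta')(1+\kappa)$, $-\xi(\eta')\kappa$, $-\xi(\eta')(\varsigma-i)$ in terms of $\overline{l}_{k}(g)$, and the comparability $\overline{l}_{k}(g)\asymp\overline{L}_{k}(g)$ rewrites these in terms of $\overline{L}_{k}(g)$; taking the composite $\eta\mapsto\xi(\eta'(\eta))$, which still tends to $1$ as $\eta\to0$, finishes the plan. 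The hard part is precisely this case analysis: checking that the almost-identity change of variables $r\leftrightarrow s$ neither manufactures nor erases an effective gap in a way that cannot be absorbed by slightly enlarging $\eta$.
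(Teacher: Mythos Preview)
Your proposal is correct and follows precisely the approach the paper indicates: the paper's entire argument is the single sentence ``Then by Lemma~\ref{time change176} and the monotonic nature, we deduce that'' preceding the corollary, having first noted (just as you do) that $|t(r)-s(r)|\ll_{\kappa}\max\{R_{0},s(r)^{1-\kappa}\}$ so that Lemma~\ref{time change176} applies with $t$ viewed as a function of $s$. Your write-up is considerably more careful than the paper's---in particular your treatment of the effective-gap transfer in part (2) fills a genuine gap the paper leaves implicit---but the strategy is identical.
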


\subsection{$\epsilon$-blocks and effective gaps}
Let $x\in \overline{X}$, $y\in B_{\overline{X}}(x,\epsilon)$. We say that \textit{$(\overline{g_{x}},\overline{g_{y}})\in C^{\rho}\backslash G\times C^{\rho}\backslash G$ covers $(x,y)$} if $ d_{C^{\rho}\backslash G}(\overline{g_{x}},\overline{g_{y}})<\epsilon$ and $\overline{P}(\overline{g_{x}})=x$, $\overline{P}(\overline{g_{y}})=y$, where $\overline{P}:C^{\rho}\backslash G\rightarrow C^{\rho}\backslash G/\Gamma$ is the projection.
Since
$\Lie(C^{\rho}\backslash G)\cong \mathfrak{sl}_{2}+ V^{\perp\rho}$, given a representative $g_{x}$ of $\overline{g_{x}}$, we may choose $g_{y}\in G$ such that $P(g_{y})=\overline{g_{y}}$ and
\[\log (g_{y}g_{x}^{-1})\in \mathfrak{sl}_{2}+ V^{\perp\rho}.\]
We shall always make such a choice if no further explanation.

\begin{defn}[$\epsilon$-block]\label{joinings202104.11}  Suppose that  $x\in \overline{X}$, $y\in B_{\overline{X}}(x,\epsilon)$,   $(\overline{g_{x}},\overline{g_{y}})$ covers $(x,y)$, and $R\in (0,\infty]$ satisfies
\[d_{C^{\rho}\backslash G}(u^{s(R)}\overline{g_{x}},u^{t(R)}\overline{g_{y}})<\epsilon.\]
 Then we define    the \textit{$\epsilon$-block of $\overline{g_{x}},\overline{g_{y}}$ of length $r$}\index{$\epsilon$-block of $g_{x},g_{y}$ of length $r$}    by
 \[\BL(g_{x},g_{y})\coloneqq  \{(u^{s(r)}\overline{g_{x}},u^{t(r)}\overline{g_{y}})\in C^{\rho}\backslash G\times C^{\rho}\backslash G:0\leq r\leq R\}.\]
 Similarly, we define the \textit{$\epsilon$-block of $x,y$ of length $r$} by
  \[\BL(x,y)\coloneqq P( \BL(g_{x},g_{y}))=\{(u^{s(r)}\overline{g_{x}},u^{t(r)}\overline{g_{y}})\in \overline{X}\times \overline{X}:0\leq r\leq R\}.\]
  In either case, we call $[0,R]$ the corresponding \textit{time interval}\index{time interval of $\epsilon$-blocks} and  define the \textit{length}\index{length of $\epsilon$-blocks} $|\BL|$ of $\BL$ by
 \[|\BL|\coloneqq R.\]
   We also write
 \[\BL(x,y)=\{(x,y),(u^{s(R)}x ,u^{t(R)}y)\}=\{(x,y),(\overline{x},\overline{y})\}\]
 emphasizing that $(x,y)$ is the first and $(\overline{x},\overline{y})$ is the last pair of the block $\BL(x,y)$.
\end{defn}

  For  a pair of $\epsilon$-blocks, a shifting problem may occur.
 \begin{defn}[Shifting]\label{dynamical systems3004} Let $\overline{\BL}^{\prime}=\{(x^{\prime},y^{\prime}),(\overline{x}^{\prime},\overline{y}^{\prime})\}$, $\overline{\BL}^{\prime\prime}=\{(x^{\prime\prime},y^{\prime\prime}),(\overline{x}^{\prime\prime},\overline{y}^{\prime\prime})\}$  be two  $\epsilon$-blocks. Then
$x^{\prime\prime}= u^{s}g_{x^{\prime}}$, $y^{\prime\prime}= u^{t}y^{\prime}$ for some $s,t>0$. Further, there is a unique $\gamma\in\Gamma$ such that
 \begin{equation}\label{dynamical systems1014}
  d_{C^{\rho}\backslash G}(\overline{g_{x^{\prime\prime}}},\overline{g_{y^{\prime\prime}}}\gamma)<\epsilon
 \end{equation}
 where  $g_{x^{\prime\prime}}\coloneqq u^{s}g_{x^{\prime}}$, $g_{y^{\prime\prime}}\coloneqq u^{t}g_{y^{\prime}}$. We  define
 \begin{itemize}
   \item (Shifting) $(x^{\prime},y^{\prime})\overset{\Gamma}{\sim}(x^{\prime\prime},y^{\prime\prime})$ if $\gamma\neq e$ in  (\ref{dynamical systems1014}),
   \item (Non-shifting) $(x^{\prime},y^{\prime})\overset{e}{\sim}(x^{\prime\prime},y^{\prime\prime})$ if $\gamma=e$ in  (\ref{dynamical systems1014}).
 \end{itemize}
\end{defn}

The key observation here is that whenever the difference of $\overline{g_{x}},\overline{g_{y}}$ can be estimated by the length in an appropriate way, a shifting must lead to an effective gap between two $\epsilon$-blocks. This follows from the natural renormalization of unipotent flows via diagonal flows.

\begin{prop}[Shiftings imply effective gaps]\label{joinings202104.29} There are quantities $\eta_{0}\approx0$, $\sigma_{0}\approx0$, $\epsilon_{0}\approx0$, $r_{0}>0$ determined orderly  such that for any
\begin{itemize}
  \item  $\eta\in(0,\eta_{0})$,
  \item  $\sigma\in(0,\sigma_{0}(\eta))$,
  \item  $\epsilon\in(0,\epsilon_{0}(\sigma))$,
\end{itemize}   there exists a compact set $K\subset \overline{X}$ with $\overline{\mu}(K)>1-\sigma$ such that the following holds (see Figure \ref{joinings202107.1}):
\begin{enumerate}[\ \ \ ]
  \item   Assume that  there are two  $\epsilon$-blocks $\overline{\BL}^{\prime}=\{(x^{\prime},y^{\prime}),(\overline{x}^{\prime},\overline{y}^{\prime})\}$, $\overline{\BL}^{\prime\prime}=\{(x^{\prime\prime},y^{\prime\prime}),(\overline{x}^{\prime\prime},\overline{y}^{\prime\prime})\}$ such that the $y$-endpoints lie in $K$ (i.e. $y^{\prime},\overline{y}^{\prime},y^{\prime\prime},\overline{y}^{\prime\prime} \in K$) and satisfy
\begin{equation}\label{joinings202104.13}
   g_{y^{\prime}}= h^{\prime}\exp(v^{\prime})g_{x^{\prime}},\ \ \ g_{y^{\prime\prime}}= h^{\prime\prime}\exp(v^{\prime\prime})g_{x^{\prime\prime}}
\end{equation}
 where $h^{\prime} ,h^{\prime\prime} \in SO_{0}(2,1)$, $v^{\prime} ,v^{\prime\prime} \in V_{\varsigma}$ can be estimated by
 \begin{equation}\label{joinings202104.12}
  h^{\prime} ,h^{\prime\prime}=\left[
            \begin{array}{ccc}
               1+O(r^{-2\eta}) &   O(r^{-1-2\eta})  \\
              O(\epsilon) &   1+O(r^{-2\eta})\\
            \end{array}
          \right],\ \ \ v^{\prime}, v^{\prime\prime} =O(r^{-\xi\varsigma})v_{0}+\cdots+O(\epsilon)v_{\varsigma}
 \end{equation} for some $r> r_{0}(\sigma,\epsilon_{0})$,
 where $\xi=\xi(\eta)\approx1$ is given by Corollary \ref{joinings202104.10}.
  Assume further that  $x^{\prime\prime}=u^{s}\overline{x}^{\prime}$, $y^{\prime\prime}=u^{t}\overline{y}^{\prime}$ and $t\asymp s$. If $\overline{\BL}^{\prime}\overset{\Gamma}{\sim}\overline{\BL}^{\prime\prime}$, then
\begin{equation}\label{joinings202104.18}
 s,t> r^{1+\eta}.
\end{equation}
\end{enumerate}
\end{prop}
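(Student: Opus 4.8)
The plan is to prove the contrapositive: assuming $\overline{\BL}'\overset{\Gamma}{\sim}\overline{\BL}''$ together with the estimates (\ref{joinings202104.13})--(\ref{joinings202104.12}), we show that if $s$ (hence also $t$, since $t\asymp s$) is at most $r^{1+\eta}$, then the element $\gamma\in\Gamma$ produced in Definition \ref{dynamical systems3004} must be trivial, contradicting the shifting hypothesis. First I would convert the shifting relation $d_{C^{\rho}\backslash G}(\overline{g_{x''}},\overline{g_{y''}}\gamma)<\epsilon$ into an algebraic identity in $G$ (modulo the compact group $C^{\rho}$, which commutes with $SO_{0}(2,1)$ and so causes no trouble). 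Writing $\delta_{1}=g_{\overline{y}'}g_{\overline{x}'}^{-1}$ for the relative position at the end of $\overline{\BL}'$ and $\epsilon_{1}=h''\exp(v'')$ for the relative position at the first pair of $\overline{\BL}''$, and using $g_{x''}=u^{s}g_{\overline{x}'}$, $g_{y''}=u^{t}g_{\overline{y}'}$, a short manipulation yields an identity of the shape
\[
g_{\overline{y}'}\,\gamma\,g_{\overline{y}'}^{-1}=u^{-t}\,\epsilon_{1}\,u^{s}\,\delta_{1}^{-1}
\]
(up to inverses, the $\Gamma$-bookkeeping between the flowed lifts $u^{s}g_{\overline{x}'},u^{t}g_{\overline{y}'}$ and the canonical lifts of $\overline{\BL}''$ being absorbed into $\gamma$). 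By (\ref{joinings202104.13})--(\ref{joinings202104.12}), which are exactly the conclusions of Corollary \ref{joinings202104.10}, both $\epsilon_{1}$ and $\delta_{1}$ are small elements whose coefficients in the $SO_{0}(2,1)$-factor and in each $V^{\perp\rho}$-weight space are polynomially small in $r$, with exponents governed by $\xi=\xi(\eta)\approx1$.

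Next I would fix the compact set $K$, of measure $>1-\sigma$, so that (i) the effective ergodicity (\ref{joinings202105.45}) of the time-change holds along $K$, which gives $|s-t|\ll_{\kappa}\max\{R_{0},s^{1-\kappa}\}$ for the gap times, and (ii) the relevant renormalised trajectories stay in a region of uniformly positive injectivity radius (this is automatic when $\Gamma$ is cocompact, since then $G/\Gamma$ is compact). The degenerate case in which $s$ stays bounded as $r\to\infty$ is disposed of at once: there $u^{-t}\epsilon_{1}u^{s}$ is a bounded-time unipotent conjugate of the small element $\epsilon_{1}$, hence small, so the identity forces $g_{\overline{y}'}\gamma g_{\overline{y}'}^{-1}$ to be small, whence $\gamma=e$. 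So assume $s\to\infty$.

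The heart of the matter is a renormalisation by the diagonal flow. Conjugate the identity by $a^{\omega}$ with $e^{\omega}=s$. Since $[Y_{n},U]=-U$, this turns $u^{s}$ into $u^{1}$ and $u^{t}$ into $u^{t/s}$ with $t/s=1+O(s^{-\kappa})\to1$; and since $\Ad(a^{\omega})$ dilates the $\overline{U}$-direction by $s$, contracts the $U$-direction by $s^{-1}$, and rescales the $i$-th $V^{\perp\rho}$-weight vector by $s^{(\varsigma-2i)/2}$, the bounds $|b|,|a-d|\ll r^{-1-2\eta},r^{-2\eta}$ and $|b_{i}|\ll r^{-\xi(\varsigma-i)}$ of (\ref{joinings202104.12}), together with $s\leq r^{1+\eta}$ and the choices $\eta\ll\kappa$, $\xi\approx1$, force every coefficient of $a^{\omega}\epsilon_{1}a^{-\omega}$ and of $a^{\omega}\delta_{1}a^{-\omega}$ to be small once $r$ is large; the one delicate coefficient is the flow-direction coefficient of the product, where the term $s^{2}r^{-1-2\eta}$ becomes $s\,r^{-1-2\eta}\leq r^{-\eta}\to0$ after the contraction and $|s-t|$ becomes $|s-t|/s\ll s^{-\kappa}\to0$ --- this is precisely why the exponent $1+\eta$, and nothing larger, appears. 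It follows that
\[
a^{\omega}\bigl(g_{\overline{y}'}\,\gamma\,g_{\overline{y}'}^{-1}\bigr)a^{-\omega}=u^{-t/s}\,(a^{\omega}\epsilon_{1}a^{-\omega})\,u^{1}\,(a^{\omega}\delta_{1}a^{-\omega})^{-1}\longrightarrow e.
\]
But $a^{\omega}(g_{\overline{y}'}\gamma g_{\overline{y}'}^{-1})a^{-\omega}=(a^{\omega}g_{\overline{y}'})\,\gamma\,(a^{\omega}g_{\overline{y}'})^{-1}$ is, whenever $\gamma\neq e$, a nontrivial element of the conjugate lattice $(a^{\omega}g_{\overline{y}'})\Gamma(a^{\omega}g_{\overline{y}'})^{-1}$, and by (ii) such elements have $d_{G}$-distance from $e$ bounded below by a fixed constant; this contradicts the previous display for $r>r_{0}$ with $r_{0}$ large. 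Hence $\gamma=e$, and the proof is complete.

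The step I expect to be the main obstacle is this renormalisation bookkeeping, and within it the flow ($U$-) direction. It is the one direction in which $\Ad(a^{-\omega})$ expands rather than contracts, so one cannot bound $\|g_{\overline{y}'}\gamma g_{\overline{y}'}^{-1}\|$ directly; the argument closes only after passing to the renormalised picture, and only because the flow-direction coefficient of $u^{-t}\epsilon_{1}u^{s}\delta_{1}^{-1}$ is itself of the controlled size $O(\epsilon)+O(s^{2}r^{-1-2\eta})+|s-t|$, with $|s-t|$ tamed by the effectiveness of the time-change. Reconciling all the competing exponent requirements --- $s\leq r^{1+\eta}$ forcing $r^{-1-2\eta}s^{2}$ small after renormalisation, $\eta$ small enough against $\kappa$, $\xi(\eta)$ close enough to $1$, and $\epsilon$ small enough against the injectivity-radius constant --- is exactly what dictates the order $\eta_{0},\sigma_{0},\epsilon_{0},r_{0}$ in which the constants are chosen.
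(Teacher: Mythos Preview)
Your overall strategy---renormalise by the geodesic flow and show that the conjugated lattice element becomes small, contradicting a lower bound on the injectivity radius---is exactly the paper's approach, and your exponent bookkeeping for the $SO_0(2,1)$- and $V^{\perp\rho}$-coefficients is essentially correct.

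There is, however, a genuine gap in the non-cocompact case, and it is precisely the point you flag parenthetically without resolving. You fix $e^{\omega}=s$ and then need the point $a^{\omega}\overline{y}'$ to lie in a region of uniformly positive injectivity radius. When $\Gamma$ is merely a lattice, $a^{\omega}\overline{y}'$ can lie arbitrarily deep in a cusp, where $(a^{\omega}g_{\overline{y}'})\Gamma(a^{\omega}g_{\overline{y}'})^{-1}$ contains nontrivial elements of arbitrarily small norm; your lower bound on $d_G(\gamma',e)$ then fails. There is no choice of $K$ of measure $>1-\sigma$ that makes (ii) hold for a \emph{single} prescribed renormalisation time $\omega$ depending on the pair of blocks.

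The paper fixes this by building $K=K_1\cap K_2$: $K_1$ is an injectivity-radius set, and $K_2$ is a set on which the \emph{geodesic} flow is quantitatively equidistributed, so that for $\overline{y}'\in K_2$ the relative length of $\{t\in[0,\omega_0]:a^{t}\overline{y}'\in K_1\}$ exceeds $1-\sigma$. One then takes $e^{\omega_0}=r^{1+2\delta}$ with $\delta=3\eta/4$ and chooses $\omega\in((1-\sigma)\omega_0,\omega_0]$ with $a^{\omega}\overline{y}'\in K_1$. The price is that $e^{\omega}$ is only $\geq r^{(1-\sigma)(1+2\delta)}$, which forces the extra constraint $1+\eta<(1-\sigma)(1+2\delta)$, i.e.\ $\sigma<3\eta/(4+6\eta)$; this, and not your inequality $s\,r^{-1-2\eta}\leq r^{-\eta}$, is what actually determines $\sigma_0(\eta)$ and explains the ordering $\eta_0,\sigma_0,\epsilon_0,r_0$ in the statement. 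Once you insert this ergodicity-of-$a$ step, your argument goes through.
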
 
  
  \begin{figure}[H]
\centering
 \tikzstyle{init} = [pin edge={to-,thin,black}]
\begin{tikzpicture}[node distance=4cm,auto,>=latex']
\draw (0,0) coordinate (a0) --node[below] {$\overline{\BL}^{\prime}$} (3,0) coordinate (a2) ;
\filldraw[black] (a0) circle(1pt) node[below] {$x^{\prime}$};
\filldraw[black] (a2) circle(1pt) node[below] {$\overline{x}^{\prime}$};

 \draw [dashed] (3,0) coordinate (a2) -- (9,0) coordinate (a3);
 \draw (9,0) coordinate (a3) --node[below] {$\overline{\BL}^{\prime\prime}$} (12,0) coordinate (a5) ;
\filldraw[black] (a3) circle(1pt) node[below] {$x^{\prime\prime}$};
\filldraw[black] (a5) circle(1pt) node[below] {$\overline{x}^{\prime\prime}$};

 \path[->] (a2) edge [bend left] node [right]{$s$} (a3);

  \draw (0,-1) coordinate (b0) --  (3,-1) coordinate (b2) ;
\filldraw[black] (b0) circle(1pt) node[above] {$y^{\prime}$};
\filldraw[black] (b2) circle(1pt) node[above] {$\overline{y}^{\prime}$};

 \draw [dashed] (3,-1) coordinate (b2) -- (9,-1) coordinate (b3);
 \draw (9,-1) coordinate (b3) --  (12,-1) coordinate (b5) ;
\filldraw[black] (b3) circle(1pt) node[above] {$y^{\prime\prime}$};
\filldraw[black] (b5) circle(1pt) node[above] {$\overline{y}^{\prime\prime}$};

 \path[->] (b2) edge [bend right] node [right]{$t$} (b3);

\end{tikzpicture}
\caption{The solid straight lines are the unipotent orbits in the $\overline{\BL}^{\prime}$ and $\overline{\BL}^{\prime\prime}$ respectively, and the dashed lines are the rest of the unipotent orbits. The bent curves indicate the   length defined by the letters.}
\label{joinings202107.1}
\end{figure}
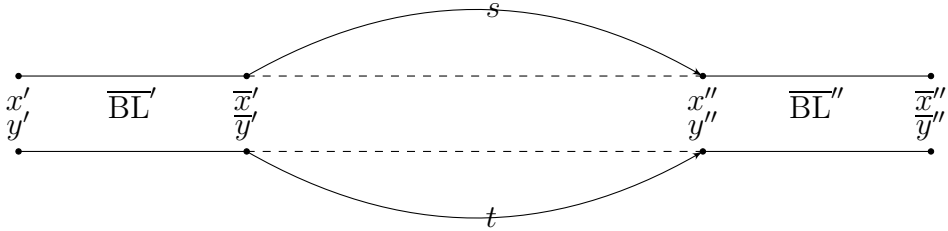

\begin{proof} We only consider $\varsigma=2$. Denote
\begin{equation}\label{joinings202104.14}
 g_{\overline{y}^{\prime}}= \overline{h}^{\prime}\exp(\overline{v}^{\prime})g_{\overline{x}^{\prime}}
\end{equation}
 for $\overline{h}^{\prime} \in SO_{0}(2,1)$, $\overline{v}^{\prime} \in V_{2}$. By Definition \ref{joinings202104.11}, we know that $g_{\overline{y}^{\prime}}, g_{\overline{x}^{\prime}}$ are obtained by the unipotent action on $g_{y^{\prime}}, g_{x^{\prime}}$, and the difference of $g_{\overline{y}^{\prime}}, g_{\overline{x}^{\prime}}$  is controlled by $\epsilon$. Combining   (\ref{joinings202104.12}), we get that
 \begin{equation}\label{joinings202104.16}
 \overline{h}^{\prime} =\left[
            \begin{array}{ccc}
               1+O(\epsilon) &   O(r^{-1-2\eta})  \\
              O(\epsilon) &   1+O(\epsilon)\\
            \end{array}
          \right],\ \ \ \overline{v}^{\prime} =O(r^{-2\xi })v_{0}+O(\epsilon)v_{1}+ O(\epsilon)v_{2}.
 \end{equation}
     Since  $\overline{\BL}^{\prime}\overset{\Gamma}{\sim}\overline{\BL}^{\prime\prime}$  and $g_{x^{\prime\prime}}=u^{s}g_{\overline{x}^{\prime}}$, we get that
  \begin{equation}\label{dynamical systems1033}
   g_{y^{\prime\prime}}= cu^{t}g_{\overline{y}^{\prime}}\gamma\ \ \ \text{ for some }e\neq \gamma\in\Gamma,\ c\in C^{\rho}.
  \end{equation}
 Then by (\ref{joinings202104.13}) (\ref{joinings202104.14}) (\ref{dynamical systems1033}), we have
  \begin{align}
   g_{\overline{y}^{\prime}}=&  \overline{h}^{\prime}\exp (\overline{v}^{\prime})  u^{-s} g_{x^{\prime\prime}}\;\nonumber\\
 g_{\overline{y}^{\prime}} \gamma=&  c^{-1} u^{-t}  h^{\prime\prime}\exp (v^{\prime\prime}) g_{x^{\prime\prime}} . \label{joinings202104.15}
\end{align}
Assume that one of $s,t$ is not greater than $r^{1+\eta}$. Then since $s\asymp t$,  we know
  \begin{equation}\label{joinings202104.17}
    0<s,t\leq O(r^{1+\eta}).
  \end{equation}

Next, we determine the quantities for the proposition.
\begin{itemize}
   \item     (Choice of $\eta$, $\delta$ (also $\eta_{0}$)) Choose a small $\eta\approx0$ that satisfies
  \begin{equation}\label{joinings202104.6}
   1+2\delta<1+2\eta<2\xi(2\eta)
  \end{equation}
    where $\xi(2\eta)$ was defined in  Corollary \ref{joinings202104.10}, and $\delta\coloneqq 3\eta/4$. Here $\eta_{0}\approx0$ can be defined to be the maximal $\eta$ so that (\ref{joinings202104.6}) holds.
  \item  (Choice of $\sigma$) Then  $\sigma=\sigma(\eta)>0$  can be chosen as
  \begin{equation}\label{joinings202104.7}
   \sigma<\frac{3\eta}{4+6\eta}.
  \end{equation}
  \item (Choice of $\epsilon_{0}$, $K_{1}$; injectivity radius)  Since $\Gamma$ is discrete, there is a compact subset $K_{1}\subset \overline{X}$, $\overline{\mu}(K_{1})>1- \frac{1}{4}\sigma$ and $ \epsilon_{0}>0$ such that for any $\overline{g_{y}}\in \overline{P}^{-1}(K_{1})$    satisfying
 \begin{equation}\label{joinings202104.22}
  d_{C^{\rho}\backslash G}( \overline{g_{y}},  \overline{g_{y}}\gamma)<O(\epsilon_{0})
 \end{equation}
 for some $\gamma\in\Gamma$, then $\gamma=e$. Here the constants hidden in $O(\epsilon_{0})$ will be determined after the estimate (\ref{joinings202104.20}) (see also (\ref{joinings202104.21})).
 \item (Choice of $K_{2}$, $K$, $T_{0}$,  $r_{0}$; ergodicity of $a^{T}$)
Since the diagonal action $a^{T}$ is ergodic on $(\overline{X},\overline{\mu})$,  there is a compact subset $K_{2}\subset  \overline{X}$, $\overline{\mu}(K_{2})>1-\frac{1}{4}\sigma$ and $T_{0}=T_{0}(K_{2})>0$ such that  the relative length measure $K_{2}$ on $[y,a^{T}y]$ (and $[a^{-T}y,y]$) is greater than $1-\sigma$ for any $y\in K_{2}$,   $|T|\geq T_{0}$.   Assume that
\begin{equation}\label{dynamical systems1026}
  K\coloneqq K_{1}\cap K_{2},\ \ \    r_{0}> e^{(1+2\delta)^{-1}T_{0}} .
\end{equation}
Note that $\overline{\mu}(K)>1-\sigma$. The quantity $r_{0}$ will be even larger and determined by $\epsilon_{0}$ if necessary (see (\ref{joinings202104.21})).
\end{itemize}

Now we are in the position to apply the renomalization via the diagonal action $a^{w}$. Since $r>r_{0}=e^{(1+2\delta)^{-1}T_{0}}$, let $e^{\omega_{0}}\coloneqq   r^{1+2\delta}$ and we know $\omega_{0}>T_{0}$. Since $\overline{y}^{\prime}\in K\subset K_{2}$, it follows from the choice of $K_{2}$ and $T_{0}$  that the relative length measure of $K$ on $[\overline{y}^{\prime},a^{\omega_{0}}\overline{y}^{\prime}]$ is greater than $1-\sigma$. This implies that there is $\omega$ satisfying
  \[(1-\sigma)\omega_{0}<\omega\leq\omega_{0}\]
  such that $a^{\omega}\overline{y}^{\prime}\in K$ and therefore
  \begin{equation}\label{dynamical systems1031}
    a^{\omega}\overline{g_{\overline{y}^{\prime}}}\in \overline{P}^{-1}(K).
  \end{equation}

 By (\ref{joinings202104.15}), we have
  \begin{align}
  a^{\omega} g_{\overline{y}^{\prime}} =&  (a^{\omega} \overline{h}^{\prime} a^{-\omega})\exp (\Ad a^{\omega}. \overline{v}^{\prime}) (a^{\omega}u^{-s}a^{-\omega}) a^{\omega} g_{x^{\prime\prime}}\;\nonumber\\
  a^{\omega} g_{\overline{y}^{\prime}}\gamma=&    c^{-1}(a^{\omega}u^{-t} a^{-\omega}) (a^{\omega} h^{\prime\prime} a^{-\omega})\exp (\Ad a^{\omega}.v^{\prime\prime}) a^{\omega}g_{x^{\prime\prime}}  \label{joinings202104.19}
\end{align}
Then by (\ref{joinings202104.16}) (\ref{joinings202104.12}) (\ref{joinings202104.17}), we estimate
\begin{align}
   a^{w} \overline{h}^{\prime}a^{-w}= &  \left[
            \begin{array}{ccc}
               1+O(\epsilon) &   O(r^{2\delta-2\eta})  \\
              O(\epsilon) &   1+O(\epsilon)\\
            \end{array}
          \right]\;\nonumber\\
a^{w} \overline{h}^{\prime}a^{-w}= &  \left[
            \begin{array}{ccc}
               1+O(r^{-2\eta}) &   O(r^{2\delta-2\eta})  \\
              O(\epsilon) &   1+O(r^{-2\eta})\\
            \end{array}
          \right]\;\nonumber\\
\Ad a^{\omega}. \overline{v}^{\prime}= & O(r^{ -2\xi+1+2\delta  })v_{0}+O(\epsilon)v_{1}+O(\epsilon)v_{2} \;\nonumber\\
\Ad a^{\omega}.  v^{\prime\prime}= & O(r^{ -2\xi+1+2\delta  })v_{0}+O(\epsilon)v_{1}+O(r^{ - (1-\sigma)(1+2\delta) })v_{2} \;\label{joinings202104.20}\\
a^{\omega}u^{-t}a^{-\omega}= & u^{-te^{-\omega}}= u^{O(r^{1+\eta}r^{-(1-\sigma)(1+2\delta)})}\;\nonumber\\
a^{\omega}u^{-s}a^{-\omega}=&  u^{-se^{-\omega}}= u^{O(r^{1+\eta}r^{-(1-\sigma)(1+2\delta)})} . \nonumber
\end{align}
Notice that by the choice of $\sigma,\delta$ (see (\ref{joinings202104.6}) (\ref{joinings202104.7})), we have
\[1+\eta-(1-\sigma)(1+2\delta)=1+\eta-(1-\sigma)(1+\frac{3}{2}\eta)<-\frac{1}{4}\eta.\]
Also, by (\ref{joinings202104.6}), we have
\[2\delta-2\eta<0,\ \ \ -2\xi+1+2\delta<0.\]
Thus, by enlarging $r_{0}$ if necessary, all terms of (\ref{joinings202104.20}) can be quantitatively dominated by $O(\epsilon_{0})$.
Then by (\ref{joinings202104.19}), we have
\begin{equation}\label{joinings202104.21}
  d_{C^{\rho}\backslash G}(\overline{a^{\omega} g_{\overline{y}^{\prime}}}\gamma,\overline{a^{\omega} g_{\overline{y}^{\prime}}})=d_{C^{\rho}\backslash G}(\overline{a^{\omega} g_{\overline{y}^{\prime}}}\gamma (a^{\omega} g_{x^{\prime\prime}})^{-1},\overline{a^{\omega} g_{\overline{y}^{\prime}}}(a^{\omega} g_{x^{\prime\prime}})^{-1})<O(\epsilon_{0}).
\end{equation}
Thus, by (\ref{joinings202104.22}), we get $\gamma=e$, which contradicts our assumptions.
\end{proof}

\subsection{Construction of $\epsilon$-blocks}
In light of Proposition \ref{joinings202104.29}, we try to construct a collection of $\epsilon$-blocks based on the unipotent flows between two nearby points so that each pair of $\epsilon$-blocks has an effective gap.

First, given $\eta_{0}\approx0$ as in Proposition \ref{joinings202104.29}, we  fix a sufficiently small $\kappa\in(0,2\eta_{0})$, and then choose  $\eta=\eta(\kappa)\approx0$ such that
\begin{equation}\label{joinings202104.31}
  \frac{1+2\eta}{\xi(2\eta)}<1+\kappa<1+2\eta_{0}
\end{equation}
 where $\xi(2\eta)\approx1$ is given by Corollary \ref{joinings202104.10}.
Then $\sigma_{0}=\sigma_{0}(\eta)\approx0$ given in Proposition \ref{joinings202104.29} has been determined. Next, assume that there exist
\begin{itemize}
  \item $\sigma\in(0,\sigma_{0})$,
  \item $R_{0}>1$,
  \item $\epsilon_{0}=\epsilon_{0}(\sigma)\approx0$, $\epsilon=\epsilon(R_{0})\in (0,\epsilon_{0})$ so small that
  \begin{equation}\label{joinings202104.34}
  \overline{L}_{1}(g)\geq L(\epsilon,R_{0},\kappa)>\max\{r_{0}(\sigma,\epsilon_{0}),R_{0}\}
  \end{equation}
  whenever $g\in B_{G}(e,\epsilon)$, where $\overline{L}_{1},L$ are defined by Corollary \ref{joinings202104.10},
\end{itemize}
such that
 for $K\subset \overline{X}$ with $\overline{\mu}(K)>1-\sigma$ given by Proposition \ref{joinings202104.29}, $x,y\in \overline{X}$,  we have $A=A(x,y)\subset\mathbf{R}^{+}$ such that
\begin{enumerate}[\ \ \ (i)]
  \item if $r\in A$, then
  \begin{equation}\label{joinings202104.30}
     u^{t(r)}y\in  K\ \ \text{ and }\ \  d_{\overline{X}}(u^{s(r)}x, u^{t(r)}y)<\epsilon
  \end{equation}
  for  continuous increasing functions  $t,s:[0,\infty)\rightarrow[0,\infty)$;
  \item   we have the H\"{o}lder inequalities:
  \begin{align}
 |(t(r^{\prime})-t(r))-(r^{\prime}-r)|\ll&  |r^{\prime}-r|^{1-\kappa}\;\label{joinings202104.5}\\
  |(s(r^{\prime})-s(r))-(r^{\prime}-r)|\ll&  |r^{\prime}-r|^{1-\kappa} \; \nonumber
\end{align}
      for all $r,r^{\prime}\in A$ with $r^{\prime}>r$, $ r^{\prime}-r \geq R_{0}$.
\end{enumerate}
  It is worth noting from (\ref{joinings202104.22}) that points in $K$ have injectivity radius at least $\epsilon_{0}$.  For simplicity, we shall assume that $0\in A$ in what follows.
\begin{rem}\label{joinings202104.32}
  For the condition (i) (ii), the quantities $s,t$ are symmetric. Thus, for instance, one can also consider $s$ as an increasing function of $t$, and obtain similar H\"{o}lder inequalities. We have already made  such a change of variables in Section \ref{joinings202104.33}, for notational simplicity.

  On the other hand,   the assumptions (\ref{joinings202104.30}) (\ref{joinings202104.5}) coincide with (\ref{joinings202104.9})  (\ref{joinings202104.8}). So Corollary \ref{joinings202104.10} can apply.
\end{rem}

\noindent
\textbf{Construction of  $\beta_{1}$.}
  For $\lambda\in A$ denote
$A_{\lambda}\coloneqq A\cap[0,\lambda]$.
Now we construct  a collection $\beta_{1}(A_{\lambda})$ of  $\epsilon$-blocks. Let $x_{1}\coloneqq x$, $y_{1}\coloneqq y$. We follow the assumptions (\ref{joinings202104.30}) (\ref{joinings202104.5}). Suppose that $(\overline{g_{x_{1}}}, \overline{g_{y_{1}}})\in C^{\rho}\backslash G\times C^{\rho}\backslash G$ covers $(x_{1},y_{1})$ and
\[ \overline{r}_{1}\coloneqq  \sup\{r\in A_{\lambda}\cap[0,\overline{L}_{1}(g_{y_{1}}g_{x_{1}}^{-1})]: d_{G}( u^{t(r)}g_{y_{1}}  ,u^{s(r)}g_{x_{1}}  )<\epsilon \},\ \ \ \overline{s}_{1}\coloneqq s(\overline{r}_{1})\]
where $\overline{L}_{1}$ is defined by Corollary \ref{joinings202104.10}.
Let $\BL_{1}$ be the $\epsilon$-block of $x_{1},y_{1}$ of length $\overline{r}_{1}$, $\BL_{1}=\{(x_{1},y_{1}),(\overline{x}_{1},\overline{y}_{1})\}$.
To define $\BL_{2}$,  we take
\[r_{2}\coloneqq\inf\{r\in A_{\lambda}:r>\overline{r}_{1} \},\ \ \ s_{2}\coloneqq s(r_{2}) \]
and apply the above procedure to
\[x_{2}\coloneqq u^{s(r_{2})}x_{1},\ \ \ y_{2}\coloneqq  u^{t(r_{2})}y_{1}\]
(Note that by (\ref{joinings202104.8}), $r_{2}>\overline{r}_{1}$). This process defines   a collection $\beta_{1}(A_{\lambda})=\{\BL_{1},\ldots, \BL_{n}\}$ of $\epsilon$-blocks on the orbit intervals $[x_{1},u^{s(\lambda)}x_{1}]$, $[y_{1},u^{t(\lambda)}y_{1}]$ (see Figure \ref{joinings202106.176}):
 \[x_{i}= u^{s_{i}}x_{1},\ \ \ \overline{x}_{i}=u^{\overline{s}_{i}}x_{1}, \ \ \ y_{i}=u^{t_{i}} y_{1},\ \ \ \overline{y}_{i}=u^{\overline{t}_{i}}y_{1}\]
  \[s_{i}= s(r_{i}),\ \ \ \overline{s}_{i}= s(\overline{r}_{i}), \ \ \ t_{i}= t(r_{i}),\ \ \ \overline{t}_{i}= t(\overline{r}_{i}).\]
 Note also that by the assumption of $A$, we have $x_{i},\overline{x}_{i}\in K$ for all $i$, the corresponding time interval of $BL_{i}$ is $[r_{i},\overline{r}_{i}]$ and  the length $|\BL_{i}|$ of $\BL_{i}$ is
 \[|\BL_{i}|\coloneqq \overline{r}_{i}-r_{i}.\]
  \begin{figure}
\centering
  \tikzstyle{init} = [pin edge={to-,thin,black}]
\begin{tikzpicture}[node distance=4cm,auto,>=latex']
\draw (0,0) coordinate (a0) --node[below] {$\BL_{1}$} (2,0) coordinate (a2) ;
\filldraw[black] (a0) circle(1pt) node[below] {$x_{1}$};
\filldraw[black] (a2) circle(1pt) node[below] {$\overline{x}_{1}$};

 \draw [dashed] (2,0) coordinate (a2) -- (3,0) coordinate (a3);
 \draw (3,0) coordinate (a3) --node[below] {$\BL_{2}$} (5,0) coordinate (a5) ;
\filldraw[black] (a3) circle(1pt) node[below] {$x_{2}$};
\filldraw[black] (a5) circle(1pt) node[below] {$\overline{x}_{2}$};

 \draw [dashed] (5,0) coordinate (a5) -- (6,0) coordinate (a6);
 \draw (6,0) coordinate (a6) --node[below] {$\BL_{3}$} (8,0) coordinate (a8) ;
\filldraw[black] (a6) circle(1pt) node[below] {$x_{3}$};
\filldraw[black] (a8) circle(1pt) node[below] {$\overline{x}_{3}$};

 \draw [dashed] (8,0) coordinate (a8) -- (12,0) coordinate (a12);
 \filldraw[black] (a12) circle(1pt) node[below] {$\overline{x}_{n}$};

 \path[->] (a0) edge [bend left] node [right]{$s_{2}$} (a3);
 \path[->] (a0) edge [bend left] node [below]{$\overline{s}_{2}$} (a5);

 \path[->] (a0) edge [bend left] node [right]{$s_{3}$} (a6);
 \path[->] (a0) edge [bend left] node [right]{$\overline{s}_{3}$} (a8);
  \path[->] (a0) edge [bend left] node [right]{$s(\lambda)$} (a12);

  \draw (0,-1) coordinate (b0) --  (2,-1) coordinate (b2) ;
\filldraw[black] (b0) circle(1pt) node[above] {$y_{1}$};
\filldraw[black] (b2) circle(1pt) node[above] {$\overline{y}_{1}$};

 \draw [dashed] (2,-1) coordinate (b2) -- (3,-1) coordinate (b3);
 \draw (3,-1) coordinate (b3) --  (5,-1) coordinate (b5) ;
\filldraw[black] (b3) circle(1pt) node[above] {$y_{2}$};
\filldraw[black] (b5) circle(1pt) node[above] {$\overline{y}_{2}$};

 \draw [dashed] (5,-1) coordinate (b5) -- (6,-1) coordinate (b6);
 \draw (6,-1) coordinate (b6) --  (8,-1) coordinate (b8) ;
\filldraw[black] (b6) circle(1pt) node[above] {$y_{3}$};
\filldraw[black] (b8) circle(1pt) node[above] {$\overline{y}_{3}$};

 \draw [dashed] (8,-1) coordinate (b8) -- (12,-1) coordinate (b12);
 \filldraw[black] (b12) circle(1pt) node[above] {$\overline{y}_{n}$};

 \path[->] (b0) edge [bend right] node [right]{$t_{2}$} (b3);
 \path[->] (b0) edge [bend right] node [right]{$\overline{t}_{2}$} (b5);

 \path[->] (b0) edge [bend right] node [right]{$t_{3}$} (b6);
 \path[->] (b0) edge [bend right] node [right]{$\overline{t}_{3}$} (b8);
  \path[->] (b0) edge [bend right] node [right]{$t(\lambda)$} (b12);
\end{tikzpicture}
\caption{A collection of $\epsilon$-blocks $\{\BL_{1},\ldots,\BL_{n}\}$.  The solid straight lines are the unipotent orbits in the $\epsilon$-blocks and the dashed lines are the rest of the unipotent orbits. The bent curves indicate the   length defined by the letters.}
\label{joinings202106.176}
\end{figure}
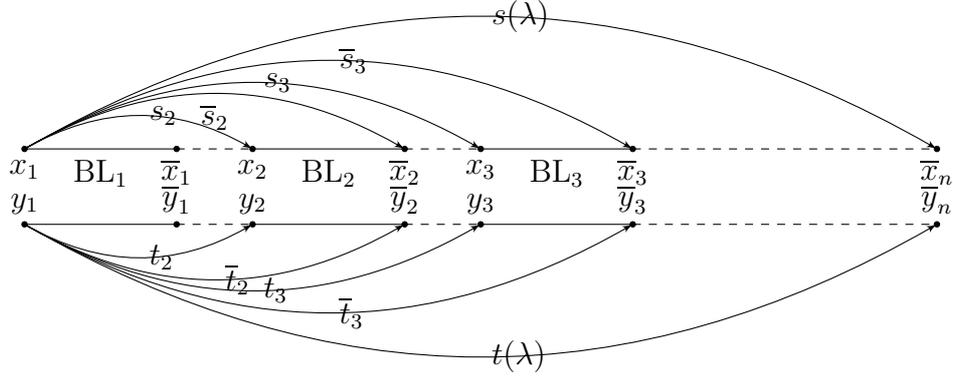

  Note that any  $\BL_{i}=\{(x_{i},y_{i}),(\overline{x}_{i},\overline{y}_{i})\}\in\beta_{1}(A_{\lambda})$ has length $|\BL_{i}|\leq \overline{L}_{1}(g_{y_{i}}g_{x_{i}}^{-1})$. By  Corollary \ref{joinings202104.10}, we immediately obtain an estimate for the difference of $g_{x_{i}}$ and $g_{y_{i}}$ in terms of the length of $\epsilon$-blocks.
  \begin{cor}[Difference of $\beta_{1}(A_{\lambda})$]\label{joinings202104.2}
   Assume that  $\overline{g_{y_{i}}g_{x_{i}}^{-1}}=C^{\rho}h_{i}\exp(v_{i})$, where
    \[h_{i}=\left[
            \begin{array}{ccc}
              a&  b \\
             c&  d\\
            \end{array}
          \right]\in SO_{0}(2,1),\ \ \ v_{i}=b_{0}v_{0}+\cdots+b_{\varsigma}v_{\varsigma}\in V_{\varsigma}.\]
  Then we have
\[h_{i}=\left[
            \begin{array}{ccc}
               1+O(\mathbf{r}_{i}^{-\kappa}) &   O(\mathbf{r}_{i}^{-1-\kappa})  \\
              O(\epsilon) &   1+O(\mathbf{r}_{i}^{-\kappa})\\
            \end{array}
          \right],\ \ \ v_{i}=O(\mathbf{r}_{i}^{-\varsigma})v_{0}+\cdots+O(\epsilon)v_{\varsigma}\]
  for some  $\mathbf{r}_{i}\geq\max\{r_{0},R_{0},|\BL_{i}|\}$.
  \end{cor}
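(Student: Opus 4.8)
The plan is to deduce Corollary~\ref{joinings202104.2} directly from Corollary~\ref{joinings202104.10}, applied separately to each $\epsilon$-block. Fix $\BL_i=\{(x_i,y_i),(\overline{x}_i,\overline{y}_i)\}\in\beta_1(A_\lambda)$ and put $g\coloneqq g_{y_i}g_{x_i}^{-1}$, so that $\overline{g}=\overline{g_{y_i}g_{x_i}^{-1}}=C^\rho h_i\exp(v_i)$ is precisely the element occurring in the statement. Since the initial pair $(x_i,y_i)$ of an $\epsilon$-block satisfies $d_{\overline{X}}(x_i,y_i)<\epsilon$, the pair $(\overline{g_{x_i}},\overline{g_{y_i}})$ covering it has $d_{C^\rho\backslash G}(\overline{g_{x_i}},\overline{g_{y_i}})<\epsilon$, i.e. $\overline{g}\in B_{C^\rho\backslash G}(e,\epsilon)$; decomposing as in (\ref{dynamical systems1}) this gives $h_i\in B_{SO_0(2,1)}(e,\epsilon)$ and $v_i\in B_{V_\varsigma}(0,\epsilon)$. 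The functions $s,t$ attached to $\BL_i$, re-based so that the block starts at parameter $0$ (i.e. replacing $s(\cdot),t(\cdot)$ by $s(r_i+\cdot)-s(r_i)$, $t(r_i+\cdot)-t(r_i)$), still obey the H\"older bounds of assumption~(ii), hence (\ref{joinings202104.9}); and the length of $\BL_i$ satisfies $|\BL_i|\le\overline{L}_1(g_{y_i}g_{x_i}^{-1})$ (stated immediately before the corollary), where $\overline{L}_1$ is the quantity produced by Corollary~\ref{joinings202104.10} for this $g$ and these re-based functions (well-defined, since the H\"older constants are the uniform ones from assumption~(ii)). So all hypotheses of Corollary~\ref{joinings202104.10} are in force.

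Applying Corollary~\ref{joinings202104.10}(1) to $\overline{g}=C^\rho h_i\exp(v_i)$ then yields $|b|\ll_\kappa\overline{L}_1(g)^{-1-\kappa}$, $|a-d|\ll_\kappa\overline{L}_1(g)^{-\kappa}$ and $|b_j|\ll_{\varsigma,\kappa}\overline{L}_1(g)^{-\varsigma+j}$ for $0\le j\le\varsigma$, with implicit constants depending only on $\kappa$ and $\varsigma$, hence uniform in $i$ since $\varsigma\le 2$ on $\mathfrak{so}(n,1)$. I would then set $\mathbf{r}_i\coloneqq\overline{L}_1(g_{y_i}g_{x_i}^{-1})$: by (\ref{joinings202104.34}) this is $\ge L(\epsilon,R_0,\kappa)>\max\{r_0,R_0\}$, and by the length bound above $\mathbf{r}_i\ge|\BL_i|$, so $\mathbf{r}_i\ge\max\{r_0,R_0,|\BL_i|\}$ as required. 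The lower-left entry is handled by $|c|<\epsilon=O(\epsilon)$ (from $h_i\in B_{SO_0(2,1)}(e,\epsilon)$) and the top-weight coefficient by $|b_\varsigma|<\epsilon=O(\epsilon)$ (from $v_i\in B_{V_\varsigma}(0,\epsilon)$), while the remaining coefficients $b_0,\dots,b_{\varsigma-1}$ already carry the claimed bounds $O(\mathbf{r}_i^{-\varsigma}),\dots,O(\mathbf{r}_i^{-1})$.

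To finish, I would upgrade the bound on $a-d$ to individual bounds on $a$ and $d$: on the branch of $\iota^{-1}(h_i)$ near the identity one has $1-\epsilon<a,d<1+\epsilon$, and $\det h_i=1$ gives $ad=1+bc=1+O(\mathbf{r}_i^{-\kappa})$ (using $|b|\ll\mathbf{r}_i^{-1-\kappa}$, $|c|<\epsilon$); then $d^2=ad-d(a-d)=1+O(\mathbf{r}_i^{-\kappa})$ forces $d=1+O(\mathbf{r}_i^{-\kappa})$ and hence $a=d+(a-d)=1+O(\mathbf{r}_i^{-\kappa})$, which is exactly the asserted form of $h_i$ and $v_i$. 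For a general $v_i\in V^{\perp\rho}=\sum_i V_i^{0\perp\rho}\oplus\sum_j V_j^2$ one just repeats the argument on each $\mathfrak{sl}_2$-irreducible summand (as in Section~\ref{joinings202104.3}) and takes the worst constant, all of which stay bounded because the highest weight is at most $2$. I expect the only slightly delicate point — the main obstacle, such as it is — to be the bookkeeping that matches the global parameter $r$ used to assemble $\beta_1(A_\lambda)$ with the block-local clock on which Corollary~\ref{joinings202104.10} is phrased, and confirming that the H\"older hypotheses (ii) persist under this re-basing; once that is pinned down the corollary is immediate, everything else being the quoted estimate together with the determinant relation.
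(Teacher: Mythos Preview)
Your proposal is correct and follows exactly the paper's approach: the paper simply notes that $|\BL_i|\le\overline{L}_1(g_{y_i}g_{x_i}^{-1})$ and invokes Corollary~\ref{joinings202104.10}, which is precisely what you do with $\mathbf{r}_i\coloneqq\overline{L}_1(g_{y_i}g_{x_i}^{-1})$. Your additional bookkeeping (re-basing the clock, the determinant argument to pass from $|a-d|$ to individual bounds on $a,d$, and the direct $O(\epsilon)$ bounds on $c$ and $b_\varsigma$) just spells out details the paper leaves implicit.
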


  We then immediately conclude from Proposition \ref{joinings202104.29}  that for any  $\BL^{\prime},\BL^{\prime\prime}\in\beta_{1}(A_{\lambda})$ with  $\BL^{\prime}\overset{\Gamma}{\sim}\BL^{\prime\prime}$, there is an effective gap between them, i.e.
\[d(\BL^{\prime},\BL^{\prime\prime})\geq[\min\{|\BL^{\prime}|,|\BL^{\prime\prime}|\}]^{1+\kappa/2}.\]
However, when   $\BL^{\prime}\overset{e}{\sim}\BL^{\prime\prime}$, they do not necessarily have an effective gap. This enlighten us to connect these $\epsilon$-blocks and generate a new collection $\beta_{2}(A_{\lambda})$.

\noindent
\textbf{Construction of  $\beta_{2}$.}
  Now we construct a new collection $\beta_{2}(A_{\lambda})=\{\overline{\BL}_{1},\ldots,\overline{\BL}_{N}\}$ by the following procedure. The idea is to connect $\epsilon$-blocks in $\beta_{1}(A_{\lambda})=\{\BL_{1},\ldots,\BL_{n}\}$ so that each pair of new blocks must have an effective gap.
  Let $\BL_{1}\in\beta_{1}(A_{\lambda})$,   $g_{y_{1}}=h\exp(v)g_{x_{1}}$ and
       \[h=\left[
            \begin{array}{ccc}
              a&  b \\
             c&  d\\
            \end{array}
          \right]\in SO(2,1),\ \ \ v=b_{0}v_{0}+\cdots+b_{\varsigma}v_{\varsigma}\in V_{\varsigma}.\]
  Then by  Corollary \ref{joinings202104.10}, one can write $u^{t(r)}gu^{-s(r)}\in B_{G}(e,\epsilon)$ for
  \begin{equation}\label{time change177}
   r\in   \bigcup_{k}[L_{k}(g),\overline{L}_{k}(g)]
  \end{equation}
  where $k\leq c$ is uniformly bounded for all $g\in G$.
  Then consider the following two cases:
    \begin{enumerate}[\ \ \ (i)]
    \item  There is no $j\in\{2,\ldots,n\}$ such that $(x_{1},y_{1})\overset{e}{\sim}(x_{j},y_{j})$.
     \item There is $j\in\{2,\ldots,n\}$ such that   $(x_{1},y_{1})\overset{e}{\sim}(x_{j},y_{j})$.
    \end{enumerate}

  In case (i), we set $\overline{\BL}_{1}=\BL_{1}$. Then by Corollary \ref{joinings202104.2}, we have
  \begin{equation}\label{time change180}
    |b|\ll\overline{L}_{1}(g_{y_{1}}g_{x_{1}}^{-1})^{-1-\kappa},\ \ \ |a-d|\leq  \overline{L}_{1}(g_{y_{1}}g_{x_{1}}^{-1})^{-\kappa}
  \end{equation}

   In case (ii),  suppose that   $g_{x_{j}}=u^{s_{j}}g_{x_{1}}$, $g_{y_{j}}=u^{t_{j}}g_{y_{1}}$. Clearly, by the construction, $\overline{r}_{j}>\overline{L}_{1}(g_{y_{1}}g_{x_{1}}^{-1})$. On the other hand, by (\ref{time change177}), we get
    \[\overline{r}_{j}\in   \bigcup_{k}[L_{k}(g_{y_{1}}g_{x_{1}}^{-1}),\overline{L}_{k}(g_{y_{1}}g_{x_{1}}^{-1})]\]
    and $k\leq C$ is uniformly bounded for all $g\in G$.
    Assume that $j_{\max}$ is the maximal $j$ among $\overline{r}_{j}\in[L_{2}(g_{y_{1}}g_{x_{1}}^{-1}),\overline{L}_{2}(g_{y_{1}}g_{x_{1}}^{-1})]$.  Whether $[0,\overline{L}_{1}(g_{y_{1}}g_{x_{1}}^{-1})]$ and $[L_{2}(g_{y_{1}}g_{x_{1}}^{-1}),\overline{L}_{2}(g_{y_{1}}g_{x_{1}}^{-1})]$ have an effective gap leads to a dichotomy of choices:
   \[\overline{\BL}_{1}=\left\{\begin{array}{ll}
 \text{remains unchange}&,\text{ if } L_{2}(g_{y_{1}}g_{x_{1}}^{-1})-\overline{L}_{1}(g_{y_{1}}g_{x_{1}}^{-1})>\overline{L}_{1}(g_{y_{1}}g_{x_{1}}^{-1})^{1+2\eta}\\
 \{(x_{1},y_{1}),(\overline{x}_{j_{\max}},\overline{y}_{j_{\max}})\} &,\text{ otherwise}
\end{array}\right. .\]
If the first case occurs, we will not change $\overline{\BL}_{1}$ anymore.
If the second case occurs, i.e. we  redefine $\overline{\BL}_{1}=\{(x_{1},y_{1}),(\overline{x}_{j_{\max}},\overline{y}_{j_{\max}})\}$, then we repeat the construction for the new $\overline{\BL}_{1}$ again:
\begin{enumerate}[\ \ \ ]
  \item  Suppose that there is  $\overline{r}_{j}>\overline{L}_{2}(g_{y_{1}}g_{x_{1}}^{-1})$. Then assume $j_{\max}$ to be the maximal $j$ among $\overline{r}_{j}\in[L_{3}(g_{y_{1}}g_{x_{1}}^{-1}),\overline{L}_{3}(g_{y_{1}}g_{x_{1}}^{-1})]$. Then again, we set
  \[\overline{\BL}_{1}=\left\{\begin{array}{ll}
\text{remains unchange}&,\text{ if } L_{3}(g_{y}g_{x}^{-1})-\overline{L}_{3}(g_{y_{1}}g_{x_{1}}^{-1})>\overline{L}_{2}(g_{y_{1}}g_{x_{1}}^{-1})^{1+2\eta}\\
 \{(x_{1},y_{1}),(\overline{x}_{j_{\max}},\overline{y}_{j_{\max}})\} &,\text{ otherwise}
\end{array}\right. \]
and so on.
\end{enumerate}
The process will stop since the number of intervals is uniformly bounded for all $g\in G$. Now $\overline{BL}_{1}\in\beta_{2}(A_{\lambda})$ has been constructed. By the choice
of $\overline{\BL}_{1}$ and  Corollary \ref{joinings202104.10}, we conclude  that
\begin{equation}\label{time change178}
 |b|\ll_{\kappa} |\BL_{1}|^{-\xi(1+\kappa)},\ \ \ |a-d|\ll_{\kappa} |\BL_{1}|^{-\xi\kappa},\ \ \ |b_{i}|\ll_{\varsigma,\kappa} |\BL_{1}|^{-\xi (\varsigma-i)}
\end{equation}
for  $\xi=\xi(2\eta)\approx1$ and for all $1\leq i\leq \varsigma$.

Next, we repeat the above argument to construct $\overline{\BL}_{m+1}$. More precisely, suppose that $   \overline{\BL}_{m}=\{(x_{j_{m-1}+1},y_{j_{m-1}+1}),(\overline{x}_{j_{m}},\overline{y}_{j_{m}})\}\in\beta_{2}(A_{\lambda})$ has been constructed. To define $\overline{\BL}_{m+1}$, we repeat the above argument  to  $\BL_{j_{m}+1}\in\beta_{1}(A_{\lambda})$. Thus, $\beta_{2}(A_{\lambda})$ is completely defined. Further, one may conclude the difference of points of $\epsilon$-blocks in $\beta_{2}(A_{\lambda})$:
\begin{lem}[Difference of $\beta_{2}(A_{\lambda})$]\label{joinings202104.35}
   For any $\overline{\BL}_{i}=\{(x_{i}^{\prime},y_{i}^{\prime}),(\overline{x}_{i}^{\prime},\overline{y}_{i}^{\prime})\}$ in the collection  $\beta_{2}(A_{\lambda})=\{\overline{\BL}_{1},\ldots,\overline{\BL}_{N}\}$ of $\epsilon$-blocks, we have
   \[ \overline{g_{y_{i}^{\prime}}g_{x_{i}^{\prime}}^{-1}}=C^{\rho}h_{i}\exp(v_{i})\]
    where
   \begin{equation}\label{joinings202104.36}
h_{i}=\left[
            \begin{array}{ccc}
               1+O(\mathbf{r}_{i}^{-2\eta}) &   O(\mathbf{r}_{i}^{-1-2\eta})  \\
              O(\epsilon) &   1+O(\mathbf{r}_{i}^{-2\eta})\\
            \end{array}
          \right],\ \ \ v_{i}=O(\mathbf{r}_{i}^{-\xi\varsigma})v_{0}+\cdots+O(\epsilon)v_{\varsigma}
   \end{equation}
  for some  $\mathbf{r}_{i}\geq\max\{r_{0},R_{0},|\overline{\BL}_{i}|\}$.
\end{lem}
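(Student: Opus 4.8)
The plan is to deduce the lemma from the construction of $\beta_{2}(A_{\lambda})$ together with the estimates already established, chiefly Corollary~\ref{joinings202104.2}, Corollary~\ref{joinings202104.10} and the bound (\ref{time change178}); the only real work is to pass from the scale $|\BL_{j}|$ of a single $\beta_{1}$-block to the scale of the fused $\beta_{2}$-block and to convert exponents via the numerical choice (\ref{joinings202104.31}). Fix $\overline{\BL}_{i}=\{(x_{i}^{\prime},y_{i}^{\prime}),(\overline{x}_{i}^{\prime},\overline{y}_{i}^{\prime})\}\in\beta_{2}(A_{\lambda})$. By construction it is obtained by fusing a run $\BL_{j_{i-1}+1},\ldots,\BL_{j_{i}}$ of consecutive $\epsilon$-blocks of $\beta_{1}(A_{\lambda})$, its initial pair is $(x_{i}^{\prime},y_{i}^{\prime})=(x_{j_{i-1}+1},y_{j_{i-1}+1})$, and the element $g\coloneqq g_{y_{i}^{\prime}}g_{x_{i}^{\prime}}^{-1}$ is exactly the one whose shearing $u^{t(r)}\overline{g}u^{-s(r)}$ drives that step. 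Applying Corollary~\ref{joinings202104.10} to $g$ produces the intervals $\{[L_{k}(g),\overline{L}_{k}(g)]\}_{k}$, at most $c=c(\mathfrak{g})$ of them, and by the fusion rule $\overline{\BL}_{i}$ absorbs precisely the blocks whose time-intervals meet $[L_{1}(g),\overline{L}_{1}(g)],\ldots,[L_{k^{\ast}}(g),\overline{L}_{k^{\ast}}(g)]$ for some index $k^{\ast}\le c$ such that $[0,\overline{L}_{j}(g)]$ and $[L_{j+1}(g),\overline{L}_{j+1}(g)]$ have no effective gap for every $j<k^{\ast}$; in particular $|\overline{\BL}_{i}|\le\overline{L}_{k^{\ast}}(g)$.

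With this I would set $\mathbf{r}_{i}\coloneqq\overline{L}_{k^{\ast}}(g)$, which by (\ref{joinings202104.34}) together with $\overline{L}_{k^{\ast}}\ge\overline{L}_{1}$ satisfies $\mathbf{r}_{i}\ge\max\{r_{0},R_{0},|\overline{\BL}_{i}|\}$, and split into two cases. If $k^{\ast}=1$ (no fusion) then $\overline{\BL}_{i}=\BL_{j_{i-1}+1}$ lies inside $[0,\overline{L}_{1}(g)]$ and Corollary~\ref{joinings202104.2} gives, writing $\overline{g_{y_{i}^{\prime}}g_{x_{i}^{\prime}}^{-1}}=C^{\rho}h_{i}\exp(v_{i})$, the bounds $|b|\ll\mathbf{r}_{i}^{-1-\kappa}$, $|a-d|\ll\mathbf{r}_{i}^{-\kappa}$ and $|b_{k}|\ll\mathbf{r}_{i}^{-(\varsigma-k)}$ for $0\le k\le\varsigma$. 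If $k^{\ast}\ge2$ then iterating the no-effective-gap clause of Corollary~\ref{joinings202104.10}(2) across the $\le c$ absorbed intervals --- this is just the induction of Lemma~\ref{dynamical systems4}(2) run a bounded number of times, so the exponent constant $\xi=\xi(2\eta)\approx1$ is degraded only by the bounded factor $(1+\eta)^{-c}$ --- yields $|b|\ll_{\kappa}\mathbf{r}_{i}^{-\xi(1+\kappa)}$, $|a-d|\ll_{\kappa}\mathbf{r}_{i}^{-\xi\kappa}$ and $|b_{k}|\ll_{\varsigma,\kappa}\mathbf{r}_{i}^{-\xi(\varsigma-k)}$ for $1\le k\le\varsigma$ (this is the computation behind (\ref{time change178}), now read at the larger scale $\mathbf{r}_{i}$), while the top-weight coefficient $b_{0}$ is handled separately by noting that the lowest-weight component of $\Ad u^{s}.v_{i}$ is a degree-$\varsigma$ polynomial of size $O(\epsilon)$, so its leading coefficient satisfies $|b_{0}|\ll\epsilon\,\mathbf{r}_{i}^{-\varsigma}\ll\mathbf{r}_{i}^{-\xi\varsigma}$. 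In either case (\ref{joinings202104.31}) forces $2\eta<\kappa$, $\xi(1+\kappa)>1+2\eta$ and $\xi\kappa>2\eta$ (the last because $\xi<1$), so all the bounds above are dominated by $O(\mathbf{r}_{i}^{-1-2\eta})$, $O(\mathbf{r}_{i}^{-2\eta})$ and $O(\mathbf{r}_{i}^{-\xi(\varsigma-k)})$ respectively, and the remaining entry is untouched, $|c|=O(\epsilon)$; finally $a=1+O(\mathbf{r}_{i}^{-2\eta})$ and $d=1+O(\mathbf{r}_{i}^{-2\eta})$ follow from $ad-bc=1$, $|c|=O(\epsilon)$, $|a-d|=O(\mathbf{r}_{i}^{-2\eta})$ and the a priori bounds $a,d\in(1-\epsilon,1+\epsilon)$. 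This is precisely (\ref{joinings202104.36}).

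The arithmetic is routine; the part that will require care is the combinatorial bookkeeping of the fusion. One must verify that the fusion genuinely halts at an index $k^{\ast}\le c(\mathfrak{g})$ independent of everything (so that $\xi(\eta,k^{\ast})$ in Lemma~\ref{dynamical systems4}(2) stays uniformly close to $1$, uniformly over all irreducible summands because $\varsigma\le2$ for $\mathfrak{so}(n,1)$), that the time-length of $\overline{\BL}_{i}$ really is bounded by $\overline{L}_{k^{\ast}}(g)$, and that the fusion's ``no effective gap'' clause supplies precisely the hypothesis (\ref{dynamical systems6}) needed to chain Corollary~\ref{joinings202104.10}(2) through the absorbed intervals. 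Once that bookkeeping is in place --- and it is essentially how $\beta_{2}(A_{\lambda})$ was designed in the preceding construction --- the lemma follows from the exponent arithmetic of (\ref{joinings202104.31}).
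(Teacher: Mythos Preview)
Your proposal is correct and follows essentially the same route as the paper: the paper's proof is the one-liner ``(\ref{joinings202104.36}) follows immediately from (\ref{time change180}), (\ref{time change178}), (\ref{joinings202104.31}), (\ref{joinings202104.34})'', and you have unpacked precisely those four ingredients --- the case-(i) and case-(ii) estimates from the construction of $\beta_{2}$, the exponent comparison $\xi(1+\kappa)>1+2\eta$ (whence $\xi\kappa>2\eta$ and $\kappa>2\eta$), and the lower bound (\ref{joinings202104.34}) on $\overline{L}_{1}$ ensuring $\mathbf{r}_{i}\ge\max\{r_{0},R_{0}\}$. One small remark: your separate treatment of $b_{0}$ via ``polynomial of size $O(\epsilon)$'' tacitly assumes that bound on all of $[0,\mathbf{r}_{i}]$, which is not quite what you have when $k^{\ast}\ge2$; but the same chaining of Lemma~\ref{dynamical systems4}(2) you already invoked for $b_{1},\ldots,b_{\varsigma-1}$ in fact covers $b_{0}$ as well (the leading coefficient of the degree-$\varsigma$ component polynomial is exactly $b_{0}$), so no extra argument is needed.
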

\begin{proof}
 (\ref{joinings202104.36}) follows immediately from (\ref{time change180}), (\ref{time change178}), (\ref{joinings202104.31}), (\ref{joinings202104.34}).
\end{proof}
Then, recall that by the construction of $\beta_{2}(A_{\lambda})$, for any  $\overline{\BL}^{\prime},\overline{\BL}^{\prime\prime}\in\beta_{2}(A_{\lambda})$ with  $\overline{\BL}^{\prime}\overset{e}{\sim}\overline{\BL}^{\prime\prime}$, there is an effective gap between them, i.e.
\[d(\overline{\BL}^{\prime},\overline{\BL}^{\prime\prime})\geq\left[\max\{r_{0},R_{0},\min\{|\overline{\BL}^{\prime}|,|\overline{\BL}^{\prime\prime}|\}\}\right]^{1+2\eta}.\]
On the other hand, when   $\overline{\BL}^{\prime}\overset{\Gamma}{\sim}\overline{\BL}^{\prime\prime}$, by Proposition \ref{joinings202104.29} and Lemma \ref{joinings202104.35}, we have
\[d(\overline{\BL}^{\prime},\overline{\BL}^{\prime\prime})\geq\left[\max\{r_{0},R_{0},\min\{|\overline{\BL}^{\prime}|,|\overline{\BL}^{\prime\prime}|\}\}\right]^{1+\eta}.\]
Thus, we conclude from Proposition \ref{joinings202104.27} that
\begin{prop}[Effective gaps of $\beta_{2}(A_{\lambda})$]\label{joinings202104.37} Let the notation and assumptions be as above. For any $\overline{\BL}^{\prime},\overline{\BL}^{\prime\prime}\in\beta_{2}(A_{\lambda})$, we have
\[d(\overline{\BL}^{\prime},\overline{\BL}^{\prime\prime})\geq\left[\max\{r_{0},R_{0},\min\{|\overline{\BL}^{\prime}|,|\overline{\BL}^{\prime\prime}|\}\}\right]^{1+\eta}.\]
Thus, for any $\zeta\in[0,1]$, if
\[\frac{1}{\lambda}\Leb( A_{\lambda})\geq \overline{\theta}_{\eta}(\zeta)=1-\theta(\eta,\zeta)= 1-\prod_{n=0}^{\infty} \left(1+ C\zeta^{n\eta}\right)^{-1}\]
then there is an $\epsilon$-block $\overline{\BL}\in\beta_{2}(A_{\lambda})$ that has
\[|\overline{\BL}|\geq\zeta\lambda.\]
\end{prop}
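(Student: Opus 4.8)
The plan is to deduce both assertions from the two special cases isolated immediately before the statement, together with Solovay's combinatorial Proposition \ref{joinings202104.27}; no new analytic input beyond Proposition \ref{joinings202104.29} and Lemma \ref{joinings202104.35} is required. First I would establish the effective-gap bound for an \emph{arbitrary} pair $\overline{\BL}^{\prime},\overline{\BL}^{\prime\prime}\in\beta_{2}(A_{\lambda})$. By Definition \ref{dynamical systems3004} exactly one of $\overline{\BL}^{\prime}\overset{e}{\sim}\overline{\BL}^{\prime\prime}$ or $\overline{\BL}^{\prime}\overset{\Gamma}{\sim}\overline{\BL}^{\prime\prime}$ holds, so it suffices to treat the two cases. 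In the non-shifting case the construction of $\beta_{2}(A_{\lambda})$ was arranged precisely so that two consecutively reachable blocks fail to be merged only when they already have an effective gap, which gives
\[d(\overline{\BL}^{\prime},\overline{\BL}^{\prime\prime})\geq\left[\max\{r_{0},R_{0},\min\{|\overline{\BL}^{\prime}|,|\overline{\BL}^{\prime\prime}|\}\}\right]^{1+2\eta}.\]
In the shifting case I would first feed the estimate (\ref{joinings202104.36}) of Lemma \ref{joinings202104.35} (applied to both blocks) into the hypothesis (\ref{joinings202104.12}) of Proposition \ref{joinings202104.29} — this is where the parameter choices (\ref{joinings202104.31}) and (\ref{joinings202104.34}) enter, making the exponents $-2\eta$, $-1-2\eta$, $-\xi\varsigma$ line up — and then Proposition \ref{joinings202104.29} yields the shift-time bound $s,t>\mathbf{r}^{1+\eta}$, hence
\[d(\overline{\BL}^{\prime},\overline{\BL}^{\prime\prime})\geq\left[\max\{r_{0},R_{0},\min\{|\overline{\BL}^{\prime}|,|\overline{\BL}^{\prime\prime}|\}\}\right]^{1+\eta}.\]
Since $1+2\eta\geq1+\eta$, the first assertion follows.

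For the second assertion I would apply Proposition \ref{joinings202104.27} to $I=[0,\lambda]$ with the good intervals $\mathcal{G}=\{[r_{i},\overline{r}_{i}]\}$ the time intervals of the blocks in $\beta_{2}(A_{\lambda})$ and the bad intervals $\mathcal{B}$ the connected components of $[0,\lambda]\setminus\bigcup_{i}[r_{i},\overline{r}_{i}]$, which together partition $[0,\lambda]$. Hypothesis (1) of Proposition \ref{joinings202104.27} is exactly the first assertion, using $\Leb[r_{i},\overline{r}_{i}]=|\overline{\BL}_{i}|$; hypothesis (3) holds because each component of the complement is a gap between two consecutive $\beta_{2}$-blocks — there is no initial piece, as $r_{1}=0$, and no terminal piece, since $\lambda\in A$ forces the last block to end at $\lambda$ — and such a gap has length at least $[\max\{r_{0},R_{0}\}]^{1+\eta}>1$ by the first assertion together with $r_{0},R_{0}>1$. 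To obtain hypothesis (2) I argue by contradiction: suppose every block of $\beta_{2}(A_{\lambda})$ had length $<\zeta\lambda$, which is hypothesis (2). Then Proposition \ref{joinings202104.27} gives $\Leb(\bigcup\mathcal{B})\geq\theta(\eta,\zeta)\lambda$, and in fact strictly more, because the estimate in its proof truncates the infinite product at a finite level. On the other hand every $\beta_{2}$-block is a union of consecutive $\beta_{1}$-blocks, and by the construction of $\beta_{1}$ no point of $A_{\lambda}$ lies in a gap between $\beta_{1}$-blocks, so $A_{\lambda}\subseteq\bigcup_{i}[r_{i},\overline{r}_{i}]$ and hence $\Leb(\bigcup\mathcal{B})\leq\lambda-\Leb(A_{\lambda})\leq(1-\overline{\theta}_{\eta}(\zeta))\lambda=\theta(\eta,\zeta)\lambda$, a contradiction. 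Therefore some $\overline{\BL}\in\beta_{2}(A_{\lambda})$ has $|\overline{\BL}|\geq\zeta\lambda$.

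The genuinely delicate point was already resolved in Proposition \ref{joinings202104.29} — that a $\Gamma$-shift between two nearby $\epsilon$-blocks whose mutual displacement is controlled by the block length must occupy a renormalization-long time interval. Relative to that, the remaining obstacle in proving the present statement is bookkeeping: checking that the good/bad partition meets Solovay's hypotheses verbatim (in particular that no short bad interval is created at the ends of $[0,\lambda]$ and that $A_{\lambda}$ is genuinely exhausted by the $\beta_{2}$-blocks), and matching the exponents of Lemma \ref{joinings202104.35} to those demanded by the hypothesis (\ref{joinings202104.12}) of Proposition \ref{joinings202104.29}. Neither is a conceptual difficulty, which is why the paper presents this step as an immediate consequence.
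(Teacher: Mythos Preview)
Your proposal is correct and follows essentially the same approach as the paper. The paper presents the proposition as an immediate consequence of Proposition~\ref{joinings202104.27} after isolating the two cases $\overline{\BL}^{\prime}\overset{e}{\sim}\overline{\BL}^{\prime\prime}$ and $\overline{\BL}^{\prime}\overset{\Gamma}{\sim}\overline{\BL}^{\prime\prime}$; you have simply filled in the bookkeeping (verifying hypotheses (1)--(3) of Solovay's proposition, handling the boundary intervals at $0$ and $\lambda$, and extracting the strict inequality needed for the contrapositive), which the paper leaves implicit.
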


\subsection{Non-shifting time}

Now assume that for some $\lambda,\zeta>0$, we know that
\[\Leb(A_{\lambda})\geq \overline{\theta}_{\eta}(\zeta)\lambda.\]
 Then Proposition \ref{joinings202104.37} provides us an $\epsilon$-block $\overline{\BL}=\{(x^{\prime},y^{\prime}),(\overline{x}^{\prime},\overline{y}^{\prime})\}\in\beta_{2}(A_{\lambda})$ with $|\overline{\BL}|\geq\zeta\lambda$. In other words, if we write
 \begin{equation}\label{joinings202104.41}
   x^{\prime}=u^{s(R_{1})}x,\ \ \ \overline{x}^{\prime}=u^{s(R_{2})}x,\ \ \ y^{\prime}=u^{t(R_{1})}y,\ \ \ \overline{y}^{\prime}=u^{t(R_{2})}y,
 \end{equation}
then we can find $R_{1},R_{2}>0$ with $R_{2}-R_{1}\geq\zeta\lambda$ such that
\[d_{C^{\rho}\backslash G}(u^{t(R_{1})}.\overline{g_{y}},u^{s(R_{1})}.\overline{g_{x}})<\epsilon,\ \ \ d_{C^{\rho}\backslash G}(u^{t(R_{2})}.\overline{g_{y}},u^{s(R_{2})}.\overline{g_{x}})<\epsilon.\]
It is already quite surprising. However, it is still possible that
\[d_{C^{\rho}\backslash G}(u^{t(r)}.\overline{g_{y}},u^{s(r)}.\overline{g_{x}})>\epsilon\]
for some $r\in[R_{1},R_{2}]\cap A$. Thus, define
\[\overline{A}_{R_{1}R_{2}}\coloneqq\{r\in[R_{1},R_{2}]\cap A:d_{C^{\rho}\backslash G}(u^{t(r)}.\overline{g_{y}},u^{s(r)}.\overline{g_{x}})>\epsilon\}\]
and we want to show that $\Leb(\overline{A}_{R_{1}R_{2}})/\lambda$ has a upper bound in certain situations.
\begin{rem}\label{joinings202104.42}
   By (\ref{joinings202104.36}), we can estimate the difference between $x^{\prime},y^{\prime}$; more precisely, we have
   \[ \overline{g_{y^{\prime}}g_{x^{\prime}}^{-1}}=C^{\rho} h\exp(v)\]
where
\[h=\left[
            \begin{array}{ccc}
               1+O((\zeta\lambda)^{-2\eta}) &   O((\zeta\lambda)^{-1-2\eta})  \\
              O(\epsilon) &   1+O((\zeta\lambda)^{-2\eta})\\
            \end{array}
          \right],\ \ \ v=O((\zeta\lambda)^{-\xi\varsigma})v_{0}+\cdots+O(\epsilon)v_{\varsigma}.\]
\end{rem}
\noindent
\textbf{Construction of  $\widetilde{\beta}_{1},\widetilde{\beta}_{2}$.} Now we consider the shifting time of the $\epsilon$-block $\overline{\BL}=\{(x^{\prime},y^{\prime}),(\overline{x}^{\prime},\overline{y}^{\prime})\}\in\beta_{2}(A_{\lambda})$.
Define a collection $\widetilde{\beta}_{1}(\overline{A}_{R_{1}R_{2}})$ of $\epsilon$-blocks on the orbit intervals $[x^{\prime},x^{\prime\prime}]$, $[y^{\prime},y^{\prime\prime}]$ according to the following steps.
Suppose that
\[r_{1}\coloneqq\min\{r\in[R_{1},R_{2}]:r\in\overline{A}_{R_{1}R_{2}}\},\ \ \ x_{1}\coloneqq u^{s(R_{1})}x^{\prime},\ \ \ y_{1}\coloneqq u^{t(R_{1})}y^{\prime}\]
and that $(\overline{g_{x_{1}}}, \overline{g_{y_{1}}})\in C^{\rho}\backslash G\times C^{\rho}\backslash G$ covers $(x_{1},y_{1})$ and
\[ \overline{r}_{1}\coloneqq  \sup\{R\in \overline{A}_{R_{1}R_{2}}: d_{G}( u^{t(r)}g_{y_{1}}  ,u^{s(r)}g_{x_{1}}  )<\epsilon \text{ for any }r\in\overline{A}_{R_{1}R_{2}}\cap[0,R]\}.\]
Let $\BL_{1}\in\widetilde{\beta}_{1}(\overline{A}_{R_{1}R_{2}})$  be the $\epsilon$-block of $x_{1},y_{1}$ of length $\overline{r}_{1}$, and write $\BL_{1}=\{(x_{1},y_{1}),(\overline{x}_{1},\overline{y}_{1})\}$.
To define $\BL_{2}$,  we take
\[r_{2}\coloneqq\inf\{r\in \overline{A}_{R_{1}R_{2}}:r>\overline{r}_{1} \} \]
and apply the above procedure to
\[x_{2}\coloneqq u^{s(r_{2})}x_{1},\ \ \ y_{2}\coloneqq  u^{t(r_{2})}y_{1}.\]
 This process defines  a collection $\widetilde{\beta}_{1}(\overline{A}_{R_{1}R_{2}})=\{\BL_{1},\ldots, \BL_{m}\}$ of $\epsilon$-blocks on the orbit intervals $[u^{s(r_{1})}x^{\prime},u^{s(\overline{r}_{m})}x^{\prime}]$, $[u^{t(r_{1})}y^{\prime},u^{t(\overline{r}_{m})}y^{\prime}]$.
 Completely  similar to $\beta_{1}$, we can connect some of the $\epsilon$-blocks in $\widetilde{\beta}_{1}(\overline{A}_{R_{1}R_{2}})$ and form a new collection $\widetilde{\beta}_{2}(\overline{A}_{R_{1}R_{2}})$ such that each pair of  $\epsilon$-blocks in $\widetilde{\beta}_{2}(\overline{A}_{R_{1}R_{2}})$ has an effective gap. Then, we conclude again from Proposition \ref{joinings202104.27} that
\begin{lem}[Difference and effective gaps of $\widetilde{\beta}_{2}(\overline{A}_{R_{1}R_{2}})$]\label{joinings202104.39}
For any $\widetilde{\BL}_{i}=\{(\widetilde{x}_{i}^{\prime},\widetilde{y}_{i}^{\prime}),(\overline{\widetilde{x}}_{i}^{\prime},\overline{\widetilde{y}}_{i}^{\prime})\}$ in the collection  $\widetilde{\beta}_{2}(\overline{A}_{R_{1}R_{2}})=\{\widetilde{\BL}_{1},\ldots,\widetilde{\BL}_{M}\}$ of $\epsilon$-blocks, we have
   \[ \overline{g_{\widetilde{y}_{i}^{\prime}}g_{\widetilde{x}_{i}^{\prime}}^{-1}}=C^{\rho}h_{i}\exp(v_{i})\]
    where
   \begin{equation}\label{joinings202104.40}
h_{i}=\left[
            \begin{array}{ccc}
               1+O(\mathbf{r}_{i}^{-2\eta}) &   O(\mathbf{r}_{i}^{-1-2\eta})  \\
              O(\epsilon) &   1+O(\mathbf{r}_{i}^{-2\eta})\\
            \end{array}
          \right],\ \ \ v_{i}=O(\mathbf{r}_{i}^{-\xi\varsigma})v_{0}+\cdots+O(\epsilon)v_{\varsigma}
   \end{equation}
  for some  $\mathbf{r}_{i}\geq\max\{r_{0},R_{0},|\widetilde{\BL}_{i}|\}$.

 Moreover, for any $\widetilde{\BL}^{\prime},\widetilde{\BL}^{\prime\prime}\in\widetilde{\beta}_{2}(\overline{A}_{R_{1}R_{2}})$, we have
\[d(\widetilde{\BL}^{\prime},\widetilde{\BL}^{\prime\prime})\geq\left[\max\{r_{0},R_{0},\min\{|\widetilde{\BL}^{\prime}|,|\widetilde{\BL}^{\prime\prime}|\}\}\right]^{1+\eta}.\]
Thus, for any $\widetilde{\zeta}\in[0,1]$, if
\[\frac{1}{\lambda}\Leb(\overline{A}_{R_{1}R_{2}})\geq \overline{\theta}_{\eta}(\widetilde{\zeta})= 1-\prod_{n=0}^{\infty} \left(1+ C\widetilde{\zeta}^{n\eta}\right)^{-1}\]
then there is an $\epsilon$-block $\widetilde{\BL}\in\widetilde{\beta}_{2}(\overline{A}_{R_{1}R_{2}})$ that has
\[|\widetilde{\BL}|\geq\widetilde{\zeta}\lambda.\]
\end{lem}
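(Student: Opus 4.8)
The plan is to repeat, \emph{mutatis mutandis}, the constructions and estimates that produced Corollary \ref{joinings202104.2}, Lemma \ref{joinings202104.35} and Proposition \ref{joinings202104.37} for $\beta_{1}(A_{\lambda})$ and $\beta_{2}(A_{\lambda})$, now with $A_{\lambda}$ replaced by $\overline{A}_{R_{1}R_{2}}$ and the base pair $(x,y)$ replaced by the first pair $(x^{\prime},y^{\prime})$ of the block $\overline{\BL}\in\beta_{2}(A_{\lambda})$. Before running those arguments I would record that their inputs survive the substitution: since $\overline{A}_{R_{1}R_{2}}\subset[R_{1},R_{2}]\cap A$, condition (i) in the construction of $\beta_{1}$ still gives $u^{t(r)}y\in K$ for every $r\in\overline{A}_{R_{1}R_{2}}$ and the H\"older inequalities (\ref{joinings202104.5}) still hold there; by Remark \ref{joinings202104.42} and (\ref{dynamical systems5}), the first pair of $\widetilde{\beta}_{1}$, being a unipotent shear of $(x^{\prime},y^{\prime})$, inherits a difference estimate of the shape (\ref{joinings202104.12}); and, by (\ref{joinings202104.5}), the gap times $s,t$ between consecutive $\epsilon$-blocks satisfy $t\asymp s$. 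With these in hand Corollary \ref{joinings202104.10}, Proposition \ref{joinings202104.29} and Proposition \ref{joinings202104.27} apply just as in the earlier constructions.

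For the difference estimate (\ref{joinings202104.40}) I would argue exactly as Corollary \ref{joinings202104.2} and Lemma \ref{joinings202104.35} were argued from Corollary \ref{joinings202104.10}: each $\BL_{i}\in\widetilde{\beta}_{1}(\overline{A}_{R_{1}R_{2}})$ has $d_{G}(u^{t(r)}g_{y_{i}},u^{s(r)}g_{x_{i}})<\epsilon$ throughout its time interval, so Corollary \ref{joinings202104.10}(1) bounds $h_{i}$ and $v_{i}$ in terms of $\overline{L}_{1}(g_{y_{i}}g_{x_{i}}^{-1})\ge\max\{r_{0},R_{0},|\BL_{i}|\}$; the passage to $\widetilde{\beta}_{2}(\overline{A}_{R_{1}R_{2}})$ by connecting blocks (the analogue of the passage from $\beta_{1}$ to $\beta_{2}$) then invokes Corollary \ref{joinings202104.10}(2) whenever two consecutive pieces $[0,\overline{L}_{k}]$, $[L_{k+1},\overline{L}_{k+1}]$ lack an effective gap, upgrading the exponents by $\xi=\xi(2\eta)\approx1$, and the choices (\ref{joinings202104.31}), (\ref{joinings202104.34}) leave one with (\ref{joinings202104.40}), i.e. exponents $2\eta$, $-1-2\eta$, $-\xi\varsigma$, exactly as in Lemma \ref{joinings202104.35}.

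For the effective gaps I would split into the two cases of Proposition \ref{joinings202104.37}: if $\widetilde{\BL}^{\prime}\overset{e}{\sim}\widetilde{\BL}^{\prime\prime}$ the construction of $\widetilde{\beta}_{2}$ directly produces a gap with exponent $1+2\eta$; if $\widetilde{\BL}^{\prime}\overset{\Gamma}{\sim}\widetilde{\BL}^{\prime\prime}$ I would feed (\ref{joinings202104.40}) --- which is precisely the estimate (\ref{joinings202104.12}) that Proposition \ref{joinings202104.29} requires --- together with the $y$-endpoints lying in $K$ and $t\asymp s$ into Proposition \ref{joinings202104.29} to get $s,t>\mathbf{r}^{1+\eta}$; in either case the exponent is at least $1+\eta$, as asserted. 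Finally, $\overline{A}_{R_{1}R_{2}}$ is covered by the time intervals of the blocks of $\widetilde{\beta}_{1}$, hence of $\widetilde{\beta}_{2}$, so applying Solovay's Proposition \ref{joinings202104.27} exactly as in the proof of Proposition \ref{joinings202104.37} (bad intervals have length $\ge1$ because $R_{0}>1$) shows that $\Leb(\overline{A}_{R_{1}R_{2}})\ge\overline{\theta}_{\eta}(\widetilde{\zeta})\lambda$ forces some block of $\widetilde{\beta}_{2}$ to have length $\ge\widetilde{\zeta}\lambda$. The only step needing genuine care --- everything else being a transcription of the already-established arguments --- is the verification that Proposition \ref{joinings202104.29} truly applies: that the $y$-endpoints of the blocks of $\widetilde{\beta}_{1}$ (hence of $\widetilde{\beta}_{2}$), being of the form $u^{t(r)}y$ with $r$ in the closure of $\overline{A}_{R_{1}R_{2}}\subset A$, lie in the compact set $K$, and that the inter-block proportionality $t\asymp s$ holds; both reduce to the definition of $A$ together with the H\"older inequalities, but should be spelled out rather than left implicit.
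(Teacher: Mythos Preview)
Your proposal is correct and is exactly the approach the paper takes: the paper's own proof consists of the single remark that the construction of $\widetilde{\beta}_{2}$ is ``completely similar to $\beta_{1}$'' and that one ``conclude[s] again from Proposition \ref{joinings202104.27}'', with no further details. You have simply unpacked what ``completely similar'' entails, and your care in checking that the hypotheses of Corollary \ref{joinings202104.10}, Proposition \ref{joinings202104.29} and Proposition \ref{joinings202104.27} survive the substitution $A_{\lambda}\rightsquigarrow\overline{A}_{R_{1}R_{2}}$ is appropriate and matches the intended argument.
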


Thus, given $\widetilde{\zeta}\in(0,\zeta)$, we can apply Lemma \ref{joinings202104.39} and obtain an $\epsilon$-block $\widetilde{\BL}=\{(\widetilde{x},\widetilde{y}),(\overline{\widetilde{x}},\overline{\widetilde{y}})\}\in\widetilde{\beta}_{2}(\overline{A}_{R_{1}R_{2}})$
that has length $|\widetilde{\BL}|\geq\widetilde{\zeta}\lambda$. Then by (\ref{joinings202104.40}), we get that
\[ \overline{g_{\widetilde{y}}g_{\widetilde{x}}^{-1}}=C^{\rho}\widetilde{h}\exp(\widetilde{v})\]
where
\[\widetilde{h}=\left[
            \begin{array}{ccc}
               1+O((\widetilde{\zeta}\lambda)^{-2\eta}) &   O((\widetilde{\zeta}\lambda)^{-1-2\eta})  \\
              O(\epsilon) &   1+O((\widetilde{\zeta}\lambda)^{-2\eta})\\
            \end{array}
          \right],\ \ \ \widetilde{v}=O((\widetilde{\zeta}\lambda)^{-\xi\varsigma})v_{0}+\cdots+O(\epsilon)v_{\varsigma}.\]
Then combining   Remark \ref{joinings202104.42} and Proposition \ref{joinings202104.29}, we conclude that
\[r_{1}>(\widetilde{\zeta}\lambda)^{1+\eta}.\]
Since $r_{1}\in[R_{1},R_{2}]$, we obtain $(\widetilde{\zeta}\lambda)^{1+\eta}\leq \zeta\lambda$ or
\[\widetilde{\zeta}\leq (\zeta\lambda^{-\eta})^{\frac{1}{1+\eta}}.\]
In other words, we obtain
\begin{lem}[Shifting is sparse in a big $\epsilon$-block]\label{joinings202104.43}
    Given $\lambda>0,\zeta\in(0,1),\eta\approx0$, assume that
\[\Leb(A_{\lambda})\geq \overline{\theta}_{\eta}(\zeta)\lambda.\]
Then there is an $\epsilon$-block $\overline{\BL}\in\beta_{2}(A_{\lambda})$ with the corresponding time interval $[R_{1},R_{2}]$ and $|\overline{\BL}|=R_{2}-R_{1}\geq\zeta\lambda$. Besides, denote the shifting time of $\overline{\BL}$   by
  \[\overline{A}_{R_{1}R_{2}}\coloneqq\{r\in A\cap [R_{1},R_{2}]:d_{C^{\rho}\backslash G}(u^{t(r)}.\overline{g_{y}},u^{s(r)}.\overline{g_{x}})>\epsilon\}.\]
  Then we have
  \[\Leb(\overline{A}_{R_{1}R_{2}})/\lambda\leq \overline{\theta}_{\eta}\left((\zeta\lambda^{-\eta})^{\frac{1}{1+\eta}}\right)= 1-\prod_{n=0}^{\infty} \left(1+ C(\zeta\lambda^{-\eta})^{\frac{n\eta}{1+\eta}}\right)^{-1}.\]
  In particular, $\Leb(\overline{A}_{R_{1}R_{2}})/\lambda=o(\lambda)$.
\end{lem}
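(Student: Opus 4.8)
The plan is to feed the output of Lemma~\ref{joinings202104.39} into Proposition~\ref{joinings202104.29}, so that the bulk of a long $\epsilon$-block inside which a shift occurs is forced to sit very far from its beginning; quantifying this will bound $\Leb(\overline{A}_{R_{1}R_{2}})$. Two elementary observations set up the reduction. First, $\widetilde{\zeta}\mapsto\overline{\theta}_{\eta}(\widetilde{\zeta})=1-\prod_{n\ge0}(1+C\widetilde{\zeta}^{n\eta})^{-1}$ is continuous and nondecreasing on $[0,1)$. Second, the scale $\zeta\lambda$ involved is large---at least of the order of the lower bound $L(\epsilon,R_{0},\kappa)$ of Corollary~\ref{joinings202104.10} and~(\ref{joinings202104.34})---so that $(\zeta\lambda^{-\eta})^{1/(1+\eta)}<\zeta<1$. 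Given these, it is enough to show that \emph{every} $\widetilde{\zeta}\in(0,\zeta)$ with $\Leb(\overline{A}_{R_{1}R_{2}})/\lambda\ge\overline{\theta}_{\eta}(\widetilde{\zeta})$ must satisfy $\widetilde{\zeta}\le(\zeta\lambda^{-\eta})^{1/(1+\eta)}$: letting $\widetilde{\zeta}$ decrease to $(\zeta\lambda^{-\eta})^{1/(1+\eta)}$ and using continuity then yields the asserted inequality, and the ``$o(\lambda)$'' clause is immediate since the right-hand side lies in $(0,1)$.

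So fix such a $\widetilde{\zeta}$. By Lemma~\ref{joinings202104.39} there is an $\epsilon$-block $\widetilde{\BL}=\{(\widetilde{x},\widetilde{y}),(\overline{\widetilde{x}},\overline{\widetilde{y}})\}\in\widetilde{\beta}_{2}(\overline{A}_{R_{1}R_{2}})$ with $|\widetilde{\BL}|\ge\widetilde{\zeta}\lambda$ whose initial displacement $\overline{g_{\widetilde{y}}g_{\widetilde{x}}^{-1}}=C^{\rho}\widetilde{h}\exp(\widetilde{v})$ obeys the estimate~(\ref{joinings202104.40}) at a scale $\mathbf{r}\ge\widetilde{\zeta}\lambda$. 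I would then apply Proposition~\ref{joinings202104.29} to the pair formed by the big block $\overline{\BL}$, read from its first point $(x^{\prime},y^{\prime})$---whose displacement is controlled at scale $\zeta\lambda\ge\widetilde{\zeta}\lambda$ by Remark~\ref{joinings202104.42}, hence a fortiori at scale $\widetilde{\zeta}\lambda$---and by $\widetilde{\BL}$. Its hypotheses hold: the $y$-endpoints lie in $K$ since points visited at times of $A$ project into $K$ by~(\ref{joinings202104.30}); the relevant time parameters are comparable, $t\asymp s$, by the H\"older inequalities~(\ref{joinings202104.5}); and, crucially, the two blocks are related by a nontrivial shift in the sense of Definition~\ref{dynamical systems3004}, because $\widetilde{\BL}$ lies over times $r\in\overline{A}_{R_{1}R_{2}}$ where $u^{t(r)}\overline{g_{y}}$ and $u^{s(r)}\overline{g_{x}}$ are $\epsilon$-far in $C^{\rho}\backslash G$, while $u^{s(r)}x$ and $u^{t(r)}y$ stay $\epsilon$-close in $\overline{X}$ (as $r\in A$), so the $\gamma$ produced in~(\ref{dynamical systems1014}) cannot be the identity. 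Proposition~\ref{joinings202104.29}, applied with its parameter $r$ taken to be $\widetilde{\zeta}\lambda$, then forces the first shifting time $r_{1}\coloneqq\inf\{r\in A\cap[R_{1},R_{2}]:d_{C^{\rho}\backslash G}(u^{t(r)}\overline{g_{y}},u^{s(r)}\overline{g_{x}})>\epsilon\}$ to satisfy $r_{1}>(\widetilde{\zeta}\lambda)^{1+\eta}$. Since $r_{1}$ lies in the time interval $[R_{1},R_{2}]$ attached to $\overline{\BL}$, we get $(\widetilde{\zeta}\lambda)^{1+\eta}\le\zeta\lambda$, i.e. $\widetilde{\zeta}\le(\zeta\lambda^{-\eta})^{1/(1+\eta)}$, as required; this completes the argument.

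I expect the main obstacle to be the bookkeeping in the second step: one must run the $\widetilde{\beta}_{1},\widetilde{\beta}_{2}$-construction and choose the compared endpoints so that Proposition~\ref{joinings202104.29} literally applies, which requires (i) matching the common scale $r$ in the displacement bounds of $\overline{\BL}$ and of $\widetilde{\BL}$---this is exactly why one restricts to $\widetilde{\zeta}<\zeta$---and (ii) checking that a time in $\overline{A}_{R_{1}R_{2}}$ genuinely yields a shift ($\overset{\Gamma}{\sim}$) rather than merely two nearby points, so that the ``shiftings imply effective gaps'' mechanism of Proposition~\ref{joinings202104.29} is available. Everything else---the monotonicity manipulation with $\overline{\theta}_{\eta}$, the H\"older bounds, and the membership in $K$---is routine once the machinery of the preceding subsections is in place.
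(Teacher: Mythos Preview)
Your proposal is correct and follows essentially the same approach as the paper: the paper's proof (the paragraph immediately preceding the lemma, beginning ``Thus, given $\widetilde{\zeta}\in(0,\zeta)$'') applies Lemma~\ref{joinings202104.39} to produce a long block $\widetilde{\BL}\in\widetilde{\beta}_{2}(\overline{A}_{R_{1}R_{2}})$, combines its displacement estimate with Remark~\ref{joinings202104.42} for $\overline{\BL}$, and feeds both into Proposition~\ref{joinings202104.29} to force $r_{1}>(\widetilde{\zeta}\lambda)^{1+\eta}$, contradicting $r_{1}\in[R_{1},R_{2}]$. Your write-up makes the contrapositive and the monotonicity of $\overline{\theta}_{\eta}$ explicit, and you correctly flag the two genuine bookkeeping points (matching the common scale $r=\widetilde{\zeta}\lambda$ for both blocks, and verifying that times in $\overline{A}_{R_{1}R_{2}}$ yield a true $\overset{\Gamma}{\sim}$ shift), which the paper leaves implicit.
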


In the following, we present a key proposition below that will be used in the proof of Proposition \ref{joinings202104.58}. It basically says that non-shifting is always observable when the time scale is large.
\begin{prop}[Non-shifting time is not negligible]\label{joinings202104.57} Given an integer $n\geq 2$, $\kappa\in(0,2\eta_{0})$, there exist  $\lambda_{0}>0$, $\sigma_{0}\approx0$,   $\vartheta\approx0$ such that for any
\begin{itemize}
  \item  disjoint subsets $A^{1},\ldots,A^{n}\subset[0,\infty)$  that satisfy (\ref{joinings202104.30}) (\ref{joinings202104.5}),
  \item  $\lambda>\lambda_{0}$,
  \item $\sigma\in(0,\sigma_{0})$ satisfying
  \[\Leb\left(\coprod_{i=1}^{n}A^{i}\cap[0,\lambda]\right)>(1-2\sigma)\lambda,\]
\end{itemize}
   there exists  one $A^{i(\lambda)}$ and $[R_{1}^{\prime}(\lambda),R_{2}^{\prime}(\lambda)]\subset[0,\lambda]$ such that there exists an $\epsilon$-block $\overline{\BL}\in \beta_{2}(A^{i(\lambda)}\cap[R_{1}^{\prime},R_{2}^{\prime}])$ with the corresponding time
interval $[R_{1},R_{2}]$ such that
   \[R_{2}-R_{1}>\vartheta\lambda,\ \ \ \Leb\left(A^{i(\lambda)}_{\epsilon}\cap [R_{1},R_{2}]\right)>\vartheta\lambda\]
   where $A^{i(\lambda)}_{\epsilon}\coloneqq\{r\in A^{i(\lambda)}:d_{C^{\rho}\backslash G}(u^{t(r)}.\overline{g_{y}},u^{s(r)}.\overline{g_{x}})<\epsilon\}$ is the non-shifting time of $A^{i(\lambda)}$.
\end{prop}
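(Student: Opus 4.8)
The plan is to reduce, by a pigeonhole over $\{1,\dots,n\}$, to a single set $A^{i(\lambda)}$, to extract inside it a long $\epsilon$-block that still carries a definite amount of $A^{i(\lambda)}$, and then to invoke Lemma~\ref{joinings202104.43} to see that the shifting part of that block is negligible, so that its non-shifting part has measure $\gg\lambda$. First fix $\eta=\eta(\kappa)$ as in~(\ref{joinings202104.31}); then $\sigma_{0}=\sigma_{0}(\eta)$, $\epsilon_{0}$, $\epsilon$, $R_{0}$ governing Proposition~\ref{joinings202104.29} and the constructions of $\beta_{1},\beta_{2}$ are fixed (we will shrink $\sigma_{0}$ once more, and enlarge $\lambda_{0}$ at the end). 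Since the $A^{i}$ are disjoint and $\Leb(\coprod_{i}A^{i}\cap[0,\lambda])>(1-2\sigma)\lambda$, some index $i(\lambda)$ has $\Leb(A^{i(\lambda)}\cap[0,\lambda])\geq\frac{1-2\sigma}{n}\lambda$; put $A\coloneqq A^{i(\lambda)}\cap[0,\lambda]$ and $[R_{1}^{\prime},R_{2}^{\prime}]\coloneqq[0,\lambda]$.

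Run the constructions of Section~\ref{joinings202103.13} for $A$, giving $\beta_{2}(A)=\{\overline{\BL}_{1},\dots,\overline{\BL}_{N}\}$ with time intervals $[R_{1}^{(j)},R_{2}^{(j)}]$; these cover $A$ up to endpoints, and by Proposition~\ref{joinings202104.37} any two blocks have an effective gap (of length $\geq r_{0}^{1+\eta}>1$). Choose $\zeta=\zeta(n,\sigma_{0})\in(0,1)$ so small that $\overline{\theta}_{\eta}(\zeta)<\frac{1-2\sigma_{0}}{2n}$ (possible since $\overline{\theta}_{\eta}(\zeta)\to0$ as $\zeta\to0$, cf.\ Lemma~\ref{joinings202104.43}). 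Call a block \emph{long} if its length is $\geq\zeta\lambda$ and \emph{short} otherwise. Applying Proposition~\ref{joinings202104.27} to the short blocks (as the ``good'' intervals), exactly as in the proof of Proposition~\ref{joinings202104.37}, bounds the total length of the short blocks by $\overline{\theta}_{\eta}(\zeta)\lambda$; since all $\beta_{2}(A)$-blocks cover $A$, the long blocks cover all but $\leq\overline{\theta}_{\eta}(\zeta)\lambda$ of $A$, i.e.
\[\sum_{\overline{\BL}_{j}\ \text{long}}\Leb\big(A\cap[R_{1}^{(j)},R_{2}^{(j)}]\big)\ \geq\ \Big(\tfrac{1-2\sigma}{n}-\overline{\theta}_{\eta}(\zeta)\Big)\lambda\ \geq\ \tfrac{1-2\sigma_{0}}{2n}\lambda.\]
As the long blocks have total length $\leq\lambda$, averaging yields one long block $\overline{\BL}$, with time interval $[R_{1},R_{2}]$, for which
\[R_{2}-R_{1}\geq\zeta\lambda,\qquad \Leb\big(A\cap[R_{1},R_{2}]\big)\geq\tfrac{1-2\sigma_{0}}{2n}(R_{2}-R_{1})\geq c(n)\lambda,\qquad c(n)\coloneqq\tfrac{(1-2\sigma_{0})\zeta}{2n}.\]

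Since $\overline{\BL}\in\beta_{2}(A)$ has length $\geq\zeta\lambda$, it satisfies the difference estimate of Lemma~\ref{joinings202104.35} and Remark~\ref{joinings202104.42}, so the argument establishing Lemma~\ref{joinings202104.43} — the construction of $\widetilde{\beta}_{1},\widetilde{\beta}_{2}$ inside $\overline{\BL}$ followed by Proposition~\ref{joinings202104.27} and Proposition~\ref{joinings202104.29} — applies to it and gives
\[\tfrac{1}{\lambda}\Leb\big(\overline{A}_{R_{1}R_{2}}\big)\leq\overline{\theta}_{\eta}\big((\zeta\lambda^{-\eta})^{1/(1+\eta)}\big)\xrightarrow[\lambda\to\infty]{}0,\]
where $\overline{A}_{R_{1}R_{2}}=\{r\in A\cap[R_{1},R_{2}]:d_{C^{\rho}\backslash G}(u^{t(r)}\overline{g_{y}},u^{s(r)}\overline{g_{x}})>\epsilon\}$ is the shifting time. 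As $A^{i(\lambda)}_{\epsilon}\cap[R_{1},R_{2}]$ equals $(A\cap[R_{1},R_{2}])\setminus\overline{A}_{R_{1}R_{2}}$ off a null set, we obtain $\Leb(A^{i(\lambda)}_{\epsilon}\cap[R_{1},R_{2}])\geq c(n)\lambda-\overline{\theta}_{\eta}((\zeta\lambda^{-\eta})^{1/(1+\eta)})\lambda\geq\tfrac12 c(n)\lambda$ once $\lambda\geq\lambda_{0}$. Taking $\vartheta\coloneqq\tfrac12 c(n)$ (so also $R_{2}-R_{1}\geq\zeta\lambda>\vartheta\lambda$) gives the claim, with $[R_{1}^{\prime},R_{2}^{\prime}]=[0,\lambda]$ and $\overline{\BL}\in\beta_{2}(A^{i(\lambda)}\cap[R_{1}^{\prime},R_{2}^{\prime}])$.

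The main obstacle is the middle step. A bare pigeonhole only gives that $A^{i(\lambda)}$ has density $\geq(1-2\sigma)/n$ on $[0,\lambda]$, and this alone is insufficient: a long block supplied by Proposition~\ref{joinings202104.37} could run over stretches where $A^{i(\lambda)}$ is sparse, so that Lemma~\ref{joinings202104.43} would only tell us that the (already small) amount of $A^{i(\lambda)}$ inside it is mostly non-shifting — not that the non-shifting time is $\gg\lambda$. What rescues the argument is that the $\beta_{2}(A)$-blocks \emph{cover} $A^{i(\lambda)}$: the Solovay bound on the \emph{total} length of the short blocks then forces a fixed proportion of $A^{i(\lambda)}$ into the long blocks, and averaging over that boundedly-total-length family isolates one long block whose \emph{internal} $A^{i(\lambda)}$-density is bounded below by a constant depending only on $n$. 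The remaining work is the bookkeeping of the nested parameters $\eta\to\sigma_{0}\to\zeta\to\epsilon\to\lambda_{0}$ so as to be simultaneously compatible with~(\ref{joinings202104.30})--(\ref{joinings202104.5}) and with Proposition~\ref{joinings202104.29}.
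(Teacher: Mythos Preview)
Your argument is correct and is genuinely simpler than the paper's. The paper runs an \emph{iterative} algorithm: it fixes $\zeta_{1}$ with $\overline{\theta}_{\eta}(\zeta_{1})=1/(n+1)$, picks at each stage one $A^{i_{k+1}}$ of density $>1/(n+1)$ on the current subinterval, extracts a long $\beta_{2}$-block via Lemma~\ref{joinings202104.43}, and then faces a dichotomy --- either $A^{i_{k+1}}$ has density $\ge\overline{\theta}_{\eta}(\zeta_{2})$ inside that block (and the proof terminates), or it is discarded and the procedure recurses on the block with the remaining $n-k-1$ sets. After at most $n-1$ rounds only one set remains, and a careful bookkeeping of the recursive densities $b_{k+1}=b_{k}\zeta_{1}^{-1}+\overline{\theta}_{\eta}(\zeta_{2})$ closes the argument.

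You bypass the recursion entirely. The observation that Proposition~\ref{joinings202104.27} can be applied with ``good'' $=$ \emph{short} $\beta_{2}$-blocks and ``bad'' $=$ (long blocks $\cup$ gaps) directly bounds the total length of short blocks by $\overline{\theta}_{\eta}(\zeta)\lambda$; since the $\beta_{2}$-blocks cover $A^{i(\lambda)}$, the long blocks must then absorb a fixed proportion of $A^{i(\lambda)}$, and a single weighted-average step isolates one long block with internal $A^{i(\lambda)}$-density bounded below. This is cleaner and uses the disjointness of the $A^{i}$ only once (for the initial pigeonhole), whereas the paper exploits it at every stage of the recursion. What the paper's approach buys is a nested family of intervals $[R_{1}^{(k)},R_{2}^{(k)}]$ with quantitative density control at each scale, which could be useful if one needed finer structure; for the proposition as stated your route suffices and is more transparent. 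One small remark: the verification that the Solovay hypotheses (1)--(3) hold when ``good'' $=$ short blocks (in particular that the gap between two short blocks separated by long blocks is still $\ge[\min\{|J_{i}|,|J_{j}|\}]^{1+\eta}$) is worth spelling out, but it follows immediately from Proposition~\ref{joinings202104.37} since any long block has length $\ge\zeta\lambda>|J_{i}|$.
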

\begin{proof}
   First, fix $\eta$ satisfying (\ref{joinings202104.31}), $\zeta_{1}\in(0,1)$ so that $\overline{\theta}_{\eta}(\zeta_{1})=1/(n+1)$ and  choose  $\zeta_{2}\approx 0$ such that
         \begin{equation}\label{joinings202104.53}
    \overline{\theta}_{\eta}(\zeta_{2})<\frac{\zeta_{1}^{-1}-1}{2(\zeta_{1}^{-n}-1)}
   \end{equation}  and then $\lambda_{0}>0$ such that
    \begin{equation}\label{joinings202104.44}
    \overline{\theta}_{\eta}(\zeta_{2})\zeta_{1}-\overline{\theta}_{\eta}\left((\zeta_{2}\lambda^{-\eta})^{\frac{1}{1+\eta}}\right)>\frac{1}{2} \overline{\theta}_{\eta}(\zeta_{2})\zeta_{1}
   \end{equation}
    for $\lambda>\lambda_{0}$.
  Then choose
  \begin{align}
    \sigma_{0}= & \min\left\{\frac{1}{4}\zeta_{1}^{n},\frac{1}{2(n+1)}\right\},  \;\label{joinings202104.54}\\
  \vartheta =&  \frac{1}{2} \overline{\theta}_{\eta}(\zeta_{2})\zeta_{1}^{n}. \; \label{joinings202104.56}
\end{align}
Given   $\sigma\in(0,\sigma_{0})$, $\lambda>\lambda_{0}$,
  we write $[R_{1}^{(0)},R_{2}^{(0)}]=[0,\lambda]$, $b_{0}=2\sigma$  and then  apply the following algorithm on $ k=0,1,\ldots, n-1$ orderly:

  First, assume that
  \begin{itemize}
    \item  $i_{1},\ldots,i_{k}\in\{1,\ldots,n\}$ have been chosen without repetition,
    \item  $b_{0},\ldots,b_{k}>0$ have been chosen,
  \end{itemize}
  and they satisfy
   \begin{equation}\label{joinings202104.47}
 \Leb\left(\coprod_{i\not\in\{i_{1},\ldots,i_{k}\}}A^{i}\cap[R_{1}^{(k)},R_{2}^{(k)}]\right)/\Leb([R_{1}^{(k)},R_{2}^{(k)}])>1-b_{k}.
 \end{equation}
 (Note that by the choice of $\zeta_{1}$ and $\sigma_{0}$, (\ref{joinings202104.47}) is possible for $k=0$.)
  Then there is one $A^{i_{k+1}}$ for some $i_{k+1}\not\in\{i_{1},\ldots,i_{k}\}$ with
\[\Leb\left(A^{i_{k+1}}\cap[R_{1}^{(k)},R_{2}^{(k)}]\right)>\overline{\theta}(\zeta_{1}) \cdot\Leb([R_{1}^{(k)},R_{2}^{(k)}]).\]
 Applying Lemma \ref{joinings202104.43} to $A^{i_{k+1}}$,   we obtain  an $\epsilon$-block $\overline{\BL}_{k+1}$ with the corresponding time interval $[R_{1}^{(k+1)},R_{2}^{(k+1)}]\subset[R_{1}^{(k)},R_{2}^{(k)}]$ and
 \begin{equation}\label{joinings202104.48}
|\overline{\BL}_{k+1}|=R_{2}^{(k+1)}-R_{1}^{(k+1)}\geq\zeta_{1}\cdot\Leb([R_{1}^{(k)},R_{2}^{(k)}])\geq\zeta_{1}^{k+1}\lambda>\vartheta\lambda.
 \end{equation}
 It follows from (\ref{joinings202104.47}) that
 \begin{align}
  &  \Leb\left(\coprod_{i\not\in\{i_{1},\ldots,i_{k}\}}A^{i}\cap[R_{1}^{(k+1)},R_{2}^{(k+1)}]\right) \;\nonumber\\
    =& \Leb([R_{1}^{(k+1)},R_{2}^{(k+1)}])- \Leb\left(\left(\coprod_{i\not\in\{i_{1},\ldots,i_{k}\}}A^{i}\right)^{c}\cap[R_{1}^{(k+1)},R_{2}^{(k+1)}]\right)  \;\nonumber\\
  \geq & \Leb([R_{1}^{(k+1)},R_{2}^{(k+1)}])- \Leb\left(\left(\coprod_{i\not\in\{i_{1},\ldots,i_{k}\}}A^{i}\right)^{c}\cap[R_{1}^{(k)},R_{2}^{(k)}]\right) \;\nonumber\\
  > & \Leb([R_{1}^{(k+1)},R_{2}^{(k+1)}])-  b_{k}\cdot\Leb([R_{1}^{(k)},R_{2}^{(k)}])  \; \nonumber
\end{align}
and so by (\ref{joinings202104.48}), we obtain
 \begin{equation}\label{joinings202104.45}
  \Leb\left(\coprod_{i\not\in\{i_{1},\ldots,i_{k}\}}A^{i}\cap[R_{1}^{(k+1)},R_{2}^{(k+1)}]\right)/\Leb([R_{1}^{(k+1)},R_{2}^{(k+1)}])>1-b_{k}\zeta_{1}^{-1}.
 \end{equation}
   Then we face a  dichotomy:
   \begin{enumerate}[\ \ \ (1)]
     \item     $\Leb(A^{i_{k+1}}\cap[R_{1}^{(k+1)},R_{2}^{(k+1)}])/\Leb([R_{1}^{(k+1)},R_{2}^{(k+1)}])\geq\overline{\theta}_{\eta}(\zeta_{2})$;
     \item     $\Leb(A^{i_{k+1}}\cap[R_{1}^{(k+1)},R_{2}^{(k+1)}])/\Leb([R_{1}^{(k+1)},R_{2}^{(k+1)}])<\overline{\theta}_{\eta}(\zeta_{2})$.
   \end{enumerate}
    In the case (1), we take $i(\lambda)=i_{k+1}$, $[R_{1}^{\prime}(\lambda),R_{2}^{\prime}(\lambda)]=[R_{1}^{(k)},R_{2}^{(k)}]$, $\overline{\BL}=\overline{\BL}_{k+1}$. By (\ref{joinings202104.44}) (\ref{joinings202104.56}) (\ref{joinings202104.48}), we have
 \begin{align}
 &  \Leb\left(A_{\epsilon}^{i(\lambda)}\cap[R_{1}^{(k+1)},R_{2}^{(k+1)}]\right)\;\nonumber\\
=  & \Leb\left(A^{i(\lambda)}\cap[R_{1}^{(k+1)},R_{2}^{(k+1)}]\right)-\Leb\left((A^{i(\lambda)}_{\epsilon})^{c}\cap A^{i(\lambda)}\cap[R_{1}^{(k+1)},R_{2}^{(k+1)}]\right)\;\nonumber\\
\geq & \overline{\theta}_{\eta}(\zeta_{2})\cdot\Leb([R_{1}^{(k+1)},R_{2}^{(k+1)}])-\overline{\theta}_{\eta}\left((\zeta_{2}\lambda^{-\eta})^{\frac{1}{1+\eta}}\right)\cdot\Leb([R_{1}^{(k)},R_{2}^{(k)}]) \;\nonumber\\
\geq&   \left(\overline{\theta}_{\eta}(\zeta_{2})\zeta_{1} -\overline{\theta}_{\eta}\left((\zeta_{2}\lambda^{-\eta})^{\frac{1}{1+\eta}}\right)\right)\cdot\Leb([R_{1}^{(k)},R_{2}^{(k)}])\;\nonumber \\
  >& \frac{1}{2} \overline{\theta}_{\eta}(\zeta_{2})\zeta_{1}\cdot\zeta_{1}^{k}\lambda \geq \vartheta\lambda\; \label{joinings202104.55}
\end{align}
and the consequence of Proposition \ref{joinings202104.57} follows.
In    the case (2),  by (\ref{joinings202104.45}), we have
\begin{equation}\label{joinings202104.49}
\Leb\left(\coprod_{i\not\in\{i_{1},\ldots,i_{k+1}\}}A^{i}\cap[R_{1}^{(k+1)},R_{2}^{(k+1)}]\right)/\Leb([R_{1}^{(k+1)},R_{2}^{(k+1)}])>1-b_{k}\zeta_{1}^{-1}-\overline{\theta}_{\eta}(\zeta_{2}).
\end{equation}
Now note that
\begin{itemize}
    \item  $i_{k+1}\not\in\{i_{1},\ldots,i_{k}\}$ has been chosen,
    \item choose $b_{k+1}=b_{k}\zeta_{1}^{-1}+\overline{\theta}_{\eta}(\zeta_{2})$
\end{itemize}
and then (\ref{joinings202104.49}) coincides with (\ref{joinings202104.47}) by replacing $k$ by $k+1$.
Thus, we can apply  the algorithm again by    replacing $k$ by $k+1$.

After applying the algorithm, we either stop in the middle and finish the proof, or we determine
\begin{itemize}
    \item  $i_{1},\ldots,i_{n-1}\in\{1,\ldots,n\}$ without repetition,
    \item a sequence $\{b_{k}\}_{k=0}^{n-1}$ of positive numbers with $b_{0}=2\sigma$ and
    \begin{equation}\label{joinings202104.50}
      b_{k+1}=b_{k}\zeta_{1}^{-1}+\overline{\theta}_{\eta}(\zeta_{2}).
    \end{equation}
  \end{itemize}
Let $i(\lambda)$ be the only element in $\{1,\ldots,n\}\setminus \{i_{1},\ldots,i_{n-1}\}$. Let $[R_{1}^{\prime}(\lambda),R_{2}^{\prime}(\lambda)]=[R_{1}^{(n-1)},R_{2}^{(n-1)}]$. Besides, by (\ref{joinings202104.50}) we calculate
\[b_{n-1}=2\sigma\zeta_{1}^{-(n-1)}+\overline{\theta}_{\eta}(\zeta_{2})\frac{\zeta_{1}^{-(n-1)}-1}{\zeta_{1}^{-1}-1}.\]
Now we try to do the algorithm one more time. Thus,  we apply again Lemma \ref{joinings202104.43} to $A^{i(\lambda)}$, and then  we obtain  an $\epsilon$-block   $\overline{\BL}=\overline{\BL}_{n}$   with the corresponding time interval $[R_{1}^{(n)},R_{2}^{(n)}]\subset[R_{1}^{(n-1)},R_{2}^{(n-1)}]$ satisfying
(\ref{joinings202104.48}) (\ref{joinings202104.45}), i.e.
\begin{equation}\label{joinings202104.52}
  |\overline{\BL}_{n}|=\Leb([R_{1}^{(n)},R_{2}^{(n)}])\geq\zeta_{1}\cdot\Leb([R_{1}^{(n-1)},R_{2}^{(n-1)}])\geq \zeta_{1}^{n}\lambda>\vartheta\lambda,
\end{equation}
\begin{multline}\label{joinings202104.51}
  \Leb\left(A^{i(\lambda)}\cap[R_{1}^{(n)},R_{2}^{(n)}]\right)/\Leb([R_{1}^{(n)},R_{2}^{(n)}])\\
>1-b_{n-1}\zeta_{1}^{-1}=1-2\sigma\zeta_{1}^{-n}-\overline{\theta}_{\eta}(\zeta_{2})\frac{\zeta_{1}^{-n}-\zeta_{1}^{-1}}{\zeta_{1}^{-1}-1}\geq\overline{\theta}_{\eta}(\zeta_{2})
\end{multline}
where the last inequality of (\ref{joinings202104.51}) follows from (\ref{joinings202104.53}) (\ref{joinings202104.54}).
Then, as in (\ref{joinings202104.55}), we calculate
\[  \Leb\left(A_{\epsilon}^{i(\lambda)}\cap[R_{1}^{(n)},R_{2}^{(n)}]\right) \geq \left(\overline{\theta}_{\eta}(\zeta_{2})\zeta_{1} -\overline{\theta}_{\eta}\left((\zeta_{2}\lambda^{-\eta})^{\frac{1}{1+\eta}}\right)\right)\cdot\Leb([R_{1}^{(n-1)},R_{2}^{(n-1)}])
  > \vartheta\lambda\]
  where the last inequality follows from (\ref{joinings202104.44}) (\ref{joinings202104.56}) (\ref{joinings202104.52}).
\end{proof}

\section{Invariance}\label{joinings202106.166}
Let $G_{X}=SO(n_{X},1)$ and $\Gamma_{X}\subset G_{X}$ be a lattice. Let $(X,\mu)$ be the homogeneous space $X=G_{X}/\Gamma_{X}$ equipped with the Lebesgue measure $\mu$, and let $\phi^{U_{X}}_{t}=u_{X}^{t}$ be a unipotent flow on $X$ as before.  Besides, let $G_{Y}$ be a Lie group and $\Gamma_{Y}\subset G_{Y}$ be a lattice.  $(Y,m_{Y})$ be the homogeneous space $Y=G_{Y}/\Gamma_{Y}$ equipped with the Lebesgue measure $m_{Y}$ and let $\phi^{U_{Y}}_{t}=u^{t}_{Y}$ be a unipotent flow on $Y$.
Next, choose $\tau_{Y}\in \mathbf{K}_{\kappa}(Y)$  a  positive integrable function  $\tau_{Y}$ on $Y$ such that $\tau_{Y},\tau_{Y}^{-1}$ are bounded and satisfies (\ref{joinings202104.26}). Then define the measure $d\nu\coloneqq\tau_{Y} dm_{Y}$ and so the time-change flow $\phi^{U_{Y},\tau_{Y}}_{t}=\widetilde{u}_{Y}^{t}$ preserves the measure $\nu$ by  Remark \ref{joinings202104.59}. Also recall from (\ref{joinings202105.9}) that
\[ u^{t}_{Y}y=\phi^{U_{Y},\tau_{Y}}_{z(y,t)}(y)=\widetilde{u}_{Y}^{z(y,t)}(y). \]

 We shall to study the joinings of $(X,\mu,u_{X}^{t})$ and  $(Y,\nu,\widetilde{u}_{Y}^{t})$. Let $\rho$ be an ergodic \textit{joining} of $u_{X}^{t}$ and  $\widetilde{u}_{Y}^{t}$\index{joinings}, i.e. $\rho$ is a probability measure on $X\times Y$, whose marginals on $X$ and $Y$ are $\mu$ and $\nu$ respectively, and which is $(u_{X}^{t}\times\widetilde{u}_{Y}^{t})$-ergodic. As indicated at the end of Section \ref{joinings202105.3}, when $\rho$ is not the product measure $\mu\times\nu$,
 we apply   Theorem \ref{joinings202012.1} and then obtain a compact subgroup $C^{\rho}\subset C_{G_{X}}(U_{X})$ such that $\overline{\rho}\coloneqq\pi_{\ast}\rho$ is an    ergodic joining $u_{X}^{t}$ and  $\widetilde{u}_{Y}^{t}$ on $C^{\rho}\backslash X\times Y$ under the natural projection $\pi:   X\times Y\rightarrow C^{\rho}\backslash X\times Y$. Besides, it is a finite extension of $\nu$, i.e. $\supp\overline{\rho}_{y}$ consists of exactly $n$ points $\overline{\psi}_{1}(y),\ldots,\overline{\psi}_{n}(y)$ for $\nu$-a.e. $y\in Y$ (without loss of generality, we shall assume that it holds for all $y\in Y$).
By Kunugui's theorem, we obtain $\psi_{i}:Y\rightarrow X$ so that $P_{X}\circ \psi_{i}=\overline{\psi}_{i}$ where $P_{X}:X\rightarrow C^{\rho}\backslash X$.

\subsection{Central direction}\label{joinings202105.24} We want to study the behavior of $\overline{\psi}_{p}$ along the central direction $C_{G_{Y}}(U_{Y})$ of $U_{Y}$.
In the following, assume that $\rho$ is a $(u_{X}^{t}\times  \widetilde{u}_{Y}^{t})$-joining. Then by (\ref{joinings202104.23}), we get that
   \[\overline{\psi}_{p}(u^{t}_{Y}y)=\overline{\psi}_{p}(\widetilde{u}_{Y}^{z(y,t)}(y))=u_{X}^{z(y,t)}\overline{\psi}_{i_{p}}(y)\]
   where the index $i_{p}=i_{p}(y,t)\in\{1,\ldots,n\}$ is determined by
   \[(u_{X}^{-z(y,t)}\times \widetilde{u}_{Y}^{-z(y,t)} )(\overline{\psi}_{p}(\widetilde{u}_{Y}^{z(y,t)}(y)),\widetilde{u}_{Y}^{z(y,t)}(y))\in\hat{\psi}_{i_{p}}(Y).\]
   Now we orderly fix the following data so that the propositions in Section \ref{joinings202103.13} can be used:
\begin{itemize}
  \item  fix $\kappa\in(0,2\eta_{0})$ satisfying (\ref{joinings202104.26}), where $\eta_{0}>0$ comes from   Proposition  \ref{joinings202104.29};
  \item  fix $\sigma\in(0,\sigma_{0})$, where $\sigma_{0}\approx0$ comes from both Proposition  \ref{joinings202104.29} and Proposition   \ref{joinings202104.57};
  \item  fix  $\epsilon\in(0,\epsilon_{0})$ as in (\ref{joinings202104.34})
\end{itemize}
such that the following holds:
   \begin{itemize}
     \item (Effective ergodicity) By (\ref{joinings202105.45}),   there is $K_{1}\subset Y$ with $\nu(K_{1})>1-\sigma/6$ and $t_{K_{1}}>0$ such that
         \begin{equation}\label{joinings202104.24}
           |t-z(y,t)|=O(t^{1-\kappa})
         \end{equation}
         for   all $t\geq t_{K_{1}}$ and $y\in K_{1}$. Note that using ergodic theorem, we have
         \begin{equation}\label{joinings202104.25}
          |t-z(y,t)|=o(t)
         \end{equation}
         for $\nu$-almost all $y\in Y$.
     \item (Distinguishing $\overline{\psi}_{p},\overline{\psi}_{q}$) There is  $K_{2}\subset Y$ with $\nu(K_{2})>1-\sigma/6$ such that
         \begin{equation}\label{joinings202105.8}
          d(\overline{\psi}_{p}(y),\overline{\psi}_{q}(y))>100\epsilon
         \end{equation}
          for $y\in K_{2}$, $1\leq p< q\leq n$.
     \item (Lusin's theorem) There is  $K_{3}\subset Y$ such that $\nu(K_{3})>1-\sigma/6$ and  $\overline{\psi}_{p}|_{K_{3}}$ is uniformly continuous for all $p\in\{1,\ldots,n\}$. Thus,   there is $\delta>0$ such that
         \begin{equation}\label{joinings202105.33}
          d_{\overline{X}}(\overline{\psi}_{p}(y_{1}),\overline{\psi}_{p}(y_{2}))<\epsilon
         \end{equation}
          for $p\in\{1,\ldots,n\}$, $d_{Y}(y_{1},y_{2})<\delta$ and $y_{1},y_{2}\in K_{3}$.
   \end{itemize}
 Given  $K\subset\overline{X}$  by Proposition \ref{joinings202104.29}, let
 \begin{equation}\label{joinings202105.17}
  K^{0}\coloneqq K_{1}\cap K_{2}\cap K_{3}\cap\bigcap_{p=1}^{n}\overline{\psi}_{p}^{-1}(K).
 \end{equation}
 Here we choose   $\overline{\mu}(K)$ being so large that  $m_{Y}(K^{0})>1-\sigma/2$.

Fix   $c\in C_{G_{Y}}(U_{Y})\cap B_{G_{Y}}(e,\delta)$. We choose arbitrarily a representative $g_{\overline{\psi}_{p}(y)}\in G_{X}$ of  $\overline{\psi}_{p}(y)$. Then there is a representative $g_{\overline{\psi}_{p}(cy)}\in G_{X}$ so that
 \begin{itemize}
   \item  $\overline{g_{\overline{\psi}_{p}(y)}}$ and $\overline{g_{\overline{\psi}_{p}(cy)}}$ lie in the same fundamental domain;
   \item the difference $g(y)=g_{\overline{\psi}_{p}(cy)}g_{\overline{\psi}_{p}(y)}^{-1}=h^{(p)}(y)\exp(v^{(p)}(y))$ where
       \begin{equation}\label{joinings202105.18}
         h^{(p)}(y)=\left[
            \begin{array}{ccc}
              a^{(p)}(y)&  b^{(p)}(y) \\
             c^{(p)}(y)&  d^{(p)}(y)\\
            \end{array}
          \right]\in SO_{0}(2,1),\ \ \ v^{(p)}=b_{0}^{(p)}(y)v_{0}+\cdots+b_{\varsigma}^{(p)}(y)v_{\varsigma}\in V_{\varsigma}.
       \end{equation}
 \end{itemize}
 Further, applying the effectiveness of the unipotent flow, we shall show that the difference $g(y)$ has to lie in the centralizer $C_{G_{X}}(U_{X})$.

\begin{prop}\label{joinings202104.58}
   Let the notation and assumptions be as above.    For the quantities in (\ref{joinings202105.18}),  there is a measurable set $S(c)\subset Y$ with $\nu(S(c))>0$ such that
   \[b^{(p)}(y)=0,\ \ \ a^{(p)}(y)=d^{(p)}(y)=1,\ \ \ b^{(p)}_{0}(y)=\cdots=b^{(p)}_{\varsigma-1}(y)=0\]
   for $y\in S(c)$, $p\in\{1,\ldots,n\}$.
\end{prop}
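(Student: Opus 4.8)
The plan is to run the shearing machinery of Section~\ref{joinings202103.13} with the two objects being the $u_X$-orbit of $\overline{\psi}_p(cy)$ and the $u_X$-orbits of $\overline{\psi}_1(y),\dots,\overline{\psi}_n(y)$, synchronised through the time-change flow on $Y$. Since $c\in C_{G_Y}(U_Y)$ commutes with $u_Y^t$, set $s(r):=z(cy,r)$ and $t(r):=z(y,r)$; then $\widetilde{u}_Y^{s(r)}(cy)=u_Y^r(cy)=c\,u_Y^r y=c\,\widetilde{u}_Y^{t(r)}(y)$, so the two $Y$-base points differ only by the fixed element $c$ and stay within $d_{G_Y}(c,e)<\delta$. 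The invariance of $\overline{\rho}$ (together with (\ref{joinings202104.23})) gives $u_X^{s(r)}\overline{\psi}_p(cy)=\overline{\psi}_\bullet(u_Y^r(cy))$ and $u_X^{t(r)}\overline{\psi}_i(y)=\overline{\psi}_\bullet(u_Y^r y)$ for the appropriate sheet indices, and the effective ergodicity (\ref{joinings202104.24}) applied along the orbits of $y$ and of $cy$ (the latter is legitimate since $\tau_Y^{\pm1}$ are bounded, so $\nu$ and $c_*\nu$ are boundedly equivalent and the $cy$-orbit is also effectively equidistributed) shows that $s,t$ obey the Hölder bounds (\ref{joinings202104.5}) on the set of $r$ with $u_Y^r y,u_Y^r(cy)\in K_1$, which by Birkhoff has density close to $1$.

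Next, for each $i\in\{1,\dots,n\}$ I would let $A^i$ be the set of $r\ge 0$ with $u_Y^r y,u_Y^r(cy)\in K^0\cap K_1$ and $d_{\overline{X}}(u_X^{s(r)}\overline{\psi}_p(cy),u_X^{t(r)}\overline{\psi}_i(y))<\epsilon$. The $100\epsilon$-separation (\ref{joinings202105.8}) makes the $A^i$ disjoint; Step~1 makes each satisfy (\ref{joinings202104.30})--(\ref{joinings202104.5}); and Birkhoff for $u_Y^1$ along $y$ and $cy$, together with Lusin's estimate (\ref{joinings202105.33}) (which forces, whenever the two $\delta$-close base points both lie in $K_3$, a match of $\overline{\psi}_p(\,\cdot\,)$ on one side with some $\overline{\psi}_i(\,\cdot\,)$ on the other), and shrinking $K^0$ so that $\nu$ of its complement inflated by the constant $\sup\tau_Y/\inf\tau_Y$ lost on passing to the $cy$-orbit is still $<\sigma$, yields $\Leb\bigl(\coprod_i A^i\cap[0,\lambda]\bigr)>(1-2\sigma)\lambda$ for all large $\lambda$ and $\nu$-a.e.\ $y$. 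Proposition~\ref{joinings202104.57} then produces an index $i(\lambda)$ and an $\epsilon$-block in $\beta_2(A^{i(\lambda)}\cap[R_1',R_2'])$ with time interval $[R_1,R_2]$, $R_2-R_1>\vartheta\lambda$, and $\Leb(A^{i(\lambda)}_\epsilon\cap[R_1,R_2])>\vartheta\lambda$, on whose non-shifting part $u_X^{s(r)}\overline{\psi}_p(cy)$ and $u_X^{t(r)}\overline{\psi}_{i(\lambda)}(y)$ remain $\epsilon$-close in $C^\rho\backslash G_X$; Corollary~\ref{joinings202104.10} then gives, for $g_{\overline{\psi}_p(cy)}g_{\overline{\psi}_{i(\lambda)}(y)}^{-1}=h\exp(v)$, the bounds $|b|\ll_\kappa(\vartheta\lambda)^{-1-\kappa}$, $|a-d|\ll_\kappa(\vartheta\lambda)^{-\kappa}$, $|b_i|\ll_{\varsigma,\kappa}(\vartheta\lambda)^{-\varsigma+i}$ for $0\le i<\varsigma$.

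Letting $\lambda\to\infty$ along a subsequence on which $i(\lambda)\equiv i^*$ is constant, the element $g_{\overline{\psi}_p(cy)}g_{\overline{\psi}_{i^*}(y)}^{-1}$ — which does not depend on $\lambda$ — has all the above non-central coefficients equal to $0$, i.e.\ lies in $C_{G_X}(U_X)$. Since at $r=0$ the base points $cy,y$ are $\delta$-close and in $K_3$, $\overline{\psi}_p(cy)$ is within $\epsilon$ of $\overline{\psi}_p(y)$ and, by (\ref{joinings202105.8}), of no other sheet, so the matched index at $r=0$ is $p$; combining this $r=0$ estimate with the separation of the sheets forces $i^*=p$ on a set $S(c)$ of positive $\nu$-measure — one takes for $S(c)$ the $\nu$-conull set of good $y$ for which the construction runs, intersected with $\{y:cy\in K^0\cap K_1\}$, which has positive measure because $c$ is close to $e$ and $\nu\ll m_Y$ with $\tau_Y^{\pm1}$ bounded. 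On $S(c)$ we then have $g(y)=h^{(p)}(y)\exp(v^{(p)}(y))\in C_{G_X}(U_X)$, and since this element is $O(\epsilon)$-small its $SL_2$-component is a small element of $C_{SL_2}(U_X)=\langle u_X\rangle$, whence $b^{(p)}(y)=0$ and $a^{(p)}(y)=d^{(p)}(y)=1$, while its $V^{\perp}$-component lies in $V^{\perp}_{C}$, whence $b^{(p)}_0(y)=\cdots=b^{(p)}_{\varsigma-1}(y)=0$; the same argument runs for every $p$.

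The step I expect to be the main obstacle is the synchronisation and bookkeeping in the first two paragraphs: keeping track of the several sheet-permutation cocycles for the orbits of $y$ and of $cy$, and in particular verifying rigorously that the reparametrised time functions $s(r),t(r)$ satisfy the Hölder conditions (\ref{joinings202104.5}) so that Corollary~\ref{joinings202104.10} and Proposition~\ref{joinings202104.57} apply verbatim — this forces one to use the effective ergodicity (\ref{joinings202104.24}) simultaneously for both orbits and to handle the non-invariance of $\nu$ under $c$. The identification $i^*=p$ via the $r=0$ anchor and the separation (\ref{joinings202105.8}) of the $\overline{\psi}_i(y)$ is the other delicate point.
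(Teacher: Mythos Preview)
Your overall strategy is the right one and matches the paper's approach: feed the two families $\overline{\psi}_p(u_Y^r cy)$ and $\overline{\psi}_q(u_Y^r y)$ into the shearing machinery of Section~\ref{joinings202103.13}, partition the time axis according to sheet indices, and invoke Proposition~\ref{joinings202104.57}. The set-up in your first two paragraphs is essentially what the paper does (the paper uses the full multi-index $I\in\{1,\dots,n\}^{2n}$ tracking all sheets at once rather than one $p$ at a time, but this is cosmetic).

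The genuine gap is in your third paragraph. The estimate you quote from Corollary~\ref{joinings202104.10}, namely $|b|\ll(\vartheta\lambda)^{-1-\kappa}$ etc., is an estimate on the difference element \emph{at the start of the $\epsilon$-block}, i.e.\ at time $r=R_1$ (more precisely, at the first pair of $\overline{\BL}\in\beta_2$; see Lemma~\ref{joinings202104.35}). It is \emph{not} an estimate on $g_{\overline{\psi}_p(cy)}g_{\overline{\psi}_{i^*}(y)}^{-1}$, which is the difference at $r=0$. Between $r=0$ and $r=R_1$ there may well be shifting (nothing in Proposition~\ref{joinings202104.57} controls the interval $[0,R_1']$), so the two difference elements can differ by a nontrivial $\gamma\in\Gamma_X$ and the block bounds do not transfer. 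Consequently ``letting $\lambda\to\infty$'' gives you nothing about the fixed element at $r=0$, and the identification $i^*=p$ via the $r=0$ anchor is likewise ungrounded: the matched index at $r=0$ is $p$, but the index $i(\lambda)$ selected by the block lives at times $r\ge R_1$, and sheet-switching in between decouples the two.

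The paper's remedy is to abandon the attempt to say anything about the fixed starting point $y$ and instead observe that for each $r$ in the non-shifting time $A^{I(\lambda)}_\epsilon\cap[R_1,R_2]$, the \emph{orbit point} $u_Y^r y$ has the desired property: the difference $g(u_Y^r y)=g_{\overline{\psi}_p(cu_Y^r y)}g_{\overline{\psi}_p(u_Y^r y)}^{-1}$ (here the index is automatically $p$ on both sides, since these are sheet~$p$ at the base point $u_Y^r y$) has all non-central coefficients $\ll(\vartheta\lambda)^{-\kappa}$. Hence $u_Y^r y\in Y_l(c)$ for $\lambda$ large, where $Y_l(c)$ is the set of $y$ with all those coefficients $<1/l$. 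Since $\Leb(A^{I(\lambda)}_\epsilon\cap[R_1,R_2])>\vartheta\lambda$, the Birkhoff average $\frac1\lambda\int_0^\lambda\mathbf{1}_{Y_l(c)}(u_Y^r y)\,dr$ exceeds $\vartheta$, and the ergodic theorem gives $m_Y(Y_l(c))\ge\vartheta$ for every $l$; then $S(c)=\bigcap_l Y_l(c)$ has positive measure. This passage from ``many orbit times are good'' to ``a positive-measure set of points is good'' via the ergodic theorem is the missing idea.
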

\begin{proof}
Consider the measure of the set
\begin{multline}
  Y_{l}(c)\coloneqq\{y\in Y:|b^{(p)}(y)|, |a^{(p)}(y)-1|, |d^{(p)}(y)-1|, |b^{(p)}_{0}(y)|,\cdots,|b^{(p)}_{\varsigma-1}(y)|<1/l, \\
 \text{for any } p\in\{1,\ldots,n\}\}\nonumber
\end{multline}
for $l\in\mathbf{Z}^{+}$. We shall show that $S(c)\coloneqq\bigcap_{l}Y_{l}(c)$ satisfies the requirement.
By ergodic theorem, we have
\begin{equation}\label{joinings202104.62}
 m_{Y}(Y_{l}(c))=\lim_{\lambda\rightarrow\infty}\frac{1}{\lambda}\int_{0}^{\lambda}\mathbf{1}_{Y_{l}(c)}(u^{r}_{Y}y)dr
\end{equation}
   for $m_{Y}$-a.e. $y\in Y$, where $m_{Y}$ denotes the Lebesgue measure on $Y$.

On the other hand,  by ergodic theorem, for $m_{Y}$-a.e. $y\in Y$, there is $A_{c,y}\subset\mathbf{R}^{+}$ and $\lambda_{0}(y)>0$  such that
    \begin{itemize}
      \item for $r\in A_{c,y}$, we have
     \[ u^{r}_{Y}y ,u^{r}_{Y}cy\in K^{0};\]
     \item
      $\Leb(A_{c,y}\cap[0,\lambda])\geq (1-2\sigma)\lambda$ whenever $\lambda\geq\lambda_{0}(y)$.
    \end{itemize}
 Then by the assumptions, we have
 \begin{equation}\label{joinings202105.19} A_{c,y} \subset \left\{r\in[0,\infty):d_{\overline{X}}(\overline{\psi}_{p}(u^{r}_{Y}y),\overline{\psi}_{p}(u^{r}_{Y}cy))<\epsilon,\ p\in\{1,\ldots,n\}\right\}.
 \end{equation}
It follows that for $r\in A_{c,y}$, we have
\begin{equation}\label{joinings202105.34}
  d_{\overline{X}}(u_{X}^{z(y,r)}\overline{\psi}_{i_{p}(y,r)}(y),u_{X}^{z(cy ,r)}\overline{\psi}_{i_{p}(cy,r)}(cy))<\epsilon
\end{equation}
     for any $p\in\{1,\ldots,n\}$.
  Now we restrict our attention on $A_{c,y}\cap[0,\lambda]$ with $\lambda\geq\lambda_{0}(y)$. For simplicity, we assume that $0\in A_{c,y}$.
   Let $I=((p_{1},p_{2}),\ldots, (p_{2n-1},p_{2n}))\in\{1,\ldots,n\}^{2n}$ be a sequence of  indexes and
   \begin{equation}\label{joinings202105.20}
     A_{c,y}^{I}\coloneqq\{r\in A_{c,y}: p_{2k-1}=i_{k}(y,r),\ p_{2k}=i_{k}(cy,r)\text{ for all }k\in\{1,\ldots,n\}\}.
   \end{equation}
  Then  $A=A_{c,y}^{I}$, $R_{0}=t_{K_{1}}$, $t(r)=z(cy ,r)$, $s(r)=z(y ,r)$ satisfy (\ref{joinings202104.30}) (\ref{joinings202104.5}) for  points
   \[\overline{\psi}_{p_{2k-1}}(y),\overline{\psi}_{p_{2k}}(cy)\in K\]
  for all $k\in\{1,\ldots,n\}$.

Since $A_{c,y}=\coprod_{I\in\{1,\ldots,n\}^{2n}}A^{I}_{c,y}$ (is a disjoint union because of (\ref{joinings202105.8})), by Proposition \ref{joinings202104.57}, for any $\lambda\geq \lambda_{0}$, there exists  one $A_{c,y}^{I(\lambda)}$  and $[R_{1}^{\prime},R_{2}^{\prime}]\subset[0,\lambda]$  such that there exists an $\epsilon$-block $\overline{\BL}=\{(x^{\prime},y^{\prime}),(x^{\prime\prime},y^{\prime\prime})\}\in \beta_{2}(A_{c,y}^{I(\lambda)}\cap[R_{1}^{\prime},R_{2}^{\prime}])$ with the corresponding time
interval $[R_{1},R_{2}]$ such that
   \[R_{2}-R_{1}>\vartheta\lambda,\ \ \ \Leb\left(A^{I(\lambda)}_{\epsilon}\cap [R_{1},R_{2}]\right)>\vartheta\lambda\]
   where $A^{I(\lambda)}_{\epsilon}$ is the non-shifting time of $A_{c,y}^{I(\lambda)}$. Then by the definition of $A^{I(\lambda)}_{\epsilon}$, we know that
    \[d_{C^{\rho}\backslash G}\left(u_{X}^{z(cy,r)}.\overline{g_{\overline{\psi}_{i_{p}(cy,r)}(cy)}},u_{X}^{z(y,r)}.\overline{g_{\overline{\psi}_{i_{p}(y,r)}(y)}}\right)<\epsilon\]
    for $r\in A^{I(\lambda)}_{\epsilon}$, $p\in\{1,\ldots,n\}$.        Recall from  (\ref{joinings202104.22}) that points in $K$ have injectivity radius at least $\epsilon_{0}$. Thus,  for $r\in A^{I(\lambda)}_{\epsilon}$,
    \[u_{X}^{z(y,r)}.\overline{g_{\overline{\psi}_{i_{p}(y,r)}(y)}}\ \ \ \text{ and } \ \ \ u_{X}^{z(cy,r)}.\overline{g_{\overline{\psi}_{i_{p}(cy,r)}(cy)}}\]
     lie in the same fundamental domain. Thus, if $r\in A^{I(\lambda)}_{\epsilon}$ and
    \[\overline{g_{\overline{\psi}_{p}(u^{r}_{Y}y)}}=u_{X}^{z(y,r)}.\overline{g_{\overline{\psi}_{i_{p}(y,r)}(y)}}\]
    then we get
    \[\overline{g_{\overline{\psi}_{p}(u^{r}_{Y}cy)}}=u_{X}^{z(cy,r)}.\overline{g_{\overline{\psi}_{i_{p}(cy,r)}(cy)}}.\]

    Recall that the difference of $u_{X}^{z(y,r)}.\overline{g_{\overline{\psi}_{i_{p}(y,r)}(y)}}$, $u_{X}^{z(cy,r)}.\overline{g_{\overline{\psi}_{i_{p}(cy,r)}(cy)}}$ for $r\in A^{i(\lambda)}_{\epsilon}\cap [R_{1},R_{2}]$ was estimated by (\ref{joinings202104.36}) (see also (\ref{joinings202105.21}) (\ref{joinings202104.61}) (\ref{joinings202104.60})). In particular, for $r\in A^{i(\lambda)}_{\epsilon}\cap [R_{1},R_{2}]$, the quantities of
    \[g(u^{r}_{Y}y)=g_{\overline{\psi}_{p}(cu^{r}_{Y}y)}g_{\overline{\psi}_{p}(u^{r}_{Y}y)}^{-1}=u_{X}^{z(cy,r)}g_{\overline{\psi}_{i_{p}(cy,r)}(cy)}\left(u_{X}^{z(y,r)}g_{\overline{\psi}_{i_{p}(y,r)}(y)}\right)^{-1}\]
    that need to estimate in $Y_{l}(c)$ are all decreasing as $\lambda\rightarrow\infty$.
Then   given   $l\in\mathbf{Z}^{+}$, there is a sufficiently large $\lambda$ such that
\[ \int_{0}^{\lambda}\mathbf{1}_{Y_{l}(c)}(u^{r}_{Y}y)dr \geq\Leb\left(A^{i(\lambda)}_{\epsilon}\cap [R_{1},R_{2}]\right)>\vartheta\lambda.\]
Thus, by (\ref{joinings202104.62}), we have $m_{Y}(Y_{l}(c))>\vartheta$. Now letting $\lambda\rightarrow\infty$ and then $l\rightarrow\infty$, we see that $m_{Y}(\bigcap_{l}Y_{l}(c))>\vartheta$. Finally, by Remark \ref{joinings202104.59} and $\tau_{Y}\in \mathbf{K}_{\kappa}(Y)$, we obtain   $\nu(\bigcap_{l}Y_{l}(c))>0$.
\end{proof}

Using Proposition \ref{joinings202104.58}, we immediately obtain
\begin{cor}\label{joinings202105.7}
  There is a measurable map $\varpi:C_{G_{Y}}(U_{Y})\times X\times Y\rightarrow C_{G_{X}}(U_{X})$ that induces a map  $\widetilde{S}_{c}:\supp (\rho)\rightarrow\supp (\rho)$ by
   \begin{equation}\label{joinings202105.1}
    \widetilde{S}_{c}: (x,y) \mapsto (\varpi(c,x,y)x,cy)
   \end{equation}
   for all $c\in C_{G_{Y}}(U_{Y})$,   $\rho$-a.e. $(x,y)\in X\times Y$. Moreover, we have
    \begin{align}
\varpi(c,x,y)= \ &  u_{X}^{-z(cy,t)}\varpi(c,(u_{X}^{z(y,t)}\times \widetilde{u}_{Y}^{z(y,t)}).(x,y))u_{X}^{z(y,t)}\;\label{joinings202105.4}\\
 \varpi(c_{1}c_{2},x,y) =\ &  \varpi(c_{1},\varpi(c_{2},x,y)x,c_{2}y)\varpi(c_{2},x,y)\; \label{joinings202105.6}
\end{align}
   for $c,c_{1},c_{2}\in C_{G_{Y}}(U_{Y})$,   $\rho$-a.e. $(x,y)\in X\times Y$, $t\in\mathbf{R}$.
\end{cor}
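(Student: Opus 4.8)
The plan is to read the map $\varpi$ off Proposition \ref{joinings202104.58} on the set $S(c)$ it produces, and then to spread it over a $\rho$-conull set by transporting it along the flow, using the cocycle $z$ to compensate for the time change. First, I would reinterpret Proposition \ref{joinings202104.58} group-theoretically: on $S(c)$ the block $h^{(p)}(y)$ has $a^{(p)}=d^{(p)}=1$ and $b^{(p)}=0$, hence equals $u_{X}^{c^{(p)}(y)}\in C_{G_{X}}(U_{X})$, while $v^{(p)}(y)=b^{(p)}_{\varsigma}(y)v_{\varsigma}$ is a highest weight vector, so by (\ref{time change184}) it lies in $V^{\perp}_{C}\subset C_{\mathfrak{g}_{X}}(U_{X})$; therefore $g(y)=g_{\overline{\psi}_{p}(cy)}g_{\overline{\psi}_{p}(y)}^{-1}=h^{(p)}(y)\exp(v^{(p)}(y))\in C_{G_{X}}(U_{X})$. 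Since $C^{\rho}\subset C_{G_{X}}(U_{X})$ and the representatives were chosen in a common fundamental domain, this says $\psi_{p}(cy)\in C_{G_{X}}(U_{X})\psi_{p}(y)$ in $X$, the $C^{\rho}$-ambiguity coming from $X\to\overline{X}$ being absorbed into $C_{G_{X}}(U_{X})$. For $y\in S(c)$ I set $\varpi(c,\psi_{p}(y),y)$ to be the corresponding element of $C_{G_{X}}(U_{X})$, extend it to a point $x=k\psi_{p}(y)$ of the $C^{\rho}$-fibre by $\varpi(c,x,y):=k\,\varpi(c,\psi_{p}(y),y)\,k^{-1}$, and put $\widetilde{S}_{c}(x,y)=(\varpi(c,x,y)x,cy)$, which then lands on the $C^{\rho}$-orbit of $(\psi_{p}(cy),cy)$, hence in $\supp\rho$.

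Second, I would remove the restriction $y\in S(c)$. For $\rho$-a.e.\ $(x,y)$, ergodicity of $(u_{X}^{t}\times\widetilde{u}_{Y}^{t},\rho)$ gives a time $t$ with $u_{Y}^{t}y\in S(c)$. Combining the equivariance $\overline{\psi}_{p}(u_{Y}^{t}y)=u_{X}^{z(y,t)}\overline{\psi}_{i_{p}(y,t)}(y)$ from Section \ref{joinings202105.24} with $c\,u_{Y}^{t}=u_{Y}^{t}c$ and the identity $u_{X}^{-z(cy,t)}\,C_{G_{X}}(U_{X})\,u_{X}^{z(y,t)}=C_{G_{X}}(U_{X})$ (valid since $u_{X}^{s}\in C_{G_{X}}(U_{X})$ for all $s$), the relation of the first step at $u_{Y}^{t}y$ transfers back to $y$; this defines $\varpi(c,\cdot,\cdot)$, and hence $\widetilde{S}_{c}$, on a $\rho$-conull set, with measurability inherited from that of the $\psi_{p}$ (Kunugui's theorem) and of a Borel choice of fundamental-domain representatives. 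Conjugating the definition of $\widetilde{S}_{c}$ along the time-$z(y,t)$ point $(u_{X}^{z(y,t)}x,u_{Y}^{t}y)$ of the $(u_{X}\times\widetilde{u}_{Y})$-orbit of $(x,y)$ is exactly the content of formula (\ref{joinings202105.4}), which thus holds by construction.

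Third, for the composition law and for arbitrary $c$: substituting $\psi_{p}(c_{2}y)=\varpi(c_{2},\psi_{p}(y),y)\psi_{p}(y)$ into $\psi_{p}(c_{1}c_{2}y)=\varpi(c_{1},\psi_{p}(c_{2}y),c_{2}y)\psi_{p}(c_{2}y)$ yields (\ref{joinings202105.6}) wherever all three maps are defined, so $c\mapsto\widetilde{S}_{c}$ is a local homomorphism on a neighbourhood of $e$. Since $C_{\mathfrak{g}_{Y}}(U_{Y})=\mathbf{R}U_{Y}\oplus V^{\perp}_{C}$ with $V^{\perp}_{C}$ a sum of a compact and a nilpotent subalgebra, $C_{G_{Y}}(U_{Y})$ is connected, hence generated by any neighbourhood of $e$; the local homomorphism then extends to all of $C_{G_{Y}}(U_{Y})$ by $\widetilde{S}_{c_{1}\cdots c_{k}}:=\widetilde{S}_{c_{1}}\circ\cdots\circ\widetilde{S}_{c_{k}}$, independence of the factorization being forced by (\ref{joinings202105.6}), and $\varpi(c,\cdot,\cdot)$ still takes values in $C_{G_{X}}(U_{X})$ because that subgroup is closed under products.

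The step I expect to be the main obstacle is the second one: one has to keep track of the permutation $i_{p}(y,t)$ carefully enough to know that $\widetilde{S}_{c}$ is genuinely single-valued (independent of which return time $t$ into $S(c)$ is used), and to ensure that the $C^{\rho}$- and $\Gamma_{X}$-ambiguities in the relation $\psi_{p}(cy)\in C_{G_{X}}(U_{X})\psi_{p}(y)$ do not accumulate along the orbit. This is precisely where one needs the injectivity-radius bound (\ref{joinings202104.22}) on points of $K$ and the self-normalizing identity for $C_{G_{X}}(U_{X})$ under $u_{X}$, together with the fact that $S(c)$, though only of positive measure, meets a.e.\ orbit.
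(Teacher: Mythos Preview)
Your proposal is correct and follows essentially the same route as the paper's proof: read off $\varpi$ on $S(c)$ from Proposition~\ref{joinings202104.58}, extend to $C^{\rho}$-fibres by conjugation (your formula matches the paper's (\ref{joinings202105.11})), propagate along the flow via the cocycle $z$ (which is exactly how the paper defines (\ref{joinings202105.2}) and obtains (\ref{joinings202105.4})), and reach general $c$ by iterating the cocycle identity (\ref{joinings202105.6}) --- the paper writes $c=k^{m}$ with $k$ small, which is your ``local homomorphism extended by connectedness'' in concrete form. The obstacle you flag is precisely the well-definedness check the paper carries out in (\ref{joinings202105.22}): for $y,u_{Y}^{r}y\in S(c)$ one compares the two centralizer relations to force $i_{p}(y,r)=i_{p}(cy,r)$ and hence compatibility of the two definitions of $\varpi$.
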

\begin{rem}
   Note that when $c\in\exp (\mathbf{R}U_{Y})$, $\varpi$ reduces to an element in $\exp (\mathbf{R}U_{X})$; in fact, we have
    \[\varpi(u^{t}_{Y},x,y)=u_{X}^{z(y,r)}=\exp(z(y,t)U_{X})\]
     for all $t\in\mathbf{R}$.

     On the other hand, for distinct $q_{1},q_{2}\in\{1,\ldots,n\}$, any $c\in C_{G_{Y}}(U_{Y})$, we  have
     \begin{equation}\label{joinings202105.15}
      w(c,\psi_{q_{1}}(y),y)\psi_{q_{1}}(y)\in C^{\rho}\psi_{p_{1}}(y), \ \ \ w(c,\psi_{q_{2}}(y),y)\psi_{q_{2}}(y)\in C^{\rho}\psi_{p_{2}}(y)
     \end{equation}
   for distinct $p_{1},p_{2}\in\{1,\ldots,n\}$; for otherwise it would lead to $\psi_{q_{2}}(y)\in C_{G_{X}}(U_{X})\psi_{q_{1}}(y)$, which contradicts the definition of $\psi$ (cf. Section \ref{joinings202103.10}).
\end{rem}
\begin{proof}[Proof of Corollary \ref{joinings202105.7}]
 Fix  $c\in C_{G_{Y}}(U_{Y})\cap B(e,\delta)$. Proposition \ref{joinings202104.58} provides us a subset $S(c)\subset Y$ with $\nu(S(c))>0$ such that
 \begin{equation}\label{joinings202105.29}
   \psi_{p}(cy)=w_{p}(c,y)\psi_{p}(y)
 \end{equation}
   for $y\in S(c)$, $w_{p}(c,y)\in C_{G_{X}}(U_{X})$. Besides, for $y,u^{r}_{Y}y\in S(c)$, we know that
   \[w_{p}(c,u^{r}_{Y}y)u_{X}^{z(y,r)}\psi_{i_{p}(y,r)}(y)=\psi_{p}(u^{r}_{Y}cy)=u_{X}^{z(cy,r)}w_{i_{p}(cy,r)}(c,y)\psi_{i_{p}(cy,r)}(y).\]
   Thus, $\psi_{i_{p}(y,r)}(y)\in C_{G_{X}}(U_{X})\psi_{i_{p}(cy,r)}(y)$ and so $i_{p}(y,r)=i_{p}(cy,r)$. It follows that
   \begin{equation}\label{joinings202105.22}
     w_{p}(c,u^{r}_{Y}y)u_{X}^{z(y,r)}=u_{X}^{z(cy,r)}w_{i_{p}(cy,r)}(c,y)=u_{X}^{z(cy,r)}w_{i_{p}(y,r)}(c,y)
   \end{equation}
   for $y,u^{r}_{Y}y\in S(c)$.

    Thus,  for $y\in S(c)$, we define
   \[\varpi(c,\psi_{p}(y),y)\coloneqq w_{p}(c,y).\]
   Let $\pi_{Y}:\supp(\rho)\rightarrow Y$ be the natural projection. Then for  $(x,y)\in \pi^{-1}_{Y}(S(c))$, we know that $C^{\rho}x= C^{\rho}\psi_{p_{x}}(y)$ for some $p_{x}\in\{1,\ldots,n\}$. Thus, given $\psi_{p_{x}}(y)=k^{\rho}_{x}x$ for some $k^{\rho}_{x}\in C^{\rho}$, we define
   \begin{equation}\label{joinings202105.11}
    \varpi(c,x,y)\coloneqq (k^{\rho}_{x})^{-1}w_{p_{x}}(c,y)k^{\rho}_{x}.
   \end{equation}
   Thus, we successfully define $\varpi(c,\cdot,\cdot)$ for $\pi^{-1}_{Y}(S(c))$. Then the $(u_{X}^{t}\times\tilde{u}_{Y}^{t})$-flow helps us to define $\varpi(c,\cdot,\cdot)$ for all $\rho$-a.e. $(x,y)\in X\times Y$. More precisely, for $(x,y)\in X\times Y$ (in a $\rho$-conull set), we can choose $t=t(x,y)\in\mathbf{R}$ such that $(u_{X}^{z(y,t)}x,u_{Y}^{t}y)\in\pi_{Y}^{-1}(S(c))$. Then define
      \begin{align}
\varpi(c,x,y)\coloneqq  &  u_{X}^{-z(cy,t)}\varpi(c,u_{X}^{z(y,t)}x,u_{Y}^{t}y)u_{X}^{z(y,t)}\;\nonumber\\
  =& u_{X}^{-z(cy,t)}\varpi(c,(u_{X}^{z(y,t)}\times \widetilde{u}_{Y}^{z(y,t)}).(x,y))u_{X}^{z(y,t)}.\; \label{joinings202105.2}
\end{align}
 (Note that (\ref{joinings202105.22}) tells us that  (\ref{joinings202105.2}) holds true for $y,u_{Y}^{t}y\in S(c)$ and thus $\varpi$ is well-defined.)  Finally, for general $c\in C_{G_{Y}}(U_{Y})$, choose $k\in C_{G_{Y}}(U_{Y})\cap B(e,\delta)$ such that $k^{m}=c$, and then define iteratively
   \[\varpi(k^{i+1},x,y)\coloneqq \varpi(k^{i},\varpi(k,x,y)x,ky)\varpi(k,x,y)\]
   and finally reach $c=k^{m}$. Then the map (\ref{joinings202105.1}) is well defined on $\supp(\rho)$.
\end{proof}

In light of Corollary \ref{joinings202105.7}, we consider the  decomposition (\ref{time change184}) and write
\begin{equation}\label{time change188}
  \varpi(c,x,y)=u_{X}^{\alpha(c,x,y)} \beta(c,x,y)
\end{equation}
where $\alpha(c,x,y)\in\mathbf{R}$ and $\beta(c,x,y)\in\exp V^{\perp}_{C_{X}}$. Then by (\ref{joinings202105.4}), we have
 \begin{align}
z(cy,t)+\alpha(c,x,y)=\ & \alpha(c,(u^{z(y,t)}\times \widetilde{u}^{z(y,t)}).(x,y))+z(y,t),\;\label{joinings202105.10}\\
 \beta(c,x,y)=\ & \beta(c,(u^{z(y,t)}\times \widetilde{u}^{z(y,t)}).(x,y))\; \label{joinings202105.5}
\end{align}
for all $t\in\mathbf{R}$.

First consider $\alpha$. Recall that for fixed $y\in Y$, $\supp(\rho_{y})=\bigsqcup_{p=1}^{n}C^{\rho}\psi_{p}(y)$. Then by (\ref{joinings202105.10}), for $\nu$-a.e. $y\in Y$, $x\in\supp(\rho_{y})$,  we have
\begin{equation}\label{joinings202105.12}
  \alpha(c,x,y)-\alpha(c,(u^{z(y,t)}\times \widetilde{u}^{z(y,t)}).(x,y))=z(y,t)-z(cy,t)
\end{equation}
for all $r\in\mathbf{R}$. Besides, by (\ref{joinings202105.11}), we have
\begin{equation}\label{joinings202105.43}
 \alpha(c,x,y)=\alpha(c,kx,y)
\end{equation}
for all $x\in\supp(\rho_{y})$, $k\in C^{\rho}$. By (\ref{joinings202105.10}), for any $(x_{1},y),(x_{2},y)\in\supp(\rho)$, we have
 \begin{equation}\label{joinings202105.42}
   \alpha(c,x_{1},y)-\alpha(c,x_{2},y)= \alpha(c,(u^{t}\times \widetilde{u}^{t}).(x_{1},y))-\alpha(c,(u^{t}\times \widetilde{u}^{t}).(x_{2},y)).
 \end{equation}
Define $\alpha_{\max}:C_{G_{Y}}(U_{Y})\times X\times Y\rightarrow\mathbf{R}$ by
\[ \alpha_{\max}:(c,x,y)\mapsto\max\left\{r\in\mathbf{R}:\rho_{y}\{x^{\prime}\in X:\alpha(c,x^{\prime},y)-\alpha(c,x,y)=r\}>0\right\}.   \]
Then by (\ref{joinings202105.42}), we have
\[\alpha_{\max}(c,(x,y))=\alpha_{\max}(c,(u_{X}^{t}\times \widetilde{u}_{Y}^{t}).(x,y))\]
for any $t\in\mathbf{R}$, $\rho$-a.e. $(x,y)\in X\times Y$. Thus, $\alpha_{\max}(c,x,y)\equiv\alpha_{\max}(c)$. Now if $\alpha_{\max}(c)>0$, then for $\rho$-a.e. $(x,y)$, there is $x^{\prime}\in X$ such that $\alpha(c,x^{\prime},y)=\alpha(c,x,y)+\alpha_{\max}(c)$, which contradicts the fact that $\alpha_{\max}(c,x,y)$ take at most finitely many different values for fixed $y$ (by (\ref{joinings202105.43})). Thus, we conclude that $\alpha_{\max}(c)\equiv0$ and so
\[\alpha(c,x,y)\equiv \alpha(c,y)\]
 for all $c\in C_{G_{Y}}(U_{Y})$, $\rho$-a.e. $(x,y)\in X\times Y$.

On the other hand, via the ergodicity of the flow $u_{X}^{t}\times \widetilde{u}_{Y}^{t}$, we conclude from (\ref{joinings202105.5}) that
\[\beta(c,x,y)\equiv\beta(c)\]
 for all $c\in C_{G_{Y}}(U_{Y})$. In particular, we have
 \[\varpi(c,x,y)=\varpi(c,y)=u_{X}^{\alpha(c,y)}\beta(y)\]
for all $c\in C_{G_{Y}}(U_{Y})$, $\rho$-a.e. $(x,y)\in X\times Y$.
Besides, we know from (\ref{joinings202105.6}) that $\beta(c_{1} c_{2})=\beta(c_{1})\beta(c_{2})$  via the definition of $\beta$. Further,
 we always have $d\beta(U_{Y})\equiv 0$. Therefore, we can restrict our attention to $V^{\perp}_{C}$ and conclude that $d\beta|_{V^{\perp}_{C}}: V^{\perp}_{C_{Y}}\rightarrow V^{\perp}_{C_{X}}$ is a Lie algebra homomorphism.

In sum, we obtain Theorem \ref{joinings202106.159} for the centralizer $C_{G_{Y}}(U_{Y})$.
\begin{thm}[Extra central invariance  of $\rho$]\label{joinings202105.41}
  For any $c\in C_{G_{Y}}(U_{Y})$, the map $S_{c}:X\times Y\rightarrow X\times Y$ defined by
  \[S_{c}:(x,y)\mapsto (\beta(c)x,\tilde{u}_{Y}^{-\alpha(c,y)}(cy))\]
  commutes with $u_{X}^{t}\times \widetilde{u}_{Y}^{t}$, and is $\rho$-invariant. Besides, $S_{c_{1}c_{2}}=S_{c_{1}}\circ S_{c_{2}}$ for any $c_{1},c_{2}\in C_{G_{Y}}(U_{Y})$, and  $S_{u_{Y}^{t}}=\id$ for $t\in\mathbf{R}$.
\end{thm}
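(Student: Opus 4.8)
The statement consolidates the structural facts obtained in this section, so the plan is mainly to assemble them. Recall that Corollary~\ref{joinings202105.7} together with the ensuing analysis yields $\varpi(c,x,y)=u_X^{\alpha(c,y)}\beta(c)$, independent of $x$, where $\alpha(c,\cdot)$ is a cocycle over $u_Y^t$ on $C_{G_Y}(U_Y)$, $\beta\colon C_{G_Y}(U_Y)\to C_{G_X}(U_X)$ is a homomorphism, and---this is exactly (\ref{joinings202105.12}) once $\alpha$ is known to be $x$-independent---the cohomology relation
\[
\int_0^r\bigl(\tau_Y(cu_Y^s y)-\tau_Y(u_Y^s y)\bigr)\,ds=\alpha(c,u_Y^r y)-\alpha(c,y)
\]
holds. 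Since $\beta(c)\in C_{G_X}(U_X)$ is by definition in the centralizer of $u_X$, a direct computation identifies the map $S_c$ of the statement with $\bigl(u_X^{-\alpha(c,y)}\times\widetilde u_Y^{-\alpha(c,y)}\bigr)\circ\widetilde S_c$; I will write $\theta_c(y)=\widetilde u_Y^{-\alpha(c,y)}(cy)$, so that $S_c=(x,y)\mapsto(\beta(c)x,\theta_c(y))$.

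The commutation $S_c\circ(u_X^t\times\widetilde u_Y^t)=(u_X^t\times\widetilde u_Y^t)\circ S_c$ I would check coordinatewise. On the $X$-coordinate it is immediate because $\beta(c)$ centralizes $u_X$. On the $Y$-coordinate, parametrizing $\widetilde u_Y^t y=u_Y^r y$ (so $t=z(y,r)$) and using that $c$ centralizes $u_Y$ together with (\ref{joinings202105.9}), both $\theta_c(\widetilde u_Y^t y)$ and $\widetilde u_Y^t\theta_c(y)$ reduce to $\widetilde u_Y^{\,s}(cy)$ for suitable $s$, and the two resulting values of $s$ coincide precisely because $z(cy,r)-z(y,r)=\int_0^r\bigl(\tau_Y(cu_Y^s y)-\tau_Y(u_Y^s y)\bigr)\,ds$, i.e.\ by the cohomology relation above. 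The same bookkeeping, now invoking the cocycle identity $\alpha(c_1c_2,y)=\alpha(c_1,c_2y)+\alpha(c_2,y)$ and $\beta(c_1c_2)=\beta(c_1)\beta(c_2)$, gives $S_{c_1c_2}=S_{c_1}\circ S_{c_2}$, while $S_{u_Y^t}=\id$ follows from $\varpi(u_Y^t,x,y)=u_X^{z(y,t)}$ (hence $\alpha(u_Y^t,y)=z(y,t)$, $\beta(u_Y^t)=e$) together with $u_Y^t y=\widetilde u_Y^{z(y,t)}y$.

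It remains to prove $\rho$-invariance, which I expect to be the only genuinely substantive step. By Theorem~\ref{joinings202012.1} the disintegration $\rho=\int_Y\rho_y\,d\nu(y)$ has $\rho_y$ equidistributed over $\bigsqcup_{p=1}^n C^\rho\psi_p(y)$; and the explicit form of $\varpi$ in the proof of Corollary~\ref{joinings202105.7} (combined with $\psi_p(cy)=w_p(c,y)\psi_p(y)$) shows that $\widetilde S_c$ sends $(k^{-1}\psi_p(y),y)$ to $(k^{-1}\psi_p(cy),cy)$ for $k\in C^\rho$, i.e.\ it is the identity in the $C^\rho$-coordinate of each orbit, whence $(\widetilde S_c)_*\rho_y=\rho_{cy}$. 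Combining this with the disintegration-equivariance $(u_X^s)_*\rho_y=\rho_{\widetilde u_Y^s y}$ of the joining flow gives $(S_c)_*\rho_y=\rho_{\theta_c(y)}$, so that $(S_c)_*\rho=\int_Y\rho_{\theta_c(y)}\,d\nu(y)=\int_Y\rho_{y'}\,d\bigl((\theta_c)_*\nu\bigr)(y')$. Thus $(S_c)_*\rho=\rho$ reduces to $(\theta_c)_*\nu=\nu$, which is the hard part. I would obtain it from the cohomology relation via the correspondence recalled in Section~\ref{joinings202106.165} between cohomologous time-change densities and measure-preserving conjugacies of the associated time changes: $\tau_Y\circ c^{-1}$ and $\tau_Y$ are cohomologous over $u_Y^t$ with transfer function $y'\mapsto-\alpha(c,c^{-1}y')$, and the induced conjugacy of the two time changes is exactly $\theta_c$, which therefore carries the invariant density $\tau_Y$ to $\tau_Y$. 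Alternatively, since $\theta_c$ commutes with the ergodic flow $\widetilde u_Y^t$, is invertible (its inverse is built from $\theta_{c^{-1}}$ using the cocycle relation) and is $\nu$-nonsingular, the $\widetilde u_Y^t$-invariant measure $(\theta_c)_*\nu$ is equivalent to $\nu$ and hence equal to it. Inserting $(\theta_c)_*\nu=\nu$ above yields $(S_c)_*\rho=\rho$, completing the proof.
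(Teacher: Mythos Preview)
Your assembly of the commutation, cocycle identities, and the relation $S_c=(u_X^{-\alpha(c,y)}\times\tilde u_Y^{-\alpha(c,y)})\circ\widetilde S_c$ is correct and matches the paper. The disintegration step $(\widetilde S_c)_*\rho_y=\rho_{cy}$ and hence $(S_c)_*\rho_y=\rho_{\theta_c(y)}$ is also fine (note that the $x$-independence of $\varpi$ forces $\beta(c)$ to centralize $C^\rho$, so left multiplication by $u_X^{\alpha(c,y)}\beta(c)$ really does carry Haar on each $C^\rho$-orbit to Haar on the image orbit).

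The gap is in the last step, $(\theta_c)_*\nu=\nu$. Your second argument hinges on $\theta_c$ being $\nu$-nonsingular, but a map of the form $y'\mapsto u_Y^{g(y')}y'$ with merely measurable $g$ need not be nonsingular with respect to $m_Y$; having a measurable inverse $\theta_{c^{-1}}$ gives a Borel bijection, not nonsingularity. Your first argument (``the induced conjugacy of the two time changes is exactly $\theta_c$'') is morally right but is not verified: you would have to identify $\theta_c$ with the specific conjugacy produced by the standard cohomology/time-change correspondence and check that \emph{that} conjugacy is measure-preserving (the usual proof goes through a special-flow representation). As written, neither route closes the argument.

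The paper avoids this issue altogether. Instead of proving $(\theta_c)_*\nu=\nu$, it exhibits a single point $(x_0,y_0)\in\Omega\cap S_c^{-1}\Omega$, where $\Omega$ is the set of $(u_X^t\times\tilde u_Y^t)$-generic points for $\rho$: choose $y_0\in\Omega_Y\cap c^{-1}\Omega_Y\cap S(c)$ (the positive-measure set from Proposition~\ref{joinings202104.58}) and then $k_0\in C^\rho$ so that both $(k_0\psi_1(y_0),y_0)$ and $(k_0\psi_1(cy_0),cy_0)$ lie in $\Omega$; the explicit formula (\ref{joinings202105.11}) gives $\widetilde S_c(k_0\psi_1(y_0),y_0)=(k_0\psi_1(cy_0),cy_0)$. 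Commutation plus Birkhoff then yields $(S_c)_*\rho=\rho$. If you want to keep your disintegration approach, the cleanest fix is to import exactly this generic-point observation on the $Y$-side to conclude $(\theta_c)_*\nu=\nu$.
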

\begin{proof}
 Clearly, $S_{c}$ is well-defined:
\begin{equation}\label{joinings202105.27}
 S_{c}(x,y)=(u_{X}^{-\alpha(c,y)}\times \widetilde{u}_{Y}^{-\alpha(c,y)}).\widetilde{S}_{c}(x,y)\in\supp(\rho)
\end{equation}
whenever $(x,y)\in\supp(\rho)$. Also, one may check that $S_{c_{1}c_{2}}=S_{c_{1}}S_{c_{2}}$ for any $c_{1},c_{2}\in C_{G_{Y}}(U_{Y})$, and  $S_{u_{Y}^{t}}=\id$ for $t\in\mathbf{R}$.
Next, by (\ref{joinings202105.10}), one verifies
\[(u_{X}^{z(y,r)}\times \widetilde{u}_{Y}^{z(y,r)}).S_{c}(x,y)=S_{c}(u_{X}^{z(y,r)}\times \widetilde{u}_{Y}^{z(y,r)}).(x,y)\]
for any $r\in\mathbf{R}$, $(x,y)\in\supp(\rho)$. That is, $(u_{X}^{t}\times \widetilde{u}_{Y}^{t})\circ S_{c}=S_{c}\circ (u_{X}^{t}\times \widetilde{u}_{Y}^{t})$.

Finally, let $\Omega$ be the set of $(u_{X}^{t}\times \widetilde{u}_{Y}^{t})$-generic points, and we want to show that there is a point $(x_{0},y_{0})\in\Omega\cap S_{c}^{-1}\Omega$. By (\ref{joinings202105.27}), it suffices to show that there is a point $(x_{0},y_{0})\in\Omega\cap \widetilde{S}_{c}^{-1}\Omega$. Fix $c\in C_{G_{Y}}(U_{Y})\cap B(e,\delta)$. Recall that
\[1=\rho(\Omega)=\int_{Y}\int_{C^{\rho}}\frac{1}{n} \sum_{p=1}^{n}\mathbf{1}_{\Omega}(k\psi_{p}(y),y)dm(k)d\nu(y).\]
Thus, there is $\Omega_{Y}\subset Y$ with $\nu(\Omega_{Y})=1$ such that
\begin{equation}\label{joinings202105.28}
  \int_{C^{\rho}} \frac{1}{n} \sum_{p=1}^{n}\mathbf{1}_{\Omega}(k\psi_{p}(y),y)dm(k)=1
\end{equation}
for $y\in\Omega_{Y}$. Since $\nu$ and $m_{Y}$ are equivalent, and $\Omega_{Y}\cap k^{-1}\Omega_{Y}$ is $m_{Y}$-conull, we get that  $\Omega_{Y}\cap c^{-1}\Omega_{Y}$ is $\nu$-conull. Choose $y_{0}\in \Omega_{Y}\cap c^{-1}\Omega_{Y}\cap S(c)$, where $S(c)$ is given by Proposition \ref{joinings202104.58} (cf. (\ref{joinings202105.29})). Then (\ref{joinings202105.28}) leads to
\[ \int_{C^{\rho}}  \mathbf{1}_{\Omega}(k\psi_{1}(y_{0}),y_{0})dm(k)=1,\ \ \  \int_{C^{\rho}}  \mathbf{1}_{\Omega}(k\psi_{1}(cy_{0}),cy_{0})dm(k)=1.\]
Then we can choose $k_{0}\in C^{\rho}$ such that $(k_{0}\psi_{1}(y_{0}),y_{0}),(k_{0}\psi_{1}(cy_{0}),cy_{0})\in\Omega$. Let $x_{0}\coloneqq k_{0}\psi_{1}(y_{0})$. Then by (\ref{joinings202105.29}) (\ref{joinings202105.11}), we have
\[\widetilde{S}_{c}(x_{0},y_{0})= (\varpi(c,y_{0})x_{0},cy_{0})=(k_{0}w_{p}(c,y_{0})k_{0}^{-1}k_{0}\psi_{1}(y_{0}),cy_{0})=(k_{0}\psi_{1}(cy_{0}),cy_{0}).\]
Thus,  $(x_{0},y_{0})\in\Omega\cap \widetilde{S}_{c}^{-1}\Omega$.

Hence, since $u_{X}^{t}\times \widetilde{u}_{Y}^{t}$ is $\rho$-ergodic, by ergodic theorem, for   any bounded continuous function $f$, we have
\begin{multline}
 \int f d\rho =\lim_{T\rightarrow\infty}\frac{1}{T}\int_{0}^{T} f((u_{X}^{t}\times\tilde{u}_{Y}^{t}).S_{c}(x_{0},y_{0}))dt \\
  =\lim_{T\rightarrow\infty}\frac{1}{T}\int_{0}^{T} f(S_{c}(u_{X}^{t}x_{0},\tilde{u}_{Y}^{t}y_{0}))dt=   \int f\circ S_{c} d\rho \nonumber
\end{multline}
and so $\rho=(S_{c})_{\ast}\rho$.
\end{proof}

In particular, we obtain
\begin{cor}[Extra central invariance of $\nu$]
   For any $c\in C_{G_{Y}}(U_{Y})$, the map $S^{Y}_{c}:Y\rightarrow Y$ defined by
  \[S_{c}^{Y}:y\mapsto \tilde{u}_{Y}^{-\alpha(c,y)}(cy)\]
  commutes with $\widetilde{u}^{t}$, and is $\nu$-invariant. Besides, $S^{Y}_{c_{1}c_{2}}=S^{Y}_{c_{1}}S^{Y}_{c_{2}}$ for any $c_{1},c_{2}\in C_{G_{Y}}(U_{Y})$, and  $S^{Y}_{u_{Y}^{t}}=\id$ for $t\in\mathbf{R}$.
\end{cor}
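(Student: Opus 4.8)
The plan is to obtain the corollary by pushing Theorem \ref{joinings202105.41} forward under the coordinate projection $\pi_{Y}:X\times Y\to Y$, $\pi_{Y}(x,y)=y$. First I would record that, by the discussion preceding Theorem \ref{joinings202105.41}, the transfer function $\alpha(c,x,y)$ is independent of the $X$-coordinate, so that $\alpha(c,y)$ and hence $S_{c}^{Y}(y)=\tilde{u}_{Y}^{-\alpha(c,y)}(cy)$ are genuinely defined ($\nu$-a.e.) as a self-map of $Y$; moreover the second coordinate of $S_{c}(x,y)$ equals $S_{c}^{Y}(y)$, which gives the intertwining identity $\pi_{Y}\circ S_{c}=S_{c}^{Y}\circ\pi_{Y}$ on $\supp(\rho)$. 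Since $\pi_{Y}\circ(u_{X}^{t}\times\tilde{u}_{Y}^{t})=\tilde{u}_{Y}^{t}\circ\pi_{Y}$, combining this with the commutation $(u_{X}^{t}\times\tilde{u}_{Y}^{t})\circ S_{c}=S_{c}\circ(u_{X}^{t}\times\tilde{u}_{Y}^{t})$ from Theorem \ref{joinings202105.41} yields $S_{c}^{Y}\circ\tilde{u}_{Y}^{t}\circ\pi_{Y}=\tilde{u}_{Y}^{t}\circ S_{c}^{Y}\circ\pi_{Y}$; because $\pi_{Y}$ maps $\supp(\rho)$ onto a $\nu$-conull subset of $Y$, this descends to $S_{c}^{Y}\circ\tilde{u}_{Y}^{t}=\tilde{u}_{Y}^{t}\circ S_{c}^{Y}$ for $\nu$-a.e. point, which is the asserted commutation with $\tilde{u}_{Y}^{t}$.

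For the $\nu$-invariance I would use that $\rho$ is a joining, so $(\pi_{Y})_{\ast}\rho=\nu$, together with the $S_{c}$-invariance $(S_{c})_{\ast}\rho=\rho$ from Theorem \ref{joinings202105.41}. The intertwining $\pi_{Y}\circ S_{c}=S_{c}^{Y}\circ\pi_{Y}$ then gives $(S_{c}^{Y})_{\ast}\nu=(S_{c}^{Y})_{\ast}(\pi_{Y})_{\ast}\rho=(\pi_{Y})_{\ast}(S_{c})_{\ast}\rho=(\pi_{Y})_{\ast}\rho=\nu$. The two algebraic relations are then immediate: projecting $S_{c_{1}c_{2}}=S_{c_{1}}\circ S_{c_{2}}$ and $S_{u_{Y}^{t}}=\id$ through $\pi_{Y}$ yields $S_{c_{1}c_{2}}^{Y}=S_{c_{1}}^{Y}\circ S_{c_{2}}^{Y}$ and $S_{u_{Y}^{t}}^{Y}=\id$ on the relevant $\nu$-conull set.

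Since this is merely the $Y$-marginal shadow of an already proved statement, no serious difficulty arises. The only points demanding a little care are bookkeeping ones: passing from identities valid on $\supp(\rho)$ or $\rho$-a.e.\ to identities valid $\nu$-a.e.\ on $Y$ — handled by the fact that $(\pi_{Y})_{\ast}\rho=\nu$, so $\pi_{Y}$ sends $\rho$-conull sets to $\nu$-conull sets — and, where convenient, invoking the equivalence of $\nu$ and $m_{Y}$ (Remark \ref{joinings202104.59}) to move freely between the two measure classes.
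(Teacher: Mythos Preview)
Your proposal is correct and is exactly the intended argument: the paper states the corollary with the phrase ``In particular, we obtain'' immediately after Theorem \ref{joinings202105.41} and gives no separate proof, so projecting $S_{c}$, its commutation relation, its $\rho$-invariance, and the identities $S_{c_{1}c_{2}}=S_{c_{1}}\circ S_{c_{2}}$, $S_{u_{Y}^{t}}=\id$ through $\pi_{Y}$ (using $(\pi_{Y})_{\ast}\rho=\nu$) is precisely what is meant.
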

It is worth noting that (\ref{joinings202105.1}) can be interpreted through the language of cohomology.  More precisely, (\ref{joinings202105.1}) implies the time change $\tau_{Y}$ and $\tau_{Y}\circ c$ are measurably cohomologous.
\begin{thm}\label{dynamical systems2001} Let  $\tau_{Y}\in \mathbf{K}_{\kappa}(Y)$. Suppose that there is a nontrivial ergodic joining  $\rho\in J(u_{X}^{t},\phi_{t}^{U_{Y},\tau_{Y}})$ . Then
            $\tau_{Y}(y)$ and $\tau_{Y}( cy)$ are (measurably)  cohomologous along $u_{Y}^{t}$ for all $c\in C_{G_{Y}}(U_{Y})$. More precisely, the transfer function can be taken to be
            \[F_{c}(y)= \alpha(c,y).\]
\end{thm}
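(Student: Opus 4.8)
The plan is to unwind the geometric invariance in Corollary~\ref{joinings202105.7} and Theorem~\ref{joinings202105.41} and read off a cohomological identity for $\tau_Y$. The key object is the relation
\[
\overline{\psi}_{p}(u_{Y}^{t}y)=u_{X}^{z(y,t)}\overline{\psi}_{i_{p}(y,t)}(y),
\]
together with the identity $\psi_{p}(cy)=w_{p}(c,y)\psi_{p}(y)$ with $w_{p}(c,y)=\varpi(c,y)=u_{X}^{\alpha(c,y)}\beta(c)$ (here $\beta(c)\in\exp V^{\perp}_{C_X}$ commutes with $u_{X}^{t}$), valid for $y$ in the positive-measure set $S(c)$. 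First I would apply $c$ to the flow relation: on one hand $\psi_{p}(c u_{Y}^{t}y)=\varpi(c,u_{Y}^{t}y)\psi_{p}(u_{Y}^{t}y)=u_{X}^{\alpha(c,u_{Y}^{t}y)}\beta(c)\,u_{X}^{z(y,t)}\overline{\psi}_{i_{p}(y,t)}(y)$; on the other hand, since $c$ commutes with $u_{Y}^{t}$, we have $cu_{Y}^{t}y=u_{Y}^{t}(cy)$, so $\psi_{p}(cu_{Y}^{t}y)=u_{X}^{z(cy,t)}\overline{\psi}_{i_{p}(cy,t)}(cy)=u_{X}^{z(cy,t)}\varpi(c,y)\overline{\psi}_{i_{p}(cy,t)}(y)=u_{X}^{z(cy,t)}u_{X}^{\alpha(c,y)}\beta(c)\overline{\psi}_{i_{p}(cy,t)}(y)$, after checking (as in the proof of Corollary~\ref{joinings202105.7}) that $i_{p}(y,t)=i_{p}(cy,t)$ on the relevant set. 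Since $\beta(c)$ commutes with all powers of $u_X$ and the $\overline{\psi}_{i_p}(y)$ on the two sides agree, equating the two expressions forces the equality of the $u_X$-exponents:
\[
\alpha(c,u_{Y}^{t}y)+z(y,t)=z(cy,t)+\alpha(c,y),
\]
valid for $y,u_Y^t y\in S(c)$ (this is exactly the identity (\ref{joinings202105.22})/(\ref{joinings202105.10}) specialized after the reduction $\alpha(c,x,y)\equiv\alpha(c,y)$).

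Next I would translate $z$ into an integral of $\tau_Y$. By definition $z(y,t)=\int_{0}^{t}\tau_{Y}(u_{Y}^{s}y)\,ds$, and because $c\in C_{G_Y}(U_Y)$ we have $c\,u_{Y}^{s}y=u_{Y}^{s}(cy)$, hence $z(cy,t)=\int_{0}^{t}\tau_{Y}(u_{Y}^{s}(cy))\,ds=\int_{0}^{t}\tau_{Y}(c\,u_{Y}^{s}y)\,ds=\int_{0}^{t}(\tau_{Y}\circ c)(u_{Y}^{s}y)\,ds$. Substituting into the exponent identity gives
\[
\int_{0}^{t}\bigl(\tau_{Y}(c u_{Y}^{s}y)-\tau_{Y}(u_{Y}^{s}y)\bigr)\,ds=\alpha(c,u_{Y}^{t}y)-\alpha(c,y),
\]
which is precisely the statement that $\tau_Y(y)$ and $\tau_Y(cy)$ are cohomologous along $u_Y^t$ with transfer function $F_c(y)=\alpha(c,y)$. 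To promote this from "$y\in S(c)$" to "$\nu$-a.e.\ $y$" I would use the $(u_X^t\times\widetilde u_Y^t)$-flow: both sides of (\ref{joinings202105.10}) extend by the cocycle relation $\alpha(c,u_Y^t y)+z(y,t)=z(cy,t)+\alpha(c,y)$ to the $\nu$-conull orbit-saturation of $S(c)$, exactly as $\varpi$ was extended in Corollary~\ref{joinings202105.7}; since $S(c)$ has positive measure and $u_Y^t$ (equivalently $\widetilde u_Y^t$) is ergodic on $(Y,\nu)$, this saturation is $\nu$-conull. Finally, for a general $c\in C_{G_Y}(U_Y)$ not necessarily close to the identity, write $c=k^m$ with $k$ small and iterate: since the cocycle relation $\alpha(k^{i+1},y)=\alpha(k,S^Y_{k^i}\!\cdots)(\ldots)$ built in Corollary~\ref{joinings202105.7} is additive in the exponents, the transfer functions add up and one gets $F_c=\alpha(c,\cdot)$ for all $c$.

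I do not expect any serious obstacle: the heavy lifting — producing $\varpi$, showing $\alpha(c,x,y)$ is independent of $x$, and establishing the cocycle identity (\ref{joinings202105.10}) — has already been done in Proposition~\ref{joinings202104.58}, Corollary~\ref{joinings202105.7} and the discussion following (\ref{time change188}). The only point that needs a little care is the bookkeeping of the indices $i_p(y,t)$ versus $i_p(cy,t)$ when passing $c$ through the flow relation, but this is handled exactly as in the lines around (\ref{joinings202105.22}) in the proof of Corollary~\ref{joinings202105.7}: the hypothesis that distinct $\overline\psi_p$ stay $100\epsilon$-apart on $K_2$ forces $\psi_{i_p(y,t)}(y)$ and $\psi_{i_p(cy,t)}(y)$ to lie in the same $C_{G_X}(U_X)$-orbit, hence have the same index. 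The remaining step — converting the equality of $u_X$-exponents into the integral identity for $\tau_Y$ — is the routine computation with $z(y,t)=\int_0^t\tau_Y(u_Y^s y)\,ds$ and $c u_Y^s=u_Y^s c$ sketched above, so the whole argument is essentially a matter of assembling pieces already in hand.
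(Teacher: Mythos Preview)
Your proposal is correct and follows essentially the same route as the paper: the paper's proof simply cites the already-established identity (\ref{joinings202105.10}) (which after the reduction $\alpha(c,x,y)\equiv\alpha(c,y)$ reads $z(cy,t)+\alpha(c,y)=\alpha(c,u_Y^t y)+z(y,t)$), rewrites $z(y,t)-z(cy,t)$ as $\int_0^t\tau_Y(u_Y^s y)-\tau_Y(u_Y^s cy)\,ds$, and reads off the transfer function $F_c=\alpha(c,\cdot)$. Your version re-derives (\ref{joinings202105.10}) from the flow relations rather than merely quoting it, and is more explicit about the a.e.\ extension and the index bookkeeping, but the substance is identical.
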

\begin{proof}
  By (\ref{joinings202105.10}), for $m_{Y}$-a.e. $y\in Y$, $x\in\supp(\rho_{y})$, we have
  \begin{align}
 &\int_{0}^{t}\tau_{Y}(u_{Y}^{s}y)-\tau_{Y}(u_{Y}^{s}cy)ds\;\nonumber\\
 =&\int_{0}^{t}\tau_{Y}(u_{Y}^{s}y)ds-\int_{0}^{t}\tau_{Y}(u_{Y}^{s}cy)ds\;\nonumber\\
=& z(y,t)-z(cy,t)\;\nonumber\\
=&\alpha(c,y)- \alpha(c,u_{Y}^{t}y). \;   \nonumber
\end{align}
Thus, we can take the transfer function as
\[F_{c}(y)\coloneqq \alpha(c,y) .\]
  Then $\tau_{Y}(y)$ and $\tau_{Y}( cy)$ are (measurably) cohomologous for all $c\in C_{G_{Y}}(U_{Y})$.
\end{proof}

If $\tau_{Y}(y)$ and $\tau_{Y}(cy)$ are cohomologous with a $L^{1}$ transfer function, then we are able to do more via the \textit{ergodic theorem}.
\begin{lem}\label{joinings202106.135} Given $c\in C_{G_{Y}}(U_{Y})$, if
\begin{itemize}
  \item  $c$ is $m_{Y}$-ergodic (as a left action on $Y$),
  \item $\tau_{Y}(y)$ and $\tau_{Y}(cy)$ are cohomologous with a $L^{1}$ transfer function $F_{c}(y)$,
\end{itemize}
then for $m_{Y}$-a.e. $y\in Y$, we have
\[\lim_{t\rightarrow\infty}\frac{1}{t}\alpha(c^{t},y)=\int \alpha(c,y)dm_{Y}(y).\]
\end{lem}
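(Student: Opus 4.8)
The plan is to recognise $y\mapsto\alpha(c,y)$ as the ``time--one'' term of the additive $\mathbf{R}$--cocycle $t\mapsto\alpha(c^{t},y)$ over the $m_{Y}$--preserving transformation $y\mapsto cy$, and then apply Birkhoff's ergodic theorem. By Theorem~\ref{joinings202106.159}(1), $\alpha$ restricted to $C_{G_{Y}}(u_{Y})$ is a cocycle, so by induction
\[
\alpha(c^{n},y)=\sum_{k=0}^{n-1}\alpha(c,c^{k}y),\qquad n\in\mathbf{Z}^{+}.
\]
Hence, once we know $\alpha(c,\cdot)\in L^{1}(Y,m_{Y})$, Birkhoff's theorem for the $m_{Y}$--ergodic transformation $y\mapsto cy$ gives $\tfrac1n\alpha(c^{n},y)\to\int_{Y}\alpha(c,y)\,dm_{Y}(y)$ for $m_{Y}$--a.e.\ $y$. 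So the whole problem reduces to (i) the integrability of $\alpha(c,\cdot)$, and (ii) upgrading this integer limit to the continuous limit.

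For (i): by Theorem~\ref{dynamical systems2001}, $y\mapsto\alpha(c,y)$ is a measurable transfer function for the coboundary $\tau_{Y}(\cdot)-\tau_{Y}(c\cdot)$ along $u_{Y}^{t}$, while the hypothesis supplies an $L^{1}$ transfer function $F_{c}$ for the same coboundary. Their difference $\alpha(c,\cdot)-F_{c}$ is therefore $u_{Y}^{t}$--invariant, i.e.\ $(\alpha(c,\cdot)-F_{c})(u_{Y}^{T}y)=(\alpha(c,\cdot)-F_{c})(y)$ for all $T$. Since $u_{Y}^{t}$ is a nontrivial unipotent flow on $Y=G_{Y}/\Gamma_{Y}$, it is ergodic for $m_{Y}$ by Moore's ergodicity theorem, so $\alpha(c,\cdot)-F_{c}$ is $m_{Y}$--a.e.\ constant, whence $\alpha(c,\cdot)=F_{c}+\mathrm{const}\in L^{1}(Y,m_{Y})$. (Consistency check: as $m_{Y}$ is invariant under all left translations, the cocycle identity applied to $c=(c^{1/N})^{N}$ forces $\int_{Y}\alpha(c^{1/N},\cdot)\,dm_{Y}=\tfrac1N\int_{Y}\alpha(c,\cdot)\,dm_{Y}$, in agreement with the claimed formula.)

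For (ii): write $t=n+r$ with $n=\lfloor t\rfloor$ and $r\in[0,1)$; the cocycle relation gives $\alpha(c^{t},y)=\alpha(c^{r},c^{n}y)+\alpha(c^{n},y)$, so it suffices to show $\alpha(c^{r},c^{n}y)=o(t)$ for $m_{Y}$--a.e.\ $y$, uniformly in $r\in[0,1)$. This follows at once if $H(y):=\sup_{0\le r\le 1}|\alpha(c^{r},y)|\in L^{1}(Y,m_{Y})$, since then $\tfrac1n H(c^{n}y)\to0$ a.e.\ by the ergodic theorem. The integrability of this maximal function is the main technical obstacle: each $\alpha(c^{r},\cdot)$ is $L^{1}$ by the argument of (i) whenever the $L^{1}$--cohomology hypothesis is available along the whole one--parameter subgroup $\{c^{r}\}=\{\exp(rv)\}\subset C_{G_{Y}}(u_{Y})$ --- which is precisely the situation in which the lemma is invoked --- and the uniformity in $r\in[0,1]$ can be extracted from the effective mixing property $\tau_{Y}\in\mathbf{K}_{\kappa}(Y)$, using moreover $\int_{Y}\alpha(c^{r},\cdot)\,dm_{Y}=r\int_{Y}\alpha(c,\cdot)\,dm_{Y}$ (a consequence of the cocycle identity and the left--invariance of $m_{Y}$) to normalise the additive constants. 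With $H\in L^{1}$ in hand, the convergence of $\tfrac1t\alpha(c^{t},y)$ to $\int_{Y}\alpha(c,\cdot)\,dm_{Y}$ along all real $t\to\infty$ is immediate. (If only the statement for integer $t$ were needed, step (ii) would be superfluous.)
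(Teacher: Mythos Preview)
Your approach is essentially the paper's: use the cocycle identity $\alpha(c^{n},y)=\sum_{k=0}^{n-1}\alpha(c,c^{k}y)$ and apply Birkhoff's ergodic theorem for the $m_{Y}$--preserving, ergodic map $y\mapsto cy$. The paper's proof is in fact terser than yours: it simply records the cocycle relation and writes down the Birkhoff limit along integers $k\to\infty$ (the displayed formula (\ref{joinings202105.16}) even contains a typo in the middle term).

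Your step (i) is a genuine improvement in rigour. The paper implicitly identifies the hypothesised $L^{1}$ transfer function $F_{c}$ with $\alpha(c,\cdot)$ via Theorem~\ref{dynamical systems2001}, without saying why the two must agree. Your argument --- the difference of two transfer functions is $u_{Y}^{t}$--invariant, hence a.e.\ constant by Moore ergodicity --- is exactly the missing line.

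Your step (ii) correctly isolates a point the paper's proof does not address at all: the statement is for real $t\to\infty$, while Birkhoff gives only integer averages. Your proposed fix via the maximal function $H(y)=\sup_{0\le r\le 1}|\alpha(c^{r},y)|$ is the natural one, but the justification you offer (``can be extracted from the effective mixing property $\tau_{Y}\in\mathbf{K}_{\kappa}(Y)$'') is not an argument --- effective mixing of $u_{Y}^{t}$ says nothing directly about uniform $L^{1}$ bounds on a family of transfer functions indexed by $r\in[0,1]$. If you want to close this gap honestly, you would need either an a~priori bound on $\|\alpha(c^{r},\cdot)\|_{L^{1}}$ uniform in $r\in[0,1]$ coming from the explicit construction of $\alpha$ in Section~\ref{joinings202105.24}, or to observe that in the only application (Proposition~\ref{joinings202106.140}) one passes to a subsequence anyway, so convergence along a single real sequence $t_{k}\to\infty$ suffices and can be obtained from the integer result once one controls $|\alpha(c^{r_{k}},c^{\lfloor t_{k}\rfloor}y)|$ for the specific fractional parts $r_{k}$ involved. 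As written, this step is a gap in your proposal --- though, to be fair, the paper's own proof simply ignores the issue.
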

\begin{proof}
By (\ref{joinings202105.6}) (\ref{joinings202105.15}), for $c_{1},c_{2}\in C_{G_{Y}}(U_{Y})$, $m_{Y}$-a.e. $y\in Y$, we have the cocycle identity
\[  \alpha(c_{1}c_{2},y)= \alpha(c_{1},c_{2}y)+\alpha(c_{2},y) .\]
Thus, if $F_{c}(\cdot)\in L^{1}(Y)$, then by the ergodicity, we get
\begin{equation}\label{joinings202105.16}
 \lim_{k\rightarrow\infty}\frac{1}{k} \alpha(c^{k},y)=\lim_{k\rightarrow\infty}\frac{1}{k}\sum_{i=0}^{k}\alpha(c^{i},y)=\int \alpha(c,y)dm_{Y}(y).
\end{equation}
\end{proof}

\begin{rem}\label{joinings202106.134} The results obtained in Section \ref{joinings202105.24} also hold true for $\rho$ being a finite extension of $\nu$, when $(X,\phi^{U_{X},\tau_{X}}_{t})$ is a time-change of the unipotent flow on $X=SO(n_{X},1)/\Gamma_{X}$. For example,  we consider the case when $n_{X}=2$, $\tau_{X}\in C^{1}(X)$, $\tau_{Y}\equiv 1$ (in other words, $\phi^{U_{Y},\tau_{Y}}_{t}=\phi^{U_{Y}}_{t}=u_{Y}^{t}$ is the usual unipotent flow, and $\nu=m_{Y}$). First, \cite{ratner1987rigid} shows that $(X,\phi^{U_{X},\tau_{X}}_{t})$ has H-property.    In particular, suppose that $\rho\in J(\phi^{U_{X},\tau_{X}}_{t},\phi^{U_{Y}}_{t})$ is not the product measure $\mu\times\nu$. Then H-property of $\tilde{u}_{X}^{t}\coloneqq\phi^{U_{X},\tau_{X}}_{t}$ deduces that $\rho$ is a finite extension of $\nu$ (see Theorem 3, \cite{ratner1983horocycle}):
   \[\int f(x,y)d\rho(x,y)=\int\frac{1}{n} \sum_{p=1}^{n}f(\psi_{p}(y),y) d\nu(y).\]
On the other hand, since $V^{\perp}_{C_{X}}=0$, by Corollary \ref{joinings202105.7} (and (\ref{time change188})), we again have a map $ \widetilde{S}_{c}: \supp (\rho)\rightarrow\supp (\rho)$ given by
\begin{equation}\label{joinings202105.25}
\widetilde{S}_{c}:(x,y) \mapsto ( u_{X}^{\alpha(c,y)}x,cy)
\end{equation}
In contrast to Theorem \ref{joinings202105.41}, $\widetilde{S}_{c}$
is $\rho$-invariant in this situation. We can further specify $\alpha(c,x,y)$ in certain situation as follows:

First, under the current setting, (\ref{joinings202105.10})  changes to
\[\xi(\psi_{p}(cy),t)+\alpha(c,y)= \alpha(c,u_{Y}^{t}y)+\xi(\psi_{p}(y),t)\]
for $t\in\mathbf{R}$. It follows that
\begin{align}
 0=&\int_{0}^{\xi(\psi_{p}(y),t)}\tau(u_{X}^{s}\psi_{p}(y))-\tau(u_{X}^{s}\psi_{p}(y))ds\;\nonumber\\
 =&\int_{0}^{\xi(\psi_{p}(cy),t)}\tau(u_{X}^{s}\psi_{p}(cy))ds-\int_{0}^{\xi(\psi_{p}(y),t)}\tau(u_{X}^{s}\psi_{p}(y))ds\;\nonumber\\
=&\int_{0}^{\xi(\psi_{p}(cy),t)}\tau(u_{X}^{\alpha(c,y)+s}\psi_{p}(y))ds-\int_{0}^{\xi(\psi_{p}(y),t)}\tau(u_{X}^{s}\psi_{p}(y))ds\;\nonumber\\
=&\int_{0}^{\alpha(c,y)+\xi(\psi_{p}(cy),t)}\tau(u_{X}^{s}\psi_{p}(y))ds-\int_{0}^{\alpha(c,y)}\tau(u_{X}^{s}\psi_{p}(y))ds-\int_{0}^{\xi(\psi_{p}(y),t)}\tau(u_{X}^{s}\psi_{p}(y))ds\;\nonumber\\
=&\int_{0}^{\alpha(c,u_{Y}^{t}y)+\xi(\psi_{p}(y),t)}\tau(u_{X}^{s}\psi_{p}(y))ds-\int_{0}^{\xi(\psi_{p}(y),t)}\tau(u_{X}^{s}\psi_{p}(y))ds-\int_{0}^{\alpha(c,y)}\tau(u_{X}^{s}\psi_{p}(y))ds\;\nonumber\\
=&\int_{0}^{\alpha(c,u_{Y}^{t}y)}\tau(u_{X}^{s}\tilde{u}_{X}^{t}(\psi_{p}(y)))ds-\int_{0}^{\alpha(c,y)}\tau(u_{X}^{s}\psi_{p}(y))ds. \;  \nonumber
\end{align}
In other words, we have
\[\int_{0}^{\alpha(c, u_{Y}^{t}y)}\tau(u_{X}^{s}\tilde{u}_{X}^{t}(x))ds=\int_{0}^{\alpha(c,y)}\tau(u_{X}^{s}x)ds\]
for $\rho$-a.e. $(x,y)\in X\times Y$ and therefore
\[\int_{0}^{\alpha(c,y)}\tau(u_{X}^{s}x)ds\equiv r_{c}\]
for some $r_{c}\in\mathbf{R}$. It follows that
\begin{equation}\label{joinings202106.148}
  \alpha(c,y)=\xi(x,r_{c})
\end{equation}
for $\rho$-a.e. $(x,y)\in X\times Y$.
 Moreover, we apply $\tilde{u}_{X}^{-r_{c}}\times u_{Y}^{-r_{c}}$ to
(\ref{joinings202105.25}), and  get that
\begin{equation}\label{joinings202105.44}
  (x,y) \mapsto ( u_{X}^{\alpha(c,y)}x,cy)\mapsto ( x,u_{Y}^{-r_{c}}cy)
\end{equation}
  is $\rho$-invariant. In particular, suppose that $G_{Y}$ is a semisimple Lie group  with finite center and no compact factors  and   $\Gamma_{Y}\subset G_{Y}$ is a irreducible lattice.
  If the $\mathfrak{sl}_{2}$-weight decomposition $\mathfrak{g}_{Y}=\mathfrak{sl}_{2}+V^{\perp}$ of $\mathfrak{g}_{Y}$ (see (\ref{joinings202103.2})) contains at least one $\mathfrak{sl}_{2}$-irreducible representation $V_{\varsigma}\subset V^{\perp}$ with a  positive highest weight $\varsigma>0$. Choosing $c=\exp(v_{\varsigma})$,  by  \textit{Moore’s ergodicity theorem}\index{Moore’s ergodicity theorem}, we must have $\rho=\mu\times\nu$ (cf. Lemma \ref{joinings202012.3}). Note that this coincides with the result obtained in \cite{dong2020rigidity}. Besides, even if    the highest weight of $V_{\varsigma}$ is $\varsigma=0$ for any $V_{\varsigma}\subset V^{\perp}$, the only possible situation for $\rho\neq \mu\times\nu$ is that $\alpha(\exp v,y)\equiv 0$ for all $v\in V^{\perp}$. Thus, by (\ref{joinings202105.44}), we conclude that $\rho$ is $(\id\times\exp(v))$-invariant for any $v\in V^{\perp}$. In Section \ref{joinings202106.172}, we shall see that $\langle \exp(v)\rangle\subset G_{Y}$ is a normal subgroup, which leads to a contradiction. Thus, we conclude that $V^{\perp}=0$ and so $\mathfrak{g}_{Y}=\mathfrak{sl}_{2}$.
\end{rem}

\subsection{Normal direction}\label{joinings202106.167}
Applying a similar argument in Section \ref{joinings202105.24}, we can  study the behavior of $\overline{\psi}_{p}$ along the normal direction $N_{G_{Y}}(U_{Y})$ of $U_{Y}$ as well. Here we only study the diagonal action provided by the  $\mathfrak{sl}_{2}$-triple. Thus,  let
 \[\Span\{U_{Y},A_{Y},\overline{U}_{Y}\}\subset\mathfrak{g}_{Y},\ \ \  \Span\{U_{X},Y_{n},\overline{U}_{X}\}\subset\mathfrak{g}_{X}\]
 be  $\mathfrak{sl}_{2}$-triples in $\mathfrak{g}_{Y}$, $\mathfrak{g}_{X}$ respectively, where $Y_{n}$ is given in Section \ref{joinings202105.23}. Denote
 \[a^{t}_{Y}\coloneqq\exp(tA_{Y}),\ \ \ a^{t}_{X}\coloneqq\exp(tY_{n}).\]

 We adopt the same notation and orderly fix the data as in Section  \ref{joinings202105.24}; thus, $\sigma,\epsilon,t_{K_{1}},\delta,K,K^{0}$ are chosen so that (\ref{joinings202104.24}) (\ref{joinings202105.8}) (\ref{joinings202105.33}) hold. (Here we further assume $\delta<\epsilon$.)
Fix   $|t_{0}|<\delta$, $a_{Y}=a_{Y}^{t_{0}}$ and $a_{X}=a_{X}^{t_{0}}$.  By ergodic theorem, there is $A_{a_{Y},y}\subset\mathbf{R}^{+}$ and $\lambda_{0}>0$  such that
    \begin{itemize}
      \item for $r\in A_{a_{Y},y}$, we have
     \[ u^{r}_{Y}y ,a_{Y}u^{r}_{Y}y\in K^{0};\]
     \item
     $\Leb(A_{a_{Y},y}\cap[\lambda^{\prime},\lambda^{\prime\prime}])\geq (1-2\sigma)(\lambda^{\prime\prime}-\lambda^{\prime})$ whenever $\lambda^{\prime\prime}-\lambda^{\prime}\geq\lambda_{0}$ and $\lambda^{\prime}\in A_{a_{Y},y}$.
    \end{itemize}
 Then by the assumptions, we have
 \begin{equation}\label{joinings202105.32} A_{a_{Y},y} \subset \left\{r\in[0,\infty):d_{\overline{X}}(a_{X}\overline{\psi}_{p}(u^{r}_{Y}y),\overline{\psi}_{p}(a_{Y}u^{r}_{Y}y))<2\epsilon,\ p\in\{1,\ldots,n\}\right\}.
 \end{equation}
It follows that for $r\in A_{a_{Y},y}$, we have
 \begin{align}
 2\epsilon>&d_{\overline{X}}(a_{X}\overline{\psi}_{p}(u^{r}_{Y}y),\overline{\psi}_{p}(a_{Y}u^{r}_{Y}y))\;\nonumber\\
 =&d_{\overline{X}}(a_{X}\overline{\psi}_{p}(u^{r}_{Y}y),\overline{\psi}_{p}(u^{e^{-t_{0}}r}_{Y}a_{Y}y))\;\nonumber\\
  =&d_{\overline{X}}\left(a_{X}u_{X}^{z(y,r)}\overline{\psi}_{i_{p}(y,r)}(y),u_{X}^{z(a_{Y}y ,e^{-t_{0}}r)}\overline{\psi}_{i_{p}(a_{Y}y,e^{-t_{0}}r)}(a_{Y}y)\right)\;\nonumber\\
 =&d_{\overline{X}}\left(u_{X}^{e^{-t_{0}}z(y,r)}a_{X}\overline{\psi}_{i_{p}(y,r)}(y),u_{X}^{z(a_{Y}y ,e^{-t_{0}}r)}\overline{\psi}_{i_{p}(a_{Y}y,e^{-t_{0}}r)}(a_{Y}y)\right) \;   \nonumber
\end{align}
     for any $p\in\{1,\ldots,n\}$ (cf. (\ref{joinings202105.34})).

     Assume that $0\in A_{a_{Y},y}$. Let $I=((p_{1},p_{2}),\ldots, (p_{2n-1},p_{2n}))\in\{1,\ldots,n\}^{2n}$ be a sequence of  indexes and
   \[  A_{a_{Y},y}^{I}\coloneqq\{r\in A_{a_{Y},y}: p_{2k-1}=i_{k}(y,r),\ p_{2k}=i_{k}(a_{Y}y,e^{-t_{0}}r)\text{ for all }k\in\{1,\ldots,n\}\}.\]
  Then  $A= A_{a_{Y},y}^{I}$, $R_{0}=t_{K_{1}}$, $s(r)=e^{-t_{0}}z(y,r)$, $t(r)=z(a_{Y}y ,e^{-t_{0}}r)$ satisfy (\ref{joinings202104.30}) (\ref{joinings202104.5}) for  points
  \[a_{X}\overline{\psi}_{p_{2k-1}}(y)\in \overline{X},\ \ \ \overline{\psi}_{p_{2k}}(a_{Y}y)\in K\]
  for all $k\in\{1,\ldots,n\}$. We can then apply Proposition \ref{joinings202104.57} to   $A_{a_{Y},y}=\coprod_{I\in\{1,\ldots,n\}^{2n}}A^{I}_{a_{Y},y}$    for any $\lambda\geq \lambda_{0}$. Then we follow the same argument as in Proposition \ref{joinings202104.58} (see also Corollary \ref{joinings202105.7}), and obtain
  \begin{prop}
      There is a measurable map $\varpi:\exp(\mathbf{R}A_{Y})\times X\times Y\rightarrow C_{G_{X}}(U_{X})$ that induces a map  $\widetilde{S}_{a^{r}_{Y}}:\supp (\rho)\rightarrow\supp (\rho)$ by
      \begin{equation}\label{joinings202105.35}
     \widetilde{S}_{a^{r}_{Y}}: (x,y) \mapsto (\varpi(a^{r}_{Y},x,y)a^{r}_{X}x,a^{r}_{Y}y)
      \end{equation}
   for all $r\in\mathbf{R}$,   $\rho$-a.e. $(x,y)\in X\times Y$. Moreover, we have
    \begin{align}
\varpi(a_{Y}^{r},x,y)= \ &  u_{X}^{-z(a_{Y}y,t)}\varpi(a_{Y}^{r},(u_{X}^{z(y,e^{r}t)}\times \widetilde{u}_{Y}^{z(y,e^{r}t)}).(x,y))u_{X}^{e^{-r}z(y,e^{r}t)}\;\label{joinings202105.36}\\
 \varpi(a_{Y}^{r_{1}+r_{2}},x,y) =\ &  \varpi(a_{Y}^{r_{1}},\varpi(a_{Y}^{r_{2}},x,y)a_{X}^{r_{2}}x,a_{Y}^{r_{2}}y)a_{X}^{r_{1}}\varpi(a_{Y}^{r_{2}},x,y)a_{X}^{-r_{1}}\; \label{joinings202105.37}
\end{align}
   for $r,r_{1},r_{2}\in\mathbf{R}$,   $\rho$-a.e. $(x,y)\in X\times Y$, $t\in\mathbf{R}$.
  \end{prop}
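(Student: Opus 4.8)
The plan is to repeat, essentially verbatim, the proofs of Proposition \ref{joinings202104.58} and Corollary \ref{joinings202105.7}, replacing the left translation by $c\in C_{G_{Y}}(U_{Y})$ with the action of $a_{Y}=a_{Y}^{t_{0}}$ (for $|t_{0}|<\delta$) and carrying along the renormalizing partner $a_{X}=a_{X}^{t_{0}}$. Recall from the discussion preceding the statement that, for $m_{Y}$-a.e.\ $y$, the set $A_{a_{Y},y}=\coprod_{I\in\{1,\ldots,n\}^{2n}}A^{I}_{a_{Y},y}$ has density $>1-2\sigma$, and that each piece $A^{I}_{a_{Y},y}$, together with $R_{0}=t_{K_{1}}$, $s(r)=e^{-t_{0}}z(y,r)$ and $t(r)=z(a_{Y}y,e^{-t_{0}}r)$, satisfies (\ref{joinings202104.30}) and (\ref{joinings202104.5}) for the base points $a_{X}\overline{\psi}_{p_{2k-1}}(y)$ and $\overline{\psi}_{p_{2k}}(a_{Y}y)$; consequently Corollary \ref{joinings202104.10} and Proposition \ref{joinings202104.57} apply. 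The only structural difference from Section \ref{joinings202105.24} is that the pair being sheared is
\[
\bigl(u_{X}^{s(r)}\,a_{X}\overline{\psi}_{i_{p}(y,r)}(y),\ u_{X}^{t(r)}\overline{\psi}_{i_{p}(a_{Y}y,\,e^{-t_{0}}r)}(a_{Y}y)\bigr),
\]
which comes from the joining relation $\overline{\psi}_{p}(u_{Y}^{r}y)=u_{X}^{z(y,r)}\overline{\psi}_{i_{p}(y,r)}(y)$ together with $a_{X}u_{X}^{z(y,r)}=u_{X}^{e^{-t_{0}}z(y,r)}a_{X}$ and $a_{Y}u_{Y}^{r}y=u_{Y}^{e^{-t_{0}}r}a_{Y}y$.

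First I would prove the $a_{Y}$-analogue of Proposition \ref{joinings202104.58}: applying Proposition \ref{joinings202104.57} to the partition $A_{a_{Y},y}=\coprod_{I}A^{I}_{a_{Y},y}$ produces, for each large $\lambda$, an index $I(\lambda)$, a subinterval $[R_{1}',R_{2}']\subset[0,\lambda]$ and an $\epsilon$-block $\overline{\BL}\in\beta_{2}(A^{I(\lambda)}_{a_{Y},y}\cap[R_{1}',R_{2}'])$ with time interval $[R_{1},R_{2}]$, $R_{2}-R_{1}>\vartheta\lambda$, whose non-shifting time $A^{I(\lambda)}_{\epsilon}$ has $\Leb(A^{I(\lambda)}_{\epsilon}\cap[R_{1},R_{2}])>\vartheta\lambda$. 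Since points of $K$ have injectivity radius at least $\epsilon_{0}$ (cf.\ (\ref{joinings202104.22})), non-shifting forces $\overline{g_{\overline{\psi}_{p}(a_{Y}u_{Y}^{r}y)}}$ and $u_{X}^{z(a_{Y}y,\,e^{-t_{0}}r)}.\overline{g_{\overline{\psi}_{i_{p}(a_{Y}y,e^{-t_{0}}r)}(a_{Y}y)}}$ into the same fundamental domain, and Lemma \ref{joinings202104.35} shows the difference $g(u_{Y}^{r}y)=g_{\overline{\psi}_{p}(a_{Y}u_{Y}^{r}y)}\bigl(a_{X}g_{\overline{\psi}_{p}(u_{Y}^{r}y)}\bigr)^{-1}$ has $SO_{0}(2,1)$-component tending to the identity and $V_{\varsigma}$-component to its top coordinate as $\lambda\to\infty$. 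Running the $Y_{l}(a_{Y})$-measure argument exactly as in Proposition \ref{joinings202104.58}, and using $\tau_{Y}\in\mathbf{K}_{\kappa}(Y)$ with Remark \ref{joinings202104.59} to pass between $m_{Y}$ and $\nu$, yields a set $S(a_{Y})\subset Y$ with $\nu(S(a_{Y}))>0$ on which $\overline{\psi}_{p}(a_{Y}y)=w_{p}(a_{Y},y)\,a_{X}\,\overline{\psi}_{p}(y)$ with $w_{p}(a_{Y},y)\in C_{G_{X}}(U_{X})$; as in (\ref{joinings202105.22}) this propagates along $u_{Y}^{r}$ with the compatibility $w_{p}(a_{Y},u_{Y}^{r}y)\,u_{X}^{e^{-t_{0}}z(y,r)}=u_{X}^{z(a_{Y}y,\,e^{-t_{0}}r)}\,w_{i_{p}(a_{Y}y,e^{-t_{0}}r)}(a_{Y},y)$.

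Next, following Corollary \ref{joinings202105.7}, I would set $\varpi(a_{Y},\overline{\psi}_{p}(y),y):=w_{p}(a_{Y},y)$ on $S(a_{Y})$, conjugate by the unique $C^{\rho}$-element relating a chosen lift of $x$ to $\overline{\psi}_{p_{x}}(y)$ to define $\varpi(a_{Y},x,y)$ on $\pi_{Y}^{-1}(S(a_{Y}))$, and extend to $\rho$-a.e.\ $(x,y)$ by transporting along the flow: pick $t$ with $(u_{X}^{z(y,e^{t_{0}}t)}x,\widetilde{u}_{Y}^{z(y,e^{t_{0}}t)}y)\in\pi_{Y}^{-1}(S(a_{Y}))$ and \emph{define} $\varpi(a_{Y},x,y)$ by (\ref{joinings202105.36}), the compatibility above guaranteeing independence of $t$. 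For general $r$, write $a_{Y}^{r}=(a_{Y}^{r/m})^{m}$ with $r/m$ small and iterate via (\ref{joinings202105.37}); the process terminates, so $\widetilde{S}_{a_{Y}^{r}}$ of (\ref{joinings202105.35}) is well-defined on $\supp(\rho)$, with measurability inherited from the Kunugui selections $\overline{\psi}_{p}$ and the measurable choice of $C^{\rho}$-elements. Identity (\ref{joinings202105.36}) is precisely the flow-transport used in the definition; (\ref{joinings202105.37}) follows by computing $\widetilde{S}_{a_{Y}^{r_{1}}}\circ\widetilde{S}_{a_{Y}^{r_{2}}}(x,y)=\bigl(\varpi(a_{Y}^{r_{1}},\varpi(a_{Y}^{r_{2}},x,y)a_{X}^{r_{2}}x,a_{Y}^{r_{2}}y)\,a_{X}^{r_{1}}\varpi(a_{Y}^{r_{2}},x,y)a_{X}^{r_{2}}x,\ a_{Y}^{r_{1}+r_{2}}y\bigr)$, equating it with $\widetilde{S}_{a_{Y}^{r_{1}+r_{2}}}(x,y)$, and using $a_{X}^{r_{1}+r_{2}}x=a_{X}^{r_{1}}a_{X}^{r_{2}}x$.

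The main obstacle is keeping the renormalization by $a_{Y}$ (and its partner $a_{X}$) consistent everywhere: one must verify that the reparametrized times $s(r)=e^{-t_{0}}z(y,r)$ and $t(r)=z(a_{Y}y,e^{-t_{0}}r)$ genuinely satisfy the H\"older hypotheses (\ref{joinings202104.30}), (\ref{joinings202104.5}) — this rests on effective ergodicity applied to the unipotent orbit of $a_{Y}y$ together with $a_{Y}u_{Y}^{r}a_{Y}^{-1}=u_{Y}^{e^{-t_{0}}r}$, so that both $s(r)$ and $t(r)$ differ from $e^{-t_{0}}r$ by $O(r^{1-\kappa})$ — and then carry the conjugations by $a_{X}$ correctly through the non-shifting estimate and through both cocycle identities. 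Once this is in place the argument is a transcription of Section \ref{joinings202105.24}.
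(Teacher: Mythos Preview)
Your proposal is correct and follows exactly the approach indicated in the paper: the paper itself does not give a separate proof but simply says ``We can then apply Proposition \ref{joinings202104.57} to $A_{a_{Y},y}=\coprod_{I}A^{I}_{a_{Y},y}$ \ldots\ Then we follow the same argument as in Proposition \ref{joinings202104.58} (see also Corollary \ref{joinings202105.7}), and obtain'' the result. You have correctly identified the reparametrized times $s(r)=e^{-t_{0}}z(y,r)$, $t(r)=z(a_{Y}y,e^{-t_{0}}r)$, the role of the partner $a_{X}$, the propagation identity replacing (\ref{joinings202105.22}), and how the two cocycle relations (\ref{joinings202105.36}) and (\ref{joinings202105.37}) arise from flow-transport and iteration respectively.
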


Similar to the discussion after Corollary \ref{joinings202105.7}, we consider the  decomposition (\ref{time change184}) and write
\begin{equation}\label{joinings202105.38}
  \varpi(a_{Y}^{r},x,y)=u_{X}^{\alpha(a_{Y}^{r},x,y)} \beta(a_{Y}^{r},x,y)
\end{equation}
where $\alpha(a_{Y}^{r},x,y)\in\mathbf{R}$ and $\beta(a_{Y}^{r},x,y)\in\exp V^{\perp}_{C_{X}}$. Then by (\ref{joinings202105.36}), we have
 \begin{align}
z(a^{r}_{Y}y,t)+\alpha(a^{r}_{Y},x,y)=\ & \alpha(a^{r}_{Y},(u_{X}^{z(y,e^{r}t)}\times \widetilde{u}_{Y}^{z(y,e^{r}t)}).(x,y))+e^{-r}z(y,e^{r}t),\;\label{joinings202105.39}\\
 \beta(a^{r}_{Y},x,y)\equiv\ & \beta(a^{r}_{Y},(u_{X}^{z(y,e^{r}t)}\times \widetilde{u}_{Y}^{z(y,e^{r}t)}).(x,y))\; \label{joinings202105.40}
\end{align}
for all $r,t\in\mathbf{R}$. The same argument then shows that
\[\alpha(a^{r}_{Y},x,y)\equiv\alpha(a^{r}_{Y},y),\ \ \  \beta(a^{r}_{Y},x,y)\equiv\beta(a_{Y}^{r})\]
for all $r\in\mathbf{R}$, $\rho$-a.e. $(x,y)\in X\times Y$.
Besides, following the same lines as in Theorem \ref{joinings202105.41}, we obtain Theorem \ref{joinings202106.159}:
\begin{thm}[Extra normal invariance  of $\rho$]\label{joinings202106.149}
  For any $a_{Y}\in \exp(\mathbf{R}A_{Y})$, the map $S_{a_{Y}}:X\times Y\rightarrow X\times Y$ defined by
  \[S_{a_{Y}}:(x,y)\mapsto \left(\beta(a_{Y})a_{X}x,\tilde{u}_{Y}^{-\alpha(a_{Y},y)}(a_{Y}y)\right)\]
  satisfies
  \[ S_{a^{r}_{Y}}\circ(u_{X}^{t}\times \widetilde{u}_{Y}^{t})=(u_{X}^{e^{-r}t}\times \widetilde{u}_{Y}^{e^{-r}t})\circ S_{a^{r}_{Y}}\]
  and is $\rho$-invariant. Besides, $S_{a^{r_{1}+r_{2}}_{Y}}=S_{a^{r_{1}}_{Y}}S_{a^{r_{2}}_{Y}}$ for any $r_{1},r_{2}\in \mathbf{R}$. Also, we have
  \[S_{a_{Y}}\circ S_{c}\circ S_{a_{Y}^{-1}}=S_{a_{Y}ca_{Y}^{-1}}\]
  for any $a_{Y}\in \exp(\mathbf{R}A_{Y})$, $c\in C_{G_{Y}}(U_{Y})$.
\end{thm}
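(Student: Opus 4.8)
The statement is the normal-direction analogue of Theorem \ref{joinings202105.41}, and the proof I propose runs parallel to that one, so I will organize it around reusing the machinery of Section \ref{joinings202105.24} with the modifications forced by the diagonal element $a_Y$. The starting point is the preceding Proposition, which produces $\varpi(a_Y^r,x,y)=u_X^{\alpha(a_Y^r,x,y)}\beta(a_Y^r,x,y)$ together with the twisted cocycle relations \eqref{joinings202105.39}, \eqref{joinings202105.40}. First I would carry out the ``constancy'' step: using \eqref{joinings202105.40} and the ergodicity of $u_X^t\times\widetilde u_Y^t$, conclude $\beta(a_Y^r,x,y)\equiv\beta(a_Y^r)$; and, imitating the $\alpha_{\max}$ argument from Section \ref{joinings202105.24} (define $\alpha_{\max}(a_Y^r,x,y)$ as the maximal value of $\alpha(a_Y^r,x',y)-\alpha(a_Y^r,x,y)$ over $x'$ in $\supp(\rho_y)$, use \eqref{joinings202105.39} to see it is flow-invariant hence a.e. constant, then use that $\alpha$ takes finitely many values on each fiber — as in \eqref{joinings202105.43} — to force it to be $0$), conclude $\alpha(a_Y^r,x,y)\equiv\alpha(a_Y^r,y)$. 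This gives $\varpi(a_Y^r,x,y)=u_X^{\alpha(a_Y^r,y)}\beta(a_Y^r)$.

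Next I would define $S_{a_Y}(x,y)=(\beta(a_Y)a_X x,\widetilde u_Y^{-\alpha(a_Y,y)}(a_Y y))$ and verify the three claimed properties. For the intertwining relation $S_{a_Y^r}\circ(u_X^t\times\widetilde u_Y^t)=(u_X^{e^{-r}t}\times\widetilde u_Y^{e^{-r}t})\circ S_{a_Y^r}$: this is a direct computation using \eqref{joinings202105.39}, exactly as the commutation $(u_X^t\times\widetilde u_Y^t)\circ S_c=S_c\circ(u_X^t\times\widetilde u_Y^t)$ was obtained in Theorem \ref{joinings202105.41}, except that the renormalization factor $e^{-r}$ appears because $a_Y$ scales the unipotent: conjugation of $u_Y^t$ by $a_Y^r$ gives $u_Y^{e^{-r}t}$, and the time-change cocycle transforms accordingly, which is precisely what \eqref{joinings202105.39} encodes. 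The homomorphism property $S_{a_Y^{r_1+r_2}}=S_{a_Y^{r_1}}S_{a_Y^{r_2}}$ follows from \eqref{joinings202105.37} after projecting via \eqref{joinings202105.38}: the $\beta$-part is multiplicative because $a_X^{r_1}\beta(a_Y^{r_2})a_X^{-r_1}$ commutes through $C_{G_X}(U_X)$ appropriately, and the $\alpha$-parts add via the twisted cocycle identity; I would note that $\beta(a_Y^{r})$ lands in $\exp V^\perp_{C_X}$ and that $a_X$ normalizes $C_{G_X}(U_X)$, so everything stays inside $C_{G_X}(U_X)$. The relation $S_{a_Y}S_cS_{a_Y^{-1}}=S_{a_Y c a_Y^{-1}}$ is then a formal consequence of the cocycle identities for $\varpi$ extended over the group generated by $C_{G_Y}(U_Y)$ and $\exp(\mathbf{R}A_Y)$, i.e. over $N_{G_Y}(U_Y)$ modulo the center: one checks that both sides define the same element of $C_{G_X}(U_X)$ by composing \eqref{joinings202105.6}, \eqref{joinings202105.37} and the analogous mixed identity.

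The $\rho$-invariance of $S_{a_Y}$ is the heart of the matter and I expect it to be the main obstacle — just as in Theorem \ref{joinings202105.41}, the point is to locate a single generic point $(x_0,y_0)$ with $(x_0,y_0)\in\Omega\cap S_{a_Y}^{-1}\Omega$ where $\Omega$ is the set of $(u_X^t\times\widetilde u_Y^t)$-generic points, and then invoke the ergodic theorem along the flow to conclude $(S_{a_Y})_*\rho=\rho$. I would reproduce the fiber-integration argument: since $\rho(\Omega)=1$, there is a $\nu$-conull $\Omega_Y\subset Y$ over which $\int_{C^\rho}\frac1n\sum_p \mathbf 1_\Omega(k\psi_p(y),y)\,dm(k)=1$; choosing $y_0\in\Omega_Y\cap a_Y^{-1}\Omega_Y\cap S(a_Y)$ (with $S(a_Y)$ the good set from the normal-direction version of Proposition \ref{joinings202104.58}, where $\psi_p(a_Y y)=w_p(a_Y,y)a_X\psi_p(y)$), one extracts $k_0\in C^\rho$ with $(k_0\psi_1(y_0),y_0)$ and $(k_0\psi_1(a_Y y_0),a_Y y_0)$ both in $\Omega$, and checks via the definition \eqref{joinings202105.11} of $\varpi$ that $\widetilde S_{a_Y}(x_0,y_0)$ (hence $S_{a_Y}(x_0,y_0)$ after the $u_X^{-\alpha}\times\widetilde u_Y^{-\alpha}$ adjustment) is exactly $(k_0\psi_1(a_Y y_0),a_Y y_0)\in\Omega$. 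The delicate point is that $a_Y$ does not preserve $\nu=m_Y$ if $\tau_Y\not\equiv 1$, so one must be careful that $\Omega_Y\cap a_Y^{-1}\Omega_Y$ is still $\nu$-conull: here I would use that $a_Y$ preserves the Lebesgue class on $Y$ (it is a diffeomorphism of the homogeneous space), hence preserves $\nu$-null sets since $\nu$ and $m_Y$ are equivalent, which is enough. With the generic point in hand, the ergodic-averaging identity $\int f\,d\rho=\int f\circ S_{a_Y}\,d\rho$ goes through as in Theorem \ref{joinings202105.41}, using the intertwining relation to move $S_{a_Y}$ past the flow at the cost of the harmless time-rescaling $t\mapsto e^{-r}t$.
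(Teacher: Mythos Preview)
Your proposal is correct and follows essentially the same approach as the paper, which in fact gives no separate proof for Theorem \ref{joinings202106.149} but simply writes ``following the same lines as in Theorem \ref{joinings202105.41}, we obtain Theorem \ref{joinings202106.149}''; you have supplied exactly those lines, including the correct handling of the time-rescaling $t\mapsto e^{-r}t$ in the ergodic-averaging step and the equivalence of $\nu$ and $m_Y$ to ensure $\Omega_Y\cap a_Y^{-1}\Omega_Y$ is $\nu$-conull.
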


\begin{cor}[Extra normal invariance of $\nu$]
   For any $a_{Y}\in \exp(\mathbf{R}A_{Y})$, the map $S^{Y}_{a_{Y}}:Y\rightarrow Y$ defined by
  \[S_{a_{Y}}^{Y}:y\mapsto \tilde{u}_{Y}^{-\alpha(a_{Y},y)}(a_{Y}y)\]
 satisfies
   \[S^{Y}_{a^{r}_{Y}}\circ  \widetilde{u}_{Y}^{t} =  \widetilde{u}_{Y}^{e^{-r}t} \circ S^{Y}_{a^{r}_{Y}}\]
    and is $\nu$-invariant. Besides, $S^{Y}_{a^{r_{1}+r_{2}}_{Y}}=S^{Y}_{a^{r_{1}}_{Y}}S^{Y}_{a^{r_{2}}_{Y}}$ for any $r_{1},r_{2}\in \mathbf{R}$. Also, we have
  \[S^{Y}_{a_{Y}}\circ S^{Y}_{c}\circ S^{Y}_{a_{Y}^{-1}}=S^{Y}_{a_{Y}ca_{Y}^{-1}}\]
  for any $a_{Y}\in \exp(\mathbf{R}A_{Y})$, $c\in C_{G_{Y}}(U_{Y})$.
\end{cor}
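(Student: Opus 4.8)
The plan is to obtain every assertion by pushing Theorem \ref{joinings202106.149}, together with the earlier Corollary on extra central invariance of $\nu$, forward under the natural projection $\pi_{Y}:X\times Y\rightarrow Y$. The structural point is that $S_{a_{Y}^{r}}$ is a skew product over the base map $S_{a_{Y}^{r}}^{Y}$: from $S_{a_{Y}^{r}}(x,y)=(\beta(a_{Y}^{r})a_{X}^{r}x,\tilde{u}_{Y}^{-\alpha(a_{Y}^{r},y)}(a_{Y}^{r}y))$ the $Y$-component depends only on $y$ --- recall that $\alpha(a_{Y}^{r},x,y)\equiv\alpha(a_{Y}^{r},y)$ was established in Section \ref{joinings202106.167} --- so $S_{a_{Y}^{r}}^{Y}$ is well defined for $\nu$-a.e.\ $y$, and $\pi_{Y}\circ S_{a_{Y}^{r}}=S_{a_{Y}^{r}}^{Y}\circ\pi_{Y}$. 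Likewise $\pi_{Y}\circ(u_{X}^{t}\times\tilde{u}_{Y}^{t})=\tilde{u}_{Y}^{t}\circ\pi_{Y}$, and, by the central corollary, $\pi_{Y}\circ S_{c}=S_{c}^{Y}\circ\pi_{Y}$ for $c\in C_{G_{Y}}(U_{Y})$.

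First I would deduce $\nu$-invariance: since $\nu=(\pi_{Y})_{\ast}\rho$ and $\rho$ is $S_{a_{Y}^{r}}$-invariant by Theorem \ref{joinings202106.149}, we get $(S_{a_{Y}^{r}}^{Y})_{\ast}\nu=(S_{a_{Y}^{r}}^{Y})_{\ast}(\pi_{Y})_{\ast}\rho=(\pi_{Y})_{\ast}(S_{a_{Y}^{r}})_{\ast}\rho=(\pi_{Y})_{\ast}\rho=\nu$. Next, applying $\pi_{Y}$ to $S_{a_{Y}^{r}}\circ(u_{X}^{t}\times\tilde{u}_{Y}^{t})=(u_{X}^{e^{-r}t}\times\tilde{u}_{Y}^{e^{-r}t})\circ S_{a_{Y}^{r}}$ and using the intertwining relations above, the two sides become $S_{a_{Y}^{r}}^{Y}\circ\tilde{u}_{Y}^{t}\circ\pi_{Y}$ and $\tilde{u}_{Y}^{e^{-r}t}\circ S_{a_{Y}^{r}}^{Y}\circ\pi_{Y}$ respectively; since $(\pi_{Y})_{\ast}\rho=\nu$, a $\rho$-conull subset of $X\times Y$ projects onto a $\nu$-conull subset of $Y$, so $S_{a_{Y}^{r}}^{Y}\circ\tilde{u}_{Y}^{t}=\tilde{u}_{Y}^{e^{-r}t}\circ S_{a_{Y}^{r}}^{Y}$ holds $\nu$-a.e. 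The homomorphism property $S_{a_{Y}^{r_{1}+r_{2}}}^{Y}=S_{a_{Y}^{r_{1}}}^{Y}S_{a_{Y}^{r_{2}}}^{Y}$ and the conjugation relation $S_{a_{Y}}^{Y}\circ S_{c}^{Y}\circ S_{a_{Y}^{-1}}^{Y}=S_{a_{Y}ca_{Y}^{-1}}^{Y}$ descend in exactly the same fashion from the corresponding identities for $S_{a_{Y}^{r}}$ and $S_{c}$ in Theorem \ref{joinings202106.149}, using that $a_{Y}ca_{Y}^{-1}\in C_{G_{Y}}(U_{Y})$ so that $S_{a_{Y}ca_{Y}^{-1}}^{Y}$ is defined.

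Since the whole argument amounts to pushing forward identities that have already been proved upstairs, I do not anticipate any genuine obstacle; the one point deserving a word of care is the passage from $\rho$-a.e.\ equalities on $X\times Y$ to $\nu$-a.e.\ equalities on $Y$, which is legitimate precisely because $(\pi_{Y})_{\ast}\rho=\nu$. One should also note at the outset that $\alpha(a_{Y}^{r},\cdot)$ and $\alpha(c,\cdot)$ really are functions of $y\in Y$ alone --- this is what makes $S_{a_{Y}^{r}}^{Y}$ and $S_{c}^{Y}$ unambiguous as maps of $Y$ --- a fact recorded in Sections \ref{joinings202105.24} and \ref{joinings202106.167}.
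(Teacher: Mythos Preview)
Your proposal is correct and matches the paper's treatment: the corollary is stated immediately after Theorem \ref{joinings202106.149} without proof, so the paper regards it as an immediate consequence obtained precisely by projecting the identities for $S_{a_{Y}^{r}}$ (and $S_{c}$) via $\pi_{Y}$, exactly as you outline. Your remarks about $\alpha$ depending only on $y$ and about passing from $\rho$-a.e.\ to $\nu$-a.e.\ via $(\pi_{Y})_{\ast}\rho=\nu$ are the right justifications to record.
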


\begin{thm} Let  $\tau_{Y}\in \mathbf{K}_{\kappa}(Y)$. Suppose that there is an ergodic joining  $\rho\in J(u_{X}^{t},\phi_{t}^{U_{Y},\tau_{Y}})$. Then
            $\tau_{Y}(y)$ and $\tau_{Y}(a_{Y}y)$ are (measurably)  cohomologous along $u_{Y}^{t}$ for all $a^{r}_{Y}\in \exp(\mathbf{R}A_{Y})$. More precisely, the transfer function can be taken to be
            \[F_{a^{r}_{Y}}(y)= e^{r} \alpha(a_{Y}^{r},y).\]
\end{thm}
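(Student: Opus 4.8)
The plan is to imitate the proof of Theorem~\ref{dynamical systems2001} almost verbatim, with the centralizer element $c$ replaced by a diagonal element $a_Y^r=\exp(rA_Y)$, the cocycle identity (\ref{joinings202105.10}) replaced by its renormalized counterpart (\ref{joinings202105.39}), and the conclusions of Section~\ref{joinings202106.167} used in place of those of Section~\ref{joinings202105.24}. As in Theorem~\ref{dynamical systems2001} one takes $\rho\neq\mu\times\nu$, so that the maps $\alpha,\beta$ of Section~\ref{joinings202106.167} are available; the only genuinely new feature is the scaling factor $e^{\pm r}$ coming from the renormalization $a_Y^r u_Y^t(a_Y^r)^{-1}=u_Y^{e^{-r}t}$, and this factor is exactly what produces the $e^{r}$ in the claimed transfer function.

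Concretely, I would first recall from Section~\ref{joinings202106.167} that $\alpha(a_Y^r,x,y)$ is independent of $x$, so $\alpha(a_Y^r,x,y)\equiv\alpha(a_Y^r,y)$, and rewrite the time-changed flow appearing in (\ref{joinings202105.39}) as a unipotent flow via $\widetilde u_Y^{z(y,T)}y=u_Y^{T}y$ from (\ref{joinings202105.9}). With these substitutions (\ref{joinings202105.39}) becomes
\[
z(a_Y^r y,t)+\alpha(a_Y^r,y)=\alpha(a_Y^r,u_Y^{e^{r}t}y)+e^{-r}z(y,e^{r}t).
\]
Next I would re-express $z(a_Y^r y,t)$ as an integral along the $u_Y$-orbit through $y$ itself: since $u_Y^{s}a_Y^r y=a_Y^r u_Y^{e^{r}s}y$, the change of variables $\sigma=e^{r}s$ gives $e^{r}z(a_Y^r y,t)=\int_0^{e^{r}t}\tau_Y(a_Y^r u_Y^{\sigma}y)\,d\sigma$. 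Multiplying the displayed identity by $e^{r}$ and writing $T=e^{r}t$, it takes the form
\[
\int_0^{T}\bigl(\tau_Y(u_Y^{\sigma}y)-\tau_Y(a_Y^r u_Y^{\sigma}y)\bigr)\,d\sigma=e^{r}\alpha(a_Y^r,y)-e^{r}\alpha(a_Y^r,u_Y^{T}y)
\]
for $\nu$-a.e.\ $y\in Y$ and all $T\in\mathbf{R}$. Comparing with the definition of (measurable) cohomology along $u_Y^{t}$, this exhibits $\tau_Y(y)$ and $\tau_Y(a_Y^r y)$ as cohomologous along $u_Y^{t}$ with transfer function $F_{a_Y^r}(y)=e^{r}\alpha(a_Y^r,y)$; measurability of $F_{a_Y^r}$ is inherited from that of $\alpha(a_Y^r,\cdot)$, which was constructed in Section~\ref{joinings202106.167}.

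I expect the main obstacle to be purely a matter of bookkeeping: one must apply the renormalization $a_Y^r u_Y^{t}(a_Y^r)^{-1}=u_Y^{e^{-r}t}$ on the correct side and track the ensuing change of variables carefully, since misplacing an $e^{\pm r}$ would change the transfer function. Apart from this, everything is a direct computation; in particular, as in Theorem~\ref{dynamical systems2001}, the hypothesis $\tau_Y\in\mathbf{K}_\kappa(Y)$ is used only through Section~\ref{joinings202106.167} (it is what manufactures the cocycle $\alpha$ in the first place), and is not needed again once $\alpha$ is in hand.
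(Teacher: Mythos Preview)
Your proposal is correct and is essentially identical to the paper's own proof: both start from the renormalized cocycle identity (\ref{joinings202105.39}), use the change of variables coming from $u_Y^{s}a_Y^{r}=a_Y^{r}u_Y^{e^{r}s}$ to rewrite $z(a_Y^{r}y,t)$ as an integral along the $u_Y$-orbit of $y$, and then multiply through by $e^{r}$ to read off the transfer function $F_{a_Y^{r}}(y)=e^{r}\alpha(a_Y^{r},y)$. The only difference is presentational---you multiply by $e^{r}$ at the end while the paper carries the $e^{-r}$ factor from the start---and your remark that the nontriviality of $\rho$ (so that $\alpha$ exists via Section~\ref{joinings202106.167}) is an implicit hypothesis is well taken.
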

\begin{proof}
  By (\ref{joinings202105.10}), for $m_{Y}$-a.e. $y\in Y$, $x\in\supp(\rho_{y})$, we have
  \begin{align}
 &e^{-r}\int_{0}^{e^{r}t}\tau(u_{Y}^{s}y)-\tau(a^{r}_{Y}u_{Y}^{s}y)ds\;\nonumber\\
 =&e^{-r}\int_{0}^{e^{r}t}\tau(u_{Y}^{s}y)ds-\int_{0}^{t}\tau(u_{Y}^{s}a_{Y}y)ds\;\nonumber\\
=&  e^{-r}z(y,e^{r}t)-z(a^{r}_{Y}y,t)\;\nonumber\\
=&  \alpha(a^{r}_{Y},y) -  \alpha(a^{r}_{Y},\widetilde{u}^{z(y,e^{r}t)}(y))\;\nonumber\\
=&\alpha(a_{Y}^{r},y)- \alpha(a_{Y}^{r},u_{Y}^{e^{r}t}y). \;   \nonumber
\end{align}
Thus, we can take the transfer function as
\[F_{a^{r}_{Y}}(y)\coloneqq e^{r}  \alpha(a_{Y}^{r}, y) .\]
  Then $\tau(y)$ and $\tau(a_{Y}y)$ are (measurably) cohomologous for all $a_{Y}\in\exp(\mathbf{R}A_{Y})$.
\end{proof}

\subsection{Opposite unipotent direction} Now we shall study  the opposite unipotent direction $\overline{u}_{Y}^{r}=\exp(r\overline{U}_{Y})$, $\overline{u}_{X}^{r}=\exp(r\overline{U}_{X})$. Unlike previous sections, we cannot directly obtain $\rho$ is invariant under the opposite unipotent direction. However, we compensate it by making the ``$a$-adjustment". More precisely, by choosing appropriate coefficients $\lambda_{k}>0$, set
\[\Psi_{k,p}(y)\coloneqq a^{\lambda_{k}}_{X}\overline{\psi}_{p} (a^{-\lambda_{k}}_{Y}y)\]
for a.e. $y\in Y$. Then we shall show that (see Theorem \ref{joinings202106.122})
\[\lim_{n\rightarrow\infty}d_{\overline{X}}(\Psi_{k,p}(\overline{u}_{Y}^{r}y), \overline{u}_{X}^{r}\Psi_{k,p}(y))=0.\]
 Here we adopt the argument given by Ratner \cite{ratner1987rigid} and make a slight generalization.
  It is again convenient to consider  $u,a,\overline{u}\in SL(2,\mathbf{R})$ as $(2\times 2)$-matrices. We first introduce a basic lemma by Ratner that estimates the time-difference of the  $\phi^{U_{Y},\tau}_{t}$-flow under  the $\overline{u}_{Y}^{r}$-direction.

First of all, one directly calculates
\begin{multline}
u^{t}_{Y}\overline{u}_{Y}^{r}  =\left[
            \begin{array}{ccc}
              1 &   0\\
              t&   1\\
            \end{array}
          \right]\left[
            \begin{array}{ccc}
              1 &  r\\
            0&   1\\
            \end{array}
          \right]=\left[
            \begin{array}{ccc}
              1 &   r\\
              t&   1+rt\\
            \end{array}
          \right]   \\
 =\left[
            \begin{array}{ccc}
              1 &  \frac{r}{1+rt}\\
            0&   1\\
            \end{array}
          \right]\left[
            \begin{array}{ccc}
             \frac{1}{1+rt} &   \\
               &    1+rt \\
            \end{array}
          \right]\left[
            \begin{array}{ccc}
              1 &   0\\
              \frac{t}{1+rt}&   1\\
            \end{array}
          \right]=\overline{u}_{Y}^{\frac{r}{1+rt}}a_{Y}^{-2\log(1+rt)}u_{Y}^{\frac{t}{1+rt}}.\label{joinings202105.67}
\end{multline}
We are interested in the \textit{fastest relative motion} of $u_{Y}^{t}$-shearing
\begin{equation}\label{joinings202106.100}
 \Delta_{r}(t)\coloneqq t-\frac{t}{1+rt}\ \ \  \text{ and }\ \ \ \Delta^{\tau_{Y}}_{r}(y,t)\coloneqq\int_{0}^{t}\tau_{Y}(u_{Y}^{s}\overline{u}^{r}_{Y}y)ds-\int_{0}^{\frac{t}{1+rt}}\tau_{Y}(u_{Y}^{s}y)ds.
\end{equation}
\begin{lem}[\cite{ratner1987rigid} Lemma 1.2]\label{joinings202105.65}
  Assume $\tau_{Y}\in C^{1}(Y)$. Then given sufficiently small $\epsilon>0$, there are
   \begin{itemize}
     \item  $\delta=\delta(\epsilon)\approx 0$,
     \item $l=l(\epsilon)>0$,
     \item $E=E(\epsilon)\subset Y$ with $\mu(E)>1-\epsilon$
   \end{itemize}    such that if $y,\overline{u}^{r}_{Y}y\in E$ for some $|r|\leq \delta/l$ then
  \begin{equation}\label{joinings202106.104}
    |\Delta^{\tau_{Y}}_{r}(y,t)-\Delta_{r}(t)|\leq O(\epsilon)|\Delta_{r}(t)|
  \end{equation}
  for all $t\in[l,\delta |r|^{-1}]$.
\end{lem}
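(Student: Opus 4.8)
## Proof proposal for Lemma \ref{joinings202105.65}

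The plan is to reduce the claim to an estimate valid pointwise along the $u_Y$-orbit and then upgrade it to a set of large measure using the effective ergodic theorem, following closely the strategy of \cite{ratner1987rigid}. First I would record the exact shape of the ``fastest relative motion'': from (\ref{joinings202105.67}) the $\overline{u}^r_Y$-perturbation, read through the $u_Y$-flow, produces a shift that to leading order equals $\Delta_r(t)=t-\frac{t}{1+rt}=\frac{rt^2}{1+rt}$, while the genuine time-change $\Delta^{\tau_Y}_r(y,t)$ differs from $\Delta_r(t)$ by the defect of $\tau_Y$ from the constant $1$ integrated along the two nearby orbit segments. So the quantity to be bounded is
\[
\Delta^{\tau_Y}_r(y,t)-\Delta_r(t)=\int_0^t\bigl(\tau_Y(u_Y^s\overline{u}^r_Yy)-1\bigr)\,ds-\int_0^{\frac{t}{1+rt}}\bigl(\tau_Y(u_Y^sy)-1\bigr)\,ds .
\]
The point is that $u_Y^s\overline{u}^r_Yy$ and $u_Y^{\frac{s}{1+rs}}y$ are, by (\ref{joinings202105.67}), exponentially close in the $\overline{u}_Y$- and $a_Y$-directions with displacement of size $O(|r|t)$, which is $\le O(\delta)$ throughout the range $t\le\delta|r|^{-1}$; hence on a compact set the integrands differ by a controlled amount.

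The key steps, in order, would be: (1) choose $E=E(\epsilon)\subset Y$ compact with $\mu(E)>1-\epsilon$ on which $\tau_Y$ is uniformly continuous (this uses $\tau_Y\in C^1(Y)$, so in fact Lipschitz on $E$) and on which the effective ergodic estimate (\ref{joinings202105.45}) for the $u_Y$-flow with respect to the function $\tau_Y-1$ holds, giving $\bigl|\int_0^t(\tau_Y(u_Y^sy)-1)\,ds\bigr|\ll t^{1-\kappa}$ for $y\in E$ and $t\ge l(\epsilon)$; (2) apply this bound to both integrals in the display above — the first with base point $\overline{u}^r_Yy\in E$, the second with base point $y\in E$, after a change of variables $s\mapsto \frac{s}{1+rs}$ in the second integral whose Jacobian is $1+O(|r|t)=1+O(\delta)$ — to get that each integral is $O(t^{1-\kappa})$; (3) compare $\Delta_r(t)=\frac{rt^2}{1+rt}$ with these: on the range $t\in[l,\delta|r|^{-1}]$ one has $|\Delta_r(t)|\asymp |r|t^2\gg l\cdot t^{1-\kappa}$ when $l=l(\epsilon)$ is taken large enough (more precisely $|r|t^2/t^{1-\kappa}=|r|t^{1+\kappa}$, and since $|r|t\le\delta$ this is $\ge \delta^{-\kappa}(|r|t)^{1+\kappa}$... here one must be slightly careful, see below), so that the additive error is dominated by $O(\epsilon)|\Delta_r(t)|$; (4) assemble (\ref{joinings202106.104}). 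Steps (1)–(2) are routine given uniform continuity and (\ref{joinings202105.45}); the choice of $\delta$, $l$ is made so that (3) closes.

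The main obstacle is step (3): one must ensure that the error terms, which are of size roughly $t^{1-\kappa}$ (or $\max\{l^{1-\kappa},t^{1-\kappa}\}$ from (\ref{joinings202105.45})), are genuinely small compared to $|\Delta_r(t)|\asymp |r|t^2$ uniformly over the full interval $t\in[l,\delta|r|^{-1}]$, and in particular at the \emph{lower} endpoint $t=l$ where $\Delta_r$ is smallest. At $t=l$ one needs $l^{1-\kappa}\ll\epsilon\,|r|l^2$, i.e. $|r|l^{1+\kappa}\gg\epsilon^{-1}$, which forces a relation between $l$, $\delta$ and the allowed range of $r$: since we also demand $|r|\le\delta/l$, compatibility requires choosing first $\epsilon$, then $\delta=\delta(\epsilon)$ small, then $l=l(\epsilon)$ large enough that $l^{1+\kappa}\ge C\epsilon^{-1}(\delta/l)^{-1}=C\epsilon^{-1}l\delta^{-1}$, i.e. $l^{\kappa}\ge C\epsilon^{-1}\delta^{-1}$ — which is achievable because $l$ is chosen last. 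One must track these inequalities honestly and also verify that the change-of-variables Jacobian error in step (2), which contributes an extra term of size $O(|r|t)\cdot O(t^{1-\kappa})=O(\delta t^{1-\kappa})$, is likewise absorbed; since it carries the small factor $\delta$ it is in fact easier to control than the main error. I would also note that on $E$ the orbit pieces $u_Y^s\overline{u}^r_Yy$ and $u_Y^{s/(1+rs)}y$ remain in a fixed compact neighborhood (by the injectivity-radius/compactness argument as in (\ref{joinings202104.22})), so the uniform continuity of $\tau_Y$ applies to close the comparison of the two integrands; this is the only place the $C^1$ (or Lipschitz) hypothesis on $\tau_Y$ is used in an essential way. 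Having fixed $\delta,l,E$ in this order, (\ref{joinings202106.104}) follows.
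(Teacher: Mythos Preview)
There is a genuine gap in step~(3), and your attempt to close it rests on a logical error. You need $t^{1-\kappa}\le O(\epsilon)|\Delta_r(t)|\asymp O(\epsilon)|r|t^2$, i.e.\ $|r|\,t^{1+\kappa}\gg\epsilon^{-1}$. At the lower endpoint $t=l$ this reads $|r|\,l^{1+\kappa}\gg\epsilon^{-1}$, which is a \emph{lower} bound on $|r|$. But the hypothesis only gives the \emph{upper} bound $|r|\le\delta/l$; nothing prevents $|r|$ from being arbitrarily small, say $|r|=l^{-10}$, in which case $|r|\,l^{1+\kappa}$ is tiny and your comparison fails. When you write ``compatibility requires \ldots\ $l^{1+\kappa}\ge C\epsilon^{-1}(\delta/l)^{-1}$'' you are silently replacing $|r|^{-1}$ by $(\delta/l)^{-1}$, i.e.\ treating the upper bound on $|r|$ as a lower bound. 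No choice of $l=l(\epsilon)$ can repair this, because $l$ is fixed before $r$ is.

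The underlying reason the crude bound fails is that your decomposition throws away the cancellation between the two integrals: each of $\int_0^t(\tau_Y-1)$ and $\int_0^{t/(1+rt)}(\tau_Y-1)$ is individually of size $O(t^{1-\kappa})$, but their \emph{difference} is much smaller because the two orbit segments are $O(|rt|)$-close. The paper's proof exploits exactly this: after the change of variables $s\mapsto s/(1+rs)$ and an application of the mean value theorem, $\Delta^{\tau_Y}_r$ is written as $J_1+J_2+J_3$, where $J_1$ carries the factor $1-(1+rs)^{-2}\approx 2rs/t\cdot\Delta_r(t)/t$ and, after integration by parts together with the ergodic theorem, equals $\Delta_r(t)(1+O(\epsilon))$; the remaining pieces $J_2,J_3$ involve the directional derivatives $\tau_{\overline u}$ and $\tau_a$, which have \emph{mean zero}, and this is what makes them $O(\epsilon)|\Delta_r(t)|$ rather than merely $O(|\Delta_r(t)|)$. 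Your outline never invokes the mean-zero property of $\tau_a,\tau_{\overline u}$, and without it the $C^1$ hypothesis buys you only an $O(|\Delta_r(t)|)$ bound on Term~I (the integrand-difference term), not $O(\epsilon)|\Delta_r(t)|$.
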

\begin{proof}
   Denote
   \[\tau_{a}(y)=\lim_{t\rightarrow0}\frac{\tau_{Y}(a_{Y}^{t}y)-\tau_{Y}(y)}{t},\ \ \ \tau_{\overline{u}}(y)=\lim_{t\rightarrow0}\frac{\tau_{Y}(\overline{u}^{t}_{Y}y)-\tau_{Y}(y)}{t}.\]
   The function $\tau_{g},\tau_{k}$ are continuous on $Y$ and
   \begin{equation}\label{joinings202105.66}
    |\tau_{Y}(y)|,|\tau_{a}(y)|,|\tau_{\overline{u}}(y)|\leq \|\tau_{Y}\|_{C^{1}(Y)}
   \end{equation}
   for  all $y\in Y$. Besides, we have
   \[\int_{Y}\tau_{a}(y)dm_{Y}(y)=\int_{Y}\tau_{\overline{u}}(y)dm_{Y}(y)=0.\]

Given $\epsilon>0$, we fix the data as follows:
\begin{itemize}
  \item   Let $K\subset Y$ be an open subset of $Y$ such that $\overline{K}$ is compact and
   \[m_{Y}(K)>1-\epsilon,\ \ \ m_{Y}(\partial K)=0\]
   where $\partial K$ denotes the boundary of $K$.
\item   Fix a sufficiently small $\delta^{\prime}=\delta^{\prime}(\epsilon)\approx 0$ such that
\begin{enumerate}
\item  $\mu(B(\partial K,\delta^{\prime}))\leq\epsilon$  where $B(\partial K,\delta^{\prime})$ denotes the $\delta^{\prime}$-neighborhood of $\partial K$ (It follows that $\mu(K\setminus B(\partial K,\delta^{\prime}))\geq 1-2\epsilon$);
\item if $y_{1},y_{2}\in\overline{K}$, $d_{Y}(y_{1},y_{2})\leq \delta^{\prime}$ then
   \begin{equation}\label{joinings202105.61}
    |\tau_{a}(y_{1})-\tau_{a}(y_{2})|\leq\epsilon.
   \end{equation}
    \end{enumerate}
\item   Fix $\delta\in(0,\frac{1}{100} \delta^{\prime})$ such that if $|rt|\leq \delta$ then  for all $s\in[0,t]$
   \begin{equation}\label{joinings202105.62}
      \left|\epsilon_{1,t}(s)\right|\leq\epsilon,\ \ \ \text{ where }\ \ \epsilon_{1,t}(s)\coloneqq\frac{\frac{1}{(1+rs)^{2}}-1}{\frac{1}{t} \Delta_{r}(t)}-\frac{2s}{t}.
   \end{equation}
\item   Fix  $t_{1}=t_{1}(\epsilon)>0$ and a subset $E=E(\epsilon)\subset Y$ with $m_{Y}(E)>1-\epsilon$ such that if $y\in E$, $t\in[ t_{1},\infty)$, then the relative length measure of $K\setminus B(\partial K,\delta^{\prime})$ on the orbit interval $[y,u_{Y}^{t}y]$ is at least $1-3\epsilon$ and $\left|\epsilon_{2}(t)\right|\leq\epsilon$, $\left|\epsilon_{3}(t)\right|\leq\epsilon$, where
   \begin{equation}\label{joinings202105.63}
  \epsilon_{2}(t)\coloneqq  \frac{1}{t}\int_{0}^{t}\tau_{Y}(u_{Y}^{s}y)ds-1,\ \ \  \epsilon_{3}(t)\coloneqq  \frac{1}{t}\int_{0}^{t}\tau_{a}(u_{Y}^{s}y)ds.
   \end{equation}
\item Fix $l=l(\epsilon)>t_{1}$ such that
   \begin{equation}\label{joinings202105.64}
     t_{1}/l\leq\epsilon.
   \end{equation}
\end{itemize}
We shall show that if $y,\overline{u}_{Y}^{r}y\in E$ for some $|r|\leq\delta/l$, and $ t\in[l,\delta|r|^{-1}]$ then (\ref{joinings202106.104}) holds if $\epsilon$ is sufficiently small.

 Now let us estimate   $\Delta^{\tau_{Y}}_{r}(y,t)$. Recall that
\[\Delta^{\tau_{Y}}_{r}(y,t)=\int_{0}^{t}\tau_{Y}(u_{Y}^{s}\overline{u}^{r}_{Y}y)ds-\int_{0}^{\frac{t}{1+rt}}\tau_{Y}(u_{Y}^{s}y)ds.\]
Then by (\ref{joinings202105.67}) and the mean value theorem, we have
\begin{align}
\int_{0}^{\frac{t}{1+rt}}\tau_{Y}(u_{Y}^{s}y)ds=& \int_{0}^{t}\tau_{Y}(u_{Y}^{\frac{s}{1+rs}}y)\cdot\frac{ds}{(1+rs)^{2}} \;\nonumber\\
=&  \int_{0}^{t}\tau_{Y}(a_{Y}^{2\log(1+rs)}\overline{u}_{Y}^{-\frac{r}{1+rs}}u^{s}_{Y}\overline{u}^{r}_{Y}y)\cdot\frac{ds}{(1+rs)^{2}} \;\nonumber\\
=&  \int_{0}^{t}\tau_{Y}(u^{s}_{Y}\overline{u}^{r}_{Y}y)\cdot\frac{ds}{(1+rs)^{2}}\;\nonumber\\
 &  -\int_{0}^{t}\frac{r}{1+rs}\tau_{\overline{u}}(\overline{u}_{Y}^{k_{s}}u^{s}_{Y}\overline{u}^{r}_{Y}y)  \cdot\frac{ds}{(1+rs)^{2}}\;\nonumber\\
 & +\int_{0}^{t}2\log(1+rs) \tau_{a}(a_{Y}^{g_{s}}\overline{u}_{Y}^{-\frac{r}{1+rs}}u^{s}_{Y}\overline{u}^{r}_{Y}y) \cdot\frac{ds}{(1+rs)^{2}} \;  \nonumber
\end{align}
where  $k_{s}\in\left[-\frac{r}{1+rs},0\right]$ and   $g_{s}\in[0,2\log(1+rs)]$. This implies
\begin{align}
\Delta^{\tau_{Y}}_{r}(y,t)=& \int_{0}^{t}\tau_{Y}(u_{Y}^{s}\overline{u}^{r}_{Y}y) \left(1- \frac{1}{(1+rs)^{2}}\right)ds\;\nonumber\\
 &  +\int_{0}^{t}\frac{r}{1+rs}\tau_{\overline{u}}(\overline{u}_{Y}^{k_{s}}u^{s}_{Y}\overline{u}^{r}_{Y}y)  \cdot\frac{ds}{(1+rs)^{2}}\;\nonumber\\
  & -\int_{0}^{t}2\log(1+rs) \tau_{a}(a_{Y}^{g_{s}}\overline{u}_{Y}^{-\frac{r}{1+rs}}u^{s}_{Y}\overline{u}^{r}_{Y}y) \cdot\frac{ds}{(1+rs)^{2}} \;\nonumber\\
= & J_{1}+ J_{2}+J_{3}. \;  \nonumber
\end{align}
We estimate the integrals $J_{1},J_{2},J_{3}$ separately:
\begin{enumerate}[\ \ \ (1)]
  \item  Using  (\ref{joinings202105.62}) (\ref{joinings202105.63}), we have
\begin{align}
  J_{1}=& 2\Delta_{r}(t)\frac{1}{t^{2}}\int_{0}^{t}s\tau_{Y}(u_{Y}^{s}\overline{u}^{r}_{Y}y) ds+ \Delta_{r}(t)\frac{1}{t}\int_{0}^{t}\epsilon_{1,t}(s)\tau_{Y}(u_{Y}^{s}\overline{u}^{r}_{Y}y) ds\;\nonumber\\
  =&  2\Delta_{r}(t)\frac{1}{t^{2}}\int_{0}^{t}s\tau_{Y}(u_{Y}^{s}\overline{u}^{r}_{Y}y) ds +\Delta_{r}(t) O(\epsilon)  \;  \nonumber
\end{align}
since $\overline{u}^{r}_{Y}y\in E$.  Now by the integration by parts and (\ref{joinings202105.63}), (\ref{joinings202105.66}) (\ref{joinings202105.64}), we have
\begin{align}
 &  \frac{1}{t^{2}}\int_{0}^{t}s\tau_{Y}(u_{Y}^{s}\overline{u}^{r}_{Y}y) ds \;\nonumber\\
=&   \frac{1}{t} \int_{0}^{t}\tau_{Y}(u_{Y}^{s}\overline{u}^{r}_{Y}y) ds-\frac{1}{t^{2}} \int_{0}^{t}\left(\int_{0}^{s}\tau_{Y}(u_{Y}^{p}\overline{u}^{r}_{Y}y) dp\right) ds \;\nonumber\\
=& 1+\epsilon_{2}(t)-\frac{1}{t^{2}}\left[\int_{t_{1}}^{t} +\int_{0}^{t_{1}}\right]\left(\int_{0}^{s}\tau_{Y}(u_{Y}^{p}\overline{u}^{r}_{Y}y) dp\right) ds \;\nonumber\\
=&  1+\epsilon_{2}(t)-\frac{1}{t^{2}}\int_{t_{1}}^{t}s\left( 1+\epsilon_{2}(s)\right)ds +O(\epsilon)=  \frac{1}{2} +O(\epsilon).\;  \nonumber
\end{align}
 It follows that
\[\left|\frac{J_{1}}{\Delta_{r}(t)}-1\right|\leq O(\epsilon).\]
\item For $J_{2}$, by (\ref{joinings202105.64}), we have
\[|J_{2}|= \left|\int_{0}^{t}\frac{r}{1+rs}\tau_{\overline{u}}(\overline{u}_{Y}^{k_{s}}u^{s}_{Y}\overline{u}^{r}_{Y}y)  \cdot\frac{ds}{(1+rs)^{2}}\right|\leq O\left(\frac{|\Delta_{r}(t)|}{t}\right)\leq O(\epsilon)|\Delta_{r}(t)|.\]
\item Note that since   $d_{Y}(a_{Y}^{g_{s}}\overline{u}_{Y}^{-\frac{r}{1+rs}}u^{s}_{Y}\overline{u}^{r}_{Y}y ,u^{s}_{Y}\overline{u}^{r}_{Y}y)<\delta^{\prime}$,  we know $a_{Y}^{g_{s}}\overline{u}_{Y}^{-\frac{r}{1+rs}}u^{s}_{Y}\overline{u}^{r}_{Y}y \in \overline{K}$ if $u^{s}_{Y}\overline{u}^{r}_{Y}y\in K\setminus B(\partial K,\delta^{\prime})$. Now set
\[I_{y}\coloneqq\{s\in[0,t]:u^{s}_{Y}\overline{u}^{r}_{Y}y\in K\setminus B(\partial K,\delta^{\prime})\}.\]
Then by (\ref{joinings202105.63}), one has $\Leb(I_{y}^{c})<3\epsilon t$. Then for $J_{3}$, using (\ref{joinings202105.66}) and (\ref{joinings202105.61}),  we have
\begin{align}
 &  \left|J_{3}- \left(-\int_{0}^{t}2\log(1+rs) \tau_{a}(u^{s}_{Y}\overline{u}^{r}_{Y}y) \cdot\frac{ds}{(1+rs)^{2}}\right)\right|\;\nonumber\\
\ll &  \left| \log(1+rt)\right|\left[\int_{I_{y}}\left|\tau_{a}(a_{Y}^{g_{s}}\overline{u}_{Y}^{-\frac{r}{1+rs}}u^{s}_{Y}\overline{u}^{r}_{Y}y) -\tau_{a}(u^{s}_{Y}\overline{u}^{r}_{Y}y) \right|ds+ \epsilon t\|\tau_{Y}\|_{C^{1}(Y)}\right]\;\nonumber\\
\leq & t\left| \log(1+rt)\right|(\epsilon+\epsilon  \|\tau_{Y}\|_{C^{1}(Y)})\ll O(\epsilon)\left| \Delta_{r}(t)\right|.\;  \nonumber
\end{align}
We also have
\begin{align}
 &  \left| \int_{0}^{t}2\log(1+rs) \tau_{a}(u^{s}_{Y}\overline{u}^{r}_{Y}y) \cdot\frac{ds}{(1+rs)^{2}} - \int_{0}^{t}2\log(1+rs) \tau_{a}(u^{s}_{Y}\overline{u}^{r}_{Y}y) ds \right|\;\nonumber\\
  =&  \left| \int_{0}^{t}2\log(1+rs) \tau_{a}(u^{s}_{Y}\overline{u}^{r}_{Y}y) \cdot\left(\frac{1}{(1+rs)^{2}} -1\right) ds\right|\;\nonumber\\
\ll& |\Delta_{r}(t)|\|\tau_{Y}\|_{C^{1}(Y)} \delta\ll O(\epsilon)|\Delta_{r}(t)|. \;  \nonumber
\end{align}
Finally, by using the integration by parts, we get
\begin{align}
 & \left|\int_{0}^{t} \log(1+rs) \tau_{a}(u^{s}_{Y}\overline{u}^{r}_{Y}y) ds\right|\;\nonumber\\
=&  \left|\log(1+rt) \int_{0}^{t} \tau_{a}(u^{s}_{Y}\overline{u}^{r}_{Y}y)ds -\int_{0}^{t}  \left(\int_{0}^{s}\tau_{a}(u^{p}_{Y}\overline{u}^{r}_{Y}y)dp\right)\frac{r}{1+rs} ds\right|\;\nonumber\\
\ll&   \frac{|\Delta_{r}(t)|}{t}\left| \int_{0}^{t} \tau_{a}(u^{s}_{Y}\overline{u}^{r}_{Y}y)ds \right|+ \frac{|\Delta_{r}(t)|}{t^{2}}\left|\int_{0}^{t}  \left(\int_{0}^{s}\tau_{a}(u^{p}_{Y}\overline{u}^{r}_{Y}y)dp\right)ds\right|\;\nonumber\\
=& \epsilon_{3}(t) |\Delta_{r}(t)| +\frac{|\Delta_{r}(t)|}{t^{2}} \left|\left[\int_{0}^{t_{1}}   +\int_{t_{1}}^{t}  \right] \left(\int_{0}^{s}\tau_{a}(u^{p}_{Y}\overline{u}^{r}_{Y}y)dp\right)ds\right|\;\nonumber\\
\ll&  \epsilon_{3}(t) |\Delta_{r}(t)| +\frac{|\Delta_{r}(t)|}{t^{2}}\left|  t_{1}^{2}\|\tau_{Y}\|_{C^{1}(Y)}   +\int_{t_{1}}^{t} s\epsilon_{3}(s) ds \right|\ll O(\epsilon)|\Delta_{r}(t)| .\;  \nonumber
\end{align}
Thus, we conclude that $|J_{3}|\leq O(\epsilon)|\Delta_{r}(t)|$.
\end{enumerate}
Therefore, combining the above estimates, we have
\[|\Delta^{\tau_{Y}}_{r}(y,t)-\Delta_{r}(t)|\leq O(\epsilon)\Delta_{r}(t).\]
This completes the proof of the lemma.
\end{proof}

The following lemma tells us that we only need to know the fastest relative motion at finitely many different time points to determine the difference of two nearby points.

\begin{lem}[Shearing comparison]\label{joinings202106.121} Given $\epsilon>0$, let $x,y,z\in \overline{X}$ be three $\epsilon$-nearby points such that the fastest relative motions between the pairs $(x,z)$ and $(y,z)$ at time $t>0$ are $q_{1}(t)$ and $q_{2}(t)$ respectively. Assume that there are $s_{1},s_{2}>0$ with $s_{1}\in[\frac{1}{3}s_{2},\frac{2}{3}s_{2}]$ such that
\[d_{\overline{X}}(u_{X}^{s_{i}}x,u_{X}^{s_{i}}q_{1}(s_{i})z)<\epsilon,\ \ \ d_{\overline{X}}(u_{X}^{s_{i}}y,u_{X}^{s_{i}}q_{2}(s_{i})z)<\epsilon,\ \ \ d_{G_{X}}(q_{1}(s_{i}),q_{2}(s_{i}))<\epsilon\]
for $i\in\{1,2\}$.
 Then we have
 \begin{equation}\label{joinings202106.119}
  d_{\overline{X}}(u_{X}^{t}x,u_{X}^{t}y)<O(\epsilon)
 \end{equation}
for $t\in[0,s_{2}]$.
\end{lem}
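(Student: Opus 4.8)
The statement is a comparison lemma: three pairwise $\epsilon$-close points $x,y,z$ in $\overline{X}$, with the divergence of $x$ from $z$ and of $y$ from $z$ along the unipotent flow described by fastest relative motions $q_1(t), q_2(t)$, and we are given that at two well-separated times $s_1, s_2$ (with $s_1$ roughly in the middle third of $[0,s_2]$) the flowed points stay $\epsilon$-close after correcting by $q_i(s_i)$, and moreover $q_1$ and $q_2$ agree up to $\epsilon$ at those times. We want to conclude $d_{\overline{X}}(u_X^t x, u_X^t y) < O(\epsilon)$ on the whole interval $[0,s_2]$. The idea is that the relative motion between $x$ and $z$ (and between $y$ and $z$) is governed by a polynomial/matrix expression in $t$ of bounded degree — precisely the $SO_0(2,1)$-part and $V^{\perp}$-part expansions in \eqref{joinings202104.60} and \eqref{dynamical systems5} — and knowing that expression is small at two points $s_1, s_2$ with $s_1 \asymp s_2$ pins down the low-order coefficients, hence controls the expression on all of $[0,s_2]$.

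First I would lift to the group: choose representatives $g_x, g_y, g_z \in G_X$ with $g_x g_z^{-1} = h_1\exp(v_1)$, $g_y g_z^{-1} = h_2 \exp(v_2)$ where $h_i \in SO_0(2,1)$, $v_i \in V^{\perp}$, all of norm $O(\epsilon)$ (using that the three points are mutually $\epsilon$-close and the injectivity radius is controlled — here one should intersect with a large-measure set, or simply assume $\epsilon$ small enough that the lift is unambiguous; the statement as phrased is local). The fastest relative motion $q_1(t) = \pi_{C_{\mathfrak g}(U)}\Ad(u_X^t).v_1$ together with the $SO_0(2,1)$ shearing matrix $u_X^t h_1 u_X^{-t}$ completely determines $u_X^t g_x u_X^{-t} (g_z)^{-1}$ up to the centralizer correction. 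The hypothesis $d_{\overline X}(u_X^{s_i} x, u_X^{s_i} q_1(s_i) z) < \epsilon$ then says exactly that, after removing the centralizer direction $q_1(s_i)$, the remaining $SO_0(2,1)$-part $u_X^{s_i} h_1 u_X^{-s_i}$ and the non-central part of $\Ad(u_X^{s_i}).v_1$ are $O(\epsilon)$ — i.e. conditions of the form \eqref{joinings202104.61} hold at $t = s_i$.

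Next, apply Lemma \ref{dynamical systems4} (the quantitative polynomial estimate): the entries of $u_X^t h_1 u_X^{-t}$ are, by \eqref{joinings202104.60}, a quadratic in $t$ whose coefficients are $b$, $a-d$, $c$; smallness at $s_1$ and $s_2 \asymp s_1$ forces $|b| \ll \epsilon/s_2^{2}$-type bounds and $|a-d| \ll \epsilon/s_2$, and likewise for the $V^{\perp}$-part the coefficient $b_i$ of $v_i$ satisfies $|b_i| \ll \epsilon\, s_2^{-(\varsigma - i)}$ (cf. Lemma \ref{dynamical systems8}, Lemma \ref{dynamical systems7}). Combined with the a priori bound $|c| = O(\epsilon)$ coming from the three points being close, these coefficient estimates make the whole polynomial expression $u_X^t h_1 u_X^{-t} \exp(\Ad u_X^t. v_1)$ stay $O(\epsilon)$ for every $t \in [0, s_2]$, not just at $s_1, s_2$ — this is the standard "small at two separated points $\Rightarrow$ small on the interval" phenomenon for bounded-degree polynomials, exactly the mechanism behind the interval structure in Lemma \ref{time change176}. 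The same argument applied to $g_y g_z^{-1}$ gives $d_{\overline X}(u_X^t y, u_X^t q_2(t) z) = O(\epsilon)$ on $[0,s_2]$, where I extend $q_2$ as the natural centralizer projection. Finally, since $d_{G_X}(q_1(s_i), q_2(s_i)) < \epsilon$ at the two times and both $q_i(t)$ are polynomial in $t$ of bounded degree with coefficients already controlled, the difference $q_1(t) q_2(t)^{-1}$ stays $O(\epsilon)$ on $[0,s_2]$ as well; chaining the three comparisons $u_X^t x \approx u_X^t q_1(t) z \approx u_X^t q_2(t) z \approx u_X^t y$ by the triangle inequality yields \eqref{joinings202106.119}.

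The main obstacle is making the "small at $s_1, s_2 \asymp s_1 \Rightarrow$ small on $[0,s_2]$" step genuinely uniform: Lemma \ref{dynamical systems4} as stated is about the \emph{first} interval $[0,\overline l_1]$ on which a polynomial is small, and one must argue that the hypothesis at the separated point $s_2$ (which lies in the \emph{same} first interval, precisely because $s_1 \in [\tfrac13 s_2, \tfrac23 s_2]$ rules out an effective gap in between — here the $s_1 \asymp s_2$ condition is doing real work) forces $\overline l_1 \gtrsim s_2$, so that $[0,s_2]$ is contained in the good interval and part (1) of Lemma \ref{dynamical systems4} gives the coefficient bounds with $\overline l_1$ replaced by $s_2$. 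One must also be careful that the centralizer corrections $q_1(t), q_2(t)$ lie in the non-compact part $\mathfrak n$ and do not themselves contribute uncontrolled growth; this is handled exactly as in Remark \ref{joinings202103.4} by discarding the negligible weight-$0$ piece $v_0$. I expect the bookkeeping of which coefficients are $O(\epsilon)$ outright versus which are $O(\epsilon \cdot s_2^{-k})$ to be the only delicate point; everything else is triangle inequality plus the already-established polynomial estimates.
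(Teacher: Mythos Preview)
Your approach is correct and rests on the same mechanism as the paper's proof: both reduce to the polynomial interpolation of Lemma~\ref{dynamical systems4} (applied with $\kappa=1$), using smallness at the three separated points $0,s_{1},s_{2}$ to bound the coefficients $|b_{j}^{\varsigma(j)-i}|\ll \epsilon\, s_{2}^{-i}$ and hence control the relevant degree-$\le 2$ polynomials on all of $[0,s_{2}]$. The only organizational difference is that the paper works directly with $g=h_{1}h_{2}^{-1}$ and the single factorization $u_{X}^{t}gu_{X}^{-t}=\delta_{1}(t)\,q_{1}(t)q_{2}(t)^{-1}\,\delta_{2}(t)^{-1}$, so that the only polynomial needing interpolation is the centralizer-valued $q_{1}(t)q_{2}(t)^{-1}$; your three-leg triangle inequality $(x\!\to\! q_{1}z\!\to\! q_{2}z\!\to\! y)$ unpacks exactly the same computation and in fact makes explicit why the non-centralizer remainders $\delta_{i}(t)$ stay $O(\epsilon)$ on the whole interval, a step the paper asserts without detail.
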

\begin{proof} This is a direct consequence of Lemma \ref{dynamical systems4}.
   Assume that $x=gy$, $x=h_{1}z$, $y=h_{2}z$ for some $g,h_{1},h_{2}\in G_{X}$. Then by the definition (\ref{joinings202106.116}), there are $\delta_{1}(t)$, $\delta_{2}(t)\in G_{X}$ with $d_{G_{X}}(\delta_{1}(t),e)<\epsilon$, $d_{G_{X}}(\delta_{2}(t),e)<\epsilon$ such that
   \[u_{X}^{t}h_{1}u_{X}^{-t}=\delta_{1}(t)q_{1}(t),\ \ \ u_{X}^{t}h_{2}u_{X}^{-t}=\delta_{2}(t)q_{2}(t)\]
    for $t\in[0,s]$. By the assumption, we have
    \begin{equation}\label{joinings202106.118}
      u_{X}^{t}gu_{X}^{-t}=u_{X}^{t}h_{1}h_{2}^{-1}u_{X}^{-t}=\delta_{1}(t)q_{1}(t)q_{2}(t)^{-1}\delta_{2}(t)^{-1}
    \end{equation}
    and
    \begin{equation}\label{joinings202106.117}
      q_{1}(s_{1})q_{2}(s_{1})^{-1}<\epsilon,\ \ \ q_{1}(s_{2})q_{2}(s_{2})^{-1}<\epsilon
    \end{equation}
    Note that $q_{1}(t)q_{2}(t)^{-1}\in C_{G_{X}}(U_{X})$ and so their corresponding vectors in the Lie algebra are polynomials of $t$ with the degree at most $2$ (see (\ref{dynamical systems5})  (\ref{joinings202106.116})); Thus, we can write
     \[h_{1}h_{2}^{-1}=\exp\left(\sum_{j} \sum_{i=0}^{\varsigma(j)}b_{j}^{i}v_{j}^{i}\right),\ \ \ q_{1}(t)q_{2}(t)^{-1}=\exp\left(\sum_{j} p_{j}(t)v_{j}^{\varsigma(j)}\right)\]
     where $p_{j}(t)=\sum^{\varsigma(j)}_{i=0}b_{j}^{\varsigma(j)-i}\binom{\varsigma(j)}{i}t^{i}$ is a polynomial having the degree at most $2$, $|b_{i}|<\epsilon$, $v_{j}^{i}\in V_{j}$ is the $i$-th weight vector of the $\mathfrak{sl}_{2}$-irreducible representation $V_{j}$.
     Then (\ref{joinings202106.117}) and the proof of Lemma \ref{dynamical systems4} (1) with $\kappa=1$ imply that
     \begin{equation}\label{joinings202106.120}
      |b_{j}^{\varsigma(j)-i}|<O(\epsilon)s_{2}^{-i}.
     \end{equation}
     It follows that for $t\in[0,s_{2}]$
     \[|p_{j}(t)|<O(\epsilon)\ \ \  \text{and so}\ \ \ q_{1}(t)q_{2}(t)^{-1}<O(\epsilon).\]
     Then by (\ref{joinings202106.118}), we obtain (\ref{joinings202106.119}).
\end{proof}

     Next, we shall prove Theorem \ref{joinings202106.122}.  The idea is to consider the fastest relative motion of the pairs $(\Psi_{k,p}(\overline{u}_{Y}^{r}y), \Psi_{k,p}(y))$ and $(\overline{u}_{X}^{r}\Psi_{k,p}(y),\Psi_{k,p}(y))$ at finitely many time points. And then apply Lemma \ref{joinings202106.121}. First, we orderly fix the following data:
\begin{itemize}
   \item (Injectivity radius)
Since $\Gamma_{X}$ is discrete, there is a compact $K_{1}\subset \overline{X}$ with $\nu(\overline{\psi}_{p}^{-1}(K_{1}))>\frac{999}{1000}$ and $D_{1}=D_{1}(K_{1})>0$ such that if $\overline{g}\in \overline{P}^{-1}(K_{1})$, then $D_{1}$ is an isometry on the ball $B_{C^{\rho}\backslash G_{X}}(\overline{g},D_{1})$ of radius $D_{1}$ centered at $\overline{g}$. Here $\overline{P}:C^{\rho}\backslash G_{X}\rightarrow C^{\rho}\backslash G_{X}/\Gamma_{X}=\overline{X}$ is the projection
\[\overline{P}:C^{\rho} g\mapsto C^{\rho}  g\Gamma_{X}.\]
\item (Distinguishing $\overline{\psi}_{p},\overline{\psi}_{q}$) There is  $K_{2}\subset Y$ with $\nu(K_{2})>\frac{999}{1000}$ such that
         \begin{equation}\label{joinings202105.49}
          d_{\overline{X}}(\overline{\psi}_{p}(y),\overline{\psi}_{q}(y))>D_{2}
         \end{equation}
          for $y\in K_{2}$, $1\leq p< q\leq n$.
  \item Define $D=\min\{D_{1},D_{2},1\}$.
\item (Lemma \ref{joinings202105.65}) Let $\delta_{k}=\min\left\{\delta\left(\frac{1}{10}2^{-k}D\right), \frac{1}{10}2^{-k}D\right\}$, $l_{k}=l\left(\frac{1}{10}2^{-k}D\right)$ and $E_{k}=E\left(\frac{1}{10}2^{-k}D\right)\subset Y$ be as in Lemma \ref{joinings202105.65} for $\tau_{Y}$.
     \item (Lusin's theorem) There is  $K^{\prime}_{k}\subset Y$ such that $\nu(K^{\prime}_{k})>1-\frac{1}{10}2^{-k}$ and  $\overline{\psi}_{p}|_{K^{\prime}_{k}}$ is uniformly continuous for all $p\in\{1,\ldots,n\}$. Thus,   for any $\epsilon>0$, there is $\delta^{\prime}(\epsilon)>0$ such that  for $p\in\{1,\ldots,n\}$, $d_{Y}(y_{1},y_{2})<\delta^{\prime}(\epsilon)$ and $y_{1},y_{2}\in K^{\prime}_{k}$, we have
         \begin{equation}\label{joinings202105.50}
          d_{\overline{X}}(\overline{\psi}_{p}(y_{1}),\overline{\psi}_{p}(y_{2}))<\epsilon.
         \end{equation}
        Let $\delta^{\prime}_{k}=\min\left\{\delta^{\prime}\left(\frac{1}{10}2^{-k} D\right), \frac{1}{10}2^{-k}D\right\}$.

            \item (Ergodicity) Fix $\tau_{Y}\in C^{1}(Y)$. By the ergodicity of unipotent flows, there are $T_{k}\geq\max\{l_{k},20\delta_{k}^{-1},20\delta_{k}^{\prime-1}\}$ and subsets $K^{\prime\prime}_{k}\subset Y$ with $\nu(K^{\prime\prime}_{k})>1-\frac{1}{10}2^{-k}$ such that if $y\in K^{\prime\prime}_{k}$, $t\geq T_{k}$ then
                \begin{enumerate}[\ \ \ (1)]
                  \item  the relative length measure of $K^{\prime}_{k}\cap E_{k}\cap K_{2} \cap \bigcap_{p}\overline{\psi}_{p}^{-1}(K_{1})$ on the orbit interval $[y,u^{t}_{Y}y]$ is at least $\frac{998}{1000}$;
                  \item we have by the ergodic theorem
\begin{equation}\label{joinings202106.110}
 \left|\frac{1}{t}z(y,t)-1\right|=\left|\frac{1}{t}\int_{0}^{t}\tau_{Y}(u_{Y}^{s}y)ds-1\right|\leq\frac{1}{10}2^{-k}D.
\end{equation}
                \end{enumerate}

    \item (Fastest relative motion)
     \begin{enumerate}[\ \ \ (1)]
      \item  For $r\in\mathbf{R}$, let $L_{1}^{i}(r)$ denote the first $t>0$ with $\Delta_{r}(t)=i^{2}D/10$ for $i\in\{1,2\}$ where $\Delta_{r}(t)$ is defined in (\ref{joinings202106.100}). Note that for sufficiently small $r$, one may calculate that
          \begin{equation}\label{joinings202106.123}
           L_{1}^{1}(r)\in[\frac{9}{20}L_{1}^{2}(r),\frac{11}{20}L_{1}^{2}(r)].
          \end{equation}
      \item As in (\ref{dynamical systems1}), for $\overline{x_{1}},\overline{x_{2}}\in \overline{X}$ close enough,  we can write $\overline{x_{1}}=\overline{gx_{2}}$ where  $g=\exp (v)$ for $v\in \mathfrak{sl}_{2}+V^{\rho\perp}$. Then the H-property (Remark  \ref{joinings202103.4}) tells us that at time $t\in\mathbf{R}$, the \textit{fastest relative motion}\index{fastest relative motion} is given by
           \[ q(\overline{x_{1}},\overline{x_{2}},t)=\pi_{C_{\mathfrak{g}_{\overline{X}}}(U_{X})} \Ad(u_{X}^{t}).v.\]
            Then let $L^{i}_{2}(\overline{x_{1}},\overline{x_{2}})$ denote the first $t>0$ with $\|q(\overline{x_{1}},\overline{x_{2}},t)\|=i^{2}D/10$.
      \end{enumerate}
      For $y\in Y$, $i\in\{1,2\}$, let
      \begin{equation}\label{joinings202106.107}
       L^{i}(y,r)\coloneqq\min\left\{L^{i}_{1}(r),L^{i}_{2}(\overline{\psi}_{1}(\overline{u}^{r}y),\overline{\psi}_{1}(y)),\ldots,,L^{i}_{2}(\overline{\psi}_{n}(\overline{u}^{r}y),\overline{\psi}_{n}(y))\right\}.
      \end{equation}
      By applying Theorem \ref{h-property202103.1} to $Q=B_{C_{G_{Y}}(U_{Y})}(e,i^{2}D/10)$ and $\epsilon=\frac{1}{10}2^{-k}$, we can choose small $0<\omega_{k}\leq \min\{\delta_{k},\delta^{\prime}_{k}\}$ such
        that if $|r|\leq\omega_{k}$, $y,\overline{u}^{r}y\in K^{\prime}_{k}$, $i\in\{1,2\}$, then we have
\begin{equation}\label{joinings202106.101}
 L^{i}=L^{i}(y,r)\geq\max\left\{10T_{k},\frac{10i^{2}D}{\delta^{\prime}_{k}}\right\}
\end{equation}
and for all $p\in\{1,\ldots, p\}$
\begin{equation}\label{joinings202106.106}
  \| q^{i}_{p}\|\leq \frac{i^{2}D}{10},\ \ \  d_{\overline{X}}\left(u_{X}^{L}\overline{\psi}_{p}(\overline{u}^{r}y),u_{X}^{L}\overline{q^{i}_{p}(L)\psi_{p}(y)}\right)\leq\frac{1}{10}2^{-k}D
\end{equation}
where $q^{i}_{p}=q(\overline{\psi}_{p}(\overline{u}^{r}y),\overline{\psi}_{p}(y),L^{i})$.
   \end{itemize}
Now let
 \begin{equation}\label{joinings202106.103}
 K_{k}^{0}\coloneqq K^{\prime}_{k}\cap K^{\prime\prime}_{k}\cap E_{k}.
 \end{equation}
 It follows that $\nu(K_{k}^{0})>1-  2^{-k}$. Let
 \begin{equation}\label{joinings202106.130}
    \lambda_{k}\coloneqq2 \cdot\max\left\{ \log\frac{10}{\omega_{k}},\log T_{k}\right\},\ \ \ \Omega\coloneqq\bigcup_{l\geq 1}\bigcap_{k\geq l}a^{\lambda_{k}}_{Y}(K_{k}^{0}),\ \ \ \Psi_{k,p}(y)\coloneqq a_{X}^{\lambda_{k}}\overline{\psi}_{p}(a_{Y}^{-\lambda_{k}}y).
 \end{equation}
  It follows that $\nu(\Omega)>1$.
  \begin{thm}\label{joinings202106.122}  Let the notation and assumption be as above.    Then  for   $r\in\mathbf{R}$, $y\in \Omega$, we have
 \[\lim_{n\rightarrow\infty}d_{\overline{X}}(\Psi_{k,p}(\overline{u}_{Y}^{r}y), \overline{u}_{X}^{r}\Psi_{k,p}(y))=0.\]
  \end{thm}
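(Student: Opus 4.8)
The plan is to run Ratner's argument from \cite{ratner1987rigid} in the present $a$-adjusted form. Fix $r\in\mathbf{R}$ and $y$; enlarging the excluded null set, we may assume in addition that $\overline{u}_Y^{r}y\in\Omega$ (this costs only a $\nu$-null set for each fixed $r$ and is harmless in the applications). Write $z_k\coloneqq a_Y^{-\lambda_k}y$. From the $\mathfrak{sl}_2$-relations $a_Y^{-\lambda}\overline{u}_Y^r=\overline{u}_Y^{re^{-\lambda}}a_Y^{-\lambda}$ and $\overline{u}_X^r a_X^{\lambda}=a_X^{\lambda}\overline{u}_X^{re^{-\lambda}}$ (recall $\overline{U}_X,\overline{U}_Y$ have weight $+1$) one gets
\[\Psi_{k,p}(\overline{u}_Y^r y)=a_X^{\lambda_k}\overline{\psi}_p(\overline{u}_Y^{re^{-\lambda_k}}z_k),\qquad \overline{u}_X^r\Psi_{k,p}(y)=a_X^{\lambda_k}\,\overline{u}_X^{re^{-\lambda_k}}\overline{\psi}_p(z_k).\]
For $k$ large (depending on $r$), the choice of $\lambda_k$ in \eqref{joinings202106.130} gives $|re^{-\lambda_k}|\leq\omega_k$, while $y,\overline{u}_Y^r y\in\Omega$ forces $z_k,\ \overline{u}_Y^{re^{-\lambda_k}}z_k=a_Y^{-\lambda_k}\overline{u}_Y^r y\in K_k^0$. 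Thus it suffices to show that $\overline{\psi}_p(\overline{u}_Y^{re^{-\lambda_k}}z_k)$ and $\overline{u}_X^{re^{-\lambda_k}}\overline{\psi}_p(z_k)$ differ by an element $\exp(w_k)$ whose coefficients, in each $\mathfrak{sl}_2$-irreducible block of $\mathfrak{g}_X$ and weight by weight, decay fast enough that $\Ad(a_X^{\lambda_k})w_k\to0$; by the proof of Lemma \ref{joinings202106.121} this reduces to controlling, for $i\in\{1,2\}$, the fastest relative motions of the two pairs $(\overline{\psi}_p(\overline{u}_Y^{re^{-\lambda_k}}z_k),\overline{\psi}_p(z_k))$ and $(\overline{u}_X^{re^{-\lambda_k}}\overline{\psi}_p(z_k),\overline{\psi}_p(z_k))$ at two comparable times $L^1,L^2$.

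Next I would feed $(z_k,re^{-\lambda_k})$ and $\epsilon=\tfrac1{10}2^{-k}$ into the fastest-relative-motion package \eqref{joinings202106.107}--\eqref{joinings202106.106} (valid since $z_k,\overline{u}_Y^{re^{-\lambda_k}}z_k\in K_k'\cap E_k$ and $|re^{-\lambda_k}|\leq\omega_k$): it produces $L^i=L^i(z_k,re^{-\lambda_k})\geq\max\{10T_k,\,10i^2D/\delta_k'\}$ with $L^1\in[\tfrac13 L^2,\tfrac23 L^2]$ (from \eqref{joinings202106.123} in the regime $L^i=L^i_1$, and from the quadratic growth of the relative motion once the computation below pins $w_k\approx re^{-\lambda_k}\overline{U}_X$), vectors $q^i_p=q(\overline{\psi}_p(\overline{u}_Y^{re^{-\lambda_k}}z_k),\overline{\psi}_p(z_k),L^i)$ with $\|q^i_p\|\leq i^2D/10$, and the estimate $d_{\overline{X}}\bigl(u_X^{L^i}\overline{\psi}_p(\overline{u}_Y^{re^{-\lambda_k}}z_k),u_X^{L^i}\overline{q^i_p(L^i)\psi_p(z_k)}\bigr)\leq\tfrac1{10}2^{-k}D$.

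The heart of the argument is the independent, ``extrinsic'' computation of $u_X^{L^i}\overline{\psi}_p(\overline{u}_Y^{re^{-\lambda_k}}z_k)$. By $\widetilde u_Y$-equivariance of $\overline{\psi}_p$ this equals $\overline{\psi}_{p'}(u_Y^{t^i}\overline{u}_Y^{re^{-\lambda_k}}z_k)$, where $z(\overline{u}_Y^{re^{-\lambda_k}}z_k,t^i)=L^i$; by the effective ergodicity \eqref{joinings202104.24}, $t^i=L^i+O((L^i)^{1-\kappa})$, so $t^i\asymp L^i$ lies in the admissible window $[l_k,\delta_k|re^{-\lambda_k}|^{-1}]$ of Lemma \ref{joinings202105.65}. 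Applying the matrix identity \eqref{joinings202105.67},
\[u_Y^{t^i}\overline{u}_Y^{re^{-\lambda_k}}=\overline{u}_Y^{\rho_i}\,a_Y^{-\sigma_i}\,u_Y^{\tau_i},\qquad \rho_i=\tfrac{re^{-\lambda_k}}{1+re^{-\lambda_k}t^i},\quad \sigma_i=2\log(1+re^{-\lambda_k}t^i),\quad \tau_i=t^i-\Delta_{re^{-\lambda_k}}(t^i),\]
one has $|\rho_i|\leq|re^{-\lambda_k}|\to0$ and $|\sigma_i|\asymp\Delta_{re^{-\lambda_k}}(t^i)/t^i\to0$. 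The leftover $\overline{u}_Y^{\rho_i}$ is absorbed by uniform continuity of $\overline{\psi}_p$ on $K_k'$ (estimate \eqref{joinings202105.50}), and the small $a_Y^{-\sigma_i}$ by the normal-direction equivariance (Theorem \ref{joinings202106.149}); the resulting $u_Y$-time discrepancy between this expression and the one obtained by applying \eqref{joinings202105.67} on the $X$-side to $u_X^{L^i}\overline{u}_X^{re^{-\lambda_k}}\overline{\psi}_p(z_k)$ is exactly $\Delta^{\tau_Y}_{re^{-\lambda_k}}(z_k,t^i)-\Delta_{re^{-\lambda_k}}(L^i)$, which Lemma \ref{joinings202105.65} together with $t^i=L^i+O((L^i)^{1-\kappa})$ and $L^i\to\infty$ bounds by $O(2^{-k}D)$; the indices $p'$ agree on the two sides because the $\overline{\psi}_q$ are $100\epsilon$-separated (cf. \eqref{joinings202105.49}). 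Hence the two fastest relative motions agree at $L^1,L^2$ up to $O(2^{-k}D)$, Lemma \ref{joinings202106.121} gives $d_{\overline{X}}(u_X^t\overline{\psi}_p(\overline{u}_Y^{re^{-\lambda_k}}z_k),u_X^t\overline{u}_X^{re^{-\lambda_k}}\overline{\psi}_p(z_k))=O(2^{-k}D)$ for $t\in[0,L^2]$, and its proof supplies the weight-by-weight decay of the relative Lie-algebra element; since $L^2\asymp e^{\lambda_k/2}$, conjugation by $a_X^{\lambda_k}$ preserves this smallness, and transporting back to $\Psi_{k,p}$ finishes the proof.

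The main obstacle is this last ``extrinsic'' step: carrying out the $\mathfrak{sl}_2$-decomposition cleanly while keeping \emph{every} error term uniformly $O(2^{-k}D)$ — in particular controlling the $a_Y$- and $C_{G_Y}(U_Y)$-equivariance corrections for the tiny exponent $\sigma_i$ (this is where $\tau_Y\in C^1(Y)$ and integrability of $\alpha(c,\cdot)$ are used), arranging that the intermediate points $u_Y^{\tau_i}z_k$ and $a_Y^{-\sigma_i}u_Y^{\tau_i}z_k$ really land in the Lusin/ergodic good sets (by perturbing $L^i$ within a positive-density set of times, exactly as in Section \ref{joinings202104.33}), and verifying the self-consistent estimate $L^2\asymp e^{\lambda_k/2}$ that renders the $a_X^{\lambda_k}$-renormalization harmless.
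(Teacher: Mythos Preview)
Your high-level strategy matches the paper's: renormalize by $a^{\lambda_k}$, feed the pair into the H-property/fastest-relative-motion machinery at two comparable scales $L^1,L^2$, invoke Lemma~\ref{joinings202105.65} to control $\Delta^{\tau_Y}_{r_k}$ against $\Delta_{r_k}$, and conclude via Lemma~\ref{joinings202106.121} plus a weight-by-weight check that $\Ad(a_X^{\lambda_k})$ does not destroy the $O(2^{-k})$ gain. That skeleton is exactly right.

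There is, however, a genuine gap in your ``extrinsic'' step. You propose to absorb the factor $a_Y^{-\sigma_i}$ via the normal-direction equivariance of Theorem~\ref{joinings202106.149}. That equivariance reads, on points of $\supp\rho$,
\[
\overline{\psi}_p(a_Y^{-\sigma_i}y')=u_X^{\alpha(a_Y^{-\sigma_i},y')}\beta(a_Y^{-\sigma_i})a_X^{-\sigma_i}\overline{\psi}_{p'}(y'),
\]
and to make this useful you need $\alpha(a_Y^{-\sigma_i},y')$ to be \emph{small} (uniformly over the relevant $y'=u_Y^{\tau_i}z_k$), not merely integrable. At the level of Theorem~\ref{joinings202106.122} the cocycle $\alpha$ is only measurable; the $L^1$-cohomology hypothesis you invoke is not part of the standing assumptions here (it enters only later, in Proposition~\ref{joinings202106.140} and Theorem~\ref{joinings202106.160}), and even when it is in force it controls $\alpha(c,\cdot)$ for $c\in C_{G_Y}(U_Y)$, not $\alpha(a_Y^{-\sigma_i},\cdot)$. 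So as written this step does not close.

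The paper sidesteps the issue entirely, and the fix is simpler than what you propose. Rather than starting from the $X$-time $L^i$ and pulling back, the paper first chooses a $Y$-time $t_k\in[\tfrac{98}{100}L^1,\tfrac{99}{100}L^1]$ using the ergodicity built into $K_k''$ (estimate~\eqref{joinings202106.110}, not the polynomial-rate~\eqref{joinings202104.24}) so that both $u_Y^{t_k}\overline{u}_Y^{r_k}z_k$ and $u_Y^{t_k'}z_k$ land in the Lusin set $K_k'$, where $t_k'=t_k/(1+r_kt_k)$. The whole upper-triangular discrepancy
\[
u_Y^{t_k}\overline{u}_Y^{r_k}(u_Y^{t_k'})^{-1}=\begin{pmatrix}(1+r_kt_k)^{-1}&r_k\\0&1+r_kt_k\end{pmatrix}
\]
has size $\leq\min\{\delta_k,\delta_k'\}$ (this is~\eqref{joinings202106.111}--\eqref{joinings202106.132}), so a \emph{single} application of Lusin~\eqref{joinings202105.50} gives~\eqref{joinings202106.114}; no decomposition into $\overline{u}_Y^{\rho_i}$ and $a_Y^{-\sigma_i}$, and no equivariance, is needed. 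From there the paper reads off $d_G(q_p(s_k),u_X^{h_k'-s_k})\leq\tfrac{2}{10}2^{-k}D$ (Claim~\ref{joinings202106.127}) and $|h_k-s_k|<2^{1-k}D$ (Claim~\ref{joinings202106.128}), which simultaneously delivers the input to Lemma~\ref{joinings202106.121} and, via~\eqref{joinings202106.115}, the lower bound $\|q_p^1(s_k)\|\geq\tfrac{9}{100}D$ that forces $h_{k,2}\asymp L_1^2(r_k)\asymp e^{\lambda_k/2}$ --- resolving the ``self-consistent estimate'' you flag as the main obstacle.

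In short: replace your equivariance step for $a_Y^{-\sigma_i}$ by Lusin on the combined perturbation (choosing the time on the $Y$-side first so both endpoints are in $K_k'$), and cite~\eqref{joinings202106.110} rather than~\eqref{joinings202104.24} for the $t$-vs-$z$ comparison. With those changes your outline becomes the paper's proof.
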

\begin{proof}
        Suppose that   $y,\overline{u}_{Y}^{r}y\in \bigcup_{l\geq 1}\bigcap_{k\geq l}a^{\lambda_{k}}_{Y}(K_{k}^{0})$.   Then $y,\overline{u}_{Y}^{r}y\in  a^{\lambda_{k}}_{Y}(K_{k}^{0})$ for sufficiently large $k$.  For $r\in\mathbf{R}$,
        let $r_{k}=e^{-\lambda_{k}}r$.  Then for sufficiently large $k$,
            \[a_{Y}^{-\lambda_{k}}\overline{u}_{Y}^{r}y=\overline{u}_{Y}^{r_{k}}a_{Y}^{-\lambda_{k}}y\ \ \ \text{ and } \ \ \  |r_{k}|\leq |r|\omega_{k}^{2}\leq \omega_{k}.\]
        Thus, (\ref{joinings202106.101}) holds true for $L^{i}(y,r_{k})$ for any  sufficient large $k$, $i\in\{1,2\}$. In the following, we fix $i=1$ (for the case $i=2$ is similar).

Next, since by (\ref{joinings202106.101}) $L^{1}(y,r_{k})>10T_{k}$, there exists $t_{k}\in\left[\frac{98}{100}L^{1}(y,r_{k}),\frac{99}{100}L^{1}(y,r_{k})\right]$ such that
\begin{equation}\label{joinings202106.113}
 u_{Y}^{t_{k}}a_{Y}^{-\lambda_{k}}\overline{u}_{Y}^{r}y,\ u_{Y}^{t^{\prime}_{k}}a_{Y}^{-\lambda_{k}}y\in K^{\prime}_{k}\cap K_{2} \cap \bigcap_{p}\overline{\psi}_{p}^{-1}(K_{1})
\end{equation}
where $t_{k}^{\prime}\coloneqq\frac{t_{k}}{1+r_{k}t_{k}}$. Then by (\ref{joinings202105.67}), we get
 \begin{multline}
 d_{Y}(u_{Y}^{t_{k}}a_{Y}^{-\lambda_{k}}\overline{u}_{Y}^{r}y,u_{Y}^{t^{\prime}_{k}}a_{Y}^{-\lambda_{k}}y)=d_{Y}(u_{Y}^{t_{k}} \overline{u}_{Y}^{r_{k}}a_{Y}^{-\lambda_{k}}y,u_{Y}^{t^{\prime}_{k}}a_{Y}^{-\lambda_{k}}y)\\
 =d_{Y}\left( \left[
            \begin{array}{cc}
           \frac{1}{1+r_{k}t_{k}}  & r_{k} \\
              0 &  1+r_{k}t_{k}\\
            \end{array}
          \right]u_{Y}^{t_{k}^{\prime}}a_{Y}^{-\lambda_{k}}y,u_{Y}^{t_{k}^{\prime}}a_{Y}^{-\lambda_{k}}y\right)\leq\min\{\delta_{k},\delta_{k}^{\prime}\}\label{joinings202106.111}
\end{multline}
where the last inequality follows from (\ref{joinings202106.100})
\begin{equation}\label{joinings202106.132}
 |r_{k}t_{k}|\leq 2\frac{\Delta_{r_{k}}(t_{k})}{t_{k}}\leq 4 \frac{\Delta_{r_{k}}(L^{1}(y,r_{k}))}{T_{k}}\leq 4\frac{D}{10}\cdot \frac{\min\{\delta_{k},\delta_{k}^{\prime}\}}{20}\leq \min\{\delta_{k},\delta_{k}^{\prime}\}.
\end{equation}
This implies via Lemma \ref{joinings202105.65} that
\begin{equation}\label{joinings202106.105}
  |\Delta^{\tau_{Y}}_{r_{k}}(a_{Y}^{-\lambda_{k}}y,t_{k})-\Delta_{r_{k}}(t_{k})|\leq  \frac{1}{10}2^{-k} D
\end{equation}
since $a_{Y}^{-\lambda_{k}}y,\overline{u}_{Y}^{r_{k}}a_{Y}^{-\lambda_{k}}y\in E_{k}$ and $t_{k}\in[T_{k},\delta_{k} |r_{k}|^{-1}]\subset [l_{k},\delta_{k} |r_{k}|^{-1}]$.

 Next, consider
\[  u_{X}^{s_{k}}\overline{\psi}_{p}(a_{Y}^{-\lambda_{k}}\overline{u}_{Y}^{r}y)=\overline{\psi}_{i(p,k)}(u_{Y}^{t_{k}}a_{Y}^{-\lambda_{k}}\overline{u}_{Y}^{r}y),\ \ \ u_{X}^{h^{\prime}_{k}}\overline{\psi}_{p}(a_{Y}^{-\lambda_{k}}y)=\overline{\psi}_{j(p,k)}(u_{Y}^{t^{\prime}_{k}}a_{Y}^{-\lambda_{k}}y)\]
where $s_{k}$ and $h^{\prime}_{k}$ are defined by
\begin{equation}\label{joinings202106.131}
 z(a_{Y,k}^{-1}\overline{u}^{r}y,t_{k})= s_{k},\ \ \   z(a_{Y,k}^{-1}y,t_{k}^{\prime}) = h^{\prime}_{k}.
\end{equation}
Then $\Delta^{\tau_{Y}}_{r_{k}}(a_{Y,k}^{-1}y,t_{k})=s_{k}-h_{k}^{\prime}$ and by (\ref{joinings202106.110}), we have  $s_{k}\in\left[\frac{97}{100}L^{1}(y,r_{k}),\frac{995}{1000}L^{1}(y,r_{k})\right]$.

\begin{cla}\label{joinings202106.127} For $p\in\{1,\ldots,n\}$,
   \[d_{G}(q_{p}(s_{k}),u_{X}^{h^{\prime}_{k}-s_{k}})\leq \frac{2}{10}2^{-k}D \]
   where   $q_{p}(s_{k})=q(\overline{\psi}_{p}(\overline{u}^{r_{k}}a_{Y}^{-\lambda_{k}}y),\overline{\psi}_{p}(a_{Y}^{-\lambda_{k}}y),s_{k})$.
\end{cla}
\begin{proof}
   Since $|r_{k}|\leq\omega_{k}$ and $a^{-\lambda_{k}}_{Y}y,\overline{u}_{Y}^{r_{k}}a^{-\lambda_{k}}_{Y}y\in  K_{k}^{0}$, by (\ref{joinings202106.107}) and Lemma \ref{joinings202105.65}, we know that
   \begin{equation}\label{joinings202106.124}
    |\Delta^{\tau_{Y}}_{r_{k}}(a_{Y,k}^{-1}y,t_{k})|\leq \frac{11}{10}|\Delta_{r_{k}}(t_{k})|\leq\frac{11}{100}D.
   \end{equation}
It follows that
\begin{equation}\label{joinings202106.108}
  d_{\overline{X}}\left(u_{X}^{s_{k}}\overline{\psi}_{p}(a_{Y}^{-\lambda_{k}}y),u_{X}^{h^{\prime}_{k}}\overline{\psi}_{p}(a_{Y}^{-\lambda_{k}}y)\right)<\frac{1}{3}D.
\end{equation}
On the other hand, by
(\ref{joinings202106.106}), we have
\begin{equation}\label{joinings202106.112}
  \| q_{p}(s_{k})\|\leq \frac{D}{10},\ \ \  d_{\overline{X}}\left(u_{X}^{s_{k}}\overline{\psi}_{p}(\overline{u}^{r_{k}}a_{Y}^{-\lambda_{k}}y),u_{X}^{s_{k}}\overline{q_{p}(s_{k})\psi_{p}(a_{Y}^{-\lambda_{k}}y)}\right)\leq\frac{1}{10}2^{-k}D
\end{equation}
 It follows that
\begin{equation}\label{joinings202106.109}
  d_{\overline{X}}\left(u_{X}^{s_{k}}\overline{\psi}_{p}(\overline{u}^{r_{k}}a_{Y}^{-\lambda_{k}}y),u_{X}^{s_{k}}\overline{\psi}_{p}(a_{Y}^{-\lambda_{k}}y)\right)<\frac{1}{3}D
\end{equation}
for $p\in\{1,\ldots,p\}$. Therefore, (\ref{joinings202106.108}) and (\ref{joinings202106.109}) tell us that
\begin{align}
 & d_{\overline{X}}\left(\overline{\psi}_{i(p,k)}(u_{Y}^{t_{k}}a_{Y}^{-\lambda_{k}}\overline{u}_{Y}^{r}y), \overline{\psi}_{j(p,k)}(u_{Y}^{t^{\prime}_{k}}a_{Y}^{-\lambda_{k}}y)\right)\;\nonumber\\
=&  d_{\overline{X}}\left(u_{X}^{s_{k}}\overline{\psi}_{p}(\overline{u}_{Y}^{r_{k}}a_{Y}^{-\lambda_{k}}y),u_{X}^{h^{\prime}_{k}}\overline{\psi}_{p}(a_{Y}^{-\lambda_{k}}y)\right)< D.\;  \nonumber
\end{align}
Then by (\ref{joinings202105.49}), we must have $i(p,k)=j(p,k)$. Then by Lusin theorem (\ref{joinings202105.50}) (\ref{joinings202106.111}), we further obtain
\begin{align}
 &d_{\overline{X}}\left(u_{X}^{s_{k}}\overline{\psi}_{p}(\overline{u}_{Y}^{r_{k}}a_{Y}^{-\lambda_{k}}y),u_{X}^{h^{\prime}_{k}}\overline{\psi}_{p}(a_{Y}^{-\lambda_{k}}y)\right)\;\label{joinings202106.114}\\
=&   d_{\overline{X}}\left(\overline{\psi}_{i(p,k)}(u_{Y}^{t_{k}}a_{Y}^{-\lambda_{k}}\overline{u}_{Y}^{r}y), \overline{\psi}_{i(p,k)}(u_{Y}^{t^{\prime}_{k}}a_{Y}^{-\lambda_{k}}y)\right)\leq \frac{1}{10}2^{-k}D.\;  \nonumber
\end{align}
Combining   (\ref{joinings202106.112}), we get
\begin{align}
 &d_{\overline{X}}\left(\overline{q_{p}(s_{k})\cdot u_{X}^{s_{k}}\psi_{p}(a_{Y}^{-\lambda_{k}}y)},\overline{u_{X}^{h^{\prime}_{k}-s_{k}}\cdot u_{X}^{s_{k}}\psi_{p}(a_{Y}^{-\lambda_{k}}y)}\right)\;\nonumber\\
=&  d_{\overline{X}}\left(u_{X}^{s_{k}}\overline{q_{p}(s_{k})\psi_{p}(a_{Y}^{-\lambda_{k}}y)},u_{X}^{h^{\prime}_{k}}\overline{\psi}_{p}(a_{Y}^{-\lambda_{k}}y)\right)\leq \frac{2}{10}2^{-k}D.\;  \nonumber
\end{align}
Since by (\ref{joinings202106.113}) $u_{X}^{h^{\prime}_{k}}\overline{\psi}_{p}(a_{Y}^{-\lambda_{k}}y)\in K_{1}$, $\|q_{p}(s_{k})\|\leq \frac{1}{10}D$, $|s_{k}-h^{\prime}_{k}|=|\Delta^{\tau_{Y}}_{r_{k}}(a_{Y,k}^{-1}y,t_{k})|\leq \frac{11}{100}D$, we conclude that
\[d_{G}(q_{p}(s_{k}),u_{X}^{h^{\prime}_{k}-s_{k}})\leq \frac{2}{10}2^{-k}D\]
for any $p\in\{1,\ldots,n\}$.
\end{proof}

It then follows from the definition of $L^{1}(y,r_{k})$ (\ref{joinings202106.107}) that
\begin{equation}\label{joinings202106.115}
  \|q^{1}_{p}(s_{k})\|\geq\frac{9}{100}D ,\ \ \ |h^{\prime}_{k}-s_{k}|\geq\frac{9}{100}D
\end{equation}
for any $p\in\{1,\ldots,n\}$.

On the other hand, denote $h_{k}=\frac{h^{\prime}_{k}}{1-r_{k}h^{\prime}_{k}}$.
\begin{cla}\label{joinings202106.128}
   We have
   \[ |h_{k}-s_{k}|<2^{1-k}D.\]
\end{cla}
\begin{proof}
   One can calculate via (\ref{joinings202106.105})
   \begin{align}
   |h_{k}-s_{k}|= &  |h_{k}-h^{\prime}_{k}-(s_{k}-h^{\prime}_{k})| \;\nonumber\\
   =&  |\Delta_{r_{k}} ( h_{k})-\Delta^{\tau_{Y}}_{r_{k}}(a_{Y}^{-\lambda_{k}}y,t_{k})|\;\nonumber\\
   \leq&   |\Delta_{r_{k}} ( h_{k})-\Delta_{r_{k}} (t_{k})|+|\Delta_{r_{k}} ( t_{k})-\Delta^{\tau_{Y}}_{r_{k}}(a_{Y}^{-\lambda_{k}}y,t_{k})|\;\nonumber\\
\leq& |\Delta_{r_{k}} ( h_{k})-\Delta_{r_{k}} (t_{k})|+ \frac{1}{10}2^{-k} D. \label{joinings202106.133}
\end{align}
  On the other hand,  by the ergodicity (\ref{joinings202106.131}) (\ref{joinings202106.110}), we have
  \[|h_{k}^{\prime}-t_{k}^{\prime}|\leq \frac{1}{10}2^{-k}D\cdot t_{k}^{\prime}\leq  \frac{2}{10}2^{-k}D\cdot t_{k}.\]
  Then by (\ref{joinings202106.132}) and $|\Delta_{r_{k}}(t_{k})|\leq D/10$, we have
  \[|h_{k}-t_{k}|=  \left|\frac{h^{\prime}_{k}}{1-r_{k}h^{\prime}_{k}}-\frac{t^{\prime}_{k}}{1-r_{k}t^{\prime}_{k}}\right| =\left|\frac{h^{\prime}_{k}-t^{\prime}_{k}}{(1-r_{k}h^{\prime}_{k})(1-r_{k}t^{\prime}_{k})}\right|   \leq  \frac{4}{10}2^{-k}D\cdot t_{k}.\]
  It follows that
  \begin{align}
|\Delta_{r_{k}} ( h_{k})-\Delta_{r_{k}} (t_{k})|= &  |r_{k}h_{k}h^{\prime}_{k}-r_{k}t_{k}t^{\prime}_{k}| \;\nonumber\\
   \leq&  |r_{k}h_{k}(h^{\prime}_{k}-t^{\prime}_{k})|+|r_{k}t^{\prime}_{k}(h_{k}-t_{k})| \;\nonumber\\
   \leq&  \frac{2}{10}2^{-k}D\cdot |r_{k}h_{k}t_{k}|+\frac{4}{10}2^{-k}D\cdot |r_{k}t_{k}^{\prime}t_{k}|\;\nonumber\\
   \leq&  \frac{4}{10}2^{-k}D\cdot |\Delta(t_{k})|+\frac{8}{10}2^{-k}D\cdot |\Delta(t_{k})|\leq\frac{12}{10}2^{-k}D. \nonumber
\end{align}
Then (\ref{joinings202106.133}) is clearly not greater than $2^{1-k}D$.
\end{proof}
Now Claim \ref{joinings202106.127} and \ref{joinings202106.128} imply that  $h_{k}\in\left[\frac{96}{100}L^{1}(y,r_{k}),\frac{999}{1000}L^{1}(y,r_{k})\right]$,    $|h^{\prime}_{k}-h_{k}|\in[\frac{9}{100}D,\frac{11}{100}D]$  and
\begin{align}
 d_{\overline{X}}(u_{X}^{h_{k}}\overline{\psi}_{p}(\overline{u}_{Y}^{r_{k}}a_{Y}^{-\lambda_{k}}y),u_{X}^{h^{\prime}_{k}}\overline{\psi}_{p}(a_{Y}^{-\lambda_{k}}y))\leq &\frac{2}{10}2^{1-k}D\;\nonumber\\ d_{\overline{X}}(u_{X}^{h_{k}}\overline{u}_{X}^{r_{k}}\overline{\psi}_{p}(a_{Y}^{-\lambda_{k}}y),u_{X}^{h^{\prime}_{k}}\overline{\psi}_{p}(a_{Y}^{-\lambda_{k}}y))\leq&   \frac{2}{10}2^{1-k}D \;\nonumber\\
  d_{G_{X}}(q_{p}(h_{k}),u_{X}^{h^{\prime}_{k}-h_{k}})\leq &  \frac{2}{10}2^{1-k}D\;  \nonumber
\end{align}
 for $p\in\{1,\ldots,n\}$.

Similarly, for $i=2$,   there exists  $h_{k,2}\in\left[\frac{96}{100}L^{2}(y,r_{k}),\frac{999}{1000}L^{2}(y,r_{k})\right]$ and $h^{\prime}_{k,2}\in\mathbf{R}$ with $|h^{\prime}_{k,2}-h_{k,2}|\in[\frac{9}{100}2^{2}D,\frac{11}{100}2^{2}D]$ such that
\begin{align}
 d_{\overline{X}}(u_{X}^{h_{k,2}}\overline{\psi}_{p}(\overline{u}_{Y}^{r_{k}}a_{Y}^{-\lambda_{k}}y),u_{X}^{h^{\prime}_{k,2}}\overline{\psi}_{p}(a_{Y}^{-\lambda_{k}}y))\leq &\frac{2}{10}2^{1-k}D\;\nonumber\\ d_{\overline{X}}(u_{X}^{h_{k,2}}\overline{u}_{X}^{r_{k}}\overline{\psi}_{p}(a_{Y}^{-\lambda_{k}}y),u_{X}^{h^{\prime}_{k,2}}\overline{\psi}_{p}(a_{Y}^{-\lambda_{k}}y))\leq&   \frac{2}{10}2^{1-k}D \;\nonumber\\
  d_{G_{X}}(q_{p}^{2}(h_{k,2}),u_{X}^{h^{\prime}_{k,2}-h_{k,2}})\leq &  \frac{2}{10}2^{1-k}D\;  \nonumber
\end{align}
 for $p\in\{1,\ldots,n\}$. Note that by (\ref{joinings202106.123}), we have $h_{k}\in[\frac{1}{3}h_{k,2},\frac{2}{3}h_{k,2}]$. Thus, we have met the requirement of Lemma \ref{joinings202106.121} with pairs
\[  (\overline{\psi}_{p}(\overline{u}_{Y}^{r_{k}}a_{Y}^{-\lambda_{k}}y),\overline{\psi}_{p}(a_{Y}^{-\lambda_{k}}y))\ \ \ \text{ and }\ \ \ (\overline{u}_{X}^{r_{k}}\overline{\psi}_{p}(a_{Y}^{-\lambda_{k}}y),\overline{\psi}_{p}(a_{Y}^{-\lambda_{k}}y))\]
  at time $t=h_{k},h_{k,2}$. Then Lemma \ref{joinings202106.121} implies that
\[d_{\overline{X}}\left(u_{X}^{t}\overline{\psi}_{p}(\overline{u}^{r_{k}}a_{Y}^{-\lambda_{k}}y),u_{X}^{t}\overline{u}^{r_{k}}_{X}\overline{\psi}_{p}(a_{Y}^{-\lambda_{k}}y)\right)\leq O\left(\frac{2}{10}2^{1-k}D\right)=O(2^{-k}).\]
for $t\in[0,h_{k,2}]$. Moreover, if we write $\overline{\psi}_{p}(\overline{u}_{Y}^{r_{k}}a_{Y}^{-\lambda_{k}}y) =g_{p,k}\overline{u}_{X}^{r_{k}}\overline{\psi}_{p}(a_{Y}^{-\lambda_{k}}y)$ and
\[g_{p,k}=\exp\left(\sum_{j} \sum_{i=0}^{\varsigma(j)}b_{j}^{i}v_{j}^{i}\right)\]
where $v^{i}_{j}$ are the weight vectors of the $\mathfrak{sl}_{2}$-irreducible representation $V_{j}$, then by (\ref{joinings202106.120}) we deduce
\[      |b_{j}^{\varsigma(j)-i}|<O(2^{-k})h_{k,2}^{-i}.\]
Finally, one calculates via (\ref{joinings202106.129}) (\ref{joinings202106.101}) (\ref{joinings202106.130})
\begin{align}
a_{X}^{\lambda_{k}}g_{p,k}a_{X}^{-\lambda_{k}}\leq&\exp\left(\sum_{j} \sum_{i=0}^{\varsigma(j)}O(2^{-k})h_{k,2}^{\varsigma(j)-2i}\cdot h_{k,2}^{i-\varsigma(j)}v_{j}^{i}\right)\;\nonumber\\
 =&  \exp\left(\sum_{j} \sum_{i=0}^{\varsigma(j)}O(2^{-k})h_{k,2}^{-i}v_{j}^{i}\right)\leq O(2^{-k}).\;  \nonumber
\end{align}
Therefore, we conclude that
\[d_{\overline{X}}(\Psi_{k,p}(\overline{u}_{Y}^{r}y), \overline{u}_{X}^{r}\Psi_{k,p}(y))\leq O(2^{-k})\]
 for $p\in\{1,\ldots,n\}$. The theorem follows.
\end{proof}

\begin{rem} Similar to Remark \ref{joinings202106.134}, Theorem \ref{joinings202106.122} also holds true for $\rho$ being a finite extension of $\nu$, when $(X,\phi^{U_{X},\tau_{X}}_{t})$ is a time-change of the unipotent flow on $X=SO(n_{X},1)/\Gamma_{X}$: if for $f\in C(X\times Y)$
   \[\int f(x,y)d\rho(x,y)=\int \frac{1}{n}\sum_{p=1}^{n}f(\psi_{p}(y),y) d\nu(y)\]
then we still have
\[\lim_{n\rightarrow\infty}d_{X}(\Psi_{k,p}(\overline{u}_{Y}^{r}y), \overline{u}_{X}^{r}\Psi_{k,p}(y))=0\]
for $p\in\{1,\ldots, n\}$ and a.e. $y\in Y$.
\end{rem}

\section{Applications}\label{joinings202106.168}
In previous sections, we considered the measure of the form
\[\int fd\rho=\int\frac{1}{n}\sum_{p=1}^{n}f(\overline{\psi}_{p}(y),y)d\nu(y)\]
for some measurable functions $\overline{\psi}_{p}$. Besides, we studied the equivariant properties of $\overline{\psi}_{p}$. In this section, we use these results to develop the rigidity of $\rho$.
\subsection{Unipotent flows of $SO(n,1)$ vs. time-changes of unipotent flows}\label{joinings202106.146} In this section, we shall prove   Theorem \ref{joinings202106.160} and \ref{joinings202106.161}.
Let $G_{X}=SO(n_{X},1)$, $G_{Y}$ be a semisimple Lie group  with finite center and no compact factors  and $\Gamma_{X}\subset G_{X}$,  $\Gamma_{Y}\subset G_{Y}$ be irreducible lattices. Let $(X,\mu)$ be the homogeneous space $X=G_{X}/\Gamma_{X}$ equipped with the Lebesgue measure $\mu$, and let $\phi^{U_{X}}_{t}=u_{X}^{t}$ be a unipotent flow on $X$. Suppose that
\begin{itemize}
  \item  $Y$ is the homogeneous space $Y=G_{Y}/\Gamma_{Y}$,
  \item $m_{Y}$ is the Lebesgue measure on $Y$,
  \item $u_{Y}\in G_{Y}$ is a unipotent element that $C_{\mathfrak{g}_{Y}}(u_{Y})$ only contains vectors of   weight at most $2$,
  \item $\tau_{Y}\in \mathbf{K}_{\kappa}(Y)\cap C^{1}(Y)$ is a  positive integrable and $C^{1}$ function  on $Y$ such that $\tau_{Y},\tau_{Y}^{-1}$ are bounded and satisfies (\ref{joinings202104.26}),
  \item $\tilde{u}_{Y}^{t}=\phi^{U_{Y},\tau_{Y}}_{t}$ of the unipotent flow $u_{Y}$,
  \item $\nu$ is a $\tilde{u}_{Y}^{t}$-invariant measure on $Y$,
  \item $\rho\in J(u_{X}^{t},\phi_{t}^{U_{Y},\tau_{Y}})$ is a nontrivial (i.e. not the product $\mu\times\nu$) ergodic joining.
\end{itemize}
\begin{prop}\label{joinings202106.140}
   $\tau_{Y}(y)$ and $\tau_{Y}( cy)$ are (measurably)  cohomologous along $u_{Y}^{t}$ for all $c\in C_{G_{Y}}(U_{Y})$. Further, if $\tau_{Y}(y)$ and $\tau_{Y}(cy)$ are  $L^{1}$-cohomologous, then  after passing a subsequence if necessary,
  \[\Psi^{\ast}(y)\coloneqq\lim_{n\rightarrow\infty} \Psi^{\ast}_{k}(y) \]
  exists for $\nu$-a.e. $y\in Y$,  where $\Psi^{\ast}_{k}(y)\coloneqq\{\Psi_{k,p}(y):p\in\{1,\ldots,n\}\}$ and $\Psi_{k,p}(y)$ is given by (\ref{joinings202106.130}).
\end{prop}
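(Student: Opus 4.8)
The first assertion is immediate: it is exactly the conclusion of Theorem~\ref{dynamical systems2001} applied to $\tau_Y\in\mathbf K_\kappa(Y)$ and the nontrivial ergodic joining $\rho$, the transfer function being $F_c(y)=\alpha(c,y)$ produced by the extra central invariance of $\rho$. Hence the content is the second assertion, and the plan is to show that the $a$-adjusted maps $\Psi_{k,p}$ of \eqref{joinings202106.130} converge pointwise $\nu$-a.e.\ along a subsequence, using that $a_Y^{-\lambda_k}y\in K_k^0$ for $y\in\Omega$ and all large $k$.

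The first step is to upgrade the equivariances already in hand into \emph{asymptotic} $G_Y$-equivariance of the $\Psi_{k,p}$. There are three ingredients. First, from $\overline\psi_p(u_Y^t z)=u_X^{z(z,t)}\overline\psi_{i_p(z,t)}(z)$, the conjugation identity $a_X^{\lambda_k}u_X^{t}a_X^{-\lambda_k}=u_X^{e^{-\lambda_k}t}$, and the effective ergodicity \eqref{joinings202104.24}, one obtains $\Psi_{k,p}(u_Y^t y)=u_X^{e^{-\lambda_k}z(a_Y^{-\lambda_k}y,e^{\lambda_k}t)}\Psi_{k,i_p}(y)$ with $e^{-\lambda_k}z(a_Y^{-\lambda_k}y,e^{\lambda_k}t)\to t$, i.e.\ asymptotic $u_Y^t\leftrightarrow u_X^t$ equivariance. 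Second, Theorem~\ref{joinings202106.122} is precisely asymptotic $\overline u_Y^r\leftrightarrow\overline u_X^r$ equivariance, with the quantitative rate $O(2^{-k})$. Third, for the centralizer, conjugating $S_c$ of Theorem~\ref{joinings202105.41} by $S_{a_Y^{\lambda_k}}$ (Theorem~\ref{joinings202106.149}, via $S_{a_Y}S_cS_{a_Y}^{-1}=S_{a_Yca_Y^{-1}}$ and the semigroup law) shows that $\Psi_{k,p}$ intertwines the $a_Y^{-\lambda_k}$-conjugated centralizer action with an $a_X^{\lambda_k}$-conjugated, $u_X$-twisted action on $\overline X$. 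Under the hypothesis of Theorem~\ref{joinings202106.160} that $C_{\mathfrak g_Y}(U_Y)$ carries only weights $0$ and $2$, the weight-$2$ directions are contracted by $a_Y^{-\lambda_k}$ as $\lambda_k\to\infty$; on them the twisting cocycle $\alpha$ is controlled exactly by the extra hypothesis, since $\tau_Y(cy),\tau_Y(y)$ being $L^1$-cohomologous for positive-weight $c$ lets Lemma~\ref{joinings202106.135} force the renormalized cocycle $\alpha(a_Y^{-\lambda_k}ca_Y^{\lambda_k},\cdot)$ to converge, so that the central twisting of $\Psi_{k,p}$ stabilizes instead of oscillating; on the weight-$0$ part it is already $a_Y$-invariant.

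With asymptotic $G_Y$-equivariance available, the plan is to pass to a limit through the graph measures $\rho_k:=\int_Y\frac1n\sum_{p=1}^n\delta_{\Psi_{k,p}(y)}\otimes\delta_y\,d\nu(y)$ on $\overline X\times Y$, all of which have $Y$-marginal $\nu$. One checks $\{\rho_k\}$ is tight — this is where the explicit choice of $\lambda_k$ in \eqref{joinings202106.130}, tied to the sets $K_k^0$ with $\nu(K_k^0)>1-2^{-k}$, is used, keeping $a_Y^{-\lambda_k}y$ in sets on which $\overline\psi_p$ lands in a fixed compact part of $\overline X$ with controlled injectivity radius — and extracts a subsequence with $\rho_{k_j}\to\rho^*$ weakly. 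By the asymptotic equivariances above, $\rho^*$ is invariant under a diagonal $G_Y$-action (matching $u_Y\leftrightarrow u_X$, $\overline u_Y\leftrightarrow\overline u_X$, $a_Y\leftrightarrow a_X$, and $C_{G_Y}(U_Y)$ through $\beta$); since $\overline u_Y$ and $C_{G_Y}(U_Y)$ already generate $G_Y$, Ratner's theorem makes $\rho^*$ algebraic and still a finite extension of $\nu$. An algebraic finite extension being rigid (determined up to its finitely many sheets), all weak limit points of $\{\rho_k\}$ then coincide, so $\rho_k$ itself converges; the finite-extension structure of $\rho^*$ finally lets one pick measurable selections (Kunugui's theorem) $\Psi_{k_j,p}$ converging $\nu$-a.e.\ to the selections of $\rho^*$, which is the asserted $\Psi^*(y)=\lim_j\Psi^*_{k_j}(y)$. (Alternatively, one can try to bypass the limit measure and prove directly that $\{\Psi_{k,p}(y)\}_k$ is Cauchy in $\overline X$ for $y\in\Omega$, applying the shearing comparison Lemma~\ref{joinings202106.121} along the $\overline u_Y$-orbit of $y$ and feeding in the $O(2^{-k})$ rate of Theorem~\ref{joinings202106.122}; this would avoid Ratner's theorem here.)

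The main obstacle is the third ingredient: controlling the centralizer equivariance under the $a_Y^{\pm\lambda_k}$-renormalization as $\lambda_k\to\infty$. A priori this renormalization can push the $\Psi_{k,p}$ off along the positive-weight central directions and destroy tightness of $\{\rho_k\}$; taming this is exactly the role of the hypothesis in Theorem~\ref{joinings202106.160} that the twists $\tau_Y(cy)$ by positive-weight $c$ be $L^1$- rather than merely measurably cohomologous, through Lemma~\ref{joinings202106.135}. Making the quantitative $O(2^{-k})$ bookkeeping of Theorem~\ref{joinings202106.122} interact cleanly with this renormalization, and verifying the tightness of $\{\rho_k\}$ (or, in the alternative route, closing up the Cauchy estimate), are the two technical points that need to be carried out with care.
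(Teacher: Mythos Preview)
Your three equivariance ingredients match the paper: the $u_Y^t$ equivariance via effective ergodicity, the $\overline u_Y^r$ equivariance from Theorem~\ref{joinings202106.122}, and the centralizer equivariance, where Lemma~\ref{joinings202106.135} turns the $L^1$-cohomology hypothesis into convergence of the renormalized cocycle $e^{-\lambda_k}\alpha(\exp(e^{\varsigma\lambda_k/2}v),y)\to\int\alpha(\exp v,\cdot)\,dm_Y$. These combine into the paper's formula~\eqref{joinings202106.139}, which says $\Psi_k^*(\exp(v)y)=c_k\,\Psi_k^*(y)$ with $c_k\to c\in C_{G_X}(U_X)$, and similarly $d_{\overline X}(\Psi_k^*(\overline u_Y^r y),\overline u_X^r\Psi_k^*(y))\to 0$.

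Where you diverge is in the synthesis, and your main route has a genuine gap. Passing to weak-$*$ limits $\rho^*$ of the graph measures $\rho_k$ and then trying to recover a.e.\ convergence of the sections $\Psi_{k,p}$ does not work without further input: weak convergence of finite-extension measures does not imply pointwise convergence of their fibre maps (e.g.\ $\psi_k(y)=\lfloor 2^k y\rfloor\bmod 2$ on $[0,1]$ gives constant graph measures with no pointwise limit). Your claimed uniqueness of the algebraic weak limit is also unproved. The alternative Cauchy route you sketch is not developed.

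The paper's argument is more direct and avoids Ratner's theorem entirely at this stage (Ratner is applied only \emph{after} this proposition, to the limit $\widetilde\rho$). The key observation you are missing is that the equivariance $\Psi_k^*(gy)=c_k\,\Psi_k^*(y)$ with $c_k$ convergent propagates subsequential convergence: if $\Psi_{k_l}^*(y_0)$ converges along some subsequence $\{k_l\}$ at a single point $y_0$, then along the \emph{same} subsequence $\Psi_{k_l}^*(gy_0)$ converges for every $g$ in $\langle \overline u_Y^r, C_{G_Y}(U_Y)\rangle=G_Y$, hence for every $y\in Y$. So it suffices to produce \emph{one} $y_0$ with a convergent subsequence. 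The paper does this by a compactness argument: writing $\overline X=\bigcup_i K_i$ with $K_i$ compact and $\overline\mu(K_i)\nearrow 1$, one checks that $\nu\bigl(\bigcup_i\bigcap_k\bigcup_{j\ge k}\bigcap_p\Psi_{j,p}^{-1}(K_i)\bigr)=1$, so for a.e.\ $y$ the sequence $\{\Psi_{j,p}(y)\}_j$ visits some fixed compact $K_i$ infinitely often and thus has a convergent subsequence.
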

\begin{proof}
   The first consequence follows from Theorem  \ref{dynamical systems2001}. For the second one, we first apply Lemma \ref{joinings202106.135} and obtain
\[\lim_{t\rightarrow\infty}\frac{1}{t}\alpha(c^{t},y)=\int \alpha(c,y)dm_{Y}(y)\]
   for $m$-a.e. $y\in Y$ whenever $c$ is $m_{Y}$-ergodic. Note that $d\beta:C_{\mathfrak{g}_{Y}}(U_{Y})\rightarrow V_{C_{X}}^{\perp}$ sends nilpotent elements to nilpotent elements.
   Thus, for weight vector $v\in C_{\mathfrak{g}_{Y}}(U_{Y})$ of   weight $\varsigma\leq 2$, $\nu$-almost all $y\in Y$, we have
   \[\Psi^{\ast}_{k}(\exp(v)y)=\left\{
    \begin{array}{ll}  u_{X}^{e^{-\lambda_{k}}\alpha(\exp(e^{\varsigma\lambda_{k}/2}v),y)}\beta(\exp(e^{\varsigma\lambda_{k}/2}v))^{e^{-\lambda_{k}}}\Psi^{\ast}_{k}(y) &,\text{ for } \varsigma\geq 1\\
      & \\
  u_{X}^{e^{-\lambda_{k}}\alpha(\exp(v),y)}a_{X}^{\lambda_{k}}\beta(\exp(v))a_{X}^{-\lambda_{k}}\Psi^{\ast}_{k}(y) &,\text{ for } \varsigma=0
                     \end{array}\right. . \]
  Thus, after passing to a subsequence if necessary, we have
   \begin{equation}\label{joinings202106.139}
     \lim_{k\rightarrow\infty}\Psi^{\ast}_{k}(\exp(v)y)=\left\{
    \begin{array}{ll}  u_{X}^{\int\alpha(\exp(v),\cdot)}\beta(\exp(v))\lim_{k\rightarrow\infty}\Psi^{\ast}_{k}(y) &,\text{ for } \varsigma= 2\\
      & \\
   \lim_{k\rightarrow\infty}\Psi^{\ast}_{k}(y) &,\text{ for } \varsigma= 1\\
      & \\
   \exp(v_{0})\lim_{k\rightarrow\infty}\Psi^{\ast}_{k}(y) &,\text{ for } \varsigma=0
                     \end{array}\right.
   \end{equation}
  where $\beta(\exp(v))=\exp(v_{0}+v_{2})$ for $v_{0},v_{2}\in V_{C_{X}}^{\perp}$ of weight $0$ and $2$ respectively. In particular, $\lim_{k\rightarrow\infty}\Psi^{\ast}_{k}(\exp(v)y)$ exists whenever $\lim_{k\rightarrow\infty}\Psi^{\ast}_{k}(y)$ exists. Besides,  by Theorem \ref{joinings202106.122}, we have
 \[\lim_{n\rightarrow\infty}d_{\overline{X}}(\Psi^{\ast}_{k}(\overline{u}_{Y}^{r}y), \overline{u}_{X}^{r}\Psi^{\ast}_{k}(y))=0\]
    for   $r\in\mathbf{R}$, $\nu$-a.e. $y\in Y$.

    It remains to show that  for $\nu$-almost all $y\in Y$, there exists a subsequence $\{k(y,l)\}_{l\in\mathbf{N}}\subset\mathbf{N}$ and $\Psi_{p}(y)\in \overline{X}$ such that
    \begin{equation}\label{joinings202106.137}
  \lim_{l\rightarrow\infty}\Psi_{k(y,l),p}(y)=  \Psi_{p}(y).
    \end{equation}
 To do this,  write $\overline{X}= \bigcup_{i=1}K_{i}$, where $K_{i}$ are compact and $\overline{\mu}(  K_{i})\nearrow 1$  as $i\rightarrow\infty$.  Let
  \[\Omega\coloneqq\bigcup_{i\geq1}\bigcap_{k\geq1}\bigcup_{j\geq k}\bigcap_{p=1}^{n}\Psi_{j,p}^{-1}(K_{i}).\]
  \begin{cla}\label{joinings202106.136}
      $\nu(\Omega)=1$.
  \end{cla}
  \begin{proof}
     From a direct calculation (recall that  $d\nu\coloneqq\tau dm_{Y}$), we know
     \begin{multline}
       m_{Y}\left(\bigcup_{i\geq1}\bigcap_{k\geq1}\bigcup_{j\geq k}\bigcap_{p=1}^{n}\Psi_{j,p}^{-1}(K_{i})\right)\geq  m_{Y}\left( \bigcap_{k\geq1}\bigcup_{j\geq k}\bigcap_{p=1}^{n}\Psi_{j,p}^{-1}(K_{i})\right) \\
       =\lim_{k\rightarrow\infty} m_{Y}\left(  \bigcup_{j\geq k}\bigcap_{p=1}^{n}\Psi_{j,p}^{-1}(K_{i})\right) \geq m_{Y}(\psi_{p}^{-1}a^{-\lambda_{j}} K_{i})\nonumber
     \end{multline}
     for any $p$, $j$ and $i$. As $\overline{\mu}(K_{i})\nearrow 1$ as $i\rightarrow\infty$, the claim follows.
  \end{proof}
 Then by Claim \ref{joinings202106.136} for  $y\in \Omega$,   there exists $i\geq 1$ such that $ \Psi_{j,p}(y)\in K_{i}$ for infinitely many $j$. Thus, we proved (\ref{joinings202106.137}).
 Therefore, since the opposite unipotent and central directions generate the whole group $\langle\overline{u}^{r}_{Y},C_{G_{Y}}(U_{Y})\rangle=G_{Y}$, we conclude that after passing a subsequence if necessary,
  \[\lim_{n\rightarrow\infty} \Psi_{k,p}(y) \]
  exists for $\nu$-a.e. $y\in Y$.
\end{proof}

Then, define a measure $\widetilde{\rho}$ on $\overline{X}\times Y$ by
  \[\int f d\widetilde{\rho}\coloneqq \int_{Y} \frac{1}{n}\sum_{p=1}^{n}f(\Psi_{p}(y),y)dm_{Y}(y)\]
  for $f\in C(\overline{X}\times Y)$  where $\Psi^{\ast}(y)=\{\Psi_{1}(y),\ldots,\Psi_{n}(y)\}$. Then $\widetilde{\rho}$ is a nontrivial $(u^{t}_{X}\times u^{t}_{Y})$-invariant measure on $\overline{X}\times Y$ such that $(\pi_{\overline{X}})_{\ast}\widetilde{\rho}=\overline{\mu}$ and  $(\pi_{Y})_{\ast}\widetilde{\rho}=m_{Y}$. Then, \textit{Ratner's theorem}\index{Ratner's theorem} \cite{ratner1990measure} asserts that  $C^{\rho}=\{e\}$ and
  \[\widetilde{\rho}(\stab(\widetilde{\rho}).(x_{0},y_{0}))=1 \]
  for some $(x_{0},y_{0})\in X\times Y$, where $\stab(\widetilde{\rho})\coloneqq\{(g_{1},g_{2})\in G_{X}\times G_{Y}: (g_{1},g_{2})_{\ast}\widetilde{\rho}=\widetilde{\rho}\}$. Then let
   \begin{itemize}
     \item  $\stab_{Y}(\widetilde{\rho})\coloneqq  \{(e,g_{2})\in G_{X}\times G_{Y}: (e,g_{2})_{\ast}\widetilde{\rho}=\widetilde{\rho}\}$  (note that $\stab_{Y}(\widetilde{\rho})\lhd G_{Y}$ is a normal subgroup of $G_{Y}$),
     \item $\Gamma_{X}^{g}\coloneqq\{ \gamma:g^{-1}\gamma g\in \Gamma_{X}\}$ for $g\in G_{X}$.
   \end{itemize}
   Then \textit{Ratner's theorem}\index{Ratner's theorem} \cite{ratner1990measure} further asserts that there is $g_{0}\in G_{Y}$ and a continuous surjective homomorphism  $\Phi:G_{Y}\rightarrow G_{X}$ with kernel $\stab_{Y}(\widetilde{\rho})$, $\Phi(g)=g$ for $g\in SL_{2}$ such that
   \begin{equation}\label{joinings202106.141}
    \{\Psi_{1}(h\Gamma_{Y}),\ldots,\Psi_{n}(h\Gamma_{Y})\}=\{\Phi(h)\gamma_{1}g_{0}\Gamma_{X},\ldots,\Phi(h)\gamma_{n}g_{0}\Gamma_{X}\}
   \end{equation}
   for all $h\in G_{Y}$, where the intersection $\Gamma_{0}\coloneqq \Phi(\Gamma_{Y})\cap\Gamma_{X}^{g_{0}}$ is of finite index in $\Phi(\Gamma_{Y})$ and in $\Gamma_{X}^{g_{0}}$, $n=|\alpha(\Gamma_{Y})/\Gamma_{0}|$ and $\Phi(\Gamma_{Y})=\{\gamma_{p}\Gamma_{0}:p\in\{1,\ldots,n\}\}$.

   Next, by using Proposition \ref{joinings202106.140} and (\ref{joinings202106.141}), for any $\sigma>0$ $\epsilon>0$, there exists a subset $K\subset Y$ with $\nu(K)>1-\sigma$ and $k_{0}>0$ such that
   \[\max_{p}\min_{q}d_{X}\left(\Psi_{k,p}(h\Gamma_{Y}),\Phi(h)\gamma_{q}g_{0}\Gamma_{X}\right)<\epsilon\]
  for $h\Gamma_{Y}\in K$, $k\geq k_{0}$. In particular, by the ergodic theorem, we know that for $\nu$-a.e. $y\in Y$, there is $A_{y}\subset\mathbf{R}^{+}$ and $\lambda_{0}(y)>0$  such that
    \begin{itemize}
      \item for $r\in A_{y}$, we have $u^{r}_{Y}y \in K$;
     \item
      $\Leb(A_{y}\cap[0,\lambda])\geq (1-2\sigma)\lambda$ whenever $\lambda\geq\lambda_{0}(y)$.
    \end{itemize}
  Therefore, one can repeat the same argument as in Section \ref{joinings202105.24}, and then conclude that there exists $c^{\prime}(h\Gamma_{Y})\in C_{G_{Y}}(U_{Y})$, $q^{\prime}(p,h\Gamma_{Y})\in\{1,\ldots,n\}$ such that
\[\Psi_{k,p}(h\Gamma_{Y})=c^{\prime}(h\Gamma_{Y})\Phi(h)\gamma_{q^{\prime}(p,h\Gamma_{Y})}g_{0}\Gamma_{X}\]
for $\nu$-a.e. $h\Gamma_{Y}\in Y$.
We can then  write
\[\psi_{p}(h\Gamma_{Y})=c(h\Gamma_{Y})\Phi(h)\gamma_{q(p,h\Gamma_{Y})}g_{0}\Gamma_{X}\]
for some $c(h\Gamma_{Y})\in C_{G_{Y}}(U_{Y})$, $q(p,h\Gamma_{Y})\in\{1,\ldots,n\}$, $\nu$-a.e. $h\Gamma_{Y}\in Y$.
 Thus,  let $I=(q_{1},q_{2},\ldots,q_{n})$ be a permutation of $\{1,\ldots,n\}$,
   \[S_{I}\coloneqq\{y\in Y:q(1,y)=q_{1},\ldots,q(n,y)=q_{n}\}\]
and let
\[\widetilde{\psi}_{p}(y)\coloneqq \psi_{q_{p}}(y)\ \ \text{ when }\ \ y\in S_{(q_{1},\ldots,q_{n})}.\]
Then $\widetilde{\psi}_{p}(y)$ plays the same role as $\psi_{p}(y)$ and satisfies
\begin{equation}\label{joinings202106.144}
\widetilde{\psi}_{p}(h\Gamma_{Y})=c(h\Gamma_{Y})\Phi(h)\gamma_{p}g_{0}\Gamma_{X}.
\end{equation}
 for  $\nu$-a.e. $h\Gamma_{Y}\in Y$. Thus, without loss of generality, we assume that $\psi_{p}$ satisfies (\ref{joinings202106.144}).
It follows that the map $\Upsilon:\supp(\rho)\rightarrow X\times Y$ defined by
\[\Upsilon:(\psi_{p}(h\Gamma_{Y}),h\Gamma_{Y})\mapsto(\Phi(h)\gamma_{p}g_{0}\Gamma_{X},h\Gamma_{Y})\ \ \ \text{ for }\  \  p\in\{1,\ldots,n\}\]
is bijective and satisfies
\begin{equation}\label{joinings202106.143}
 \Upsilon(u_{X}^{t}x,\tilde{u}_{Y}^{t}(y))=(u_{X}^{\xi(y,t)}\times u_{Y}^{\xi(y,t)}).\Upsilon(x,y)
\end{equation}
for $\rho$-a.e. $(x,y)$ and $t\in\mathbf{R}$. Equivalently,  we  obtain:
 \begin{prop}\label{joinings202106.142}
            Assume   that
            $\tau_{Y}(y)$ and $\tau_{Y}( cy)$ are $L^{1}$-cohomologous for all $c\in C_{G_{Y}}(U_{Y})$. Then $\tau_{X}\equiv1$ and $\tau_{Y}$ are joint cohomologous.
          \end{prop}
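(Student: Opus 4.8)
The plan is to show that the homogeneous measure $\widetilde{\rho}$ constructed above is itself the joining witnessing the asserted joint cohomology, the transfer function being extracted from the ``discrepancy'' factor $\overline{c}(\cdot)$ appearing in the Ratner normal form (\ref{joinings202106.144}) of the maps $\psi_{p}$.

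First I would record what the hypothesis, Proposition \ref{joinings202106.140}, and Ratner's measure classification already give: $C^{\rho}=\{e\}$; a continuous surjective homomorphism $\Phi\colon G_{Y}\to G_{X}$ with $\Phi|_{SL_{2}}=\id$; an element $g_{0}\in G_{Y}$ and coset representatives $\gamma_{1},\dots,\gamma_{n}$; the normal form $\psi_{p}(h\Gamma_{Y})=\overline{c}(h\Gamma_{Y})\,\Phi(h)\gamma_{p}g_{0}\Gamma_{X}$ for $\nu$-a.e.\ $h\Gamma_{Y}$, where $\overline{c}:=\Phi\circ c\colon Y\to C_{G_{X}}(U_{X})$ takes values in the centraliser because $\Phi(u_{Y})=u_{X}$; and the measure $\widetilde{\rho}$, supported on $\Lambda:=\{(\Phi(h)\gamma_{p}g_{0}\Gamma_{X},h\Gamma_{Y})\}$, which is a genuine joining of $u_{X}^{t}$ and $u_{Y}^{t}$ (its marginals being $\mu$ and $m_{Y}$).

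Next I would derive a cocycle identity for $\overline{c}$ along the unipotent flow $u_{Y}^{t}$. Writing $\psi_{p}(u_{Y}^{t}y)$ both ``outside'' — via $\Phi(u_{Y}^{t}h)=u_{X}^{t}\Phi(h)$ and (\ref{joinings202106.144}) — and ``inside'' — via the equivariance $\psi_{p}(u_{Y}^{t}y)=u_{X}^{z(y,t)}\psi_{i_{p}(y,t)}(y)$ of Section \ref{joinings202105.24}, lifted to $X$ since $C^{\rho}=\{e\}$ — one gets two expressions for the same point; after matching up the sheet indices these are forced to agree, and I would read off
\[\overline{c}(u_{Y}^{t}y)=u_{X}^{\,z(y,t)-t}\,\overline{c}(y)\qquad\text{for $m_{Y}$-a.e.\ }y\in Y\text{ and all }t\in\mathbf{R}.\]
Then I would pass to scalars using the structure of $C_{\mathfrak{g}_{X}}(U_{X})$ recorded in (\ref{joinings202103.12}): its nilradical $\mathfrak{n}_{C}=\mathbf{R}U_{X}\oplus\mathfrak{n}_{C}^{\perp}$ is abelian (it sits inside the abelian root space of $\mathfrak{so}(n_{X},1)$) and its compact part $\mathfrak{k}_{C}^{\perp}$ commutes with $U_{X}$, so the projection $\varphi\colon C_{G_{X}}(U_{X})\to\mathbf{R}$ onto the $\mathbf{R}U_{X}$-coordinate is a group homomorphism with $\varphi(u_{X}^{s})=s$. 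Applying $\varphi$ to the last display gives
\[\varphi\bigl(\overline{c}(u_{Y}^{t}y)\bigr)-\varphi\bigl(\overline{c}(y)\bigr)=z(y,t)-t=\int_{0}^{t}\bigl(\tau_{Y}(u_{Y}^{s}y)-1\bigr)\,ds .\]

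Finally, I would put $f(x,y):=-\varphi(\overline{c}(y))$ on $\supp\widetilde{\rho}=\Lambda$ (and extend it arbitrarily off $\Lambda$). Since $\tau_{X}\equiv1$, the displayed identity is precisely
\[\int_{0}^{T}(\tau_{X}-\tau_{Y})(u_{X}^{t}x,u_{Y}^{t}y)\,dt=f(u_{X}^{T}x,u_{Y}^{T}y)-f(x,y)\]
for $\widetilde{\rho}$-a.e.\ $(x,y)$ and all $T$, i.e.\ $\tau_{X}$ and $\tau_{Y}$ are cohomologous over $u_{X}^{t}\times u_{Y}^{t}$ on $(X\times Y,\widetilde{\rho})$, hence jointly cohomologous via $(\widetilde{\rho},f)$. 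The step I expect to be the main obstacle is the cocycle identity for $\overline{c}$: one must carry the $\Gamma_{X}$-coset and sheet bookkeeping in (\ref{joinings202106.144}) along the whole orbit, using that on a set of measure close to $1$ distinct sheets stay uniformly separated and that the $C_{G_{X}}(U_{X})$-stabiliser of $m_{Y}$-a.e.\ base point $\Phi(h)\gamma g_{0}\Gamma_{X}$ is trivial, so that all residual ambiguities lie in a subgroup on which $\varphi$ vanishes.
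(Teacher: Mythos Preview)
Your proposal is correct and follows essentially the same route as the paper. The only cosmetic differences are that you parametrize by the untwisted flow $u_{Y}^{t}$ (obtaining $z(y,t)-t$) whereas the paper parametrizes by $\tilde{u}_{Y}^{t}$ (obtaining $t-\xi(y,t)$), and you extract the $\mathbf{R}U_{X}$-coordinate via the homomorphism $\varphi$ whereas the paper writes the decomposition $c(y)=u_{X}^{a(y)}b$ from (\ref{time change184}) directly; in both cases the transfer function is $a=\varphi\circ c$ and the joining is $\widetilde{\rho}$.
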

\begin{proof} By (\ref{joinings202106.143}), we can write down the decomposition (\ref{time change184}) for $c(y)$:
\[c(y)=u_{X}^{a(y)}b\]
and
\[a(y)+t=\xi(y,t)+a(u_{Y}^{\xi(y,t)}y).\]
 It follows that
 \[\int_{0}^{\xi(y,t)}\tau_{Y}(u_{Y}^{s}y)-1 ds= t-\xi(y,t)=  a(u_{Y}^{\xi(y,t)}y)-a(y).\]
Thus, $1$ and $\tau_{Y}$ are joint cohomologous via $(\tilde{\rho},a)$.
\end{proof}

Recall (\ref{joinings202106.139}) that when a weight vector $v\in C_{\mathfrak{g}_{Y}}(U_{Y})$ of   weight $\varsigma\geq 1$, we know that $\widetilde{\rho}$ is invariant under
\begin{equation}\label{joinings202106.145}
  \left\{
    \begin{array}{ll}  u_{X}^{\int\alpha(\exp(v),\cdot)}\beta(\exp(v))\times \exp(v)  &,\text{ for } \varsigma= 2\\
      & \\
        \id\times \exp(v)  &,\text{ for } \varsigma= 1\\
      & \\
   \exp(v_{0})\times \exp(v)  &,\text{ for } \varsigma= 0
                     \end{array}\right.
   \end{equation}
   where $\beta(\exp(v))=\exp(v_{0}+v_{2})$.
 Since  $\widetilde{\rho}$ is also $(u_{X}^{t}\times u^{t}_{Y})$-invariant,  if $\beta(\exp(v))=e$, then \textit{Moore’s ergodicity theorem}\index{Moore’s ergodicity theorem} and Lemma \ref{joinings202012.3} imply that $\langle\exp(v)\rangle\subset \ker\Phi$   is a compact normal subgroup of $G_{Y}$. It is a contradiction. Thus, we conclude
 \begin{prop}
    The map  $d\beta|_{V^{\perp}_{C}}: V^{\perp}_{C_{Y}}\rightarrow V^{\perp}_{C_{X}}$ is an injective Lie algebra homomorphism.
 \end{prop}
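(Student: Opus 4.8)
The homomorphism property of $d\beta|_{V^{\perp}_{C}}\colon V^{\perp}_{C_{Y}}\to V^{\perp}_{C_{X}}$ has already been recorded (see Section \ref{joinings202105.24}), so the plan is only to establish injectivity. First I would use Theorem \ref{joinings202106.159}(3) (the conjugation rule $S_{a_{Y}}\circ S_{c}\circ S_{a_{Y}^{-1}}=S_{a_{Y}ca_{Y}^{-1}}$), which forces $\beta$ to intertwine $\Ad(a_{Y})$ with $\Ad(a_{X})$; hence $d\beta$ carries the $\mathfrak{sl}_{2}$-weight space of weight $\varsigma$ into the weight space of weight $\varsigma$, so that $\ker\bigl(d\beta|_{V^{\perp}_{C}}\bigr)$ is a graded subspace. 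It therefore suffices to rule out a nonzero homogeneous $v\in V^{\perp}_{C_{Y}}$, necessarily a highest weight vector of a single weight $\varsigma\in\{0,1,2\}$ under the standing hypothesis, with $d\beta(v)=0$.

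Assume such a $v$ exists; then $\beta(\exp(tv))=e$ for all $t$. By the computation recorded in (\ref{joinings202106.139})--(\ref{joinings202106.145}) the limiting measure $\widetilde{\rho}$ is then invariant under $\id_{X}\times\exp(v)$ when $\varsigma\in\{0,1\}$, and under $u_{X}^{c}\times\exp(v)$, with $c=\int_{Y}\alpha(\exp(v),\cdot)\,dm_{Y}$, when $\varsigma=2$; the constant $c$ is finite because $\alpha(\exp(v),\cdot)$ is assumed $L^{1}$, via Proposition \ref{joinings202106.140} and Lemma \ref{joinings202106.135}. Combining these symmetries with the $(u_{X}^{t}\times u_{Y}^{t})$-invariance of $\widetilde{\rho}$ — in the weight-$2$ case by composing with the flow element $(u_{X}^{-c},u_{Y}^{-c})$ and using $[U_{Y},v]=0$ to write $\exp(v)u_{Y}^{-c}=\exp(v-cU_{Y})$ — I obtain a nonzero vector $w$ (namely $w=v$ if $\varsigma\le 1$, and $w=v-cU_{Y}$ if $\varsigma=2$) with $\exp(\mathbf{R}w)\subset\stab_{Y}(\widetilde{\rho})$.

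Finally I would invoke Ratner's theorem, already used to produce $\widetilde{\rho}$: it identifies $\stab_{Y}(\widetilde{\rho})$ with $\ker\Phi$, a connected normal subgroup of the semisimple group $G_{Y}$, and makes $\widetilde{\rho}$ invariant under $\id_{X}\times g$ for every $g\in\ker\Phi$ (since $\widetilde{\rho}$ is $(\Phi(g),g)$-invariant for all $g\in G_{Y}$). As $G_{Y}$ has no compact factors, $\ker\Phi$ is either trivial or noncompact semisimple; in the latter case Moore's ergodicity theorem makes it act ergodically on $Y=G_{Y}/\Gamma_{Y}$, while irreducibility of $\Gamma_{Y}$ makes $\ker\Phi\cap h\Gamma_{Y}h^{-1}$ finite for every $h$, so every $\ker\Phi$-orbit in $Y$ is closed of infinite volume. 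Disintegrating the $(\id_{X}\times\ker\Phi)$-invariant measure $\widetilde{\rho}$ over the $X$-factor would then yield $\ker\Phi$-invariant probability measures supported on finitely many such orbits, which is impossible. Hence $\ker\Phi$ is trivial and $w=0$; for $\varsigma\le 1$ this gives $v=0$, and for $\varsigma=2$ it gives $v=cU_{Y}\in\mathbf{R}U_{Y}$, again forcing $v=0$ since $V^{\perp}_{C_{Y}}$ is complementary to $\mathbf{R}U_{Y}$. Thus $d\beta|_{V^{\perp}_{C}}$ is injective.

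The step I expect to be the main obstacle is the weight-$2$ bookkeeping: one must verify that the $a$-renormalized limit of $\Psi^{\ast}_{k}(\exp(v)\,\cdot\,)$ genuinely produces invariance of $\widetilde{\rho}$ under $u_{X}^{c}\times\exp(v)$ with $c$ the ergodic average of $\alpha(\exp(v),\cdot)$ — this is exactly where the $L^{1}$-cohomology hypothesis and Lemma \ref{joinings202106.135} are needed — and that $\stab_{Y}(\widetilde{\rho})$ equals $\ker\Phi$ on the nose, so that $\exp(\mathbf{R}w)$ lands inside $\ker\Phi$ rather than merely being conjugate into it. By contrast the weight-$1$ case never occurs: the argument applies with no hypothesis on $\beta$ and shows $C_{\mathfrak{g}_{Y}}(U_{Y})$ has no highest weight vector of odd weight, recovering the remark following Theorem \ref{joinings202106.163}.
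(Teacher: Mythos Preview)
Your overall strategy matches the paper's: both reduce to showing that a nonzero $w$ with $\exp(\mathbf{R}w)\subset\stab_{Y}(\widetilde{\rho})$ is impossible, using the invariance relations (\ref{joinings202106.139})--(\ref{joinings202106.145}) for the limiting measure $\widetilde{\rho}$. Your preliminary reduction to homogeneous weight vectors via the $a$-conjugation rule is a sensible addition that the paper leaves implicit.

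However, your final contradiction contains a genuine error. You claim that irreducibility of $\Gamma_{Y}$ forces every $\ker\Phi$-orbit in $Y$ to be \emph{closed} of infinite volume. The opposite is true: for a proper nontrivial normal subgroup $N\lhd G_{Y}$ and an irreducible lattice $\Gamma_{Y}$, the orbits $Nh\Gamma_{Y}=hN\Gamma_{Y}$ are \emph{dense}, since irreducibility means precisely that $\Gamma_{Y}$ projects densely to $G_{Y}/N$. The finiteness of $N\cap h\Gamma_{Y}h^{-1}$ only says the orbit map $N\to Y$ is locally injective, not that the image is closed. Consequently your disintegration step (``$\ker\Phi$-invariant probability measures supported on finitely many such orbits'') is unjustified --- the fibre measures $\widetilde{\rho}_{x}$ over the $X$-factor have no reason to be supported on finitely many $\ker\Phi$-orbits (you may be confusing this with the disintegration over $Y$, which is atomic).

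The paper's route is shorter and avoids this. Once $\exp(\mathbf{R}w)\subset\ker\Phi$ with $w\neq0$, the normal subgroup $\ker\Phi$ is nontrivial, hence (since $G_{Y}$ has finite center and no compact factors) noncompact. Any element of $\ker\Phi$ generating an unbounded cyclic subgroup acts ergodically on $(Y,m_{Y})$ by Moore's theorem, and then the $Y$-sided analogue of Lemma~\ref{joinings202012.3} --- whose proof is symmetric in the two factors --- forces $\widetilde{\rho}=\overline{\mu}\times m_{Y}$, contradicting nontriviality. Your argument is salvaged simply by replacing the orbit-closure claim with this direct appeal to Lemma~\ref{joinings202012.3}.
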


\subsection{Time-changes of unipotent flows of $SO(n,1)$ vs. unipotent flows}\label{joinings202106.172} In this section, we shall prove Theorem \ref{joinings202106.163}.
Let $G_{X}=SO(n_{X},1)$, $G_{Y}$ be a semisimple Lie group with finite center and no compact factors and $\Gamma_{X}\subset G_{X}$,  $\Gamma_{Y}\subset G_{Y}$ be irreducible lattices.
Let $(Y,\nu)$ be the homogeneous space $Y=G_{Y}/\Gamma_{Y}$ equipped with the Lebesgue measure $\nu$, and let $\phi^{U_{Y}}_{t}=u_{Y}^{t}$ be a unipotent flow on $Y$.
  Suppose that
\begin{itemize}
  \item  $X$ is the homogeneous space $X=G_{X}/\Gamma_{X}$,
  \item $u_{X}\in G_{X}$ is a unipotent element,
  \item $\tau_{X}\in \mathbf{K}_{\kappa}(X)$ is a  positive integrable and $C^{1}$ function  on $Y$ such that $\tau_{X},\tau_{X}^{-1}$ are bounded and satisfies (\ref{joinings202104.26}),
  \item $\tilde{u}_{X}^{t}=\phi^{U_{X},\tau}_{t}$ of the unipotent flow $u_{X}$,
  \item $\mu$ is a $\tilde{u}_{X}^{t}$-invariant measure on $X$,
  \item $\rho\in J(\tilde{u}_{X}^{t},u_{Y}^{t})$ is an  ergodic joining that is a compact extension of $\nu$, i.e. has the form
      \[ \rho(f)=\int_{Y} \int_{C^{\rho}}\frac{1}{n}\sum_{p=1}^{n}f(k\psi_{p}(y),y)dm(k)d\nu(y)\]
      for $f\in C(X\times Y)$ and compact $C^{\rho}\in C_{G_{X}}(U_{X})$.
\end{itemize}

Recall that in Remark \ref{joinings202106.134},  for $c\in C_{G_{Y}}(U_{Y})$, we know that $\rho$ is invariant under the map
\[\widetilde{S}_{c}:(x,y) \mapsto ( u_{X}^{\alpha(c,y)}\beta(c)x,cy)\]
(cf. (\ref{joinings202105.25})). Besides, $\alpha,\beta$ satisfy
\[\xi(\psi_{p}(cy),t)+\alpha(c,y)= \alpha(c,u_{Y}^{t}y)+\xi(\psi_{p}(y),t),\]
\begin{equation}\label{joinings202106.152}
 \alpha(c_{1}c_{2},y)=\alpha(c_{1},c_{2}y)+\alpha(c_{2},y),\ \ \ \beta(c_{1}c_{2})=\beta(c_{1})\beta(c_{2})
\end{equation}
where
\[t=\int_{0}^{\xi(x,t)}\tau_{X}(u_{X}^{s}x)ds.\]
Moreover, if $\beta(c)=e$ for some $c\in C_{G_{Y}}(U_{Y})$, then we have (\ref{joinings202106.148}):
\begin{equation}\label{joinings202106.156}
 \alpha(c,y)=\xi(x,r_{c})
\end{equation}
for some $r_{c}\in\mathbf{R}$.  Note that (\ref{joinings202106.156}) implies that
\[ (x,y) \mapsto ( u_{X}^{\alpha(c,y)}x,cy)\mapsto ( x,u_{Y}^{-r_{c}}cy)\]
is $\rho$-invariant. Thus, \textit{Moore’s ergodicity theorem}\index{Moore’s ergodicity theorem} and Lemma \ref{joinings202012.3} force
\begin{equation}\label{joinings202106.171}
   \alpha(\exp (v),y)\equiv 0\ \ \ \text{ and } \ \ \   \langle\exp(v)\rangle\subset G_{Y}
\end{equation}
 is compact. In particular, we obtain (\ref{joinings202106.169}):
\[d\beta|_{V^{\perp}_{C}}(v)\neq0\]
 for any weight vector $v\in V^{\perp}_{C_{Y}}$ of positive weight.
 Inspired by this, we deduce
\begin{lem}\label{joinings202106.170}
    For  weight vectors $v\in C_{\mathfrak{g}_{Y}}(U_{Y})$ of   weight $\varsigma\neq0, 2$, we must have
    \[d\beta(v)=0. \]
\end{lem}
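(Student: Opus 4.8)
The plan is to combine the conjugation–equivariance of $\beta$ under the diagonal $\mathfrak{sl}_2$–element (Theorem \ref{joinings202106.163}(3)) with the rigidity of $C_{\mathfrak{g}_X}(U_X)$. Recall from (\ref{joinings202103.12}) that $C_{\mathfrak{g}_X}(U_X)=\mathbf{R}U_X\oplus\mathfrak{k}^\perp_C\oplus\mathfrak{n}^\perp_C$ with $V^\perp_{C_X}=\mathfrak{k}^\perp_C\oplus\mathfrak{n}^\perp_C$, where $\mathfrak{k}^\perp_C\cong\mathfrak{so}(n_X-2)$ is of compact type and has $\mathfrak{sl}_2$–weight $0$, while $\mathfrak{n}^\perp_C$ has weight $2$ and $[\mathfrak{n}^\perp_C,\mathfrak{n}^\perp_C]=0$. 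Thus $\mathfrak{j}\coloneqq\mathbf{R}U_X\oplus\mathfrak{n}^\perp_C$ is an abelian ideal of $C_{\mathfrak{g}_X}(U_X)$ which acts trivially on $\mathfrak{n}^\perp_C$, so the $\Ad$–action of $C_{G_X}(U_X)$ on $\mathfrak{n}^\perp_C$ factors through the compact group $C_{G_X}(U_X)/\exp(\mathfrak{j})$ and so does the $\Ad$–action on the quotient $C_{\mathfrak{g}_X}(U_X)/\mathfrak{j}\cong\mathfrak{k}^\perp_C$; moreover $\Ad(a_X^r)$ is the identity on $\mathfrak{k}^\perp_C$ and equals $e^{-r}\cdot\mathrm{id}$ on $\mathfrak{n}^\perp_C$, and in particular $\Ad(a_X^r)$ preserves $V^\perp_{C_X}$.

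The first step is to extract the infinitesimal equivariance relation. Writing out the $X$–component of the identity $\widetilde S_{a_Y^r}\circ\widetilde S_c\circ\widetilde S_{a_Y^{-r}}=\widetilde S_{a_Y^r c a_Y^{-r}}$ of Theorem \ref{joinings202106.163} with $\widetilde S_c(x,y)=(u_X^{\alpha(c,y)}\beta(c)x,cy)$ and $\widetilde S_{a_Y^r}(x,y)=(u_X^{\alpha(a_Y^r,y)}\beta(a_Y^r)a_X^r x,a_Y^r y)$, one uses that $U_X$ is central in $C_{G_X}(U_X)$ to collect all the $u_X$–powers, that $a_X^r\beta(c)a_X^{-r}\in\exp(V^\perp_{C_X})$ by the previous paragraph, and the uniqueness of the decomposition $C_{G_X}(U_X)^{\circ}=\exp(\mathbf{R}U_X)\exp(V^\perp_{C_X})$, to obtain
\[\beta\bigl(a_Y^r c a_Y^{-r}\bigr)=\beta(a_Y^r)\,\bigl(a_X^r\beta(c)a_X^{-r}\bigr)\,\beta(a_Y^r)^{-1}.\]
Differentiating at $c=\exp(tv)$ and $t\to 0$ then gives, for every $v\in C_{\mathfrak{g}_Y}(U_Y)$ and $r\in\mathbf{R}$,
\[d\beta\bigl(\Ad(a_Y^r)v\bigr)=\Ad\bigl(\beta(a_Y^r)a_X^r\bigr)\,d\beta(v).\]

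Now fix a weight vector $v\in C_{\mathfrak{g}_Y}(U_Y)$ of weight $\varsigma$, so that $\Ad(a_Y^r)v=e^{-\varsigma r/2}v$ (by Lemma \ref{time change179} and the normalization $[A_Y,U_Y]=-U_Y$, as for $U_X$), and write $d\beta(v)=w_k+w_n$ with $w_k\in\mathfrak{k}^\perp_C$, $w_n\in\mathfrak{n}^\perp_C$; this lies in $V^\perp_{C_X}$ since $\beta$ takes values in $\exp(V^\perp_{C_X})$. Reducing the identity $e^{-\varsigma r/2}(w_k+w_n)=\Ad(\beta(a_Y^r)a_X^r)(w_k+w_n)$ modulo the ideal $\mathfrak{j}$, the operator $\Ad(a_X^r)$ becomes trivial on $C_{\mathfrak{g}_X}(U_X)/\mathfrak{j}\cong\mathfrak{k}^\perp_C$ and $\Ad(\beta(a_Y^r))$ acts there through the compact group $C_{G_X}(U_X)/\exp(\mathfrak{j})$, hence preserves a fixed invariant norm; taking norms yields $e^{-\varsigma r/2}\|w_k\|=\|w_k\|$ for all $r$, so $w_k=0$ whenever $\varsigma\neq 0$. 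With $w_k=0$ we get $d\beta(v)=w_n\in\mathfrak{n}^\perp_C$, on which $\Ad(a_X^r)=e^{-r}\cdot\mathrm{id}$ and $\Ad(\beta(a_Y^r))$ is again norm–preserving (it factors through the same compact quotient); taking norms in $e^{-\varsigma r/2}w_n=\Ad(\beta(a_Y^r)a_X^r)w_n$ yields $e^{-\varsigma r/2}\|w_n\|=e^{-r}\|w_n\|$ for all $r$, forcing $w_n=0$ whenever $\varsigma\neq 2$. Hence $d\beta(v)=0$ for $\varsigma\neq 0,2$.

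The main obstacle I anticipate is the bookkeeping in the first step: one must check carefully that the $u_X$–powers really can be stripped off (centrality of $U_X$ in its own centralizer and the $\Ad(a_X^r)$–stability of $V^\perp_{C_X}$ are exactly what make the identity for $\beta$ come out clean), and one must keep the $\mathfrak{sl}_2$–weight and the $\Ad(a^r)$–scaling conventions consistent between $\mathfrak{g}_X$ and $\mathfrak{g}_Y$ so that the exponents $-\varsigma r/2$ and $-r$ compared at the end are genuinely the right ones. Once the displayed equivariance relation is in hand, the remaining compactness/norm argument is routine.
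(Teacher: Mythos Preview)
Your proof is correct and follows essentially the same route as the paper: both derive the conjugation identity $\beta(a_Y^r c a_Y^{-r})=\beta(a_Y^r)\,(a_X^r\beta(c)a_X^{-r})\,\beta(a_Y^r)^{-1}$ from Theorem~\ref{joinings202106.163}(3), pass to the Lie algebra, and compare the $a$–scalings on the two sides. Your write-up is in fact more complete than the paper's: where the paper simply asserts ``by the assumption, $w$ has to be nilpotent'' and then runs the norm argument on $\mathfrak{n}^\perp_C$, you explicitly split $d\beta(v)=w_k+w_n$, kill $w_k$ by passing to the quotient $C_{\mathfrak{g}_X}(U_X)/\mathfrak{j}\cong\mathfrak{k}^\perp_C$ (using $\varsigma\neq0$), and then kill $w_n$ on $\mathfrak{n}^\perp_C$ (using $\varsigma\neq2$); this makes transparent exactly how each of the two excluded weights enters.
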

\begin{proof}
   Similar to Theorem \ref{joinings202106.149}, one can deduce that  for $r\in\mathbf{R}$,
   \[\widetilde{S}_{a^{r}_{Y}}:(x,y)\mapsto \left(u_{X}^{\alpha(a^{r}_{Y},y)}\beta(a^{r}_{Y})a^{r}_{X}x,a^{r}_{Y}y\right)\]
   is $\rho$-invariant.  Also, we have
  \[\widetilde{S}_{a_{Y}}\circ \widetilde{S}_{c}\circ \widetilde{S}_{a_{Y}^{-1}}=\widetilde{S}_{a_{Y}ca_{Y}^{-1}}\]
  for any $a_{Y}\in \exp(\mathbf{R}A_{Y})$, $c\in C_{G_{Y}}(U_{Y})$. In particular, one deduces
  \[\beta(a_{Y}^{r})a_{X}^{r}\beta(a_{Y}^{-r})a_{X}^{-r}=e,\ \ \ \beta(a_{Y}^{r})a_{X}^{r}\beta(c)\beta(a_{Y}^{-r})a_{X}^{-r}=\beta(a_{Y}^{r}ca_{Y}^{-r}).\]
  Thus, suppose that     $v\in C_{\mathfrak{g}_{Y}}(U_{Y})$ is a  weight vector of   weight $\varsigma\neq0, 2$. Then
\begin{multline}
  \beta(\exp(v))^{e^{r\varsigma/2}}=\beta(\exp(e^{r\varsigma/2}v))= \beta(a_{Y}^{r}\exp(v)a_{Y}^{-r}) \\
   =\beta(a_{Y}^{r})a_{X}^{r}\beta(\exp(v))\beta(a_{Y}^{-r})a_{X}^{-r} =\beta(a_{Y}^{r})a_{X}^{r}\beta(\exp(v))a_{X}^{-r}\beta(a_{Y}^{r})^{-1}.\label{joinings202106.150}
\end{multline}
Assume that $\beta(\exp(v))=\exp(w)$ for some $w\in C_{\mathfrak{g}_{X}}(U_{X})$. By the assumption, $w$ has to be  nilpotent and so
\begin{equation}\label{joinings202106.151}
 a_{X}^{r}\beta(\exp(v))a_{X}^{-r}=a_{X}^{r}\exp(w)a_{X}^{-r}=\exp(e^{r}w).
\end{equation}
Combining (\ref{joinings202106.150}) and (\ref{joinings202106.151}), we get
\[e^{r\varsigma/2}\|w\|=\|e^{r\varsigma/2}w\|= \|\Ad \beta(a_{Y}^{r}).e^{r}w\|= \|e^{r}w\|= e^{r}\|w\|\]
which leads to a contradiction.
\end{proof}
Then by \textit{Moore’s ergodicity theorem}\index{Moore’s ergodicity theorem} and Lemma \ref{joinings202012.3} (cf. Remark \ref{joinings202106.134}), we conclude
\begin{cor}
   If $C_{\mathfrak{g}_{Y}}(U_{Y})$ contains a weight vector of   weight $\varsigma\neq0, 2$, then
   \[\rho=\mu\times\nu.\]
\end{cor}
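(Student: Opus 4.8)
The plan is to read this off from Lemma~\ref{joinings202106.170} together with the Moore-type dichotomy of Lemma~\ref{joinings202012.3}. Suppose $C_{\mathfrak{g}_{Y}}(U_{Y})$ contains a weight vector $v$ of weight $\varsigma\neq 0,2$. Since the weights appearing in an $\mathfrak{sl}_{2}$-subrepresentation are non-negative integers, $\varsigma\geq 1$; the corresponding vector in $C_{\mathfrak{g}_{Y}}(U_{Y})$ then has strictly negative $\ad A_{Y}$-eigenvalue $-\varsigma/2$, so it lies in a nilpotent subalgebra and $v$ is nilpotent, and moreover $v\notin\mathbf{R}U_{Y}$ since $U_{Y}$ has weight $2$. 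Put $c:=\exp(v)\in C_{G_{Y}}(U_{Y})$.

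The first step is to apply Lemma~\ref{joinings202106.170}, which gives $d\beta(v)=0$ and hence $\beta(c)=\exp(d\beta(v))=e$. Now that the $V^{\perp}_{C_{X}}$-component has vanished, the ``$a$-adjustment''-free reduction recorded in (\ref{joinings202106.156}) applies: $\alpha(c,y)=\xi(x,r_{c})$ for some $r_{c}\in\mathbf{R}$, so that $\widetilde{S}_{c}(x,y)=(\tilde{u}^{r_{c}}_{X}x,cy)$ and therefore the map $(x,y)\mapsto(x,u^{-r_{c}}_{Y}cy)$, being $\widetilde{S}_{c}$ followed by the $\rho$-preserving transformation $\tilde{u}^{-r_{c}}_{X}\times u^{-r_{c}}_{Y}$, is $\rho$-invariant. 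Setting $w':=-r_{c}U_{Y}+v\in C_{\mathfrak{g}_{Y}}(U_{Y})$, which is nilpotent and nonzero because $v\notin\mathbf{R}U_{Y}$, we obtain that $g:=u^{-r_{c}}_{Y}c=\exp(w')$ is a nontrivial unipotent element of $G_{Y}$, so $\langle\exp(tw')\rangle_{t\in\mathbf{R}}$ is a closed non-compact subgroup.

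The last step is the $Y$-factor analogue of Lemma~\ref{joinings202012.3} (equivalently, the mirror of Corollary~\ref{joinings202103.3}): its proof uses only that $\rho$ is $(\tilde{u}^{1}_{X}\times u^{1}_{Y})$-ergodic, that the transformation in question commutes with the flow on the relevant factor, and that it is ergodic there. Here $g$ commutes with $u^{t}_{Y}$ because $w'\in C_{\mathfrak{g}_{Y}}(U_{Y})$, and $g$ is ergodic on $Y=G_{Y}/\Gamma_{Y}$ (with Haar measure $\nu=m_{Y}$) by Moore's ergodicity theorem, since $\langle g\rangle$ is a closed non-compact subgroup and $\Gamma_{Y}$ is an irreducible lattice. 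As $(\id\times g)_{\ast}\rho=\rho$ is not mutually singular with $\rho$, the dichotomy forces $\rho=\mu\times\nu$. The only point needing care — and the one I would verify in detail — is the reduction in the second step, namely that the vanishing of $\beta(c)$ is exactly what lets one dispense with the central $C_{G_{X}}(U_{X})$-twist and land, after flowing, at a transformation that moves only the $Y$-coordinate by a genuine unipotent; once that is in hand, the conclusion is immediate from the machinery of Section~\ref{joinings202106.172} and the standard ergodic-decomposition argument.
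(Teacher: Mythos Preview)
Your proof is correct and follows essentially the same route as the paper: invoke Lemma~\ref{joinings202106.170} to get $d\beta(v)=0$ and hence $\beta(\exp v)=e$, use (\ref{joinings202106.156}) to rewrite $\widetilde{S}_{c}$ so that after composing with $\tilde{u}_{X}^{-r_{c}}\times u_{Y}^{-r_{c}}$ only the $Y$-coordinate moves by the nontrivial unipotent $u_{Y}^{-r_{c}}\exp(v)=\exp(-r_{c}U_{Y}+v)$, and then apply Moore's theorem together with the $Y$-factor analogue of Lemma~\ref{joinings202012.3} (this is exactly the paper's ``by Moore's ergodicity theorem and Lemma~\ref{joinings202012.3} (cf.\ Remark~\ref{joinings202106.134})''). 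Your write-up simply spells out in full the details the paper leaves implicit, including the observation that $-r_{c}U_{Y}+v$ is a nonzero nilpotent because $v\notin\mathbf{R}U_{Y}$ (different weights) and $[U_{Y},v]=0$.
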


Now we focus on the case $n_{X}=2$ and $\tau_{X}\in \mathbf{K}(X)\cap C^{1}(X)$. Note that in this case, Ratner \cite{ratner1987rigid} showed that $\tilde{u}^{t}_{X}$ also has H-property. Thus,  we can repeat the same idea as in Section \ref{joinings202106.146} to discuss the case when $C_{\mathfrak{g}_{Y}}(U_{Y})$ consists only of weight vectors of   weight $\varsigma=0, 2$. Note that since $\beta\equiv0$, by (\ref{joinings202106.156}), we must have $\alpha(c,\cdot)\in L^{\infty}(Y)$ for any $c\in C_{G_{Y}}(U_{Y})$.  Then, similar to  Proposition \ref{joinings202106.140}, we have
\begin{prop}\label{joinings202106.174}
  Assume that  $C_{\mathfrak{g}_{Y}}(U_{Y})$ consists only of weight vectors of   weight $\varsigma=0, 2$. Then  after passing a subsequence if necessary,
  \begin{equation}\label{joinings202106.154}
   \Psi^{\ast}(y)\coloneqq\lim_{n\rightarrow\infty} \Psi^{\ast}_{k}(y)
  \end{equation}
  exists for $\nu$-a.e. $y\in Y$,  where $\Psi^{\ast}_{k}(y)\coloneqq\{\Psi_{k,p}(y):p\in\{1,\ldots,n\}\}$ and $\Psi_{k,p}(y)$ is given by (\ref{joinings202106.130}).
\end{prop}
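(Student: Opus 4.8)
The plan is to follow the argument of Proposition \ref{joinings202106.140}, using the simplifications available when $n_X=2$. First I would record the structural reductions. Since $G_X=SO(2,1)$, the centralizer $C_{G_X}(U_X)=\exp(\mathbf{R}U_X)$ has no nontrivial compact subgroup, so the compact group $C^\rho\subset C_{G_X}(U_X)$ produced in Section \ref{joinings202106.166} is trivial; hence $\overline X=X$, $\rho$ is a genuine finite extension of $\nu$ (cf. Remark \ref{joinings202106.134}), and by \cite{ratner1987rigid} $\widetilde u_X^t$ has the $H$-property, so Theorem \ref{joinings202106.122} in its finite-extension form and the construction of $\Psi_{k,p}$ in (\ref{joinings202106.130}) are available. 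Moreover $V^{\perp}_{C_X}=0$, so $\beta\equiv e$, and by (\ref{joinings202106.156}) we have $\alpha(c,\cdot)\in L^\infty(Y)$ for every $c\in C_{G_Y}(U_Y)$; this $L^\infty$-bound plays the role that the $L^1$-cohomology hypothesis played in Proposition \ref{joinings202106.140}. We may assume $\rho\neq\mu\times\nu$, the product case being trivial.

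Next I would analyze how $\Psi^{\ast}_k$ transforms under the central and opposite-unipotent directions. Fix a weight vector $v\in C_{\mathfrak g_Y}(U_Y)$; by hypothesis its weight $\varsigma$ is $0$ or $2$. Conjugating by $a_Y^{\lambda_k}$ and using $\beta\equiv e$ together with the equivariance of $\overline\psi_p$ under $\widetilde S_c$ (Remark \ref{joinings202106.134}, Corollary \ref{joinings202105.7}, (\ref{joinings202106.129})), one obtains $\Psi^{\ast}_k(\exp(v)y)=u_X^{\,e^{-\lambda_k}\alpha(\exp(e^{\varsigma\lambda_k/2}v),y)}\,\Psi^{\ast}_k(y)$. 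For $\varsigma=0$ the exponent is $e^{-\lambda_k}\alpha(\exp(v),y)\to 0$ since $\alpha(\exp v,\cdot)\in L^\infty$ and $\lambda_k\to\infty$. For $\varsigma=2$ the exponent is $e^{-\lambda_k}\alpha(\exp(v)^{e^{\lambda_k}},y)$; here $\exp(\mathbf{R}v)$ is unipotent, hence $m_Y$-ergodic by Moore's theorem, so the additive cocycle identity (\ref{joinings202106.152}) together with the Birkhoff theorem (as in the proof of Lemma \ref{joinings202106.135}, now using only $\alpha(\exp v,\cdot)\in L^1$) gives $e^{-\lambda_k}\alpha(\exp(e^{\lambda_k}v),y)\to\int_Y\alpha(\exp v,\cdot)\,dm_Y=:\bar\alpha_v$ for a.e.\ $y$. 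Thus in both cases there is a fixed $P^v\in C_{G_X}(U_X)$ (namely $e$, resp.\ $u_X^{\bar\alpha_v}$) with $\lim_k\Psi^{\ast}_k(\exp(v)y)=P^v\cdot\lim_k\Psi^{\ast}_k(y)$ whenever either limit exists, a.e.; and by Theorem \ref{joinings202106.122}, $\lim_k d_X(\Psi^{\ast}_k(\overline u_Y^r y),\overline u_X^r\Psi^{\ast}_k(y))=0$ for all $r$, a.e.\ $y$. Since only weights $0,2$ occur in $C_{\mathfrak g_Y}(U_Y)$, these relations control $\Psi^{\ast}_k$ along a generating set of $G_Y$.

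To extract the limit I would first show the maps $y\mapsto\Psi^{\ast}_k(y)$ are uniformly tight: writing $X=\bigcup_i K_i$ with $K_i$ compact and $m_X(K_i)\nearrow1$, the substitution $y'=a_Y^{-\lambda_k}y$ (using $a_Y$-invariance of $m_Y$), the inequality $m_Y(\{y:\exists p,\ \psi_p(y)\notin A\})\le n\,\mu(A^c)$ (from $(\pi_X)_{\ast}\rho=\mu$), and $\mu(A)\asymp m_X(A)$ (as $\tau_X,\tau_X^{-1}$ are bounded) give $m_Y(\{y:\Psi^{\ast}_k(y)\not\subset K_i\})\le nC(1-m_X(K_i))$ uniformly in $k$. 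Hence, with $\Omega\coloneqq\bigcup_i\bigcap_k\bigcup_{j\ge k}\bigcap_p\Psi_{j,p}^{-1}(K_i)$ one has $m_Y(\Omega)=1$ (as in Claim \ref{joinings202106.136}), so for a.e.\ $y$ the sequence $(\Psi^{\ast}_k(y))_k$ has a convergent subsequence. Finally, to replace these $y$-dependent subsequences by one valid a.e., I would fix a countable dense subgroup $\Lambda\subset G_Y$ generated by $\{\overline u_Y^r:r\in\mathbf{Q}\}$ and countably many weight vectors in $C_{\mathfrak g_Y}(U_Y)$ — so that $\langle\overline u_Y^r,C_{G_Y}(U_Y)\rangle=G_Y$ forces $\Lambda$ dense — pick $y_0\in\Omega$ lying in the conull set where all the a.e.\ relations of the previous paragraph hold at every point of $\Lambda y_0$, and take the subsequence $(k_l)$ realizing convergence at $y_0$; the relations then force $\lim_l\Psi^{\ast}_{k_l}$ to exist on $\Lambda y_0$, with value $\Phi(g)\,\Psi^{\ast}(y_0)$ for a homomorphism $\Phi$ built from the $P^v$ and $\overline u_X^r$, and a.e.\ existence follows by the usual Fubini argument. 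I expect the main obstacle to be the weight-$2$ estimate of the second paragraph — making sense of and controlling $e^{-\lambda_k}\alpha(\exp(e^{\lambda_k}v),y)$ as $\lambda_k\to\infty$, which needs $m_Y$-ergodicity of the single map $y\mapsto\exp(v)y$ and a uniform bound on $\alpha(\exp(sv),\cdot)$ for $s\in[0,1]$ to interpolate between integer and real times; it is precisely the $L^\infty$-control on $\alpha$ coming from $\beta\equiv e$ that makes this work, which is why $n_X=2$ is essential here. The bookkeeping to upgrade the $y$-dependent subsequences to a uniform one is a secondary technical point, handled by the equivariance relations and the generation statement.
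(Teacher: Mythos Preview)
Your proposal is correct and follows essentially the same route as the paper: the paper does not give a detailed proof of Proposition~\ref{joinings202106.174} but simply indicates (in the remark following it) that one should rerun the argument of Proposition~\ref{joinings202106.140}, using Ratner's $H$-property for time-changes \cite{ratner1987rigid} to obtain the analogue of Theorem~\ref{joinings202106.122} in the reversed setting. You carry this out explicitly, correctly noting that $n_X=2$ forces $C^\rho=\{e\}$ and $\beta\equiv e$, so that the $L^\infty$-bound on $\alpha$ from (\ref{joinings202106.156}) replaces the $L^1$-cohomology hypothesis of Proposition~\ref{joinings202106.140}; your treatment of the weight-$0$ and weight-$2$ cases, the tightness argument (Claim~\ref{joinings202106.136}), and the passage from pointwise subsequences to a single subsequence all mirror the paper's scheme.
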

\begin{rem}
   One nontrivial step of Proposition \ref{joinings202106.174} is to obtain a similar version of Theorem \ref{joinings202106.122}. This requires that the time-change $\tilde{u}^{t}_{X}$ also has H-property. See \cite{ratner1987rigid} Lemma 3.1 for further details.
\end{rem}
  Then by \textit{Ratner's theorem}\index{Ratner's theorem} (cf. (\ref{joinings202106.144})), there exists $c(h\Gamma_{Y})\in C_{G_{X}}(U_{X})=\exp(\mathbf{R}U_{X})$, a homomorphism $\Phi(h)$, $\gamma_{p},g_{0}\in G_{X}$ such that $\psi_{p}$ can be  written as
  \begin{equation}\label{joinings202106.155}
    \psi_{p}(h\Gamma_{Y})=c(h\Gamma_{Y})\Phi(h)\gamma_{p}g_{0}\Gamma_{X}
  \end{equation}
for $h\Gamma_{Y}\in Y$. Then as in Proposition \ref{joinings202106.142}, we get

\begin{prop} $\tau_{X}$  and $\tau_{Y}\equiv1$ are joint cohomologous.
          \end{prop}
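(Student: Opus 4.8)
The plan is to mirror the proof of Proposition~\ref{joinings202106.142}, interchanging the roles of the time-changed and the honest factor. By Proposition~\ref{joinings202106.174} and Ratner's theorem we may assume $C^{\rho}=\{e\}$, so that $\rho(f)=\int_{Y}\frac{1}{n}\sum_{p=1}^{n}f(\psi_{p}(y),y)\,d\nu(y)$, and that, after relabelling, $\psi_{p}(h\Gamma_{Y})=c(h\Gamma_{Y})\Phi(h)\gamma_{p}g_{0}\Gamma_{X}$ as in~(\ref{joinings202106.155}). Since $n_{X}=2$ we have $C_{G_{X}}(U_{X})=\exp(\mathbf{R}U_{X})$, so write $c(h\Gamma_{Y})=u_{X}^{a(h\Gamma_{Y})}$ for a measurable $a\colon Y\to\mathbf{R}$, and put $x_{p}'(h\Gamma_{Y})\coloneqq\Phi(h)\gamma_{p}g_{0}\Gamma_{X}$, so that $\psi_{p}(y)=u_{X}^{a(y)}x_{p}'(y)$. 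The measure $\widetilde{\rho}$ of Proposition~\ref{joinings202106.174}, $\widetilde{\rho}(f)=\int_{Y}\frac{1}{n}\sum_{p}f(x_{p}'(y),y)\,d\nu(y)$, is invariant under the honest product flow $u_{X}^{t}\times u_{Y}^{t}$ and, because $\Phi$ restricts to the identity on the $\mathfrak{sl}_{2}$ generated by $U_{Y}$, is carried by the homogeneous orbit $\{(\Phi(h)\gamma_{p}g_{0}\Gamma_{X},h\Gamma_{Y})\}$; in particular $u_{X}^{T}x_{p}'(y)=x_{p}'(u_{Y}^{T}y)$.

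The first real step is an equivariance relation for $a$. As $\rho$ is $(\widetilde{u}_{X}^{t}\times u_{Y}^{t})$-invariant one has $\widetilde{u}_{X}^{t}\psi_{p}(y)=\psi_{i_{p}(y,t)}(u_{Y}^{t}y)$ for an index $i_{p}(y,t)$; writing the left side as $u_{X}^{\xi(\psi_{p}(y),t)}\psi_{p}(y)$, using $u_{X}^{t}=\Phi(u_{Y}^{t})$ and that $u_{X}^{a(y)}$ commutes with $u_{X}^{\xi(\psi_{p}(y),t)}$, the two expressions give
\[
u_{X}^{\delta}\,\Phi(h)\gamma_{p}g_{0}\Gamma_{X}=\Phi(h)\gamma_{i_{p}(y,t)}g_{0}\Gamma_{X},\qquad \delta=\xi(\psi_{p}(y),t)+a(y)-a(u_{Y}^{t}y)-t .
\]
On a set of positive $\nu$-measure where $a$ is continuous (Lusin) and for $t$ near $0$, the quantity $\delta$ is continuous in $t$ with $\delta|_{t=0}=0$ and $i_{p}(y,0)=p$; since $\Gamma_{X}$ is discrete and a.e.\ $u_{X}$-orbit is non-periodic, the relation $g_{0}^{-1}\gamma_{i_{p}}^{-1}\Phi(h)^{-1}u_{X}^{\delta}\Phi(h)\gamma_{p}g_{0}\in\Gamma_{X}$ forces $i_{p}(y,t)=p$ and $\delta=0$, and the cocycle identities for $\xi$ and for the flow propagate this to all $t$. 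Thus, for a.e.\ $y$ and all $t$,
\[
\xi(\psi_{p}(y),t)=t+a(u_{Y}^{t}y)-a(y).
\]
This is precisely the mechanism already used in Section~\ref{joinings202106.146}, now with the factors swapped.

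Finally I would turn this into a coboundary identity. Set $z(x,T)\coloneqq\int_{0}^{T}\tau_{X}(u_{X}^{s}x)\,ds$. From $\psi_{p}(y)=u_{X}^{a(y)}x_{p}'(y)$ and the defining equation $t=\int_{0}^{\xi(\psi_{p}(y),t)}\tau_{X}(u_{X}^{s}\psi_{p}(y))\,ds$ one gets $t=z\big(x_{p}'(y),a(y)+\xi(\psi_{p}(y),t)\big)-z(x_{p}'(y),a(y))$, which combined with the previous display yields
\[
z\big(x_{p}'(y),\,t+a(u_{Y}^{t}y)\big)-z(x_{p}'(y),a(y))=t
\]
for a.e.\ $y$ and all $t$. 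Using $u_{X}^{T}x_{p}'(y)=x_{p}'(u_{Y}^{T}y)$ and $z(u_{X}^{T}x,s)=z(x,T+s)-z(x,T)$, a direct check shows that the measurable function
\[
F(x,y)\coloneqq -z(x,a(y))=-\int_{0}^{a(y)}\tau_{X}(u_{X}^{s}x)\,ds
\]
on $\supp\widetilde{\rho}$ satisfies $F(u_{X}^{T}x,u_{Y}^{T}y)-F(x,y)=z(x,T)-T=\int_{0}^{T}(\tau_{X}(u_{X}^{t}x)-1)\,dt$, i.e.\ $\tau_{X}$ and $\tau_{Y}\equiv1$ are jointly cohomologous via $(\widetilde{\rho},F)$; as explained in the remark following Theorem~\ref{joinings202106.160}, this in turn gives that $\tau_{X}$ and $1$ are measurably cohomologous on $X$. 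The main obstacle is the equivariance step: the delicate points are guaranteeing enough regularity of $a$ to run the discreteness argument and checking that the permutation $i_{p}$ does not genuinely move; the subsequent bookkeeping with $\xi$, $z$ and $a$ is routine, the one novelty being that the transfer function now depends on both coordinates through $a$ on $Y$, in contrast to Proposition~\ref{joinings202106.142}.
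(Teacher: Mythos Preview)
Your argument is correct and is precisely the detailed working-out of what the paper means by ``as in Proposition~\ref{joinings202106.142}''. The key equivariance relation $\xi(\psi_{p}(y),t)=t+a(u_{Y}^{t}y)-a(y)$ is exactly the analogue of the identity $a(y)+t=\xi(y,t)+a(u_{Y}^{\xi(y,t)}y)$ used there, with the roles of the time-changed and honest factor exchanged.

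The one point worth noting is the transfer function. In Proposition~\ref{joinings202106.142} the transfer function is simply $a(y)$, depending only on the $Y$-coordinate, because the cohomological equation $\int_{0}^{T}[\tau_{Y}(u_{Y}^{s}y)-1]\,ds=a(u_{Y}^{T}y)-a(y)$ lives purely on $Y$. Here you correctly observe that the transfer function must involve $x$, and your choice $F(x,y)=-z(x,a(y))$ is the right one; the paper's terse ``as in Proposition~\ref{joinings202106.142}'' suppresses this. Your verification that $F(u_{X}^{T}x,u_{Y}^{T}y)-F(x,y)=z(x,T)-T$ on $\supp\widetilde{\rho}$, using $z(x_{p}'(y),t+a(u_{Y}^{t}y))=t+z(x_{p}'(y),a(y))$, is clean.

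On the point you flag as the main obstacle, namely $i_{p}(y,t)=p$: this is in fact simpler than your Lusin/continuity sketch suggests. Since $n_{X}=2$, $C_{G_{X}}(u_{X})=\exp(\mathbf{R}U_{X})$, and by the finite-extension structure the points $\psi_{1}(y),\ldots,\psi_{n}(y)$ lie on distinct $u_{X}$-orbits for $\nu$-a.e.\ $y$ (this is what the H-property of $\tilde{u}_{X}^{t}$ together with Theorem~3 of \cite{ratner1983horocycle} gives). Hence so do $x_{1}'(y),\ldots,x_{n}'(y)$, and the relation $u_{X}^{\delta}x_{p}'(y)=x_{i_{p}}'(y)$ immediately forces $i_{p}=p$ and then $\delta=0$ by aperiodicity. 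The paper does not spell this out either.
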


 Finally, consider $\rho$ is nontrivial$v\in C_{G_{Y}}(U_{Y})$.  Since    $\beta(\exp(v))=e$,  (\ref{joinings202106.171}) asserts that
\[ \alpha(\exp (v),y)\equiv 0\ \ \ \text{ and } \ \ \   \langle\exp(v)\rangle\subset G_{Y}\]
 is compact. However,   \textit{Ratner's theorem}\index{Ratner's theorem} implies that $\langle\exp(v)\rangle\subset \ker\Phi$   is a normal subgroup of $G_{Y}$. It is a  contradiction. Thus, we conclude
 \[V^{\perp}_{C_{Y}}=0.\]
Therefore, we have proved Theorem \ref{joinings202106.173}.

\bibliographystyle{alpha}
 \bibliography{text}
\end{document}